\newtheorem{theorem}{Theorem}[section]
\newtheorem{proposition}[theorem]{Proposition}
\newtheorem{lemma}[theorem]{Lemma}
\newtheorem{corollary}[theorem]{Corollary}
\theoremstyle{remark}
\newtheorem{remark}[theorem]{Remark}
\newtheorem{definition}{Definition}
\newtheorem{example}{Example}
\numberwithin{equation}{section}
\newcommand{\vep}{\varepsilon}
\newcommand{\R}{{\mathbb{R}}}
\newcommand{\C}{{\mathbb{C}}}
\newcommand{\Z}{{\mathbb{Z}}}
\newcommand{\N}{{\mathbb{N}}}
\newcommand{\T}{{\mathbb{T}}}
\newcommand{\sgn}{\operatorname{sgn}}
\newcommand{\rp}{\operatorname{Re}}
\newcommand{\ip}{\operatorname{Im}}
\begin{document}

\title[Hamiltonian flows with impacts]{Non-uniform ergodic properties of Hamiltonian flows with impacts}
\author[K.\ Fr\k{a}czek]{Krzysztof Fr\k{a}czek}
\address{Faculty of Mathematics and Computer Science, Nicolaus
Copernicus University, ul. Chopina 12/18, 87-100 Toru\'n, Poland}
\email{fraczek@mat.umk.pl}

\author[V.\ Rom-Kedar]{Vered Rom-Kedar}
\address{The Estrin Family Chair of Computer Science and Applied Mathematics. Department of Computer Science and Applied Mathematics,
The Weizmann Institute, Rehovot, Israel 76100}
\email{Vered.Rom-Kedar@weizmann.ac.il}

\date{\today}

\subjclass[2000]{}
\keywords{}
\thanks{Research partially supported by the Narodowe Centrum Nauki Grant 2017/27/B/ST1/00078 and by ISF  Grant 1208/16.}

\begin{abstract}
The ergodic properties of two uncoupled oscillators, a horizontal and vertical one, residing in a class of non rectangular star-shaped polygons with  only vertical and horizontal boundaries and impacting elastically from its boundaries are studied. We prove that the iso-energy level sets topology changes non-trivially; the flow on level sets is always conjugated to a translation flow on a translation surface, yet, for some segments of partial energies the genus of the surface is strictly larger than one. When at least one of the oscillators is un-harmonic, or when both are harmonic and non-resonant, we prove that
for almost all partial energies, including the impacting ones, the flow on  level sets is unique ergodic. When both oscillators are harmonic and resonant, we prove that there exist intervals of partial energies on which periodic ribbons and additional ergodic components co-exist. We prove that for almost all partial energies in such segments the motion is unique ergodic on the part of the level set that is not occupied by the periodic ribbons. This implies that ergodic averages  project to piecewise smooth weighted averages in the configuration space.
\end{abstract}

\maketitle

\section{Introduction}
Hamiltonian Impact Systems (HIS) describe the motion of a particle in a given Hamiltonian field within a billiard table: the Hamiltonian flow determines the particle trajectory in the configuration space till it reaches the billiard boundary, there it reflects elastically, and then it continues with the Hamiltonian flow \cite{KozlovBook}. For mechanical Hamiltonian flows with bounded energy surfaces, for small energy,  as long as  the  energy surface projection to the configuration space (the Hill's region) does not  touch the billiard boundary, the HIS reduces to the study of smooth mechanical Hamiltonian systems. On the other extreme, at the high energy limit, for compact billiard tables, mechanical HIS  limit to the billiard flow in the specified billiard table. The theory for intermediate energy values includes local analysis near periodic orbits \cite{dullin1998linear} and near smooth convex boundaries \cite{zharnitsky2000invariant,berglund1996billiards}, and, for some specific classes of  HIS, hyperbolic behavior \cite{Wojtkowski1999}, Liuoville integrable \cite{Kozlova99,Fedorov2001,Dragovic2014a,Radnovic2015,PRk2020} and near-integrable \cite{pnueli2018near} dynamics were established. A class of quasi-integrable HIS, which is related to the quasi-integrable dynamics in families of polygonal right angled corners, was  introduced in  \cite{Issi2019}.

The analysis of quasi-integrable dynamics in right angled billiards is related to several deep mathematical fields \cite{Zorich2006}.  A new family of billiards with quasi-integrable dynamics, consisting of confocal ellipses with confocal barriers was introduced in \cite{DrRa2014} (and, if the Birkhoff conjecture is correct, this family and polygonal billiards with rational angles are the only billiards with quasi-integrable dynamics). For such a billiard table, the quasi-integrable dynamics depend on a  parameter - the constant of motion associated with the caustic of the trajectories. By a change of coordinates, the dynamics  for any given caustic constant is conjugated to the directed motion in a  right angled billiard table \cite{DrRa2014}. Using tools of homogeneous dynamics it was established that the flow in an ellipse with a vertical barrier is uniquely ergodic  for almost all the caustic parameters  \cite{FrShUl2018}. Developing a different approach, a similar result was established for the more general case  of nibbled ellipses \cite{frkaczek2019recurrence}. Our methodology relies on the methods developed in   \cite{frkaczek2019recurrence}, where it was shown   that to prove unique ergodicity, the Minsky-Weiss criterion \cite{MiWe2014}  may be applied to a class of right angled polygons consisting of staircase polygons.

Here, we examine the dynamics of a horizontal and a vertical oscillator with stable fixed point at the origin that are restricted to lie within  star-shaped polygons with  only vertical and horizontal boundaries with a kernel that includes the origin (this is a subclass of the HIS introduced in   \cite{Issi2019}, and such polygons consist of \(4\) staircase polygons considered in   \cite{frkaczek2019recurrence}). These \(2\) degrees of freedom systems have \(2\) conserved integrals, so their motion is always restricted to level sets, yet, in contrast to the smooth case, the motion on the level sets is conjugated, for some of the level sets, to the motion in polygonal right angled billiards with more than \(4\) corners. Thus, the motion on such level sets is not conjugated to rotations \cite{Zorich2006}, and, since the shape of the polygonal billiard and the direction of motion on it vary, the dynamics may depend sensitively on the value of the conserved integrals, even for iso-energy level sets \cite{Issi2019}. In the first part of the paper we analyse our class of HIS in non-resonant cases and prove that the motion is unique-ergodic for almost all iso-energy level sets (in this part we rely on the tools and analysis developed in    \cite{frkaczek2019recurrence} for staircase polygons, verifying that the functional dependence of the corners in the induced family of polygons satisfies the needed conditions of smoothness, independence and monotonicity as in     \cite{frkaczek2019recurrence}). In the second part of the paper we study the case of HIS with quadratic resonant potentials in a staircase polygon. Here, ribbons of periodic orbits co-exist with quasi-periodic motion (similar to the motion presented in \cite{McM}, but in the corresponding HIS). By our new construction, we establish the unique ergodicity of the non-periodic component on a certain set. In Section~\ref{subsec:ergodicav} we show that these  findings imply non-uniform ergodic averages in the configuration space.

\subsection{Set up}

Consider integrable 2 d.o.f.\ (degrees of freedom) Hamiltonian systems of the form
\begin{equation}\label{eq:Hamfun}
H(p_x,x,p_{y},y)=\frac{p_x^2}{2}+\frac{p_y^2}{2}+V_1(x)+V_2(y),
\end{equation}
where  $(x,y)$ are the space coordinates,  $(p_x,p_y)$ are the corresponding momenta and the potentials  $V_1,V_2:\R\to\R_{\geq 0}$ are even uni-modal $C^2$-maps that tend monotonically to infinity with their argument (with no loss of generality we take \(V_{1}(0)=V_2(0)=0\), see precise assumption "Deck"  below). The Hamiltonian flow $(\varphi_t)_{t\in\R}$ of \eqref{eq:Hamfun}  describes a particle which oscillates in a potential well.   The Hamiltonian flow on a given energy surface \(E\) is foliated by the level sets with fixed partial energies, \((E_{1}=H_{1}(I_1),E_{2}=H_{2}(I_2)=E-E_{1})\), where \(I_{i}(E_i)\) is the action of the one d.o.f.\ system \(H_i\). For a given energy level $E\geq 0$ and any $0\leq E_{1}\leq E$ let
\[S_{E,E_{1}}:=\Big\{(p_x,x,p_y,y)\in\R^4:\frac{p_x^2}{2}+V_1(x)=E_{1},\frac{p_y^2}{2}+V_2(y)=E-E_{1},(x,y)\in \R^2\Big\}.\]
Then the phase space of the flow $(\varphi_t)_{t\in\R}$,  is foliated by the invariant sets $\{S_{E,E_{1}}:E\geq 0,0\leq E_{1}\leq E\}$, which are tori for  \(0<E_{1}<E,\) and, for \(E>0\) and  \(E_1\in\{0,E\}\), are circles.
 Denote the restriction of  $(\varphi_t)_{t\in\R}$ to $S_{E,E_{1}}$  by $(\varphi^{E,E_{1}}_t)_{t\in\R}$. The smooth flow without reflection is trivially integrable and oscillatory. The projection of \(S_{E,E_{1}}\) to the configuration space is the projected rectangle \cite{PRk2020}:
\begin{equation}\label{eq:rectangle}
R^{(E,E_1)}=[-x^{max}(E_{1}),x^{max}(E_{1})]\times[-y^{max}(E-E_{1}),y^{max}(E-E_{1})],
\end{equation}
where \(V_{1}(x^{max}(E_{1}))=E_{1}, V_{2}(y^{max}(E_{2}))=E_{2}, E_2=E-E_1\). The union of all iso-energy rectangles is the Hill region: \(\mathcal{D}_{Hill}(E)=\bigcup_{0\leq E_{1}\leq E} R^{(E,E_1)}=\{(x,y)|V_{1}(x)+V_{2}(y)\leqslant E\}\)  (see \cite{PRk2020} for more general formulation).

 Denote by   \(\omega_{i}(E_i)=\frac{2\pi }{T_{i}(E_i)}\) the frequency in each degree of freedom, where \(T_{i}(E_i)\) is the period of oscillation. The standard transformation to action angle coordinates \((I_{i},\theta_i)\) in each degree of freedom brings  \eqref{eq:Hamfun} to the form \(H(p_x,x,p_y,y)=H_{1}(I_{1})+H_2(I_2)\)  and in these coordinates the flow is simply  $\varphi^{E,E_1}_t=\{I_{1}(E_{1}),\theta_1(t)=\omega_{1}(E_1)t+\theta_1(0),I_{2}(E-E_{1}),\theta_2(t)=\omega_{2}(E-E_1)t+\theta_2(0)).$ Recall that \(\omega_{i}(I_i)=H_{i}'(I_{i})=\frac{dE_{i}}{dI_{i}}\) and that the Hamiltonian is said to satisfy the twist condition if \(det(\frac{\partial^{2}H}{\partial I_{i} \partial I_j})=\prod\omega_{i}'(I_i)\neq0\) and to  satisfy the iso-energy non-degeneracy condition if
\begin{displaymath}
\begin{vmatrix}\frac{\partial^{2}H}{\partial I_{i} \partial I_j} & \frac{\partial H}{\partial I_{i} } \\
\frac{\partial H}{\partial I_{j} } & 0 \\
\end{vmatrix}=
\begin{vmatrix}\omega_{1}'(I_1) & 0 &  \omega_1(I_1)  \\
0 & \omega_{2}'(I_2) &  \omega_2(I_2)\\
\omega_1(I_1) & \omega_{2}(I_2) &  0\\
\end{vmatrix}=-\omega_{1}'(I_1)\omega_2^{2}(I_2)-\omega_1^{2}(I_1)\omega_{2}'(I_2)\neq0.
\end{displaymath}
The character of the smooth flow  $\varphi^{E,E_1}_t$   on the level set  $S_{E,E_1}$ depends on the frequency ratio on this level set. If
\[\Omega=\Omega(E,E_1)=\frac{\omega_{1}(E_1)}{\omega_{2}(E-E_1)}=\frac{H'_{1}(I_{1}(E_1))}{H'_{2}(I_{2}(E-E_1))}\]
is rational the flow  is periodic (the resonant case) and if \(\Omega\) is irrational it is quasi-periodic (the non-resonant case).
Recall that \(\frac{d}{dE_{1}}\Omega(E,E_1)\neq0\) iff the iso-energy non-degeneracy condition is satisfied. 

    Now, assume that the particle is confined to a bounded polygonal room $P\subset\R^2$ whose walls consist of vertical and horizontal segments. When the particle meets the wall it reflects elastically. More precisely, if a trajectory meets a vertical segment at $(p_x,x,p_y,y)$ then it jumps to $(-p_x,x,p_y,y)$ and continues its movement in accordance with the Hamiltonian flow solving:
\begin{equation}\label{eq:Ham2}
\frac{dp_x}{dt}=-V_1'(x), \quad\frac{dx}{dt}=p_x,\quad \frac{dp_y}{dt}=-V_2'(y), \quad\frac{dy}{dt}=p_y.
\end{equation}
Similarly, if a trajectory meets a horizontal segment at $(p_x,x,p_y,y)$ then it jumps to $(p_x,x,-p_y,y)$ and continues its movement with \eqref{eq:Ham2}, see \cite{Issi2019} for the general construction, a mechanical example and the description of the resulting dynamics on energy surfaces and \cite{PRk2020} for global structure of energy surfaces of such systems.

\medskip

In particular, since all the walls are either horizontal or vertical, the partial energies are preserved under these reflections, so the motion  remains restricted to  level sets:
\[S^{P}_{E,E_{1}}:=\{(p_x,x,p_y,y)\in\R^4:H_{1}(x,p_x)=E_1,H_{2}(y,p_y)=E-E_{1},(x,y)\in P\Big\}.\]
Denote the restriction of the impact Hamiltonian flow $(\varphi_t)_{t\in\R}$ to $S^{P}_{E,E_{1}}$  by $(\varphi^{P,E,E_{1}}_t)_{t\in\R}$, see  for example a trajectory segment projected to the configuration space in Figure~\ref{fig:particleroom}. Notice that if a particle hits any corner of $P$ then it dies. { Namely, the flow is fully defined for regular trajectories and is only partially defined on the set which corresponds to all forward and backward images of corner points.}

 We are interested in studying the topology of the level set  $S^{P}_{E,E_{1}}$ and the invariant measures of $(\varphi^{P,E,E_{1}}_t)_{t\in\R}$ when the total and partial energies $E,E_{1}$  vary. In particular, we ask when the flow is uniquely ergodic. Recall  that the flow $(\varphi^{P,E,E_{1}}_t)_{t\in\R}$ is  uniquely ergodic if (i) each of its orbits is forward or backward infinite and (ii) if there exists a probability
measure $\mu_{E,E_1}$ on $S^P_{E,E_1}$ such that for every continuous map $f:S^P_{E,E_1}\to\R$ and any $(p_x,x,p_y,y)\in S^P_{E,E_1}$ for which the $(\varphi^{P,E,E_1}_t)_{t\in\R}$-orbit of $(p_x,x,p_y,y)$ is either forward or backward infinite
\[\lim_{T\to\pm\infty}\frac{1}{T}\int_{0}^Tf(\varphi_{ t}(p_x,x,p_y,y))dt=\int_{S^P_{E,E_1}}f\,d\mu_{E,E_1},\]
where the $+$ (respectively $-$) sign is taken if the $(\varphi^{P,E,E_1}_t)_{t\in\R}$-orbit of $(p_x,x,p_y,y)$ is forward (respectively backward)  infinite.
In our case the measure $\mu_{E,E_1}$ is equivalent to the Lebesgue  measure on $S^P_{E,E_1}$.

\medskip

To formally determine the living space, $P$, of the particle, by following \cite{frkaczek2019recurrence}, denote by $\Xi$ the set of  sequences $(\overline{x},\overline{y})=(x_i,y_i)_{i=1}^k$ of points in $\R^2_{>0}$ such that
\[0<x_1<x_2<\ldots<x_{k-1}<x_k\quad\text{and}\quad 0<y_k<y_{k-1}<\ldots<y_2<y_1.\]
\begin{figure}[h]
\includegraphics[width=0.6 \textwidth]{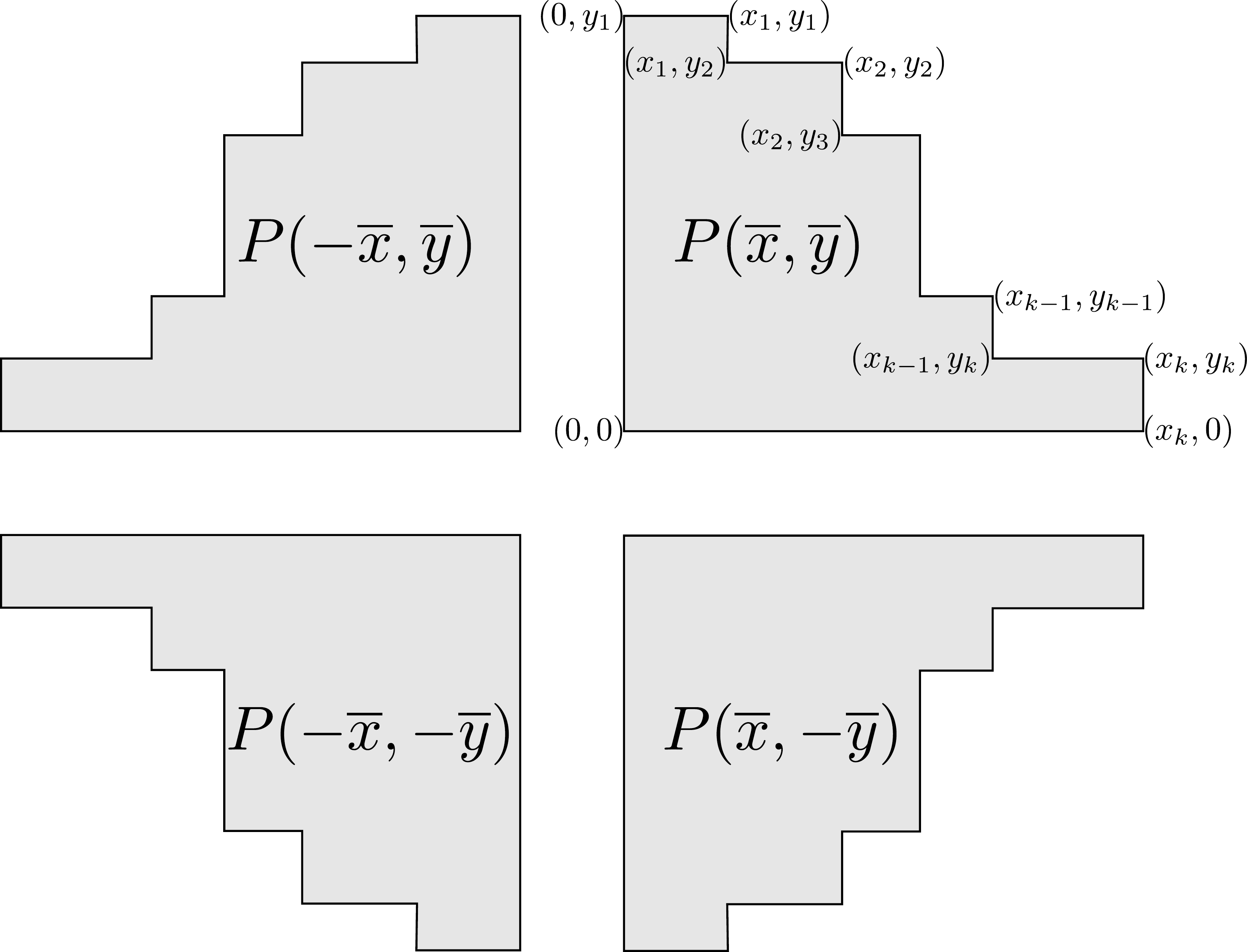}
\caption{Staircase polygons $P(\overline{x},\overline{y})$, $P(-\overline{x},\overline{y})$, $P(\overline{x},-\overline{y})$, $P(-\overline{x},-\overline{y})$.}\label{fig:basicpolygon}
\end{figure}
For every $(\overline{x},\overline{y})\in\Xi$ set $k(\overline{x},\overline{y}):=k$ and
denote by $P(\overline{x},\overline{y})$ the right-angle  staircase polygon on $\R^2$  with consecutive vertices:
\[(0,0),(0,y_1),(x_1,y_1),(x_1,y_2),\ldots,(x_{k-1},y_{k-1}),(x_{k-1},y_{k}),(x_k,y_k),(x_k,0),\]
see Figure~\ref{fig:basicpolygon}.

Denote by $\Gamma$ the four element group
generated by the vertical and the horizontal reflections $\gamma_v, \gamma_h:\R^2\to\R^2$.
The polygons of the form
\begin{align*}
P(\overline{x},\overline{y}), \; P(-\overline{x},\overline{y})=\gamma_v P(\overline{x},\overline{y}),\;
P(\overline{x},-\overline{y})=\gamma_h P(\overline{x},\overline{y}),\;
P(-\overline{x},-\overline{y})=\gamma_v\circ\gamma_h P(\overline{x},\overline{y})
\end{align*}
are called \emph{staircase polygons}, see Figure~\ref{fig:basicpolygon}. The numbers $x_1,\ldots, x_{k-1}$ are called staircase lengths
and $y_2,\ldots, y_k$ are called staircase heights of the staircase polygon $P(\pm\overline{x},\pm\overline{y})$.
The number $x_{k}$ is called the width
and $y_1$ is called the height of $P(\pm\overline{x},\pm\overline{y})$.

We assume that the living space $P$ of the particle is the union of four staircase polygons determined by the four sequences
$(\overline{x}^{\varsigma_1\varsigma_2},\overline{y}^{\varsigma_1\varsigma_2})\in\Xi$, $\varsigma_1,\varsigma_2\in\{\pm\}$ (in the symmetric case all sequences are identical):
\[P(\overline{x}^{++},\overline{y}^{++}), \; P(-\overline{x}^{-+},\overline{y}^{-+}),\;
P(\overline{x}^{+-},-\overline{y}^{+-}),\;
P(-\overline{x}^{--},-\overline{y}^{--}),\]
such that
\[y^{++}_1=y^{-+}_1,\ y^{+-}_1=y^{--}_1,\ x^{++}_{k(\overline{x}^{++},
\overline{y}^{++})}=x^{+-}_{k(\overline{x}^{+-},\overline{y}^{+-})},\
x^{-+}_{k(\overline{x}^{-+},
\overline{y}^{-+})}=x^{--}_{k(\overline{x}^{--},\overline{y}^{--})}.\]
\begin{figure}[h]
\includegraphics[width=0.6 \textwidth]{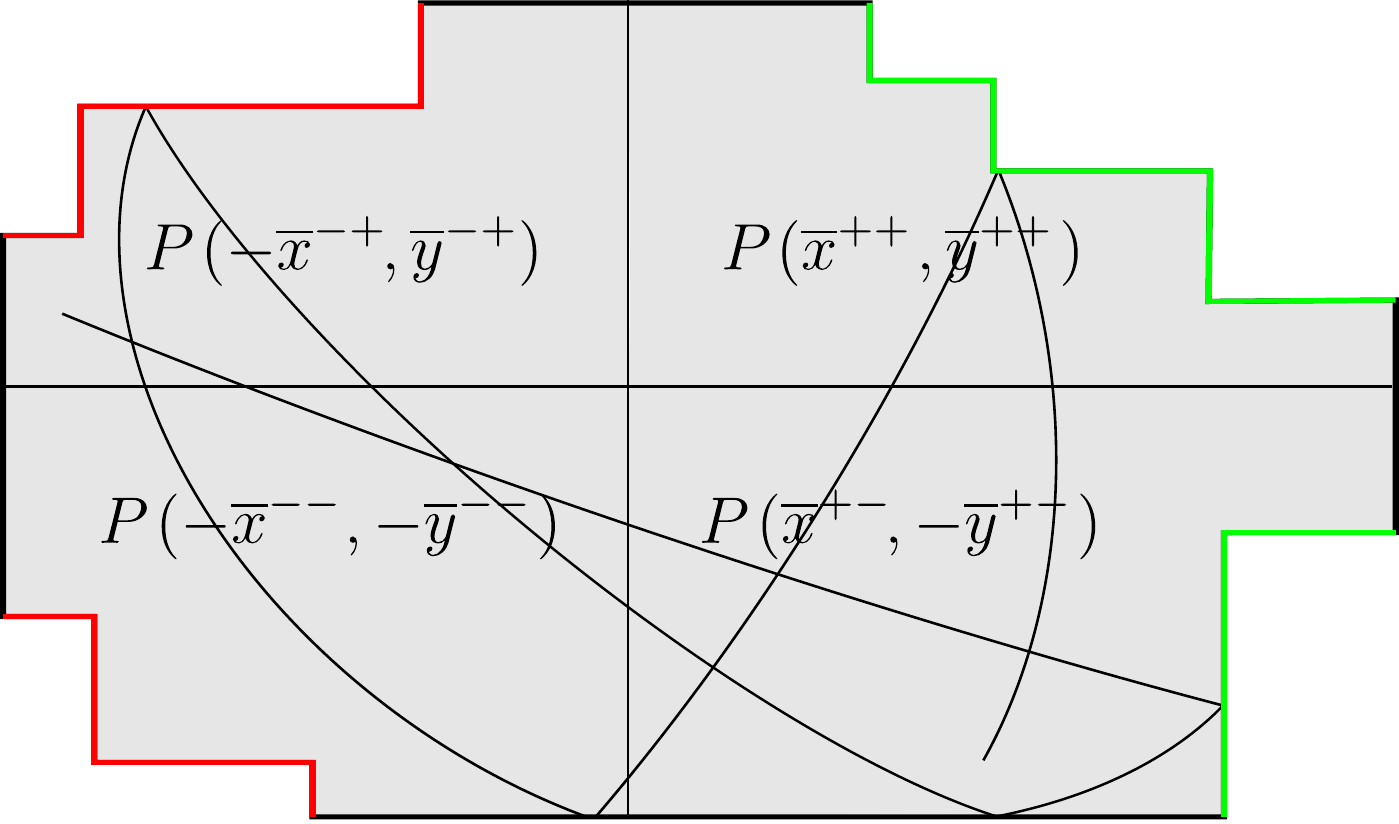}
\caption{The living space of the particle.}\label{fig:particleroom}
\end{figure}
The space of all such polygons we denote by $\mathscr{R}$. It is the set of star-shaped polygons with a kernel that includes the origin and with only vertical and horizontal boundaries. The corners
 $(x^{\varsigma_1,\varsigma_2}_{j},y^{\varsigma_1,\varsigma_2}_{j})$ are $90^o$ corners and are called hereafter  convex corners and
 $(x^{\varsigma_1,\varsigma_2}_{j},y^{\varsigma_1,\varsigma_2}_{j+1})$ are $270^o$ and are called hereafter  concave corners. The four dimensional vector  \(\{k(\overline{x}^{\varsigma_1,\varsigma_2},\overline{y}^{\varsigma_1,\varsigma_2})\}_{ \varsigma_1,\varsigma_2\in\{\pm\}}\) is called the \textit{topological data} of the polygon \(P\), whereas  the set of four vectors,  \(\{(\overline{x}^{\varsigma_1\varsigma_2},\overline{y}^{\varsigma_1\varsigma_2}), \varsigma_1,\varsigma_2\in\{\pm\}\}\) is called the  \textit{numerical data} of the polygon\footnote{We continue to call \( x^{\varsigma_1,\varsigma_2}_k\)/ \( y^{\varsigma_1,\varsigma_2}_k\) the length /height of a step, yet, notice that this is measuring the lengths and heights from the axes of the corresponding axes  and not of the full polygon.} \(P\).

\begin{figure}[h]
\includegraphics[width=0.6 \textwidth]{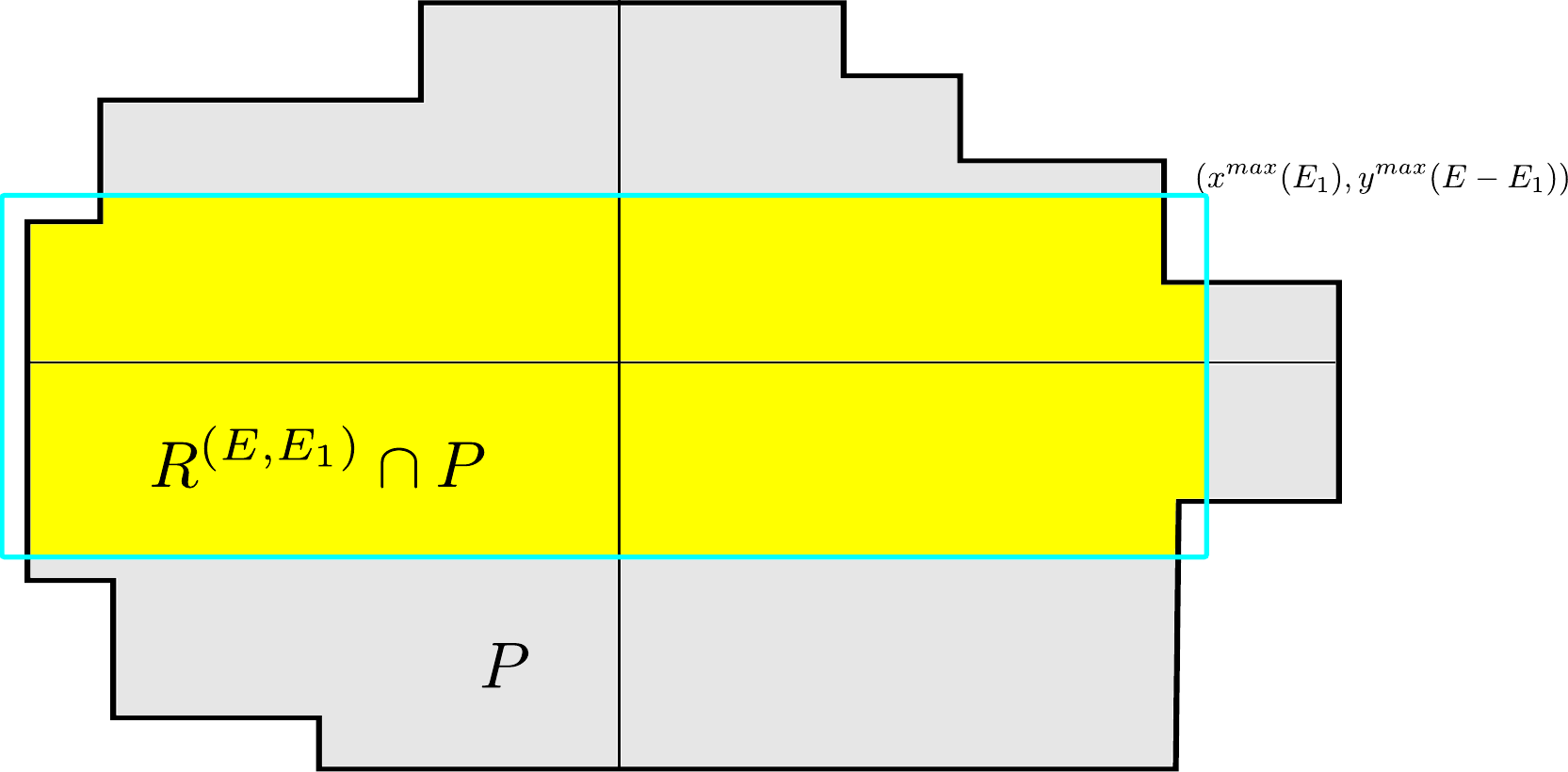}
\caption{A projected rectangle and the staircase polygon. Cyan - the projected rectangle. Yellow - the polygon  \(R^{(E,E_{1})}\cap P\) includes  3 concave corners, thus, by Theorem \ref{thm:genus}, \(g(E,E_{1})=4\).}\label{fig:particleroom2}
\end{figure}

The level set topology of \(S^{P}_{E,E_{1}}\)  is determined by the properties of \(R^{(E,E_{1})}\cap P\) (see Figure~\ref{fig:particleroom2}) and can be found under mild conditions on the potential:
 \begin{theorem}\label{thm:genus}
Assume $V_1,V_2:\R\to\R_{\geq 0}$ are $C^2$ uni-modal potentials (satisfy condition  \eqref{eq:i}). Let $P$ be any  polygon in $\mathscr R$.
Then for any  \(E>0\), for  \(E_{1}\in(0,E)\),  the genus of the level set \((E_{1}, E-E_1)\), \(S^{P}_{E,E_{1}}\), is given by one plus the number of concave corners in  \(R^{(E,E_{1})}\cap P\) : \begin{gather*}
g(E,E_1)=1+\sum_{\varsigma_1,\varsigma_2\in\{\pm\}}\#\{1\leq k<k(\bar{x}^{\varsigma_1\varsigma_2},\bar{y}^{\varsigma_1\varsigma_2}):V_1(x^{\varsigma_1\varsigma_2}_k)<E_1<E-V_2(y^{\varsigma_1\varsigma_2}_{k+1})\}.
\end{gather*} Specifically, for \(E> 0\) the interval \(E_{1}\in(0,E)\) is divided to a finite number of segments  on which the level sets have a constant genus. This partition is non-trivial for \(E>\min _{\varsigma_1,\varsigma_2,k}V_1(x^{\varsigma_1\varsigma_2}_k)+V_2(y^{\varsigma_1\varsigma_2}_{k+1}) \). Close to the end points of $(0,E)$ the genus is $1$ whereas for  \(E\) sufficiently large there exists an interval of level sets, \(E_{1}\in I_{max}\) with genus \(g_{max}=\sum_{\varsigma_1,\varsigma_2\in\{\pm\}}k(\bar{x}^{\varsigma_1\varsigma_2},\bar{y}^{\varsigma_1\varsigma_2})-3\) and on which for almost all \(E_{1}\) values the motion is uniquely ergodic.\end{theorem}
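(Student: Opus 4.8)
The plan is to represent the impact flow $(\varphi^{P,E,E_1}_t)_{t\in\R}$ on $S^P_{E,E_1}$ as the linear flow on a closed translation surface, and to read off the genus from the combinatorics of the configuration polygon $Q=Q(E,E_1):=R^{(E,E_1)}\cap P$. First I would rectify each oscillator separately: the classical substitution $x\mapsto\theta_1$ (resp.\ $y\mapsto\theta_2$) that conjugates the flow of $H_1$ on $\{H_1=E_1\}$ (resp.\ of $H_2$ on $\{H_2=E-E_1\}$) to a uniform rotation is, on each of the two branches $p_x>0$ and $p_x<0$ of $\{H_1=E_1\}$, a monotone function of $x$, and it acts coordinatewise; so it turns the restriction of the smooth flow to $S_{E,E_1}$ into the linear flow on a torus and carries $Q$ --- whose sides are the horizontal and vertical segments lying on $\partial P$ or on $\partial R^{(E,E_1)}$ --- onto a polygon $\widehat Q$ of the same combinatorial type, with the same number $c$ of right ($\pi/2$) and $n$ of reflex ($3\pi/2$) corners. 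The impact rule at a vertical (resp.\ horizontal) wall of $P$, and likewise the smooth turn-around at a side of $\partial R^{(E,E_1)}$ where $p_x=0$ (resp.\ $p_y=0$), glues the two sheets of $S^P_{E,E_1}$ that differ only in the sign of $p_x$ (resp.\ $p_y$); hence $S^P_{E,E_1}$, with these identifications, is the unfolding of the billiard in $\widehat Q$ by the four-element group $\Gamma\cong(\Z/2)^2$ --- the four isometric copies $\{g\widehat Q:g\in\Gamma\}$, glued along matching vertical and horizontal edges. As in the unfolding of rational polygonal billiards \cite{Zorich2006} and in the staircase construction of \cite{frkaczek2019recurrence}, and using that the matching conditions defining $\mathscr R$ keep all sides and corners of $P$ (hence of $Q$, hence of $\widehat Q$) off the coordinate axes, this is a closed, connected (since $\widehat Q$ has at least one vertical and one horizontal side), orientable translation surface $M_{E,E_1}$ on which the linear flow in direction $(\omega_1(E_1),\omega_2(E-E_1))$ is conjugate to $(\varphi^{P,E,E_1}_t)_{t\in\R}$; in particular the one flow is uniquely ergodic if and only if the other is.

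Next I would compute $g(M_{E,E_1})$ from its cone points. Around a right corner of $\widehat Q$ the four copies close up to total angle $4\cdot\tfrac{\pi}{2}=2\pi$, a regular point; around a reflex corner they close up to $4\cdot\tfrac{3\pi}{2}=6\pi=3\cdot 2\pi$, a cone point carrying a double zero of the translation form. As $Q$ is a topological disc its exterior angles sum to $2\pi$, so $c=n+4$; and the tiling $M_{E,E_1}=\bigcup_{g\in\Gamma}g\widehat Q$ has $c+n$ vertices (the four copies cycle once around each corner of $\widehat Q$), $2(c+n)$ edges (each of the $c+n$ sides of $\widehat Q$ gives two glued pairs among the four copies) and $4$ faces, whence $\chi(M_{E,E_1})=4-(c+n)=-2n$ and $g(E,E_1)=1+n$. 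It remains to identify $n$. The corners of $Q=R^{(E,E_1)}\cap P$ are of three kinds: corners of the rectangle $R^{(E,E_1)}$ lying in $\into P$, transverse intersections of $\partial R^{(E,E_1)}$ with the axis-parallel walls of $\partial P$, and corners of $P$ lying in $\into R^{(E,E_1)}$; the first two sorts are always right corners, while the last keep their type. After discarding the finite (hence Lebesgue-null in $E_1$) set of parameters for which $\partial R^{(E,E_1)}$ passes through a corner of $P$ or is flush with a wall of $P$ --- i.e.\ for which $x^{max}(E_1)$ or $y^{max}(E-E_1)$ equals one of the finitely many $x^{\varsigma_1\varsigma_2}_j$, $y^{\varsigma_1\varsigma_2}_j$ --- $n$ equals the number of reflex corners of $P$, i.e.\ of the points $(\pm x^{\varsigma_1\varsigma_2}_k,\pm y^{\varsigma_1\varsigma_2}_{k+1})$ with $1\le k<k(\overline{x}^{\varsigma_1\varsigma_2},\overline{y}^{\varsigma_1\varsigma_2})$, lying in $\into R^{(E,E_1)}$. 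Since $V_1,V_2$ are strictly increasing on $[0,\infty)$ this reads $x^{\varsigma_1\varsigma_2}_k<x^{max}(E_1)\Leftrightarrow V_1(x^{\varsigma_1\varsigma_2}_k)<E_1$ together with $y^{\varsigma_1\varsigma_2}_{k+1}<y^{max}(E-E_1)\Leftrightarrow V_2(y^{\varsigma_1\varsigma_2}_{k+1})<E-E_1$, which is exactly the condition in the statement; this proves the genus formula.

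All remaining assertions follow by inspecting it. As a function of $E_1\in(0,E)$, the count $n(E,E_1)=\sum_{\varsigma_1,\varsigma_2,k}\mathbf{1}_{(V_1(x^{\varsigma_1\varsigma_2}_k),\,E-V_2(y^{\varsigma_1\varsigma_2}_{k+1}))}(E_1)$ is a sum of finitely many indicator functions of subintervals of $(0,E)$, hence piecewise constant with finitely many values; the $(\varsigma_1\varsigma_2,k)$-th subinterval is non-empty iff $V_1(x^{\varsigma_1\varsigma_2}_k)+V_2(y^{\varsigma_1\varsigma_2}_{k+1})<E$, so the stratification is non-trivial precisely when $E>\min_{\varsigma_1\varsigma_2,k}\bigl(V_1(x^{\varsigma_1\varsigma_2}_k)+V_2(y^{\varsigma_1\varsigma_2}_{k+1})\bigr)$; and $n(E,E_1)=0$, i.e.\ $g=1$, both for $E_1<\min_{\varsigma_1\varsigma_2,k}V_1(x^{\varsigma_1\varsigma_2}_k)$ and for $E_1>E-\min_{\varsigma_1\varsigma_2,k}V_2(y^{\varsigma_1\varsigma_2}_{k+1})$, which are genuine one-sided neighbourhoods of $0$ and of $E$ because $V_i(0)=0$ lies strictly below the value of $V_i$ at any positive point. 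Finally $P$ has exactly $\sum_{\varsigma_1\varsigma_2}\bigl(k(\overline{x}^{\varsigma_1\varsigma_2},\overline{y}^{\varsigma_1\varsigma_2})-1\bigr)$ reflex corners, so $n\le\sum_{\varsigma_1\varsigma_2}k(\overline{x}^{\varsigma_1\varsigma_2},\overline{y}^{\varsigma_1\varsigma_2})-4$ with equality iff \emph{every} reflex corner of $P$ lies in $\into R^{(E,E_1)}$; writing $a:=\max_{\varsigma_1\varsigma_2,k}V_1(x^{\varsigma_1\varsigma_2}_k)$ and $b:=\max_{\varsigma_1\varsigma_2,k}V_2(y^{\varsigma_1\varsigma_2}_{k+1})$, this holds exactly for $E_1\in I_{max}:=(a,E-b)$, which is non-empty as soon as $E>a+b$, and there $g\equiv g_{max}=\sum_{\varsigma_1\varsigma_2}k(\overline{x}^{\varsigma_1\varsigma_2},\overline{y}^{\varsigma_1\varsigma_2})-3$. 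The last clause --- unique ergodicity of $(\varphi^{P,E,E_1}_t)_{t\in\R}$ for almost every $E_1\in I_{max}$ --- is not a statement about genus: by the first paragraph it says that the linear flow on $M_{E,E_1}$ is uniquely ergodic for a.e.\ $E_1$, and I would deduce it by feeding the one-parameter family $(M_{E,E_1})_{E_1\in I_{max}}$ into the Minsky-Weiss unique ergodicity criterion \cite{MiWe2014}, exactly as is done for staircase polygons in \cite{frkaczek2019recurrence}, after verifying the required smooth, monotone and independent dependence on $E_1$ of the corner data of $\widehat Q$ (under the pertinent non-resonance / iso-energy non-degeneracy hypotheses); this part is carried out in the main body of the paper and is quoted here.

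The step I expect to be the real work is making the first paragraph rigorous: checking that the four rectified sheets glue along \emph{all} of $\partial\widehat Q$ --- the wall edges and the turn-around edges coming from $\partial R^{(E,E_1)}$ alike --- into a genuine closed orientable translation surface rather than a bordered or a half-translation one; that the conjugacy between the linear flow on $M_{E,E_1}$ and $(\varphi^{P,E,E_1}_t)_{t\in\R}$ (or, in the worst case, a reparametrisation of it by a function bounded between two positive constants) transfers time averages, hence unique ergodicity; and that the $E_1$-null set on which $R^{(E,E_1)}$ is tangent to, or passes through a corner of, $P$ --- where $Q$ degenerates and spurious singularities can appear --- is harmlessly excluded. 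The genus count itself, together with the interval arithmetic giving the stratification, the non-triviality threshold and $g_{max}$, is then routine, while the deepest ingredient --- a.e.\ unique ergodicity on $I_{max}$ --- is imported from the paper's main theorems.
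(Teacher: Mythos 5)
Your proposal follows the paper's route essentially verbatim: rectify via the action-angle-like $\psi$-coordinates of Sections~\ref{sec:osc1}--\ref{sec:osctobil}, reduce to the directional billiard on $\mathbf P_{E,E_1}\in\mathscr R$, unfold by the Klein four-group to the translation surface $M(E_1)$, and read the genus from the cone angles --- $2\pi$ at each convex corner (regular) and $6\pi$ at each concave corner. Your Euler-characteristic tally $\chi=V-E+F=4-(c+n)$ together with $c=n+4$ is the same arithmetic as the paper's appeal to \eqref{eq:genus}, i.e.\ $\sum_\sigma k_\sigma=2g-2$ with $k_\sigma=2$ at each $6\pi$-point; both give $g=1+n$ and then the stated counting of $n$.

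The one point to tighten is your choice of $I_{max}$. You set $I_{max}=(a,E-b)$ with $a=\max_{\varsigma_1\varsigma_2,\,1\le k<k(\bar{x}^{\varsigma_1\varsigma_2},\bar{y}^{\varsigma_1\varsigma_2})}V_1(x^{\varsigma_1\varsigma_2}_k)$ and $b=\max_{\varsigma_1\varsigma_2,\,1\le k<k(\bar{x}^{\varsigma_1\varsigma_2},\bar{y}^{\varsigma_1\varsigma_2})}V_2(y^{\varsigma_1\varsigma_2}_{k+1})$, which is indeed precisely the set where $g\equiv g_{max}$. But the unique-ergodicity input you invoke, Proposition~\ref{prop:largegmax}, is established only on the strictly smaller subinterval where $R^{(E,E_1)}$ reaches all four \emph{extremal} walls of $P$, i.e.\ $\max_{\varsigma_1\varsigma_2}V_1(x^{\varsigma_1\varsigma_2}_{k(\bar{x}^{\varsigma_1\varsigma_2},\bar{y}^{\varsigma_1\varsigma_2})})\le E_1\le E-\max_{\varsigma_1\varsigma_2}V_2(y^{\varsigma_1\varsigma_2}_1)$; only there are the Minsky--Weiss independence and monotonicity hypotheses verified (via Proposition~\ref{prop:indep1} and Lemma~\ref{lem:negposder}) under the weak $C^2$/\eqref{eq:i} assumption of the present theorem. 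Since the claim is existential you should simply declare $I_{max}$ to be that smaller subinterval; on the remainder of your $(a,E-b)$ the a.e.\ unique ergodicity is not supplied by the proposition you quote and would require the stronger $Deck$-potential hypotheses of Theorem~\ref{thm:albega}.
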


\begin{figure}[h]
\includegraphics[width=0.45 \textwidth]{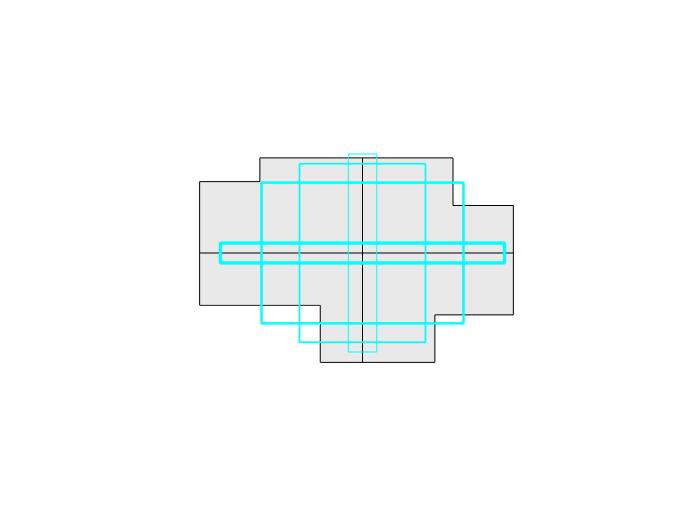}
\includegraphics[width=0.45 \textwidth]{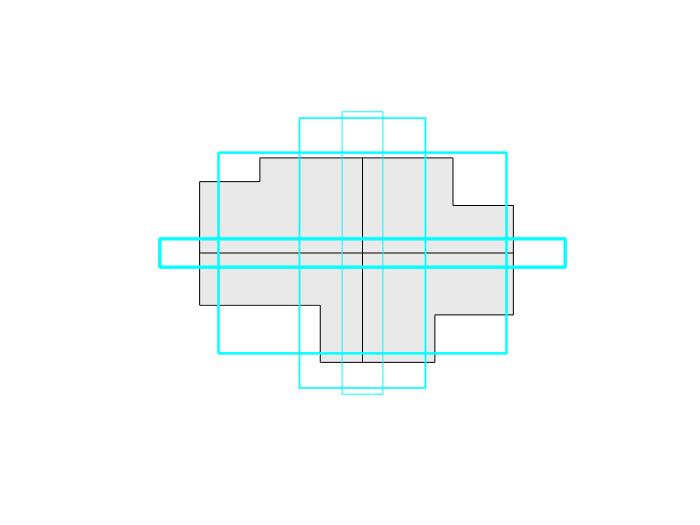}
\caption{The intersection of a star-shaped polygon (grey) with four iso-energy projected rectangles (cyan) at (a) \(E=2.7\)  and (b) \(E=5.7 \)   energy values.   By Theorem \ref{thm:genus}, the genus of the corresponding iso-energy level sets for the 4 shown \(R^{(E,E_{1})}\)  rectangles are, for increasing \(E_{1}\)  (a)  \( g(E=2.7,E_1)=\{1,2,4,1\}\) (b) \( g(E=5.7,E_1)=\{1,2,5=g_{max},1\}\). The potentials  here are quadratic with \(\omega_{1}=1,\omega_{2}=0.8\sqrt{2}\).}\label{fig:rectangleandpolygon}
\end{figure}

\begin{figure}[h]
\includegraphics[width=0.4 \textwidth]{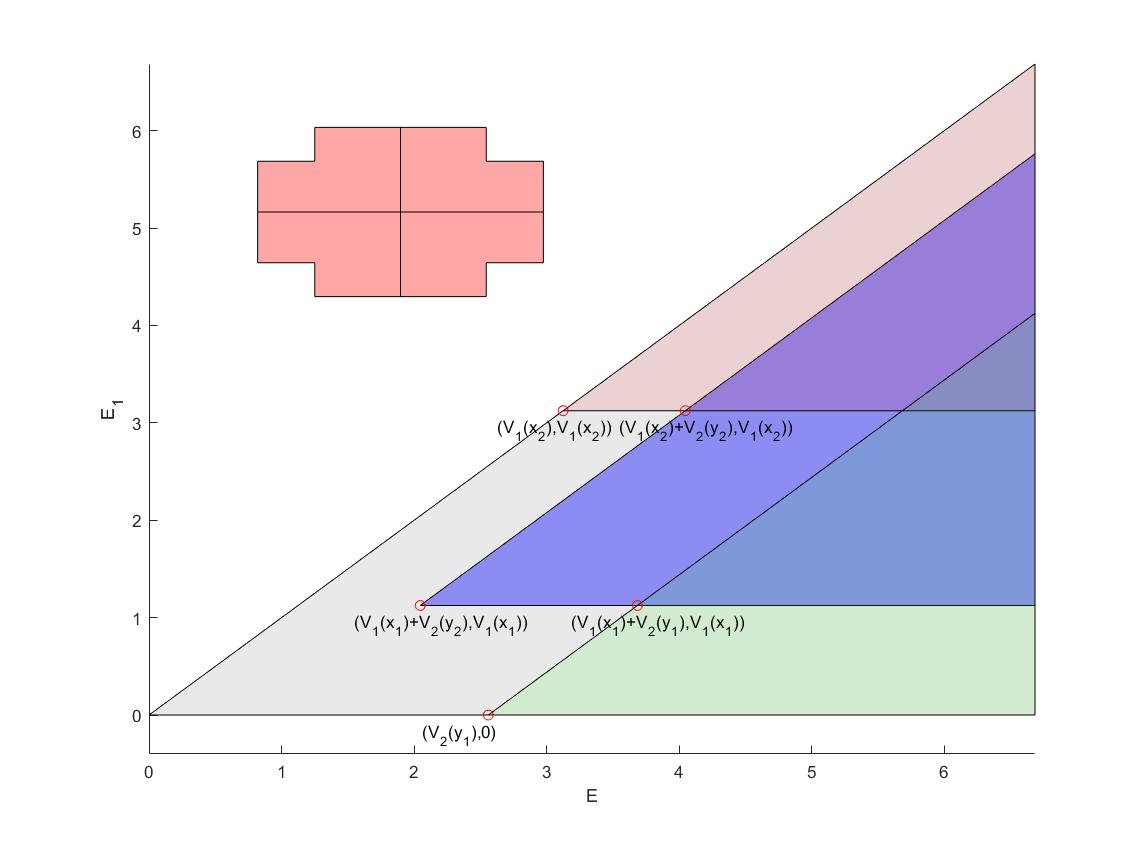}
\includegraphics[width=0.4 \textwidth]{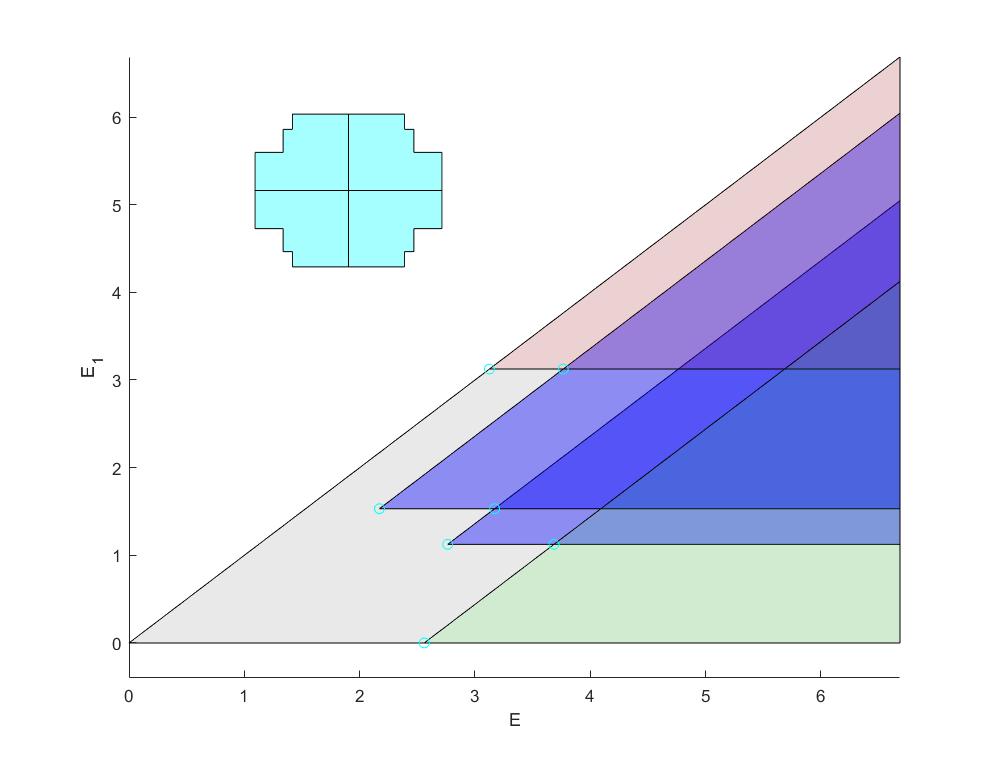}
\caption{Impact Energy-Momentum bifurcation diagram: (a) for a symmetric cross (one concave corner with multiplicity four), b)  symmetric two-steps cross (two concave corners each with multiplicity four). Here, due to symmetry - each blue wedge corresponds to 4 overlapping wedges, so  \(R^{(E,E_{1})}\cap P\) includes \(4k\) concave corners if and only if \((E,E_{1})\) is  in a region covered by \(4k\) overlapping shaded blue regions. Only for these regions the level sets genus is larger than $1$.  The shaded pink (respectively light-green) regions correspond to level sets that impact the extreme vertical (respectively horizontal) sides.}\label{fig:embdsym}
\end{figure}

\begin{figure}[h]
\includegraphics[width=0.6 \textwidth]{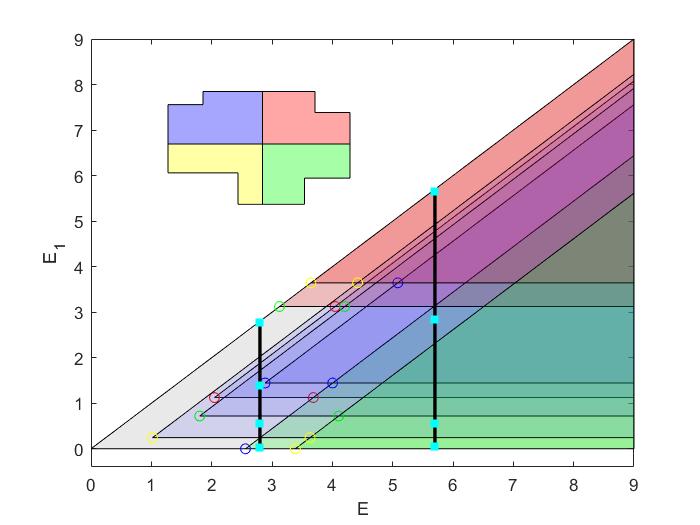}
\caption{Impact Energy-Momentum bifurcation diagram for the asymmetric cross shown in Figure~\ref{fig:rectangleandpolygon} (four distinct concave corners). The colored circles correspond to the energies of the corners of the corresponding colored staircase polygons shown in the inset. All corners have distinct partial energies, so    \(R^{(E,E_{1})}\cap P\) includes \(k\) concave corners if and only if \((E,E_{1})\) is  in a region covered by \(k\) overlapping shaded blue regions.  Only for these regions the level sets genus is larger than $1$. The shaded pink (respectively light green) regions include  extremal vertical (respectively horizontal) boundaries. The two vertical lines indicate the energies \(E=2.7,5.7 \) and the cyan squares on these lines correspond to the \(E_{1}\) values of the rectangles shown in  Figure~\ref{fig:rectangleandpolygon}.   }\label{fig:embd}
\end{figure}
Figures~\ref{fig:particleroom2} and~\ref{fig:rectangleandpolygon} provide the geometrical interpretation of this theorem in the configuration space, where several iso-energy rectangles are plotted on top of an asymmetric staircase polygon \(P\). Figure~\ref{fig:rectangleandpolygon} demonstrates  that the number of concave corners that are included in  \(R^{(E,E_{1})}\cap P\) can  vary  at a fixed energy.  Figures~\ref{fig:embdsym} and \ref{fig:embd}  demonstrate the division of the intervals \(E_{1}\in[0,E]\) of iso-energy level sets to a finite number of intervals, each having level sets with a fixed genus. The figures show how this division depends on the energy \(E\). In these plots, called Impact Energy-Momentum bifurcation diagrams (IEMBD, see \cite{pnueli2018near,PRk2020})   the regions in the  \((E,E_{1}) \)  plane at which level sets include impacts with certain parts of the boundaries are shown. Figure~\ref{fig:embdsym} shows these plots for symmetric polygons and Figure~\ref{fig:embd}  shows it for the asymmetric polygon of Figure~\ref{fig:rectangleandpolygon}. The grey wedge in the IEMBD  corresponds to all allowed level sets (since \(E_{1}\in[0,E]\)). A family of iso-energy level sets corresponds to a vertical line in this plot. The projected rectangles of the iso-energy level sets shown in Figure~\ref{fig:rectangleandpolygon} at two energies correspond to the cyan squares on the two vertical black  lines of Figure~\ref{fig:embd}. Each blue colored wedge in the IEMBD corresponds to level sets that include impacts with a concave corner of one of the polygons \(P(\overline{x}^{\varsigma_{1},\varsigma_{2}},
\overline{y}^{\varsigma_{1},\varsigma_{2}})\). In case \(j\) polygons have the same concave corner we say that this wedge has multiplicity \(j\), so in Figure~\ref{fig:embdsym}  each concave corner has multiplicity 4. The shaded pink (respectively light-green) regions correspond to level sets that impact the extreme vertical (respectively horizontal) boundaries a polygon. Regions that are in the complement to the blue wedges correspond to level sets with genus 1 (so in particular small and large (\(E_{1}\approx E\)) values are included in this set). Regions that are in \( k_g\) shaded blue wedges (counting multiplicities) have genus \( k_g+1\). Thus, the regions in the intersection of all the blue wedges have the maximal genus \(g_{max}\). The intersection of this region with  the two pink and two  light green wedges corresponds to level sets that, for almost all \(E_{1}\), have unique ergodic dynamics with the very mild assumption   (condition \eqref{eq:i}) on the potentials. As described next, with stronger assumptions on the potentials we prove unique ergodicity for almost all level sets in the allowed region, whereas for quadratic resonant potentials the IEMBD provides a more delicate division to segments as will be explained in Section~\ref{sec:loc}.
\subsection{Deck potentials}
To study the properties of the invariant measures of $(\varphi^{P,E,E_{1}}_t)_{t\in\R}$ we need additional assumptions on the potentials. For all $z_0\in \C$ and $r>0$  define the ball centered at \(z_0\) and the droplet emanating from \(z_0\) by:
\[B(z_0,r)=\{z\in \C:|z-z_0|<r\}\quad\text{and}\quad C(z_0,r)=\bigcup_{s\in(0,1]}sB(z_0,r)\;\text{if}\;r<|z_0|.\]
We define a special class of even potentials $V_1,V_2$ denoted by $Deck$. An even  $C^2$-map $V:\R\to\R$ belongs to $Deck$ if
\begin{gather}
\tag{$\diamondsuit$}\label{eq:i}
V(0)=V'(0)=0,\quad V'(x)>0\ \text{for all}\ x>0\ \text{and}\ \lim_{x\to+\infty}V(x)=+\infty;\\
\tag{$\heartsuit$}\label{eq:ii}
V:(0,+\infty)\to(0,  +\infty)\quad\text{is an analytic map;}
\end{gather}
{then $V:(0,+\infty)\to(0,  +\infty)$ has a holomorphic extension $V:U\to\C$ on an open neighborhood $U\subset\C$ of $(0,+\infty)$;
\begin{gather}
\tag{$\clubsuit$}\label{eq:iii}
\begin{aligned}
&\text{for every $E>0$ there exist $0<r<E$ and a bounded open set $U_{E}\subset U$}\\
&\text{such that $V:U_E\to V(U_E)$
is biholomorphic with $C(E,r)\subset V(U_E)$;}
\end{aligned}\\
\tag{$\spadesuit$}\label{eq:iv}
\text{there exists}\ C_{E}>0 \ \text{such that} \
\left|\frac{V''(z)V(z)}{(V'(z))^2}\right|\leq C_E\ \text{ for all}\ z\in U_E.
\end{gather}
}

The class $Deck$ contains all unimodal analytic maps (i.e.\ satisfying \eqref{eq:i}), see Proposition~\ref{prop:anDeck}.
Further examples of $Deck$-potentials which are not analytic at $0$ (such as \(V(x)=|x|^m\exp(-1/|x|)\)) are presented in Section~\ref{sec:Deckpot}.
The Deck assumption insure that the period depends analytically on the energy.
Additionally, we will most often assume that $V\in Deck$ also satisfies
\begin{gather}\tag{$\smiley$}\label{eq:v}
\frac{V(x)V''(x)}{(V'(x))^2}\geq \frac{1}{2}\ \text{for all}\ x>0,\ \text{or}\\
\tag{$\sun$}\label{eq:vi}
V\ \text{satisfies \eqref{eq:v} and}\ \frac{VV''}{(V')^2}\neq \frac{1}{2}.
\end{gather}
The condition \eqref{eq:v} is equivalent  to \(V\) being the square of a convex function, and insures that the period is a decreasing function of the energy, while the condition \eqref{eq:vi} means additionally not being a quadratic function, so the period is strictly decreasing with the energy, see Lemma~\ref{lem:negposder}~and~\ref{lem:square}. For example, all non-trivial non-quadratic even polynomials with non-negative coefficients are Deck and satisfy \eqref{eq:vi}, whereas   \(V(x)=x^{2}-\sqrt{2}x^4+x^6=(x+x^3)^2-(2+\sqrt{2})x^4\) is Deck but does not satisfy   \eqref{eq:v} (cf.\ Proposition~\ref{prop:smpot}).

\subsection{Main results for non-quadratic Deck potentials}

The main result (Theorem~\ref{thm:albega}) says that for every energy level $E>0$ and typical (a.e.) partial energy $E_{1}\in[0,E]$ the local flow $(\varphi^{P,E,E_{1}}_t)_{t\in\R}$ is uniquely ergodic whenever at least one potential is not a quadratic function or both are quadratic functions and non-resonant (i.e.\ \(\Omega\) is irrational\footnote{Recall that when both potentials are quadratic the ratio of frequencies $\Omega(E,E_1)$ does not depend on $E$ and $E_1$.}):

\begin{theorem}\label{thm:albega}
Assume $V_1,V_2:\R\to\R_{\geq 0}$ are $Deck$-potentials satisfying \eqref{eq:v}. Let $P$ be any  polygon in $\mathscr R$.
Suppose that
\begin{itemize}
\item[($\alpha$)] at least one potential $V_1$ or $V_2$ satisfies \eqref{eq:vi} or;
\item[($\beta$)] both $V_1,V_2$ are quadratic maps such that $V_1=\Omega^{2} V_2$ with $\Omega$ irrational.
\end{itemize}
Then for every energy level $E>0$ and almost every $E_1\in [0,E]$ the restricted Hamiltonian flow
$(\varphi^{P,E,E_1}_t)_{t\in\R}$ is uniquely ergodic.
\end{theorem}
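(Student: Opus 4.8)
The plan is to reduce the impact Hamiltonian flow on each level set $S^P_{E,E_1}$ to a directional (translation) flow on a translation surface, and then verify the hypotheses of the Minsky--Weiss unique ergodicity criterion for families of translation surfaces, exactly as was carried out in \cite{frkaczek2019recurrence} for staircase polygons; the only genuinely new content is checking that the polygon-valued and direction-valued maps $E_1\mapsto (R^{(E,E_1)}\cap P,\Omega(E,E_1))$ coming from the physical potentials satisfy the regularity, independence and monotonicity conditions needed to invoke that criterion.

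First I would introduce, for fixed $E>0$ and each $E_1\in(0,E)$, the change of coordinates in each degree of freedom that straightens the one-dimensional Hamiltonian $H_i$ to a constant-speed linear flow: replace $x$ by the ``time'' parametrization $u=u_{E_1}(x)$ with $du=dx/\sqrt{2(E_1-V_1(x))}$ on $[-x^{max}(E_1),x^{max}(E_1)]$, and similarly $v=v_{E-E_1}(y)$. In these coordinates the smooth flow becomes the unit-speed linear flow of slope $\Omega(E,E_1)=\omega_1(E_1)/\omega_2(E-E_1)$ on the image of $R^{(E,E_1)}\cap P$, which is again a (rescaled) staircase-type polygon $\widetilde P(E,E_1)$ built from $4$ staircase pieces, and the elastic reflections become the standard billiard reflections off its vertical/horizontal sides. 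Unfolding this right-angled billiard by the Katok--Zemljakov construction produces a translation surface $M(E,E_1)$ whose genus is computed by Theorem~\ref{thm:genus}, and the impact flow $(\varphi^{P,E,E_1}_t)$ is (up to a time change by the positive analytic function $\sqrt{2(E_1-V_1(x))}\cdot\text{(matching factor)}$, which does not affect unique ergodicity) measurably conjugate to the directional flow on $M(E,E_1)$ in direction $\theta(E,E_1)=\arctan\Omega(E,E_1)$. Condition (i) in the definition of unique ergodicity — every orbit forward or backward infinite — follows because corner points of $\widetilde P$ have a $1$-codimensional set of preimages.

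Next I would set up the one-parameter family $E_1\mapsto (M(E,E_1),\theta(E,E_1))$ on each of the finitely many open subintervals of $(0,E)$ on which the topological data (the count in Theorem~\ref{thm:genus}, i.e.\ which concave corners are ``active'') is constant, so that the combinatorics of the translation surface is locally constant and only the side lengths and the direction vary smoothly. On such an interval the side lengths are the quantities $u_{E_1}(x^{\varsigma_1\varsigma_2}_k)$, $u_{E_1}(x^{\varsigma_1\varsigma_2}_{k_{max}})$, $v_{E-E_1}(y^{\varsigma_1\varsigma_2}_j)$, $v_{E-E_1}(y_1^{\varsigma_1\varsigma_2})$, which by the $Deck$ hypothesis depend analytically on $E_1$ (this is precisely where conditions \eqref{eq:ii}--\eqref{eq:iv} enter: they guarantee the half-period integrals $u_{E_1}(x)$ extend holomorphically in the energy and hence are real-analytic, non-constant functions of $E_1$), and the direction $\theta(E,E_1)$ likewise varies analytically. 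The Minsky--Weiss criterion, as adapted in \cite{frkaczek2019recurrence}, requires: (a) smooth dependence of the surface and direction on the parameter — done; (b) a non-degeneracy/``independence'' condition ensuring the curve $E_1\mapsto (M(E,E_1),\theta(E,E_1))$ is not contained in any proper affine-invariant constraint, which one checks by exhibiting that the derivatives of the side lengths in $E_1$ span enough directions; and (c) a monotonicity condition on the slope. For (c) I would use that when $V_1$ or $V_2$ satisfies \eqref{eq:vi} the period $T_i$ is strictly monotone in the energy, so $\frac{d}{dE_1}\Omega(E,E_1)\neq 0$ (equivalently the iso-energy non-degeneracy condition holds), giving the needed twist; in the quadratic non-resonant case ($\beta$) $\Omega$ is a fixed irrational and the monotonicity is supplied instead by the analytic, non-constant variation of the side lengths alone, which is the degenerate-direction variant of the criterion also covered in \cite{frkaczek2019recurrence}. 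Applying the criterion on each subinterval yields unique ergodicity of the directional flow for a.e.\ $E_1$ in that subinterval; summing over the finitely many subintervals and the (measure-zero) endpoints, and transporting back through the time change and the conjugacy — under which the Lebesgue-class invariant measure on $M(E,E_1)$ pulls back to a measure equivalent to Lebesgue on $S^P_{E,E_1}$ — gives the theorem.

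The main obstacle I anticipate is step (b), the independence/non-degeneracy verification: one must show that the real-analytic map $E_1\mapsto \bigl(\text{side lengths of }\widetilde P(E,E_1),\ \Omega(E,E_1)\bigr)$ is, as a curve in the moduli space of such polygons-with-direction, transverse to the ``bad'' directions for which the Minsky--Weiss argument fails — concretely, that no nontrivial rational linear relation with locally constant coefficients holds identically among the functions $u_{E_1}(x^{\varsigma_1\varsigma_2}_k)$, $v_{E-E_1}(y^{\varsigma_1\varsigma_2}_j)$ and $1$ along a subinterval. This is where the analyticity furnished by $Deck$ does the real work (a relation failing at one point fails on a full-measure set), but making it rigorous requires identifying the exact independence condition from \cite{frkaczek2019recurrence} and matching our corner-indexed families of half-period integrals to it; I would isolate this as a lemma on the functional independence of the maps $E_1\mapsto u_{E_1}(x_j)$ for distinct $x_j$, proved by analyzing the leading behavior of these integrals as $E_1\downarrow V_1(x_j)$ (where $u_{E_1}(x_j)$ has an integrable square-root singularity that is different for each $x_j$), which separates them and rules out constant-coefficient relations.
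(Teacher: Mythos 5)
Your proposal matches the paper's argument essentially step for step: the same $\psi_1,\psi_2$ coordinate change, the same partition of $[0,E]$ into intervals $\mathcal J_E$ of fixed topological data, the same reduction to the Minsky--Weiss-type criterion from \cite{frkaczek2019recurrence} (Theorem~\ref{thm:mainfr}), the same independence lemma proved by the $-\infty$ blow-up of $\frac{d}{dE_1}\psi_1(x,E_1)$ as $E_1\searrow V_1(x)$ (Propositions~\ref{prop:ay} and~\ref{prop:indep2}), and the same sign-of-derivative conditions (Lemma~\ref{lem:negposder}). The only imprecisions are cosmetic and easily repaired: the time change you invoke is unnecessary (the $\psi$-coordinates already yield a constant-speed topological conjugacy to the $\pm\pi/4$ billiard), the monotonicity hypothesis of the criterion concerns the rescaled side lengths of $\mathbf P_{E,E_1}$ — including the half-periods $\tfrac14 T_i$, whose sign of variation is controlled by \eqref{eq:v}, not only by \eqref{eq:ii}--\eqref{eq:iv} — rather than directly the slope $\Omega$, and in case $(\beta)$ you should dispose separately of the subintervals with no impacts where all side lengths are constant (there the motion is just an irrational linear flow on a fixed rectangle, uniquely ergodic for every $E_1$).
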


Notice that for small energy (i.e.\ satisfying \eqref{eq:lowE}), and, in fact, for all the grey areas in the IEMBD figures (Figures~\ref{fig:embdsym} and \ref{fig:embd}), the motion does not impact the polygon walls and the above theorem trivially holds as the motion on most of the tori is of irrational rotation {(in the non-quadratic case, \((\alpha)\), the iso-energy non-degeneracy condition holds since  \(\omega_{i}'(I_i)\geqslant0,i=1,2\) and at least for one oscillator the inequality is strict).} The non-trivial statement is that even when impacts occur (the pink, light-green and blueish regions in the IEMBD figures), the flow is usually uniquely ergodic. \ By the definition of unique ergodicity, the  theorem tells us that for most level sets time averages are equivalent to phase-space average for every initial condition on these level set. The complementary set could have periodic and quasi-periodic motion coexisting on the same level set as in \cite{McM}, \cite{Issi2019}.

\subsection{Linear oscillators case} \label{sec:loc} We study separately the case when $V_1$ and $V_2$ are quadratic. This boundary case (in the class of $Deck$-potentials satisfying \ref{eq:v}) is significantly different from the general case. In the quadratic case we have  $V_1=\Omega^{2} V_2$, and we consider rational $\Omega$ so the harmonic motion is resonant.

 If the energy level $E$ is low enough then, the {impacting resonant quadratic  flow,} $(\varphi^{P,E,E_1}_t)_{t\in\R}$ does not reach the boundary and the motion is {trivially} identical to the resonant periodic linear oscillator motion (the grey area in the IEMBD figures, before any of the blue wedges emerge):

\begin{proposition}\label{prop:lowE}
For
 energies satisfying\begin{equation}\label{eq:lowE}
E\leq\min\{V_1(x^{\varsigma_1\varsigma_2}_k)+V_2(y^{\varsigma_1\varsigma_2}_{k+1}):\varsigma_1,\varsigma_2\in\{\pm\},0\leq k\leq k(\bar{x}^{\varsigma_1\varsigma_2},\bar{y}^{\varsigma_1\varsigma_2})\},
\end{equation}
where $x^{\varsigma_1\varsigma_2}_0=y^{\varsigma_1\varsigma_2}_{k(\bar{x}^{\varsigma_1\varsigma_2},\bar{y}^{\varsigma_1\varsigma_2})+1}=0$,
 the restricted Hamiltonian flow
$(\varphi^{P,E,E_1}_t)_{t\in\R}$ is identical to
$(\varphi^{E,E_1}_t)_{t\in\R}$ for all $E_1\in[0,E]$ and, for the impacting resonant quadratic  flow it corresponds to periodic motion.
\end{proposition}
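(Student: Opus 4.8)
The plan is to show that under the low-energy condition \eqref{eq:lowE}, the projected rectangle $R^{(E,E_1)}$ never reaches the boundary of $P$ other than possibly at its outermost walls, so that the impact flow and the smooth flow coincide on $S^P_{E,E_1}=S_{E,E_1}$. First I would observe that the reflections in the impact flow only modify a trajectory when its configuration-space projection hits a vertical or horizontal segment of $\partial P$ that is \emph{not} one of the four extremal walls $x=\pm x^{\varsigma_1\varsigma_2}_k$, $y=\pm y^{\varsigma_1\varsigma_2}_1$ of the corresponding staircase piece — hitting an extremal wall is exactly the turning point of the one-dimensional oscillator, which the smooth flow $(\varphi^{E,E_1}_t)$ already performs. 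Concretely, the non-extremal walls of $P(\bar x^{\varsigma_1\varsigma_2},\bar y^{\varsigma_1\varsigma_2})$ are the vertical segments at $x=x^{\varsigma_1\varsigma_2}_k$ for $1\le k<k(\bar x^{\varsigma_1\varsigma_2},\bar y^{\varsigma_1\varsigma_2})$ spanning $y\in[y^{\varsigma_1\varsigma_2}_{k+1},y^{\varsigma_1\varsigma_2}_k]$, together with the symmetric images, and likewise the horizontal "inner" segments at $y=y^{\varsigma_1\varsigma_2}_{k+1}$ spanning $x\in[x^{\varsigma_1\varsigma_2}_k,x^{\varsigma_1\varsigma_2}_{k+1}]$.

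Next I would bound the excursions of the motion: since $\tfrac{p_x^2}{2}+V_1(x)=E_1$ and $\tfrac{p_y^2}{2}+V_2(y)=E-E_1$ on $S_{E,E_1}$, one has $V_1(x)\le E_1$ and $V_2(y)\le E-E_1$ for every point of the orbit, hence $|x|\le x^{max}(E_1)$ and $|y|\le y^{max}(E-E_1)$ with $V_1(x^{max}(E_1))=E_1$, $V_2(y^{max}(E-E_1))=E-E_1$; that is, the orbit stays inside $R^{(E,E_1)}$ as defined in \eqref{eq:rectangle}. For the smooth flow to reach a non-extremal inner wall of $P$ — say the vertical wall $x=x^{\varsigma_1\varsigma_2}_k$ over $y\in[y^{\varsigma_1\varsigma_2}_{k+1},y^{\varsigma_1\varsigma_2}_k]$ — one would need $x^{max}(E_1)\ge x^{\varsigma_1\varsigma_2}_k$ and simultaneously $|y|\le y^{\varsigma_1\varsigma_2}_k$ attainable with $|y|\ge y^{\varsigma_1\varsigma_2}_{k+1}$ also attainable, i.e. $y^{max}(E-E_1)\ge y^{\varsigma_1\varsigma_2}_{k+1}$ (using that $y=0$ is always attainable since $V_2$ is unimodal with minimum at $0$, the whole band $|y|\le y^{max}(E-E_1)$ is visited). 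These two inequalities give $E_1=V_1(x^{max}(E_1))\ge V_1(x^{\varsigma_1\varsigma_2}_k)$ and $E-E_1\ge V_2(y^{\varsigma_1\varsigma_2}_{k+1})$, so $E\ge V_1(x^{\varsigma_1\varsigma_2}_k)+V_2(y^{\varsigma_1\varsigma_2}_{k+1})$, contradicting \eqref{eq:lowE} (the case $k=0$ or $k=k(\cdot)$ with the convention $x_0=y_{k+1}=0$ handles the extremal walls and shows even those are reached only in the degenerate boundary sub-case, where nothing changes). The symmetric argument rules out the inner horizontal walls. Therefore no orbit of the smooth flow on $S_{E,E_1}$ ever meets an inner wall, so $S^P_{E,E_1}=S_{E,E_1}$ and the two flows agree; in particular $(\varphi^{P,E,E_1}_t)_{t\in\R}$ is the linear resonant oscillator flow, which for $\Omega$ rational is periodic on each torus $0<E_1<E$ and on the two boundary circles.

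The only mild subtlety — and the step I expect to require the most care — is the treatment of the boundary values $E_1\in\{0,E\}$ and of energies that achieve equality in \eqref{eq:lowE}: when $E_1=0$ the level set is the $y$-circle $\{x=p_x=0\}$, which lies entirely on the segment of $\partial P$ through the origin, and one must check this is consistent with "the flow does not reach the boundary" in the intended sense (the orbit slides along a wall without impact, or equivalently $x^{max}(0)=0$ so there is no transverse approach); similarly for the limiting equality case the rectangle is tangent to a wall at a corner or along an edge but is not crossed. I would dispose of these by noting that tangential contact does not trigger a reflection in the HIS definition (reflections are defined by the momentum flip, which requires a genuine crossing), and for the corner case the dying set is Lebesgue-null and does not affect the stated identity of flows. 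Once the identity $S^P_{E,E_1}=S_{E,E_1}$ and $(\varphi^{P,E,E_1}_t)=(\varphi^{E,E_1}_t)$ is established for all $E_1\in[0,E]$, the periodicity claim for the resonant quadratic case is immediate from the explicit action–angle form $\theta_i(t)=\omega_i t+\theta_i(0)$ with $\omega_1/\omega_2=\Omega\in\Q$.
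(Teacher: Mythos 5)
Your argument is correct and follows essentially the same route as the paper: the low-energy condition \eqref{eq:lowE} forces $R^{(E,E_1)}\subset P$ for all $E_1\in[0,E]$, so $S^P_{E,E_1}=S_{E,E_1}$, impacts only occur at the extremal walls (i.e.\ at the oscillator turning points), and the impact flow reduces to the smooth flow, which is periodic for resonant quadratic potentials. The paper compresses this into the geometric observation that $R^{(E,E_1)}\cap P$ is a rectangle not meeting $\partial P$ and cites the rectangle-billiard periodicity from Proposition~\ref{prop:formIcp}, whereas you spell out the same inequality $E_1\geq V_1(x_k)$, $E-E_1\geq V_2(y_{k+1})\Rightarrow E\geq V_1(x_k)+V_2(y_{k+1})$ dynamically and also handle the tangency/boundary cases $E_1\in\{0,E\}$ explicitly, which the paper leaves implicit.
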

On the other hand,
{a non-trivial statement, with a proof which is similar to that of Theorem \ref{thm:albega},} is that if the energy level $E$ is high enough so that at least one of the extremal horizontal or vertical boundaries is reached  by orbits in $S^P_{E,E_1}$ for all $E_1\in[0,E]$ then $(\varphi^{P,E,E_1}_t)_{t\in\R}$ is uniquely ergodic for a.e.\ $E_1$ ({energies beyond which the segment \([0,E] \) is covered by the union of  the pink and light green wedges in the IEMBD figures, e.g. \(E \approx5.7 \) in Figure   \ref{fig:embd}, see also the corresponding projected rectangles in Figure  ~\ref{fig:rectangleandpolygon}b}):

\begin{theorem}\label{thm:highE}
If
\begin{equation}\label{eq:bigEmin}
E \geq\min_{\varsigma_1,\varsigma_2\in\{\pm\}}V_1(x_{k(\bar{x}^{\varsigma_1\varsigma_2},\bar{y}^{\varsigma_1\varsigma_2})}^{\varsigma_1\varsigma_2})
+\min_{\varsigma_1,\varsigma_2\in\{\pm\}}V_2(y_1^{\varsigma_1\varsigma_2})
\end{equation}
then the impacting resonant quadratic  flow
$(\varphi^{P,E,E_1}_t)_{t\in\R}$ is uniquely ergodic for a.e.\ $E_1\in[0,E]$.
\end{theorem}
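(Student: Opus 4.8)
The plan is to run the argument of Theorem~\ref{thm:albega} in the quadratic case (hypothesis~$(\beta)$), now with $\Omega\in\Q$, and to isolate the one place where resonance forces a new step: the level sets of genus one. Recall the rectification used there: because $V_1,V_2$ are quadratic, the action--angle substitution $x=x^{max}(E_1)\cos\theta_1$, $y=y^{max}(E-E_1)\cos\theta_2$ in the two degrees of freedom conjugates $(\varphi^{P,E,E_1}_t)_{t\in\R}$ to the directional flow in the \emph{fixed} direction $(\omega_1,\omega_2)$ on the translation surface $M(E,E_1)$ obtained by unfolding the right-angled polygon $R^{(E,E_1)}\cap P$, whose side lengths are affine expressions in the quantities $\arccos\big(c\,\omega_1/\sqrt{2E_1}\big)$ and $\arccos\big(c\,\omega_2/\sqrt{2(E-E_1)}\big)$, $c$ ranging over the relevant corner coordinates $x^{\varsigma_1\varsigma_2}_j,\,y^{\varsigma_1\varsigma_2}_j$. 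The interval $[0,E]$ splits into finitely many subintervals on each of which $R^{(E,E_1)}\cap P$ has a fixed combinatorial type, so that $M(E,E_1)$ has a fixed genus and stays in a fixed stratum (a refinement of the partition into constant-genus segments of Theorem~\ref{thm:genus}); it suffices to prove unique ergodicity for a.e.\ $E_1$ in each subinterval.

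On a subinterval with $g(E,E_1)\geq 2$ the argument is that of Theorem~\ref{thm:albega}$(\beta)$ verbatim: one checks that the side-length functions of the unfolded polygon depend on $E_1$ with the smoothness, independence and monotonicity demanded by the Minsky--Weiss criterion \cite{MiWe2014}, in the form in which it is applied to staircase polygons in \cite{frkaczek2019recurrence}, and concludes unique ergodicity of the directional flow for a.e.\ $E_1$. Rationality of $\Omega$ is immaterial here, since that criterion is a transversality statement about the curve $E_1\mapsto M(E,E_1)$ in moduli space and does not concern the direction.

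The new point is the genus-one subintervals, which are non-empty --- they contain one-sided neighbourhoods of $0$ and of $E$ --- and on which the non-resonant proof simply invoked the unique ergodicity of an irrational rotation, unavailable when $\Omega\in\Q$. Here I would use \eqref{eq:bigEmin}. Write $A=\min_{\varsigma_1,\varsigma_2}V_1(x_{k(\bar{x}^{\varsigma_1\varsigma_2},\bar{y}^{\varsigma_1\varsigma_2})}^{\varsigma_1\varsigma_2})$ and $B=\min_{\varsigma_1,\varsigma_2}V_2(y_1^{\varsigma_1\varsigma_2})$ for the two terms on the right-hand side of \eqref{eq:bigEmin}; both are positive because the widths and heights of the staircase polygons are positive. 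From $E\geq A+B$ it follows that for every $E_1\in[0,E]$ one has $E_1\geq A$ or $E-E_1\geq B$; equivalently, for every $E_1$ away from the finitely many subinterval endpoints, at least one extremal wall of $P$ (leftmost/rightmost vertical, or top/bottom horizontal) lies strictly inside $R^{(E,E_1)}$, hence --- since this property is locally constant in $E_1$ --- throughout the whole genus-one subinterval, where it is a reflecting side of $R^{(E,E_1)}\cap P$. A genus-one $R^{(E,E_1)}\cap P$ has no concave corner (Theorem~\ref{thm:genus}) and, being star-shaped about the origin, is a rectangle; unfolding its reflecting sides turns $M(E,E_1)$ into a flat torus $(\R/L_1(E_1)\Z)\times(\R/L_2(E_1)\Z)$ in which, for $i=1,2$, $L_i(E_1)=2\pi$ when no wall is met in the $i$-th degree of freedom and otherwise $L_i(E_1)$ is affine in $\arccos\big(c\,\omega_1/\sqrt{2E_1}\big)$ (for $i=1$), respectively $\arccos\big(c\,\omega_2/\sqrt{2(E-E_1)}\big)$ (for $i=2$), for the wall position(s) $c$, the flow being the linear flow of velocity $(\omega_1,\omega_2)$. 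This flow is uniquely ergodic precisely when $\Omega\,L_2(E_1)/L_1(E_1)\notin\Q$; since at least one wall is present, $E_1\mapsto L_2(E_1)/L_1(E_1)$ is a non-constant --- indeed strictly monotone --- real-analytic function on the subinterval, so $\Omega\,L_2(E_1)/L_1(E_1)$ is irrational off a countable set, and the flow is uniquely ergodic for a.e.\ $E_1$. A finite union over subintervals then yields the theorem.

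The main obstacle, and the only genuinely delicate point, is this genus-one analysis: one must verify that \eqref{eq:bigEmin} really does force a reflecting side for \emph{every} $E_1$ --- so that the degenerate possibility $L_1\equiv L_2\equiv 2\pi$ of a purely periodic linear flow (as in Proposition~\ref{prop:lowE}) is excluded on all genus-one subintervals --- and that the resulting circumference ratio is non-constant in $E_1$ there, which after the unfolding bookkeeping reduces to the strict monotonicity in $E_1$ of $\arccos\big(c\,\omega_1/\sqrt{2E_1}\big)$ and of $\arccos\big(c\,\omega_2/\sqrt{2(E-E_1)}\big)$. Everything else is taken over from the proof of Theorem~\ref{thm:albega}.
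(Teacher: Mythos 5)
Your genus-one analysis is essentially sound, but the genus~$\geq 2$ step has a real gap: the claim that ``rationality of $\Omega$ is immaterial'' there is false, and this is precisely where the paper's hard work is. Condition~$(i)$ in Theorem~\ref{thm:mainfr} is not a pure transversality statement in moduli space; it requires that \emph{every} non-trivial integer combination of the widths/heights $\mathscr X_\mathbf{P}\cup\mathscr Y_\mathbf{P}$ of the staircase polygon $\mathbf P_{E,E_1}$ be nonzero for a.e.\ $E_1$. When both potentials are quadratic with $\Omega=n/m\in\Q$, the widths $\tfrac14T_1$ and heights $\tfrac14T_2$ are \emph{constants} with rational ratio, so $n\cdot\tfrac14T_1-m\cdot\tfrac14T_2\equiv 0$ identically. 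If at least one quadrant does not reach its extreme vertical wall and at least one does not reach its extreme horizontal wall, then both $\tfrac14T_1\in\mathscr X_\mathbf{P}$ and $\tfrac14T_2\in\mathscr Y_\mathbf{P}$ (cf.\ Remark~\ref{rem:XYI}), condition~$(i)$ fails, and Theorem~\ref{thm:mainfr} is silent. Hypothesis~\eqref{eq:bigEmin} only forces that \emph{some} extreme wall is reached (since $E-\min V_2\geq\min V_1$), not that \emph{all} of them are. For an asymmetric polygon, $\min V_1<\max V_1$ (or $\min V_2<\max V_2$), so the subinterval $(E-\max V_2,\max V_1)$ is non-empty and can contain $E_1$'s of genus $\geq 2$ where both constant periods appear among the side lengths; your argument collapses there. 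Note that this is not an artifact --- this is exactly why the paper proves Theorem~\ref{thm:mainquadrational}, a refinement of the Minsky--Weiss criterion built on proper partitions with \emph{distinguished} sides, vertical sides, and the assumption that every orbit hits a distinguished side, plus the ``degenerated polygon'' trick (Figure~\ref{fig:Degenerated}) which artificially equalizes formal widths/heights so the surface fits the class~$\mathcal S$.

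The paper's actual proof of Theorem~\ref{thm:highE} is one line: under \eqref{eq:bigEmin} the interval $[0,E]$ is covered by $[0,E-\max V_2]\cup I_1\cup I_2\cup I_3\cup[\max V_1,E]$, the two outer intervals are handled by part~$(\gamma)$ of Theorem~\ref{thm:albegacopdet} (where condition~$(i)$ can be verified because at least one of $\tfrac14T_1,\tfrac14T_2$ is absent from the side sets), and $I_1,I_2,I_3$ are handled by Theorem~\ref{thm:highmedH} via Theorem~\ref{thm:mainquadrational}. You will need this refined criterion; the staircase version of Minsky--Weiss from \cite{frkaczek2019recurrence} alone does not close the argument in the resonant quadratic case.
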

Studying the case of intermediate  $E$  is much more complex {and requires new methods.}
Then the interval $[0,E]$ splits into at most countably many intervals so that
for every interval $I$ from this partition we have three possible scenarios:
\begin{itemize}
\item[\textbf{(ue)}] the flow $(\varphi^{P,E,E_1}_t)_{t\in\R}$ is uniquely ergodic for a.e.\ $E_1\in I$;
\item[\textbf{(cp)}] the flow $(\varphi^{P,E,E_1}_t)_{t\in\R}$ is completely periodic for all $E_1\in I$;
\item[\textbf{(coex)}] for a.e.\ $E_1\in I$ the phase space of $(\varphi^{P,E,E_1}_t)_{t\in\R}$ splits into two completely periodic cylinders and two uniquely ergodic components.
\end{itemize}

For  $E>0$ let  $\mathcal{J}_E$ denote a partition (into open intervals) of the interval $[0,E]$ determined by the numbers
\[V_1(x^{\varsigma_1\varsigma_2}_k), E-V_2(y^{\varsigma_1\varsigma_2}_k)\text{ for all }\varsigma_1,\varsigma_2\in\{\pm\},1\leq k\leq k(\bar{x}^{\varsigma_1\varsigma_2},\bar{y}^{\varsigma_1\varsigma_2}).\]
 {In the IEMBD figures,   $\mathcal{J}_E$ corresponds to the partition of the vertical interval $[0,E]$ by the colored wedges. }
 Now we formulate two results relating to the cases \textbf{(cp)} and \textbf{(ue)}.
For low energies, there are intervals of \(E_{1}\) values for which no impacts occur (these are the grey regions in the IEMBD figures):
\begin{proposition}\label{prop:formIcp}
Suppose that $I\in\mathcal J_E$ is an interval such that for every $\varsigma_1,\varsigma_2\in\{\pm\}$
there exists $1\leq {l}^{\varsigma_1\varsigma_2}
\leq {k}(\bar{x}^{\varsigma_1\varsigma_2},\bar{y}^{\varsigma_1\varsigma_2})$  such that
\begin{equation}\label{eq:formIcp}
I\subset \bigcap_{\varsigma_1,\varsigma_2\in\{\pm\}}\big[E-V_2(y^{\varsigma_1\varsigma_2}_{{l}^{\varsigma_1\varsigma_2}}),V_1(x^{\varsigma_1\varsigma_2}_{{l}^{\varsigma_1\varsigma_2}})\big].
\end{equation}
Then  the impacting resonant quadratic  flow
$(\varphi^{P,E,E_1}_t)_{t\in\R}$ is completely periodic for every $E_1\in I$.
\end{proposition}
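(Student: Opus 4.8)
The plan is to reduce this to the mechanism behind Proposition~\ref{prop:lowE}: I will show that hypothesis \eqref{eq:formIcp} forces the projected rectangle $R^{(E,E_1)}$ to be contained in $P$ for every $E_1\in I$, so that on the level set $S^P_{E,E_1}$ the impact flow has no effective reflection and coincides with the smooth integrable flow $(\varphi^{E,E_1}_t)_{t\in\R}$, which in the resonant quadratic case is completely periodic.

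\emph{From \eqref{eq:formIcp} to $R^{(E,E_1)}\subseteq P$.} Since $V_1,V_2$ are increasing on $\R_{\geq0}$ with $V_1(x^{max}(E_1))=E_1$ and $V_2(y^{max}(E-E_1))=E-E_1$, membership of $E_1$ in $[E-V_2(y^{\varsigma_1\varsigma_2}_{l^{\varsigma_1\varsigma_2}}),V_1(x^{\varsigma_1\varsigma_2}_{l^{\varsigma_1\varsigma_2}})]$ is equivalent to
\[x^{max}(E_1)\leq x^{\varsigma_1\varsigma_2}_{l^{\varsigma_1\varsigma_2}}\qquad\text{and}\qquad y^{max}(E-E_1)\leq y^{\varsigma_1\varsigma_2}_{l^{\varsigma_1\varsigma_2}}.\]
Write $a=x^{max}(E_1)$, $b=y^{max}(E-E_1)$. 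For the first-quadrant staircase $P(\bar x^{++},\bar y^{++})$, slicing at a fixed height shows that whenever $a\leq x_l$ and $b\leq y_l$ one has $[0,a]\times[0,b]\subseteq \overline{P(\bar x^{++},\bar y^{++})}$: for $y\in[0,b]$ choose $m\geq l$ with $y\in[y_{m+1},y_m]$ (with $y_{k+1}=0$); the horizontal section of the staircase at that height is $[0,x_m]\supseteq[0,x_l]\supseteq[0,a]$. Applying this in each of the four quadrants with the corresponding index $l^{\varsigma_1\varsigma_2}$ yields $R^{(E,E_1)}=[-a,a]\times[-b,b]\subseteq P$; equivalently, by Theorem~\ref{thm:genus}, $R^{(E,E_1)}\cap P=R^{(E,E_1)}$ carries no concave corner and $g(E,E_1)=1$.

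\emph{Identifying the dynamics.} Because $R^{(E,E_1)}\subseteq P$, an orbit of $(\varphi^{P,E,E_1}_t)_{t\in\R}$ stays in the interior of $R^{(E,E_1)}$ except for touching $\partial R^{(E,E_1)}$, where $p_x=0$ on the vertical sides and $p_y=0$ on the horizontal sides. If such a side lies on $\partial P$ the corresponding reflection $p_x\mapsto -p_x$ (resp.\ $p_y\mapsto -p_y$) is the identity; otherwise there is no wall there. Hence no reflection alters an orbit, and, exactly as in the proof of Proposition~\ref{prop:lowE}, $(\varphi^{P,E,E_1}_t)_{t\in\R}$ coincides with the smooth flow $(\varphi^{E,E_1}_t)_{t\in\R}$ on $S^P_{E,E_1}$. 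Moreover no orbit dies: an orbit can meet a vertex of $P$ only if the extreme point $(a,b)$ of $R^{(E,E_1)}$ equals a vertex $(x^{\varsigma_1\varsigma_2}_j,y^{\varsigma_1\varsigma_2}_j)$ or $(x^{\varsigma_1\varsigma_2}_j,y^{\varsigma_1\varsigma_2}_{j+1})$ of $P$, which would force $E_1\in\{V_1(x^{\varsigma_1\varsigma_2}_j),\ E-V_2(y^{\varsigma_1\varsigma_2}_j),\ E-V_2(y^{\varsigma_1\varsigma_2}_{j+1})\}$ — impossible for $E_1$ in the open interval $I\in\mathcal J_E$, whose endpoints are precisely such numbers.

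\emph{Conclusion.} In the resonant quadratic case $V_1=\Omega^2V_2$ with $\Omega\in\Q$, the smooth flow is completely periodic: for $0<E_1<E$ it is, in action--angle coordinates, the linear flow on the torus $S_{E,E_1}$ with frequency vector $(\omega_1,\omega_2)$ of rational and $E_1$-independent slope $\omega_1/\omega_2=\Omega$, so every orbit is a closed curve; for $E_1\in\{0,E\}\cap I$ the level set is a circle and periodicity is immediate. Combining this with the previous step, $(\varphi^{P,E,E_1}_t)_{t\in\R}$ is completely periodic for every $E_1\in I$. The only point requiring care is the geometric bookkeeping of the first two steps — the clean equivalence between \eqref{eq:formIcp} and $R^{(E,E_1)}\subseteq P$, and the check that for $E_1$ interior to $I$ the boundary of $R^{(E,E_1)}$ meets $\partial P$ only along momentum-zero segments and at no vertex; granting this, the statement follows from Proposition~\ref{prop:lowE}'s reasoning verbatim.
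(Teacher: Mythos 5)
Your proof is correct and follows essentially the same route as the paper's: both reduce hypothesis \eqref{eq:formIcp} (via Eq.~\eqref{eq:rectab} in the paper, via your direct staircase slicing) to the statement that $R^{(E,E_1)}\cap P$ is the full projected rectangle, so the impacting flow is the billiard flow in a rectangle of rational aspect ratio and hence is periodic. The paper phrases the conclusion in the $\psi$-coordinates (the rectangle $\mathbf{P}_{E,E_1}$ has sides $\tfrac12 T_1$, $\tfrac12 T_2$ with rational ratio), while you identify the flow with the smooth resonant flow directly in the configuration space — same content, different bookkeeping; the verification that no orbit dies at a corner, which you carry out explicitly, is left implicit in the paper but is worth recording.
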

Denote by \(I_{nonimp}(E)\) the collection of \(E_1\) intervals on which no impacts occur:
 \begin{equation}
I_{nonimp}(E)=\bigcup_{\substack{
1\leq {l}^{++} \leq {k}(\bar{x}^{++},\bar{y}^{++})\\
1\leq {l}^{+-} \leq {k}(\bar{x}^{+-},\bar{y}^{+-})\\
1\leq {l}^{-+} \leq {k}(\bar{x}^{-+},\bar{y}^{-+})\\
1\leq {l}^{--} \leq {k}(\bar{x}^{--},\bar{y}^{--})
}}
\bigcap_{\varsigma_1,\varsigma_2\in\{\pm\}}\big[E-V_2(y^{\varsigma_1\varsigma_2}_{{l}^{\varsigma_1\varsigma_2}}),V_1(x^{\varsigma_1\varsigma_2}_{{l}^{\varsigma_1\varsigma_2}})\big].
\label{eq:inonimpact}\end{equation}
For sufficiently large \(E \) this set is empty, whereas for sufficiently small \(E,\ I_{nonimp}(E)= [0,E]\).
For intermediate values  \(I_{nonimp}(E)\) may be composed of several disjoint intervals (e.g.  the grey segments for \(E=3 \) in Figure ~\ref{fig:embdsym}).

When at least one of the extremal boundaries is reached (the union of the pink and light green wedges in the IEMBD figures), {similar to the general case of Theorem \ref{thm:albega},} we get again unique ergodicity for a.e. \(E_{1}\):
\begin{theorem}\label{thm:highmedH}
For every energy level $E>0$ and almost every
\[E_1\in \Big[0,E-\min_{\varsigma_1,\varsigma_2\in\{\pm\}}V_2(y_1^{\varsigma_1\varsigma_2})\Big]
\cup \Big[\min_{\varsigma_1,\varsigma_2\in\{\pm\}}V_1(x_{k(\bar{x}^{\varsigma_1\varsigma_2},\bar{y}^{\varsigma_1\varsigma_2})}^{\varsigma_1\varsigma_2}),E\Big]\]
the  impacting resonant quadratic  flow
$(\varphi^{P,E,E_1}_t)_{t\in\R}$  is uniquely ergodic.
\end{theorem}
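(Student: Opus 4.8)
The plan is to run the argument of Theorem~\ref{thm:albega}: realize the impacting flow as a directional flow on a translation surface and then feed a curve of such surfaces into the unique-ergodicity criterion of \cite{frkaczek2019recurrence}. The new feature is that, $V_1,V_2$ being quadratic, the frequency ratio $\Omega$ is constant and rational, so the transversality that powers the criterion must be supplied by the dependence of the \emph{surface} on $E_1$ and not by a moving direction. First, since $\mathscr R$ is invariant under reflection in the diagonal $\{x=y\}$ — which interchanges the two oscillators, replaces $\Omega$ by $\Omega^{-1}$ (still rational) and $E_1$ by $E-E_1$, and carries $S^P_{E,E_1}$ to $S^{P'}_{E,E-E_1}$ with conjugate dynamics — the assertion on $\big[0,E-\min_{\varsigma_1,\varsigma_2}V_2(y^{\varsigma_1\varsigma_2}_1)\big]$ follows by applying the assertion on $\big[\min_{\varsigma_1,\varsigma_2}V_1(x^{\varsigma_1\varsigma_2}_{k}),E\big]$ (where $x^{\varsigma_1\varsigma_2}_{k}$ is the width of the corresponding staircase) to the reflected polygon $P'$. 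So it suffices to prove unique ergodicity for a.e.\ $E_1$ in the latter interval, i.e.\ in the regime where the horizontal oscillator reaches the rightmost vertical side of at least one of the four staircases.

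I would then realize $(\varphi^{P,E,E_1}_t)_{t\in\R}$ as a translation flow. Passing to the phase variables $(\theta_1,\theta_2)$ of the two quadratic oscillators — so that $x=x^{max}(E_1)\cos\theta_1$, $y=y^{max}(E-E_1)\cos\theta_2$ and $\dot\theta_i=\omega_i$ — the level set $S^P_{E,E_1}$ is identified with the region $W\subset\T^2$ cut out by $(x^{max}(E_1)\cos\theta_1,y^{max}(E-E_1)\cos\theta_2)\in P$, which is bounded by vertical and horizontal sides; on it, after the impact identifications $\theta_i\mapsto-\theta_i$ on the appropriate parts of $\partial W$, the flow becomes (up to a constant time change) a linear flow in the fixed direction $(\omega_1,\omega_2)$. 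This is precisely the translation flow $\psi^{E_1}$ on the genus-$g(E,E_1)$ translation surface $M(E,E_1)$ furnished by Theorem~\ref{thm:genus}, and since $\Omega\in\Q$ its direction $\theta^{*}=\arctan(\omega_2/\omega_1)$ has rational slope with respect to the natural horizontal and vertical foliations of $M(E,E_1)$. The theorem thus reduces to showing that $\psi^{E_1}$ is uniquely ergodic on $M(E,E_1)$ for a.e.\ $E_1$.

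Next I would partition the interval by $\mathcal J_E$: on each $I\in\mathcal J_E$ the combinatorics of $R^{(E,E_1)}\cap P$, hence the separatrix diagram of $M(E,E_1)$, is constant, and $M(E,E_1)$ depends real-analytically on $E_1$ through the side lengths $a_j(E_1)=\arccos\big(x^{\varsigma_1\varsigma_2}_j/x^{max}(E_1)\big)$ and $b_j(E_1)=\arccos\big(y^{\varsigma_1\varsigma_2}_j/y^{max}(E-E_1)\big)$, which are strictly monotone in $E_1$ because $x^{max}(E_1)$ and $y^{max}(E-E_1)$ are (proportional to $\sqrt{E_1}$ and $\sqrt{E-E_1}$). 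The hypothesis $x^{max}(E_1)\geq\min_{\varsigma_1,\varsigma_2}x^{\varsigma_1\varsigma_2}_{k}$ ensures that an extremal vertical side is genuinely crossed, so $M(E,E_1)$ acquires the extra rectangular block adjacent to that side; I would use that block to establish, on each $I$, that (i) $\theta^{*}$ is an aperiodic direction on $M(E,E_1)$ for a.e.\ $E_1\in I$ — a cylinder in direction $\theta^{*}$ would force a nontrivial relation $\sum_jn_ja_j(E_1)+\sum_jm_jb_j(E_1)\in 2\pi\Q$, which is impossible off a Lebesgue-null set since the derivatives $a_j',b_j'$ are not all zero and are independent over that block — and (ii) the velocity of the curve $E_1\mapsto M(E,E_1)$, expressed in period coordinates, avoids the degenerate subspace cut out by the frozen direction $\theta^{*}$, i.e.\ the family $\{(M(E,E_1),\theta^{*})\}_{E_1\in I}$ satisfies the smoothness, independence and monotonicity hypotheses of \cite{frkaczek2019recurrence}. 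Put differently, crossing an extremal side excludes the invariant decomposition of $M(E,E_1)$ into trapped periodic cylinders plus complementary sub-tori — it rules out scenarios \textbf{(cp)} and \textbf{(coex)} and leaves only \textbf{(ue)}.

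With the family in this form, the proof closes as for Theorem~\ref{thm:albega}: the unique-ergodicity-along-curves criterion of \cite{frkaczek2019recurrence} — a non-divergence refinement, in the spirit of \cite{MiWe2014}, of Masur's recurrence criterion — gives that the Teichm\"uller orbit $g_tr_{\theta^{*}}M(E,E_1)$ is recurrent in moduli space for Lebesgue-a.e.\ $E_1\in I$, so $\psi^{E_1}$, and therefore $(\varphi^{P,E,E_1}_t)_{t\in\R}$, is uniquely ergodic for a.e.\ $E_1\in I$; summing over the countably many $I\in\mathcal J_E$ contained in $\big[\min_{\varsigma_1,\varsigma_2}V_1(x^{\varsigma_1\varsigma_2}_{k}),E\big]$ completes the argument. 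I expect the genuine obstacle to be the step of the previous paragraph rather than this last one: with $\Omega$ frozen and $\theta^{*}$ of rational slope we are in the degenerate case of the criterion, so everything hinges on extracting from the extremal-side hypothesis both that no \textbf{(coex)}-type periodic cylinder survives — so that we are truly in scenario \textbf{(ue)} — and that the induced interval exchange data move in $E_1$ with a non-degenerate velocity; granted those two points, the non-divergence input of \cite{frkaczek2019recurrence,MiWe2014} applies verbatim.
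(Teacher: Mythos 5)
Your reduction to translation surfaces and your diagnosis of where the difficulty lies are both correct: with $\Omega$ rational and $V_1,V_2$ quadratic, $T_1$ and $T_2$ are constant, so the width/height functions of the staircase blocks contribute nothing to the transversality and the resonant relation $nT_1-mT_2\equiv 0$ kills condition~$(i)$ of Theorem~\ref{thm:mainfr}. Your diagonal-reflection observation is also a legitimate simplification (the class $\mathscr R$ is invariant and the reflection swaps the roles of $E_1$ and $E-E_1$). But the proposal stalls precisely at the point you yourself flag as ``the genuine obstacle'': you assume, without supplying any mechanism, both that the extremal-side hypothesis rules out the coexistence of periodic cylinders and that the period data of the curve $E_1\mapsto M(E,E_1)$ move non-degenerately, and then say that ``granted those two points'' the non-divergence criterion applies. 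Granting them is the theorem.

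What is actually missing is the two-part construction that the paper introduces specifically for this statement. First, Theorem~\ref{thm:mainfr} cannot be salvaged here; the paper replaces it by Theorem~\ref{thm:mainquadrational}, a genuinely weaker criterion in which the homological sum is only required to be non-zero when some coefficient of a \emph{distinguished} side is positive, and this is compensated by the structural hypothesis $(*)$ that \emph{every} orbit in direction $\pi/4$ crosses a distinguished side. Second, applying that criterion on the intermediate interval $\big[\min V_1(x_k^{\varsigma_1\varsigma_2}),\max V_1(x_k^{\varsigma_1\varsigma_2})\big]$ requires the tiles of $M(E_1)$ to have a common formal width and height (condition $(vii)$ of Definition~\ref{def:mathcalS}), which fails when only some of the four staircase polygons reach their extremal side. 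The paper resolves this by artificially extending the shorter staircases with a vertical (or horizontal) phantom segment, producing the degenerated polygon $\widehat{\mathbf P}_{E,E_1}$ and the surface $\widehat M(E_1)$ with two vertical loops attached; since these loops are never hit by a $\pi/4$-orbit, the directional flow is unchanged, but the tiles now form a $C^\infty$-curve in $\mathcal S$, and condition~$(*)$ holds because every orbit does cross a boundary of a removed polygon. Only then do Proposition~\ref{prop:indep2} and Lemma~\ref{lem:negposder} supply conditions $(i)$ and $(ii_{-+})$ of Theorem~\ref{thm:mainquadrational}. Without the distinguished-side criterion and the degenerated-polygon completion, your argument neither excludes the ``(coex)'' scenario nor provides the monotone non-degenerate data the Minsky--Weiss non-divergence input requires, so the proof does not close.
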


Now assume that $I\in\mathcal J_E$ is an interval such that impacts occur and not with the extremal boundaries (pure blue regions in the IEMBD figures):
\begin{equation}\label{eq:assumInti}
I\subset I_{intimp}  =\big(E-\min_{\varsigma_1,\varsigma_2\in\{\pm\}}V_2(y_1^{\varsigma_1\varsigma_2}),\min_{\varsigma_1,\varsigma_2\in\{\pm\}}V_1(x_{k(\bar{x}^{\varsigma_1\varsigma_2},\bar{y}^{\varsigma_1\varsigma_2})}^{\varsigma_1\varsigma_2})\big)\backslash I_{nonimp}(E)
\end{equation}
 {This case presents non-uniform ergodic properties and requires new constructions}. Assume that $\Omega=n/m$ with $m$, $n$ coprime natural numbers. Let $\{red,green\}$
be a partition of the \((\varsigma_1,\varsigma_2)\)  set $\{++,+-,-+,--\}$ into two-element set so that
\begin{equation}\label{eq:green}
green=(pair1,pair2)=\left\{
\begin{array}{cl}
\{++,+-\}&\text{ if $m$ is odd and $n$ is even,}\\
\{++,-+\}&\text{ if $m$ is even and $n$ is odd,}\\
\{++,--\}&\text{ if $m$ and $n$ are odd.}
\end{array}
\right.
\end{equation}
Recall that a billiard in a rectangle can be reflected 3 times so that the billiard flow is conjugated to the directed flow on the torus. By rescaling we can always consider the flow to be in the direction $45^o$. Let the left lower corner of the original rectangle denote by \((--)\), the upper left corner by  \((-+)\), the  lower right  corner  \((+-)\) and the upper right corner by \((++)\) (see Figure~\ref{fig:surface} of Section~\ref{sec:billtotrans}).   In the rational situation, when the impacts from the boundary of the polygon $P$ are ignored, the torus is filled with periodic orbits. We will see that the partition corresponds to having a periodic orbit of the scaled torus which connects the \(pair 1\) and \(pair2\) corners.  These coloured periodic  orbits induce colouring of the staircase non extremal boundaries of \(P(\overline{x}^{\varsigma_1\varsigma_2},\overline{y}^{\varsigma_1\varsigma_2})\), which we call coloured sides (see Figure~\ref{fig:particleroom}, {with colouring induced by taking $m$ odd and $n$  even as in the figures of Section~\ref{sec:quadrat}).}

 Next for every $colour\in\{green,red\}$ let
\begin{align*}
\delta^{colour}(E_1):
=\max_{\substack{\varsigma_1\varsigma_2\in colour\\
1\leq k< {k}(\bar{x}^{\varsigma_1\varsigma_2},\bar{y}^{\varsigma_1\varsigma_2})\\
V_1(x_k^{\varsigma_1\varsigma_2})<E_1<E-V_2(y_{k+1}^{\varsigma_1\varsigma_2})}}
\Big(m\arccos\sqrt{\frac{V_1(x_k^{\varsigma_1\varsigma_2})}{E_1}}+n\arccos\sqrt{\frac{V_2(y_{k+1}^{\varsigma_1\varsigma_2})}{E-E_1}}\Big).
\end{align*}
We will see that \(\delta^{colour}(E_1)\) is related to the measure of orbits that impact the coloured sides of the polygon (see details in Section~\ref{sec:quadrat}).
 Since $\delta^{green}(E_1)+\delta^{red}(E_1)=\pi$ for at most countably many $E_1\in I$ (see Lemma~\ref{lem:dotyk}),
the interval $I$ has a partition into open intervals of two kinds $\mathcal U^+_I$ and $\mathcal U^-_I$ so that
\begin{align}\label{eq:uipm}
\begin{split}
\delta^{green}(E_1)+\delta^{red}(E_1)>\pi&\text{ for all $E_1\in J$ if $J\in \mathcal U^+_I$,  }  \\
\delta^{green}(E_1)+\delta^{red}(E_1)<\pi&\text{ for all $E_1\in J$ if $J\in \mathcal U^-_I$. }
\end{split}
\end{align}
 We will see that in the first case, there are no non-impacting orbits, whereas in
the second case there are also periodic orbits which do not impact any coloured side. In this latter case, the impacting orbits can be divided to two separate sets, the red/green set,  consisting of orbits impacting the  red/green sides.

The following theorem asserts that in the first case, for almost all \(E_{1}\), all orbits are equi-distributed, whereas in the second case, for almost all  \(E_{1}\), all orbits that impact the red/green sides are equi-distributed among the red/green set:
\begin{theorem}\label{thm:U+-}
Let \(I\subset  I_{intimp}\) of Eq.~\eqref{eq:assumInti} and divide \(I\) to the subintervals \( \mathcal U^\pm_I \) satisfying (\ref{eq:uipm}). If $J\in \mathcal U^+_I$ then   the  impacting resonant quadratic  flow
$(\varphi^{P,E,E_1}_t)_{t\in\R}$ is uniquely ergodic for a.e.\ $E_1\in J$.
If $J\in \mathcal U^-_I$ then the phase space of  the  flow
$(\varphi^{P,E,E_1}_t)_{t\in\R}$ splits for all  \ $E_1\in J$ into four components, two of which are completely periodic and  for a.e.\ $E_1\in J$ the two other components are uniquely ergodic.
\end{theorem}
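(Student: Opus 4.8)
The plan is to reduce the impacting resonant quadratic flow on $S^P_{E,E_1}$ to a directed flow on a translation surface, and then apply the Minsky--Weiss criterion exactly as in the non-resonant case, but now being careful about the periodic ribbons. First I would recall the general reduction: since both potentials are quadratic, each one-d.o.f.\ subsystem is a harmonic oscillator, so after passing to action--angle coordinates in each factor the level set $S_{E,E_1}$ (ignoring impacts) is a torus carrying the constant-slope flow of slope $\Omega=n/m$, which is completely periodic. The impacts from the staircase boundary of $P$ are, in the appropriate rescaled coordinates (Section~\ref{sec:billtotrans}), a $45^\circ$ billiard in a right-angled polygon, and by unfolding this becomes a translation flow on a translation surface whose genus is $g(E,E_1)$ from Theorem~\ref{thm:genus}. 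The key new point is that, because the underlying torus flow is periodic with rational slope $n/m$, this translation surface is not connected in general: it decomposes according to the coloured periodic orbits of the scaled torus. The colouring \eqref{eq:green} is chosen precisely so that one distinguished periodic orbit of the torus joins $pair1$ to $pair2$; cutting the surface along the orbits homologous to it, the coloured sides of $P(\bar x^{\varsigma_1\varsigma_2},\bar y^{\varsigma_1\varsigma_2})$ for $\varsigma_1\varsigma_2\in green$ (resp.\ $red$) determine which ``green'' or ``red'' component an orbit lives on, and the quantities $\delta^{green}(E_1),\delta^{red}(E_1)$ measure the length (transverse width) of the coloured impacting strips.

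Next I would analyse the dichotomy \eqref{eq:uipm}. For $J\in\mathcal U^-_I$, i.e.\ $\delta^{green}+\delta^{red}<\pi$, the two coloured impacting strips do not exhaust the torus, so there remains a band of periodic orbits of the torus flow that never meet any coloured side; these are genuine closed orbits of $(\varphi^{P,E,E_1}_t)$, and they sweep out two completely periodic cylinders (one per ``missing'' corner pair, accounting for the factor coming from the four staircase polygons). The complement then splits as a green component and a red component, each of which is, after unfolding, a translation surface with a well-defined directional flow; the pieces are genuinely disjoint and invariant because an orbit impacting a green side can never reach a red side (this is the combinatorial content of the colouring). For $J\in\mathcal U^+_I$, i.e.\ $\delta^{green}+\delta^{red}>\pi$, the two strips overlap, no periodic band survives, the surface is connected, and we are in the situation of a single translation surface with directional flow. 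In either regime, unique ergodicity of the relevant component(s) for a.e.\ $E_1$ will follow from the Minsky--Weiss criterion, applied as in \cite{frkaczek2019recurrence,MiWe2014}: one must exhibit, for a.e.\ $E_1$, a sequence of times $t_j\to\infty$ along which $g_{t_j}$ (the geodesic flow on the moduli space, applied to the family parametrised by $E_1$) returns to a compact set, and for this it suffices that the $E_1$-dependence of the surface's geometric data (the side lengths, which are built from $\arccos\sqrt{V_i(\cdot)/E_i}=\arccos\sqrt{(\cdot)/E_i}$ in the quadratic case) satisfies the smoothness, non-degeneracy and monotonicity hypotheses of the criterion. Since these functions are real-analytic and non-constant in $E_1$ on $I$, the verification is routine, parallel to the non-resonant verification underlying Theorem~\ref{thm:albega}.

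The main obstacle, and the part requiring genuinely new work rather than a citation, is establishing the exact splitting of the phase space into the two periodic cylinders and the two ergodic components in the case $J\in\mathcal U^-_I$ — that is, proving that the green-impacting orbits and the red-impacting orbits really do form two separate, flow-invariant translation surfaces on which the Minsky--Weiss machinery can be run independently, and that the leftover periodic orbits assemble into exactly two cylinders. This is where the choice of the partition $green=\{pair1,pair2\}$ in \eqref{eq:green} according to the parities of $m,n$ is essential: it is dictated by which corners of the scaled $45^\circ$-torus lie on a common periodic orbit, which in turn is a $\mathbb{Z}/2\times\mathbb{Z}/2$ computation in the unfolded torus $\mathbb{R}^2/(m\mathbb{Z}\times n\mathbb{Z})$. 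I would carry out this combinatorial computation first (it is purely about the rectangle billiard and is clean), then use it to define the green/red surfaces intrinsically, then show $\delta^{colour}(E_1)$ equals the relevant transverse length, then invoke Lemma~\ref{lem:dotyk} to see the boundary case $\delta^{green}+\delta^{red}=\pi$ occurs only countably often, and finally feed each component into the Minsky--Weiss criterion to conclude unique ergodicity for a.e.\ $E_1$ in $J$.
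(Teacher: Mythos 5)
Your outline is on-target for the big-picture architecture: reduction via action--angle coordinates and unfolding to the directional flow $(\psi^{\pi/4}_t)_{t\in\R}$ on the surface $M(E_1)$ obtained by cutting out the polygons $\mathbf R^{\varsigma_1\varsigma_2}$ from the torus $\T_{m,n}$; the identification of the two coloured periodic orbits through the Weierstrass points, with the pairing \eqref{eq:green} determined by the parities of $m,n$; the interpretation of $C\delta^{colour}(E_1)$ as the transverse half-width of the cylinder of torus orbits that meet the $colour$-removed polygons; the dichotomy $\delta^{green}+\delta^{red}\gtrless\pi$ separating the case where the two cylinders $R,G$ cover the torus from the case where two residual periodic cylinders $Y,W$ survive; and the use of Lemma~\ref{lem:dotyk} to handle the boundary case $\delta^{green}+\delta^{red}=\pi$. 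All of that matches the paper (Remark~\ref{rem:meetbound} and the preamble of Section~\ref{sec:quadrat}), and the theorem is indeed proved by passing to Theorem~\ref{thm:mainquad} for the flow $(\psi^{\pi/4}_t)_{t\in\R}$.

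The genuine gap is in your final step, which you dismiss as ``routine, parallel to the non-resonant verification underlying Theorem~\ref{thm:albega}.'' The paper explicitly argues the opposite: the criterion used there (Theorem~\ref{thm:mainfr}, i.e.\ Theorem~4.2 of \cite{frkaczek2019recurrence}) cannot apply here, since its hypothesis $(i)$ forces minimality for a.e.\ $E_1$, which is false whenever periodic cylinders persist. Moreover, the coloured invariant pieces $R\setminus\mathbf R^{red}$ and $G\setminus\mathbf R^{green}$ of $M(E_1)$ are \emph{bordered} translation surfaces whose boundaries are unions of saddle connections in the direction $\pi/4$; you cannot feed a surface with boundary directly into a Minsky--Weiss-type argument. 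The paper's essential new work here is the \emph{completion construction} of Section~\ref{sec:compl}: separate the singular boundary point $r$ into two regular points $r_b,r_e$ and re-glue the boundary saddle connections of $\partial(R\setminus\mathbf R^{red})$ to obtain a closed translation surface $\overline R(E_1)$ (similarly $\overline G(E_1)$). This completion forces the induced partition of $\overline R(E_1)$ to have polygon sides \emph{parallel} to the flow direction (i.e.\ vertical after rotation by $\pi/4$), which the old criterion prohibits, so the paper also develops the new general criterion Theorem~\ref{thm:gencrit} for proper partitions with vertical sides and a set $\mathbf D^*$ of \emph{distinguished} sides, with the key hypothesis $(*)$ that every orbit hits a distinguished side --- a hypothesis that is exactly what the completion construction is engineered to produce (Remark~\ref{rem:hit}). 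This is then specialized to curves in the class $\mathcal S$ (Theorem~\ref{thm:mainquadrational}), and only then is Proposition~\ref{prop:indep2} and Lemma~\ref{lem:negposder} invoked to check hypotheses $(i)$ and $(ii_{-+})$. Your proposal contains none of this: without the completion construction you have no compact translation surface to apply Minsky--Weiss to, and without the distinguished-side machinery you cannot rule out the saddle connections that necessarily exist because of the periodic cylinders.

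A minor additional point: you never explain how to count exactly two periodic cylinders. In the paper this is immediate once one works on $M(E_1)\subset\T_{m,n}$: the complement of $R\cup G$ in $\T_{m,n}$ is the pair of open cylinders $Y,W$, which do not meet the removed polygons and hence descend to $M(E_1)$ unchanged. Your sentence about ``one per `missing' corner pair'' is vague and would need to be replaced by this direct geometric observation.
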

This result completes the description of invariant measure of the restricted Hamiltonian flow $(\varphi^{P,E,E_1}_t)_{t\in\R}$
for every $E>0$ and almost every $E_1\in[0,E]$, when $V_1$, $V_2$ are quadratic potentials with $V_1=\Omega^{2} V_2$ and $\Omega$ is rational.
We discuss the non-trivial implications of this theorem on ergodic averages of the  impacting resonant quadratic  flow in Section~\ref{subsec:ergodicav}.

\subsection{Strategy of the proof}
In Section~\ref{sec:osctobil}, using a standard change of coordinates (as observed in \cite{Issi2019}) we construct an isomorphism between the restricted Hamiltonian flow
$(\varphi^{P,E,E_1}_t)_{t\in\R}$ and the directional billiard flow in direction $\pm \pi/4,\pm 3\pi/4$ on a polygon $\mathbf P_{E,E_1}\in\mathscr R$.
For every $E>0$ this gives a piecewise analytic curve $E_1\in[0,E]\mapsto \mathbf P_{E,E_1}\in\mathscr R$ of billiard flows on polygons in $\mathscr R$.
More precisely, this curve is analytic on every interval $I\in \mathcal J_E$. The unique ergodicity problem for this type of curves was recently
studied by the first author in \cite{frkaczek2019recurrence}. In fact, a slight modification of Theorem~4.2 in \cite{frkaczek2019recurrence} (see Theorem~\ref{thm:mainfr})
is applied to curves $E\ni E_1\mapsto \mathbf P_{E,E_1}\in\mathscr R$ to show unique ergodicity $(\varphi^{P,E,E_1}_t)_{t\in\R}$ for a.e.\ $E_1\in[0,E]$
whenever at least one potential $V_1$ or $V_2$ is not a quadratic function or both are quadratic functions with $\Omega$ irrational,
see Theorem~\ref{thm:albega} in Section~\ref{sec:genequ}. Theorem~\ref{thm:mainfr} relies on the analysis of functions indicating the width, height, length and height of steps of staircase polygons that make up the polygon $\mathbf P_{E,E_1}$, when the parameter $E_1$ varies.
The relevant results involving these functions necessary for  applications of Theorem~\ref{thm:mainfr}  are presented and proved in Section~\ref{sec:propertiesa}.

The case when $V_1$, $V_2$ are quadratic with $V_1=\Omega^{2} V_2$ and $\Omega$ is rational needs a more subtle version of Theorem~\ref{thm:mainfr}, this is Theorem~\ref{thm:mainquadrational}. Recall that any directional billiard flow on any right angle polygon is isomorphic to the
translation flow on a translation surface obtained using so called unfolding procedure from the polygon. This leads to the study of analytic curves
of translation surfaces and their translation flows in a fixed direction. {Theorem~\ref{thm:mainquadrational} gives a criterion for unique ergodicity
of the translation flow for almost every translation surface lying on such a curve (this theorem is set in an abstract framework to allow other applications). While the idea of the proof of Theorem~\ref{thm:mainquadrational}
is similar to Theorem~4.2 in \cite{frkaczek2019recurrence}, it needs more subtle reasoning, as it involves a new type of partition  of the translation surface into polygons with sides that can be parallel to the direction of the flow. This is the main innovation in relation to the approach used in Theorem~\ref{thm:mainfr} (and in \cite{frkaczek2019recurrence}).  Another important novelty is the use of so-called distinguished sides of partitions.
The key assumption of Theorem~\ref{thm:mainquadrational} is that every orbit of the directional flow (finite or half-infinite or double-infinite) hits a distinguished side, and the key construction in proving Theorem \ref{thm:U+-} is of a glued surface for which this assumption holds.}

\section{Oscillations in one dimension}\label{sec:osc1}
 Assume that $V_{1}(x):\R\to\R_{\geq 0}$ is an even $C^2$-potential satisfying \eqref{eq:i}.
Then $x^{max}(E_{1}):[0, +\infty)\to[0,+\infty)$, the inverse of the positive branch of $V_1$, is continuous and $C^2$ on $(0, +\infty)$ with $x^{max}(0)=0$ and $(x^{max})'(E_{1})>0$ for $E_{1}>0$. Similar definitions apply to \(y^{max}(E_{2})=y^{max}(E-E_{1})\).

Fix an energy level $E_1>0$. The particle oscillates in the interval $[-x^{max}(E_{1}),x^{max}(E_{1})]$ wandering between the ends back and forth. We change the space coordinate to obtain a new isomorphic model of the oscillation in which the mass point moves periodically with \(\omega_{1}(E_1)>0\)  speed on the interval \( \psi_1\in[-\frac{\pi}{2},\frac{\pi}{2}]\):
\begin{equation}
\frac{d\psi_1}{dt}=\sgn(p_x)\omega_{1}(E_1),\:\psi(0)=0,\:\psi_1(\pm x^{max}(E_{1}))=\pm\frac{\pi}{2}.
\label{eq:psiprop}
\end{equation}
We call these coordinates action-angle-like coordinate, as they are simply related to the transformation to action angle  coordinates, see \cite{Issi2019}. Using the symmetry of \(V_{1}\) and the notation \(p_{x}(E_{1},x)=\pm\sqrt{2}\sqrt{E_1-V_{1}(x)}\) for denoting the dependence of \(p_x\) on position and energy, we have:
  \begin{equation}\label{def:psi}
\psi_{1}(x,E_{1})=\frac{2\pi}{T_{1}(E_1)}\int_0^{x}\frac{1}{|p_{x}(E_{1},s)|}ds=\omega_{1}(E_1)\int_0^{x}\frac{1}{\sqrt{2}\sqrt{E_1-V_{1}(s)}}ds,
\end{equation}
where \(T_{1}(E_1),\omega_{1}(E_1)\) are the period and frequency of the periodic flow on the \(E_{1}\) level set, i.e.:
\begin{equation}\label{def:a}
\frac{1}{4}T_{1}(E_1)=\frac{\pi }{2\omega_{1}(E_1)}=\int_0^{x^{max}(E_{1})}\frac{1}{|p_{x}(E_{1},s)|}ds=\int_0^{x^{max}(E_{1})}\frac{1}{\sqrt{2}\sqrt{E_1-V_{1}(s)}}ds.
\end{equation}
satisfies Eq.~\eqref{eq:psiprop}. Now suppose additionally that our oscillator meets an elastic barrier at a point $x^{wall}>0$. Then its trajectories are described by the equations (\ref{eq:Ham2})  if $x\leq x^{wall}$
according to the rule that if a trajectory meets a point $(p_{x},x^{wall})$ then it jumps to $ (-p_{x},x^{wall})$ and continues its movement in accordance with (\ref{eq:Ham2}).
Thus, if  $x^{wall}<x^{max}(E_{1})$ the particle  oscillates in the interval $[-x^{max}(E_{1}), x^{wall}]$ and after changing the space coordinate to the action-angle-like coordinate, \(\psi_{1}\) it oscillates with the   \(\omega_{1}(E_1)\)  speed on the interval
\( \psi_1\in[-\frac{\pi}{2},\psi_{1}(x^{wall},E_{1})]\) with elastic reflections from the ends. The maps
\(\psi_{2}(y,E_{2})\) and \(T_{2}(E_2)\)  are similarly defined.

\section{From oscillations in dimension two to billiards on polygons}\label{sec:osctobil}

{Recall that the motion in configuration space on a given level set is restricted to the  polygon  \( P\cap R^{(E,E_{1})}\). Using the transformation to the \(\psi\) coordinates, we find the topological and numerical data of the corresponding polygon in the  \(\psi\) space.}

We consider the Hamiltonian flow \ref{eq:Ham2} restricted to the polygon $P\in\mathscr R$: \[P=P(\overline{x}^{++},\overline{y}^{++}) \cup P(-\overline{x}^{-+},\overline{y}^{-+})\cup
P(\overline{x}^{+-},-\overline{y}^{+-})\cup
P(-\overline{x}^{--},-\overline{y}^{--}),\]
where  $(\overline{x}^{\varsigma_1\varsigma_2},\overline{y}^{\varsigma_1\varsigma_2})\in\Xi$ for $\varsigma_1,\varsigma_2\in\{\pm\}$ and we are interested in the properties of the flow restricted to iso-energy level sets \(E_{1},E_{2}=E-E_1\) which we denote by $(\varphi^{P,E,E_1}_t)_{t\in\R}$.
Fix $E>0$ and $0<E_1<E$. By the definition of $S^P_{E,E_1}$  the level set is contained in
\[\R^2\times R^{(E,E_1)}=\R^2\times[-x^{max}(E_{1}),x^{max}(E_{1})]\times[-y^{max}(E-E_{1}),y^{max}(E-E_{1})] .\]
Let us consider new coordinates on \(R^{(E,E_1)}\) given by
\begin{align*}
  \psi(x,y) & =(\psi_{1}(x,E_{1}),\psi_{2}(y,E-E_{1}))
\end{align*}
and notice that \(\frac{d\psi(x(t),y(t))}{dt}=(\sgn(p_x(t))\omega_{1}(E_1),\sgn(p_y(t))\omega_{2}(E-E_1))\).
It follows that the flow $\big(\varphi^{P,E,E_1}_t\big)_{t\in\R}$ in the new coordinates coincides
with the directional billiard flow on
\[\mathbf{P}_{E,E_1}:=\psi \left(P\cap R^{(E,E_1)}\right)\]
so that the directions of its orbit are: $(\pm\omega_{1}(E_1),\pm\omega_{2}(E-E_1))$. As
\(P\cap R^{(E,E_1)}\in\mathscr R\)
and $\psi$ sends vertical/horizontal segments to vertical/horizontal segments, we have $\mathbf{P}_{E,E_1}\in\mathscr R$, namely:
\[\mathbf{P}_{E,E_1}=\mathbf{P}^{++}_{E,E_1}\cup \mathbf{P}^{+-}_{E,E_1}\cup \mathbf{P}^{-+}_{E,E_1}\cup \mathbf{P}^{--}_{E,E_1}\]
Notice that the number  of corner points in each quadrant of \(P\cap R^{(E,E_1)}\) and of \(\mathbf{P}_{E,E_1}\) are identical, whereas their dimensions are related by the transformation \(\psi\) (which depends on \(E_{1}\) and \(E\)). We need to find these dimensions to determine the properties of the flow.

It is convenient to first rescale \(\mathbf{P}_{E,E_1}\) so that the directional motion occurs in the standard directions \((\frac{\pi}{4},-\frac{\pi}{4},\frac{3\pi}{4},-\frac{3\pi}{4})\). Thus we scale
\[\hat \psi_{1}(x,E_{1})=\frac{\psi_{1}(x,E_{1})}{\omega_{1}(E_1)},\quad\hat \psi_{2}(x,E-E_{1})=\frac{\psi_{2}(y,E-E_{1})}{\omega_{2}(E-E_1)},\] then \(\mathbf{P}_{E,E_1}\) is scaled to \( \mathbf{\hat P}_{E,E_1}\) and for shorthand we omit hereafter the hats.
After the rescaling,

  \begin{equation}\label{def:psires}
\psi_{1}(x,E_{1})=\int_0^{x}\frac{1}{\sqrt{2}\sqrt{E_1-V_{1}(s)}}ds\ \text{ for }\ E_1\geq V_1(x),
\end{equation}
so \(\psi_{1}(x_{max}(E_{1}),E_{1})=\frac{1}{4}T_1(E_1)\) and, similarly,  \(\psi_{1}(x,V_{1}(x))=\frac{1}{4}T_1(V_{1}(x)).\)

  The {topological and} numerical data of the polygon $\mathbf{P}_{E,E_1}\in\mathscr R$ for any $E>0$ and $E_1\in(0,E)$, namely, the number of corners it has in each quadrant and their locations in the scaled \(\psi\) plane, is found by computing the sequences \(\bar \Psi_{i}^{\varsigma_1,\varsigma_2}(E,E_1)\) (\(i=1,2\)) of the corner points of \(\mathbf{P}^{\varsigma_1,\varsigma_2}_{E,E_1}\).

{By the definition of staircase polygons and condition  \eqref{eq:i}, the sequence \(\{V_1({x}_j^{\varsigma_1\varsigma_2})\}_{j=1}^{k(\bar{x}^{\varsigma_1\varsigma_2},\bar{y}^{\varsigma_1\varsigma_2})}\) is monotonically increasing and  \(\{V_2({y}_j^{\varsigma_1\varsigma_2})\}_{j=1}^{k(\bar{x}^{\varsigma_1\varsigma_2},\bar{y}^{\varsigma_1\varsigma_2})}\) is monotone decreasing. Hence, }the number of convex corners of  \(\mathbf{P}^{\varsigma_1,\varsigma_2}_{E,E_1}\), namely the length of  \(\bar \Psi_{i}^{\varsigma_1,\varsigma_2}(E,E_1)\), is
\(\\\max\{1,\overline{k}^{\varsigma_1\varsigma_2}(E,E_1)-\underline{k}^{\varsigma_1\varsigma_2}(E,E_1)+1\}\) where:
\begin{align}\label{eq:kbarunder}
\begin{split}
\overline{k}^{\varsigma_1\varsigma_2}(E,E_1)&=\left\{
\begin{array}{l}
 \min\{1\leq k\leq k(\bar{x}^{\varsigma_1\varsigma_2},\bar{y}^{\varsigma_1\varsigma_2}): V_1({x}_k^{\varsigma_1\varsigma_2})\geq E_1 \} \\
  k(\bar{x}^{\varsigma_1\varsigma_2},\bar{y}^{\varsigma_1\varsigma_2})\ \text{ if }  V_1({x}_k^{\varsigma_1\varsigma_2})<E_1 \text{ for } 1\leq k\leq k(\bar{x}^{\varsigma_1\varsigma_2},\bar{y}^{\varsigma_1\varsigma_2})
\end{array}
\right.\\
\underline{k}^{\varsigma_1\varsigma_2}(E,E_1)&=\left\{
\begin{array}{l}
\max\{1\leq k\leq k(\bar{x}^{\varsigma_1\varsigma_2},\bar{y}^{\varsigma_1\varsigma_2}):V_2({y}_k^{\varsigma_1\varsigma_2})\geq E-E_1\}\\
1 \text{ if } V_2({y}_k^{\varsigma_1\varsigma_2})< E-E_1 \text{ for } 1\leq k\leq k(\bar{x}^{\varsigma_1\varsigma_2},\bar{y}^{\varsigma_1\varsigma_2}).
\end{array}
\right.
\end{split}
\end{align}
If $\underline{k}^{\varsigma_1\varsigma_2}(E,E_1)\geq\overline{k}^{\varsigma_1\varsigma_2}(E,E_1)$ then $\mathbf{P}^{\varsigma_1\varsigma_2}_{E,E_1}$ is a rectangle. Moreover,
\begin{gather}\label{eq:rectab}
\begin{split}
&\text{if }V_1({x}_l^{\varsigma_1\varsigma_2})\geq E_1,\ V_2({y}_l^{\varsigma_1\varsigma_2})\geq E-E_1\text{ for some }1\leq l\leq k(\bar{x}^{\varsigma_1\varsigma_2}, \bar{y}^{\varsigma_1\varsigma_2})\\ &\text{ then }\underline{k}^{\varsigma_1\varsigma_2}\geq l\geq \overline{k}^{\varsigma_1\varsigma_2}
\text{ and }
\mathbf{P}^{\varsigma_1\varsigma_2}_{E,E_1}=P\Big( \frac{1}{4}\varsigma_1T_{1}(E_1), \frac{1}{4}\varsigma_2T_{2}(E-E_1)\Big).
\end{split}
\end{gather}
Otherwise, we have $\underline{k}^{\varsigma_1\varsigma_2}(E,E_1)\leq\overline{k}^{\varsigma_1\varsigma_2}(E,E_1)$ and
\[\mathbf{P}^{\varsigma_1\varsigma_2}_{E,E_1}=P(\varsigma_1 \bar\Psi_{1}^{\varsigma_1\varsigma_2}(E_1),\varsigma_2\bar\Psi_{2}^{\varsigma_1\varsigma_2}(E-E_1)),\]
where the vectors \(( \bar\Psi_{1}^{\varsigma_1\varsigma_2}(E_1),\bar\Psi_{2}^{\varsigma_1\varsigma_2}(E_1))\):
\begin{align*}
{\bar\Psi}_{1}^{\varsigma_1\varsigma_2}(E_1)=\{\Psi_{1,k}^{\varsigma_1\varsigma_2}(E_1)_{}\}_{k=\underline{k}^{\varsigma_1\varsigma_2}}^{\bar{k}^{\varsigma_1\varsigma_2}},\quad
\bar\Psi_{2}^{\varsigma_1\varsigma_2}(E-E_1)=\{\Psi_{2,k}^{\varsigma_1\varsigma_2}(E-E_1)_{}\}_{k=\underline{k}^{\varsigma_1\varsigma_2}}^{\bar{k}^{\varsigma_1\varsigma_2}}
\end{align*}
are found from the sequences \((\bar{x}^{\varsigma_1\varsigma_2},\bar{y}^{\varsigma_1\varsigma_2})\) by the \((E_{1},E)\) dependent \(\psi\) transformation of the corner points that are inside \(\mathbf{P}^{\varsigma_1\varsigma_2}_{E,E_{1}}\). Since, for \(k<\bar{k}^{\varsigma_1\varsigma_2}\), the horizontal motion exceeds \({x}_k^{\varsigma_1\varsigma_2}\)  (as \(V_1({x}_k^{\varsigma_1\varsigma_2})< E_{1}\)) we have
\begin{equation}\label{eq:psinumk}
{\Psi}_{1,k}^{\varsigma_1\varsigma_2}(E,E_1)=\psi _{1}({x}_k^{\varsigma_1\varsigma_2},E_{1}),\quad \underline{k}^{\varsigma_1\varsigma_2}\leqslant k<\bar{k}^{\varsigma_1\varsigma_2}.
\end{equation}
The last  value of \( \bar\Psi_{1}^{\varsigma_1\varsigma_2}(E,E_1)\)  depends on whether  \(R^{(E,E_{1})}\) intersect the extremal vertical side of   \(P^{\varsigma_1\varsigma_2}\) (in the \((x,y)\) plane):
\begin{equation}
\Psi_{1,\bar{k}^{\varsigma_1\varsigma_2}}(E,E_{1})=\begin{cases}\psi _{1}({x}_{\bar{k}^{\varsigma_1\varsigma_2}}^{\varsigma_1\varsigma_2},E_{1}) & \text{ if }\ V_1({x}_{\bar{k}^{\varsigma_1\varsigma_2}}^{\varsigma_1\varsigma_2})< E_1  \\
\frac{1}{4}T_1(E_1) & \text{ if }\ V_1({x}_{\bar{k}^{\varsigma_1\varsigma_2}}^{\varsigma_1\varsigma_2})\geq E_1.
\end{cases}
\end{equation}
Similarly, since  for \(k>\underline{k}^{\varsigma_1\varsigma_2} \), the vertical motion exceeds \({y}_k^{\varsigma_1\varsigma_2}\)   (since  \(E-E_{1}>V_2({y}_k^{\varsigma_1\varsigma_2})\)) we have
\begin{equation}
\quad{\Psi}_{2,k}^{\varsigma_1\varsigma_2}(E,E_1)=\psi _{2}({y}_k^{\varsigma_1\varsigma_2},E-E_{1}),\quad \underline{k}^{\varsigma_1\varsigma_2}< k\leqslant\bar{k}^{\varsigma_1\varsigma_2}.
\end{equation}
The first  value of \( \bar\Psi_{2}^{\varsigma_1\varsigma_2}(E,E_1)\)  depends on whether  \(R^{(E,E_{1})}\) intersect the extremal  horizontal side of   \(P^{\varsigma_1\varsigma_2}\) (in the \((x,y)\) plane):
\begin{align}
\label{eq:psinumlast}\qquad\Psi_{2,\underline{k}^{\varsigma_1\varsigma_2}}(E,E_1)&=\begin{cases}\psi _{2}({y}_{\underline{k}^{\varsigma_1\varsigma_2}}^{\varsigma_1\varsigma_2},E-E_{1}) & \text{ if }\ V_2({y}_{\underline{k}^{\varsigma_1\varsigma_2}}^{\varsigma_1\varsigma_2})< E-E_1  \\
\frac{1}{4}T_2(E-E_1) & \text{ if }\ V_2({y}_{\underline{k}^{\varsigma_1\varsigma_2}}^{\varsigma_1\varsigma_2})\geq E-E_1.
\end{cases}
\end{align}
Summarizing, the above computations show that the  topological data of the polygon  \(\mathbf{P}_{E,E_{1}}\) is given by  \(\{\max\{1,\overline{k}^{\varsigma_1\varsigma_2}(E,E_1)-\underline{k}^{\varsigma_1\varsigma_2}(E,E_1)+1\}\}_{\varsigma_1,\varsigma_2\in\{\pm\}}\)  and the numerical data by  \( \{(\bar\Psi_{1}^{\varsigma_1\varsigma_2}(E,E_1),\bar\Psi_{2}^{\varsigma_1\varsigma_2}(E,E_1))\}_{\varsigma_1,\varsigma_2\in\{\pm\}}\).

\subsection{Regions of fixed topological data}
Next,  we show that the topological data of  the polygons corresponding to iso-energy level sets is fixed on a finite number of intervals of   $E_1$ values, and this partition depends piecewise smoothly on \(E\) (so the topological data is fixed in certain parallelograms of the IEMBD, see Figure~\ref{fig:embd}).

Let \(X,Y\) denote the collection of widths and heights of the steps in all quadrants:
\begin{align*}
X:=\{x_k^{\varsigma_1\varsigma_2}:\varsigma_1, \varsigma_2\in\{\pm\},1\leq k\leq k(\bar{x}^{\varsigma_1\varsigma_2},\bar{y}^{\varsigma_1\varsigma_2})\}\subset\R_{>0}\\
Y:=\{y_k^{\varsigma_1\varsigma_2}:\varsigma_1, \varsigma_2\in\{\pm\},1\leq k\leq k(\bar{x}^{\varsigma_1\varsigma_2},\bar{y}^{\varsigma_1\varsigma_2})\}\subset\R_{>0}.
\end{align*}
For any $E>0$ let us consider the partition $\mathcal{J}_E$ (into open intervals) of the interval $[0,E]$ determined by the numbers
\[V_1(x), E-V_2(y)\text{ for all }x\in X\text{ and }y\in Y.\]
Then for every $I\in \mathcal{J}_E$, by Eq.~\eqref{eq:kbarunder} the numbers  $\overline{k}^{\varsigma_1\varsigma_2}_I=\overline{k}^{\varsigma_1\varsigma_2}(E,E_1)$ and $\underline{k}^{\varsigma_1\varsigma_2}_I=\underline{k}^{\varsigma_1\varsigma_2}(E,E_1)$
do not depend on $E_1\in I$. Therefore, the numerical data  $E_1\in I\mapsto \mathbf P_{E,E_1}\in\mathscr R$ is a smooth (as shown in Section~\ref{sec:propertiesa}, analytic if $V_1,V_2\in Deck$) curve of polygons  in $\mathscr{R}$.

\begin{remark}\label{rem:XYI}
Fix $I=(E_{min},E_{max})\in \mathcal{J}_E$. Then the sets
\begin{equation}\label{def:XYI}
X_I:=\{x\in X:V_1(x)<E_1\}
\text{ and }Y_I:=\{y\in Y:V_2(y)<E-E_1\}
\end{equation}
do not depend on the choice of $E_1\in I$.

 Summarizing, in view of \eqref{eq:psinumk}-\eqref{eq:psinumlast}, for any $E_1\in I$ and $\varsigma_1,\varsigma_2\in\{\pm\}$
\begin{itemize}
\item every staircase lengths of $\mathbf{P}^{\varsigma_1\varsigma_2}_{E,E_1}$ is of the form $\psi_{1}(x,E_1)$ for some $x\in X_I$;
\item every staircase heights of $\mathbf{P}^{\varsigma_1\varsigma_2}_{E,E_1}$ is of the form  $\psi_{2}(y,E-E_1)$ for some $y\in Y_I$;
\item if $E_{min}>V_1(x_{{k}(\bar{x}^{\varsigma_1\varsigma_2},\bar{y}^{\varsigma_1\varsigma_2})}^{\varsigma_1\varsigma_2})$   then the width of $\mathbf{P}^{\varsigma_1\varsigma_2}_{E,E_1}$ is of the form  $\psi_{1}(x,E_1)$  for some $x\in X_I$, otherwise, the width of $\mathbf{P}^{\varsigma_1\varsigma_2}_{E,E_1}$ is of the form $\tfrac{1}{4}T_1(E_1)$;
\item if $E_{max}<E-V_2(y_{1}^{\varsigma_1\varsigma_2})$    then the height of $\mathbf{P}^{\varsigma_1\varsigma_2}_{E,E_1}$ is of the form  $\psi_{2}(y,E-E_1)$  for some $y\in Y_I$, otherwise, the height of $\mathbf{P}^{\varsigma_1\varsigma_2}_{E,E_1}$ is of the form $\tfrac{1}{4}T_2(E-E_2) $.
\end{itemize}
\end{remark}
 The IEMBD figures (Figures \ref{fig:embdsym} and~\ref{fig:embd}) provide a graphical representation of the above summary: the intersection of the  wedges boundaries with a vertical line provide the partition to the intervals \(\mathcal{J}_E\),  each  blue wedge corresponds to a region in which another step in the staircase (a concave corner) is included, and {the pink (respectively light green) wedges correspond to regions in which the corresponding staircase polygons widths (respectively heights) are of the form $\psi_1(x,E_1)$ (respectively $\psi_2(y,E-E_1)$).}
Notice that the dependence of the above partition of \(\mathcal{J}_E\) is piecewise smooth in \(E\):  it changes exactly at the singular \(E\) values \(E^{sin}=\{E:E=V_{1}(x)+V_2(y),x\in X,y\in Y\}\), namely, at the energy values  at which the wedges in the IEMBD figures emanate and/or start to intersect each other.

\section{Properties of numerical data in a given topological region}\label{sec:propertiesa}
 In this section we present some basic properties of functions $\psi_{1}(x,E_{1}),T_1(E_{1})$, defined by Eq.~\eqref{def:psires}, \eqref{def:a} as a function of \(E_{1}\) (so $T_1:(0,  +\infty)\to\R_{>0}$ and $\psi_{1}(x,\cdot):[V_{1}(x),  +\infty)\to\R_{>0}$). We show that when $V_{1}:\R\to\R_{\geq 0}$ is an even $C^2$-potential satisfying \eqref{eq:i} the function  $\psi_{1}(x,E_{1})$ is analytic and that when $V_1$ is a $Deck$-potential the function \(T_1(E_{1})\) is analytic (the same properties apply to  $\psi_{2}(y,E_{2}),T_2(E_{2})$ with corresponding assumptions on \(V_{2}\)).

\begin{proposition}\label{prop:a}
Suppose that  $V_{1}:\R\to\R_{\geq 0}$ is a $Deck$-potential.
Then the map $T_{1}:(0,  +\infty)\to\R_{>0}$ given by \eqref{def:a} is analytic and
\begin{equation}\label{eq:det}
\frac{1}{4}T_{1}'(E_1)=\frac{1}{E_1\sqrt{2}}\int_0^{x^{max}(E_1)}\frac{1}{\sqrt{E_1-V_{1}(x)}}\left(\frac{1}{2}-\frac{V_{1}''(x)V_{1}(x)}{(V_{1}'(x))^2}\right)dx.
\end{equation}
\end{proposition}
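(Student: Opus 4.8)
The plan is to obtain the formula for $T_1'(E_1)$ by a direct differentiation of the integral representation \eqref{def:a}, after a change of variables that removes the $E_1$-dependence from the domain of integration, and then to read off analyticity from the explicit form of the resulting integrand together with the $Deck$ hypotheses \eqref{eq:iii}–\eqref{eq:iv}. First I would substitute $x = x^{max}(E_1)\,u$, or more conveniently use the energy variable $v = V_1(x)$ so that $x = x^{max}(v)$ and $dx = (x^{max})'(v)\,dv = dv / V_1'(x^{max}(v))$; this turns \eqref{def:a} into
\[
\tfrac14 T_1(E_1) = \frac{1}{\sqrt 2}\int_0^{E_1} \frac{dv}{V_1'(x^{max}(v))\,\sqrt{E_1-v}}.
\]
The integrand still has an $E_1$ in the square-root singularity, so before differentiating I would rescale once more, $v = E_1 w$ with $w\in[0,1]$, giving $\tfrac14 T_1(E_1) = \frac{1}{\sqrt{2E_1}}\int_0^1 \frac{\sqrt{E_1}\,dw}{V_1'(x^{max}(E_1 w))\sqrt{1-w}}$ — wait, more carefully one gets a clean form in which $E_1$ appears only inside $x^{max}(E_1 w)$ and as an overall factor, so that differentiation under the integral sign is justified by dominated convergence (the $1/\sqrt{1-w}$ singularity is integrable and $w$-uniform in $E_1$ on compact subintervals).

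Differentiating under the integral and then transforming back to the $x$ variable is the computational heart of the argument. The key identity I expect to need is that for $x = x^{max}(E_1)$,
\[
\frac{d}{dE_1}\Big(\tfrac14 T_1(E_1)\Big)
\]
can be integrated by parts using $\frac{\partial}{\partial E_1}\frac{1}{\sqrt{E_1 - V_1(x)}} = -\frac12 (E_1-V_1(x))^{-3/2}$ and then $\frac{\partial}{\partial x}(E_1-V_1(x))^{-1/2} = \tfrac12 V_1'(x)(E_1-V_1(x))^{-3/2}$, i.e. one writes $(E_1 - V_1(x))^{-3/2} = \frac{2}{V_1'(x)}\frac{\partial}{\partial x}(E_1 - V_1(x))^{-1/2}$ and integrates by parts in $x$. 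The boundary term at $x = x^{max}(E_1)$ is singular but is cancelled by the contribution coming from the $E_1$-dependence of the upper limit $x^{max}(E_1)$ (this is the standard cancellation in such "period derivative" computations), while the boundary term at $x=0$ vanishes because $V_1'(0)=0$ forces the relevant quantity to vanish there. Differentiating $\tfrac{1}{V_1'(x)}$ in the integration by parts produces the factor $-V_1''(x)/(V_1'(x))^2$, and collecting all terms yields precisely
\[
\tfrac14 T_1'(E_1) = \frac{1}{E_1\sqrt 2}\int_0^{x^{max}(E_1)} \frac{1}{\sqrt{E_1 - V_1(x)}}\Big(\tfrac12 - \frac{V_1''(x)V_1(x)}{(V_1'(x))^2}\Big)\,dx .
\]

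For the analyticity claim I would use the $Deck$ conditions directly: by \eqref{eq:iii} the map $V_1$ has, near $(0,\infty)$, a biholomorphic branch onto a set containing the droplet $C(E,r)$, so $x^{max}$ extends holomorphically to a complex neighborhood of any $E_1>0$; combined with \eqref{eq:ii} this makes the $v$-integrand $1/(V_1'(x^{max}(v)))$ holomorphic in $v$ in a neighborhood of $(0,E_1]$, and \eqref{eq:iv} furnishes the uniform bound needed to differentiate the parameter integral arbitrarily many times (equivalently, to apply Morera/Fubini to a complexified $E_1$). Thus $T_1$ extends to a holomorphic function of $E_1$ on a neighborhood of $(0,\infty)$ in $\C$, which is the asserted analyticity. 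I expect the main obstacle to be the careful bookkeeping of the two competing singular contributions in the integration by parts — the divergent boundary term at the turning point $x^{max}(E_1)$ and the derivative of the moving endpoint — making sure they cancel exactly and leave the stated convergent integral; a clean way to sidestep subtleties is to do the whole computation in the rescaled variable $w\in[0,1]$ where all integrands are genuinely integrable and differentiation under the integral sign is unambiguous, and only convert to the $x$-form at the very end.
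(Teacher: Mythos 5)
Your high-level plan — substitute $v=V_1(x)$, rescale to $s=v/E_1\in[0,1]$, differentiate under the integral sign, and read analyticity off the Deck conditions — is the same as the paper's, and your rescaled form $\tfrac14 T_1(E_1)=\sqrt{E_1/2}\int_0^1\frac{(x^{max})'(E_1s)}{\sqrt{1-s}}\,ds$ is exactly the auxiliary function $A$ the paper introduces. The analyticity argument you sketch is also the right one in spirit: \eqref{eq:iii} gives a holomorphic branch $z^{max}$ of $V_1^{-1}$ near the droplet $C(E_1,r)$, and \eqref{eq:iv} supplies (after a Gronwall-type estimate) the domination needed to differentiate the parameter integral.

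However, the computational step that is supposed to produce \eqref{eq:det} has a genuine gap. You propose to integrate by parts in $x$ using $(E_1-V_1(x))^{-3/2}=\tfrac{2}{V_1'(x)}\partial_x(E_1-V_1(x))^{-1/2}$, with the boundary term at $x=x^{max}(E_1)$ cancelling against the moving-endpoint contribution and the boundary term at $x=0$ vanishing ``because $V_1'(0)=0$''. The last claim is backwards: the boundary term at $x=0$ is $-\,(V_1'(0)\sqrt{E_1})^{-1}$, which blows up when $V_1'(0)=0$, it does not vanish. Moreover, even setting aside the boundary terms, this integration by parts differentiates $1/V_1'(x)$ and hence produces the integrand $-V_1''(x)/(V_1'(x))^2$, not $-V_1''(x)V_1(x)/(V_1'(x))^2$, and it does not account for the overall factor $1/E_1$. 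In fact no integration by parts is needed at all: differentiating $A$ under the integral yields $A'(E_1)=\int_0^1\frac{(x^{max})''(E_1s)\,s}{\sqrt{1-s}}\,ds$, and it is precisely the chain-rule factor $s$, which becomes $V_1(x)/E_1$ after substituting back $u=E_1s$ and then $x=x^{max}(u)$, that supplies both the missing $V_1(x)$ and the $1/E_1$. Combined with the pointwise identity $(x^{max})''(V_1(x))(x^{max})'(V_1(x))^{-1}=-V_1''(x)/(V_1'(x))^2$ (the paper's \eqref{eq:WV}), and the product rule applied to $\tfrac14 T_1(E_1)=\sqrt{E_1/2}\,A(E_1)$, one obtains \eqref{eq:det} directly. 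So the fix is to drop the integration-by-parts step entirely and instead track the factor $s$ through the change of variables.
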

\begin{proof}
Using integration by substitution twice we have
\begin{align}\label{ibs}
\begin{aligned}
\frac{\sqrt{2}}{4}T_{1}(E_1)&=\int_0^{x^{max}(E_1)}\frac{1}{\sqrt{E_1-V_{1}(x)}}dx=|_{u=V_{1}(x), x=x^{max}(u)}
\int_0^{E_1}\frac{(x^{max})'(u)}{\sqrt{E_1-u}}du \\
&=|_{ s=u/E_1}
\sqrt{E_1}\int_0^{1}\frac{(x^{max})'(E_1 s)}{\sqrt{1-s}} ds.
\end{aligned}
\end{align}
Let us consider an auxiliary map  $A:(0,  +\infty)\to\R_{>0}$ defined by
\begin{equation}\label{def:A}
A(E_1)=\sqrt{\frac{1}{8E_1}}T_{1}(E_1)=\int_0^{1}\frac{(x^{max})'(E_1 s)}{\sqrt{1-s}} ds
\end{equation}
for $E_1>0$.
We will show that $A$ is analytic, which obviously implies the analyticity of $T_{1}(E_1)$ for  $E_1>0$.
 To this aim, we first establish some properties of the function \(x^{max}\) and its holomorphic extension.

Suppose that $V_{1}:\R\to\R_{\geq 0}$ is a $Deck$-potential. In view of
\eqref{eq:ii}, $V_{1}:(0,+\infty)\to(0,  +\infty)$ has a holomorphic extension $V_{1}:U\to\C$ on an open neighborhood $U\subset\C$ of $(0,+\infty)$ such that $V_{1}'(z)\neq 0$ for every $z\in U$. Then $V_{1}$ is locally invertible and its inverse functions are holomorphic (i.e.\ $V_{1}$ is locally biholomorphic).
More subtle assumption on the domain of biholomorphicity is formulated in \eqref{eq:iii}. By \eqref{eq:iii},
for every $E_{10}>0$ there exist $0<r<E_{10}$ and a bounded open set $U_{E_{10}}\subset U$ such that $V_{1}:U_{E_{10}}\to V_{1}(U_{E_{10}})$
is biholomorphic with $C(E_{10},r)\subset V_{1}(U_{E_{10}})$. Then $z^{max}=V_{1}^{-1}:V_{1}(U_{E_{10}})\to U_{E_{10}}$ is a holomorphic extension of
$x^{max}:(0,E_{10}+r)\to\R$.

Fix any $E_{10}>0$. By the above, there exists $0<r<E_{10}$ such that the inverse map $z^{max}$ is analytic on $E_{1}\in C(E_{10},r)$ (so \(E_1\) is a complex variable in the proof and \(E_{10}\) is a real positive number).
As
\begin{equation}\label{eq:WV}
\frac{(z^{max})''(E_{1})}{(z^{max})'(E_{1})}=-\frac{V_{1}''(z^{max}(E_{1}))}{(V_{1}'(z^{max}(E_{1})))^2}\;\text{ for every }\;E_{1}\in V_{1}(U_{E_{10}}),
\end{equation}
by the assumption \eqref{eq:iv}, it follows that
\begin{equation}\label{ineq:CR}
\left|\frac{(z^{max})''(E_{1})E_{1}}{(z^{max})'(E_{1})}\right|\leq C_{E_{10}}\;\text{ for every }\;E_{1}\in C(E_{10},r).
\end{equation}
Assume that $E_1\in B(E_{10},r)$ and $s\in(0,1]$. Then
\begin{align*}
\left|\log\frac{|(z^{max})'(E_1 s)|}{|(z^{max})'(E_{10} s)|}\right|&=\big|\log |(z^{max})'(E_1 s)|-\log{|(z^{max})'(E_{10} s)|}\big|\\
&=\left|\int_0^1\frac{d}{dt}\log{|(z^{max})'((E_{10}+(E_1-E_{10})t) s)|}dt\right|\\
&\leq\int_0^1\left|\frac{\frac{d}{dt}|(z^{max})'((E_{10}+(E_1-E_{10})t) s)|}{|(z^{max})'((E_{10}+(E_1-E_{10})t) s)|}\right|dt\\
&\leq\int_0^1\frac{|(z^{max})''((E_{10}+(E_1-E_{10})t) s)||E_1-E_{10}||s|}{|(z^{max})'((E_{10}+(E_1-E_{10})t) s)|}dt.
\end{align*}
In view of \eqref{ineq:CR}, it follows that
\begin{align*}
\left|\log\frac{|(z^{max})'(E_1 s)|}{|(z^{max})'(E_{10} s)|}\right|&
\leq C_{E_{10}}\int_0^1\frac{|E_1-E_{10}||s|}{|(E_{10}+(E_1-E_{10})t) s|}dt\leq C_{E_{10}}\int_0^1\frac{|E_1-E_{10}|}{E_{10}-|E_1-E_{10}|t}dt\\
&=C_{E_{10}}\log\frac{E_{10}}{E_{10}-|E_1-E_{10}|}\leq C_{E_{10}}\log\frac{E_{10}}{E_{10}-r}.
\end{align*}
Hence
\begin{equation}\label{ineq:w'}
|(z^{max})'(E_1 s)|\leq \left(\frac{E_{10}}{E_{10}-r}\right)^{C_{E_{10}}} (z^{max})'(E_{10} s)
\end{equation}
for all $E_1\in B(E_{10},r)$ and $s\in (0,1]$.

We now show that the holomorphic extension  $A:B(E_{10},r)\to\C$ is given by \eqref{def:A} for complex \(E_{1}\).
By \eqref{ineq:w'},
\[\left|\frac{(z^{max})'(E_1 s)}{\sqrt{1-s}}\right|\leq C(E_{10},r)\frac{(z^{max})'(E_{10} s)}{\sqrt{1-s}}\]
for all $E_1\in B(E_{10},r)$ and $s\in (0,1]$. Therefore, \(|A(E_{1})|<C(E_{10},r)|A(E_{{10}})|\), so, since \(A(E_{{10}})\) is finite for positive \(E_{10}\),  $A:B(E_{10},r)\to\C$ is well defined.

We now show that $A:B(E_{10},r)\to\C$  is holomorphic. For every parameter $s\in(0,1]$ let us consider the map $\Phi_s:B(E_{10},r)\to\C$
given by
\[\Phi_s(E_1):=\frac{(z^{max})'(E_1 s)}{\sqrt{1-s}},\]
so \(A(E_{1})=\int_0^1\Phi_s(E_{1})ds\). The map $\Phi_s$ is holomorphic with
\[\Phi'_s(E_1):=\frac{(z^{max})''(E_1 s)s}{\sqrt{1-s}}.\]
In view of \eqref{ineq:CR} and \eqref{ineq:w'}, we obtain
\begin{align}\label{ineq:|u|}
\begin{aligned}
|\Phi'_s(E_1)|&=\left|\frac{(z^{max})''(E_1 s)s}{\sqrt{1-s}}\right|\leq \frac{C_{E_{10}}}{|E_1|}\left|\frac{(z^{max})'(E_1 s)}{\sqrt{1-s}}\right|\\
&\leq \frac{C_{E_{10}}E_{10}^{C_{E_{10}}}}{(E_{10}-r)^{C_{E_{10}}+1}}\frac{(z^{max})'(E_{10} s)}{\sqrt{1-s}}=C\Phi_s(E_{10})
\end{aligned}
\end{align}
for all $E_1\in B(E_{10},r)$ and $s\in(0,1]$. Since \(A(E_{10})\) is finite,  $A(E_1)=\int_0^1\Phi_s(E_1)\,ds$ is holomorphic and
\[A'(E_1)=\int_0^1\Phi'_s(E_1)\,ds=\int_0^1\frac{(z^{max})''(E_1 s)s}{\sqrt{1-s}}\,ds.\]
Using integration by substitution the same as in \eqref{ibs} (used in reverse order), we obtain, for real \(E_{1}>0\),
\begin{align*}
A'(E_1)&=\int_0^1\frac{(x^{max})''(E_1 s)s}{\sqrt{1-s}}\,ds=\frac{1}{E_1\sqrt{E_1}}\int_0^{E_1}\frac{(x^{max})''(u)u}{\sqrt{E_1-u}}\,du\\
&=\frac{1}{E_1\sqrt{E_1}}\int_0^{x^{max}(E_1)}\frac{(x^{max})''(V_{1}(x))V_{1}(x)V_{1}'(x)}{\sqrt{E_1-V_{1}(x)}}\,dx.
\end{align*}
Therefore, by \eqref{eq:WV}, we have
\[A'(E_1)=-\frac{1}{E_1\sqrt{E_1}}\int_0^{x^{max}(E_1)}\frac{1}{\sqrt{E_1-V_{1}(x)}}\frac{V_{1}''(x)V_{1}(x)}{(V_{1}'(x))^2}\,dx.\]
Finally, it follows that
\begin{align*}
\frac{1}{4}T_{1}'(E_1)&=\frac{d}{dE_1}\Big(\sqrt{\frac{E_1}{2}}A(E_1)\Big)=\frac{1}{2\sqrt{2}\sqrt{E_1}}A(E_1)+\sqrt{\frac{E_1}{2}}A'(E_1)\\&=
\frac{1}{E_1\sqrt{2}}\int_0^{x^{max}(E_1)}\frac{1}{\sqrt{E_1-V_{1}(x)}}\left(\frac{1}{2}-\frac{V_{1}''(x)V_{1}(x)}{(V_{1}'(x))^2}\right)dx.
\end{align*}
\end{proof}
\begin{proposition}\label{prop:ay}
Suppose that  $V_{1}:\R\to\R_{\geq 0}$ is a  continuous potential such that $V_{1}:\R_{\geq 0}\to\R_{\geq 0}$ is strictly increasing and $V_{1}(0)=0$.
Then for every $x>0$ the map $\psi_{1}(x,\cdot):(V_{1}(x),  +\infty)\to\R_{>0}$ given by \eqref{def:psires} is analytic and, for every $E_1>V_{1}(x)$ and $n\geq 1$
\begin{equation}\label{eq:dera1}
\frac{d^{n}}{dE_{1}^n}\psi_{1}(x,E_1)=\frac{(-1)^n(2n-1)!!}{2^{\frac{2n+1}{2}}}\int_0^{x}\frac{1}{(E_1-V_{1}(y))^{\frac{2n+1}{2}}}dy
\end{equation}
Additionally, if $V_{1}\in Deck$,
for every $E_1>V_{1}(x)$ 
\begin{align}\label{eq:dera2}
\begin{split}
\frac{d}{dE_{1}}\psi_{1}(x,E_1)=&\frac{1}{E_1\sqrt{2}}\int_0^{x}\!\frac{1}{\sqrt{E_1\!-\!V_{1}(y)}}\left(\frac{1}{2}\!-\!\frac{V_{1}''(y)V_{1}(y)}{(V_{1}'(y))^2}\right)dy \\
&-  \frac{1}{E_1\sqrt{E_1\!-\!V_{1}(x)}\sqrt{2}}\frac{V_{1}(x)}{V'_{1}(x)}
\end{split}
\end{align}
and
\begin{equation}\label{eq:dera3}
\lim_{E_1\searrow V_{1}(x)}\frac{d}{dE_{1}}\psi_{1}(x,E_1)=-\infty.
\end{equation}
\end{proposition}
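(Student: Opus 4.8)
The plan is to handle the three assertions in turn. The analyticity of $\psi_1(x,\cdot)$ and \eqref{eq:dera1} need only that $V_1$ is continuous and strictly increasing on $\R_{\ge 0}$; formulas \eqref{eq:dera2} and \eqref{eq:dera3} use the full $Deck$ structure (through \eqref{eq:iii}--\eqref{eq:iv}). For the first claim, fix $x>0$ and $E_{10}>V_1(x)$, and choose $0<\rho<E_{10}-V_1(x)$. For $E_1\in B(E_{10},\rho)$ and $s\in[0,x]$ one has $\operatorname{Re}(E_1-V_1(s))\ge E_{10}-\rho-V_1(x)>0$, so $E_1\mapsto(E_1-V_1(s))^{-1/2}$ (principal branch) is holomorphic and bounded uniformly in $s$. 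Hence, by \eqref{def:psires} together with Morera's theorem and Fubini (equivalently, differentiation under the integral sign), $\psi_1(x,\cdot)$ is holomorphic on $B(E_{10},\rho)$, so analytic on $(V_1(x),+\infty)$, and may be differentiated under the integral sign to any order. Since $\tfrac{d^n}{dE_1^n}(E_1-V_1(s))^{-1/2}=(-1)^n\tfrac{(2n-1)!!}{2^n}(E_1-V_1(s))^{-(2n+1)/2}$ and $2^n\sqrt2=2^{(2n+1)/2}$, this yields \eqref{eq:dera1}.

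For \eqref{eq:dera2} I would start from the case $n=1$ of \eqref{eq:dera1}, that is $\tfrac{d}{dE_1}\psi_1(x,E_1)=-\tfrac{1}{2\sqrt2}\int_0^x(E_1-V_1(y))^{-3/2}\,dy$, and use the elementary identity
\[\frac{d}{dy}\Big[\frac{V_1}{V_1'}(E_1-V_1)^{-1/2}\Big]=\Big(1-\frac{V_1''V_1}{(V_1')^2}\Big)(E_1-V_1)^{-1/2}+\frac{V_1}{2}(E_1-V_1)^{-3/2},\]
which, combined with $\tfrac{E_1}{2}(E_1-V_1)^{-3/2}=\tfrac12(E_1-V_1)^{-1/2}+\tfrac{V_1}{2}(E_1-V_1)^{-3/2}$, rewrites the integrand as
\[(E_1-V_1)^{-3/2}=\frac{2}{E_1}\frac{d}{dy}\Big[\frac{V_1}{V_1'}(E_1-V_1)^{-1/2}\Big]-\frac{2}{E_1}\Big(\frac12-\frac{V_1''V_1}{(V_1')^2}\Big)(E_1-V_1)^{-1/2}.\]
Integrating over $(0,x)$ and applying the fundamental theorem of calculus to the total-derivative term then produces exactly \eqref{eq:dera2}: the contribution at $y=x$ is $\tfrac{2}{E_1}\tfrac{V_1(x)}{V_1'(x)}(E_1-V_1(x))^{-1/2}$ and the one at $y=0$ will vanish. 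To make this rigorous I need three facts, all following from $V_1\in Deck$: \emph{(i)} $\lim_{y\to0^+}\tfrac{V_1(y)}{V_1'(y)}=0$ (so the endpoint at $0$ contributes nothing, since $(E_1-V_1(y))^{-1/2}\to E_1^{-1/2}$ is finite); \emph{(ii)} $\tfrac{V_1''V_1}{(V_1')^2}$ is bounded on $(0,x]$ (so the remaining integral in \eqref{eq:dera2} converges); and \emph{(iii)} $y\mapsto\tfrac{V_1}{V_1'}(E_1-V_1)^{-1/2}$ is absolutely continuous on $[0,x]$ (so the fundamental theorem of calculus applies).

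I would establish these as follows. For \emph{(ii)}: taking $E:=E_1$ in \eqref{eq:iii} and arguing exactly as in the proof of Proposition~\ref{prop:a}, the droplet $C(E_1,r)$ meets $\R$ in $(0,E_1+r)$, so $U_{E_1}$ contains $(0,x^{max}(E_1+r))\supset(0,x]$ (using $x<x^{max}(E_1)<x^{max}(E_1+r)$), and \eqref{eq:iv} supplies the bound $\big|\tfrac{V_1''V_1}{(V_1')^2}\big|\le C_{E_1}$ on $(0,x]$. Given \emph{(i)} and \emph{(ii)}, the derivative displayed above is bounded on $(0,x]$ (note $V_1'(y)\to0$ and $\tfrac{V_1}{V_1'}\to0$ kill the $(E_1-V_1)^{-3/2}$ term), so $\tfrac{V_1}{V_1'}(E_1-V_1)^{-1/2}$ is Lipschitz, hence \emph{(iii)} holds with boundary value $0$ at $y=0$. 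Finally, for \emph{(i)}: by \emph{(ii)} the function $\tfrac{d}{dy}\big(\tfrac{V_1}{V_1'}\big)=1-\tfrac{V_1''V_1}{(V_1')^2}$ is bounded near $0$, so $g:=\tfrac{V_1}{V_1'}$ is Lipschitz near $0$ and has a finite limit $g_0\ge0$ as $y\to0^+$; if $g_0>0$ then $\tfrac{V_1'}{V_1}=\tfrac1g$ is bounded near $0$, so $\log V_1$ is bounded near $0$, contradicting $V_1(y)\to V_1(0)=0$; hence $g_0=0$.

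For \eqref{eq:dera3} I would let $E_1\searrow V_1(x)$ in \eqref{eq:dera2}. The subtracted term $\tfrac{1}{E_1\sqrt{E_1-V_1(x)}\sqrt2}\tfrac{V_1(x)}{V_1'(x)}$ tends to $+\infty$, since $\tfrac{V_1(x)}{E_1 V_1'(x)}\to\tfrac{1}{V_1'(x)}>0$ while $(E_1-V_1(x))^{-1/2}\to+\infty$. The integral term instead converges to a finite limit: for every $E_1>V_1(x)$ and $y\in[0,x)$ one has $(E_1-V_1(y))^{-1/2}\le(V_1(x)-V_1(y))^{-1/2}$, which is integrable on $[0,x]$ because $V_1(x)-V_1(y)$ is comparable to $V_1'(x)(x-y)$ near $y=x$, while $\big|\tfrac12-\tfrac{V_1''V_1}{(V_1')^2}\big|$ is bounded on $(0,x]$ uniformly in $E_1$ (apply \eqref{eq:iv} with a fixed $E>V_1(x)$, for which $(0,x]\subset U_E$); so dominated convergence applies and $\tfrac{d}{dE_1}\psi_1(x,E_1)\to-\infty$. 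The only genuinely delicate point in all of this is the improper endpoint $y=0$ in the proof of \eqref{eq:dera2}: identifying the precise primitive $\tfrac{V_1}{V_1'}(E_1-V_1)^{-1/2}$ and showing its boundary value at $0$ vanishes, which is exactly where the $Deck$ hypothesis is indispensable (to bound $\tfrac{V_1''V_1}{(V_1')^2}$ near $0$ and to force $\tfrac{V_1}{V_1'}\to0$); everything else is routine differentiation under the integral sign and dominated convergence.
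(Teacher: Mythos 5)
Your argument is correct, and its treatment of \eqref{eq:dera1} and \eqref{eq:dera3} coincides with the paper's: the first via complex differentiation under the integral sign with the uniform bound $|\Phi_y'(E_1)|\le\tfrac12(E_{10}-V_1(x)-r)^{-3/2}$, the third by observing that the integral term in \eqref{eq:dera2} stays bounded as $E_1\searrow V_1(x)$ (the paper recognizes the dominating integral as $\tfrac{\sqrt2}{4}T_1(V_1(x))$, you instead note the integrand is $O((x-y)^{-1/2})$; both are fine).

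Where you genuinely diverge from the paper is \eqref{eq:dera2}. The paper performs the double substitution $y\mapsto u=V_1(y)\mapsto s=u/E_1$ to write $\sqrt2\,\psi_1(x,E_1)=\sqrt{E_1}\int_0^{V_1(x)/E_1}(x^{\max})'(E_1 s)(1-s)^{-1/2}\,ds$, differentiates this integral with $E_1$-dependent upper limit (which is exactly where the boundary term $-\tfrac{1}{E_1\sqrt{E_1-V_1(x)}}\tfrac{V_1(x)}{V_1'(x)}$ comes from), and then substitutes back, relying on the bound \eqref{ineq:|u|} established in Proposition~\ref{prop:a} to justify differentiating under the sign. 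You instead start from the already-proved $n=1$ case of \eqref{eq:dera1} and decompose $(E_1-V_1)^{-3/2}$ as an exact derivative of $\tfrac{V_1}{V_1'}(E_1-V_1)^{-1/2}$ plus a controlled remainder, i.e.\ you integrate by parts directly in the variable $y$. Your route is more self-contained (it does not recycle the $z^{\max}$-machinery of Proposition~\ref{prop:a}), at the cost of having to verify the real-variable facts (i) $V_1/V_1'\to 0$ at $0$, (ii) $|V_1V_1''/(V_1')^2|$ bounded on $(0,x]$, and (iii) absolute continuity of the primitive --- which you do correctly, in the order (ii)$\Rightarrow$(i)$\Rightarrow$(iii), with (ii) coming from \eqref{eq:iii}--\eqref{eq:iv}. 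One small imprecision: your parenthetical claim that "$V_1'(y)\to0$ and $V_1/V_1'\to0$ kill the $(E_1-V_1)^{-3/2}$ term" is misleading --- the term in question is $\tfrac{V_1}{2}(E_1-V_1)^{-3/2}$, and it is bounded near $y=0$ simply because $V_1(y)\to 0$ while $(E_1-V_1(y))^{-3/2}\to E_1^{-3/2}$; neither $V_1'\to0$ nor (i) is needed there. This does not affect the validity of the argument.
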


\begin{proof}
Take any $E_{10}>V_{1}(x)$ and let $0<r<E_{10}-V_{1}(x)$. Then for every $y\in[0,x]$ the map $\Phi_y:B(E_{10},r)\to\C$ given by $\Phi_y(E_1)=\tfrac{1}{\sqrt{E_1-V_{1}(y)}}$
is holomorphic (taking again \(E_{1}\) to be complex) with
\[\Phi'_y(E_1)=-\frac{1}{2}\frac{1}{(E_1-V_{1}(y))^{3/2}}\;\text{ for every }\;E_1\in B(E_{10},r).\]
Since for all $y\in[0,x]$ and $E_1\in B(E_{10},r)$ we have
\[|\Phi'_y(E_1)|=\frac{1}{2}\frac{1}{|E_1-V_{1}(y)|^{3/2}}\leq\frac{1}{2(E_{10}-V_{1}(x)-r)^{3/2}},\]
it follows that $\psi_{1}(x,E_1):B(E_{10},r)\to\C$ is holomorphic and
\[\frac{d}{dE_{1}}\psi_{1}(x,E_1)=\int_0^{x}\frac{\Phi'_y(E_1)}{\sqrt{2}}\,dy=-\frac{1}{2\sqrt{2}}\int_0^{x}\frac{1}{(E_1-V_{1}(y))^{3/2}}dy\]
for every $E_1\in B(E_{10},r)$. This gives the analyticity of $\psi_{1}(x,E_1)$ and \eqref{eq:dera1} for $n=1$. Repeating the same reasoning for higher order derivatives
we obtain
\[\frac{d^{n}}{dE_{1}^n}\psi_{1}(x,E_1)=\int_0^{x}\frac{\Phi^{(n)}_y(E_1)}{\sqrt{2}}\,dy=\frac{(-1)^n(2n-1)!!}{2^{\frac{2n+1}{2}}}\int_0^{y_0}\frac{1}{(E_1-V_{1}(y))^{\frac{2n+1}{2}}}dy.\]

In order to show \eqref{eq:dera2} we first notice that  using integration by substitution  as in \eqref{ibs}, for every $E_1>V_{1}(x)$ we obtain
\[
\sqrt{2}\psi_{1}(x,E_1)=\int_0^{x}\frac{1}{\sqrt{E_1-V_{1}(y)}}dy
=\int_0^{V_{1}(x)}\frac{(x^{max})'(u)}{\sqrt{E_1-u}}du
=\sqrt{E_1}\int_0^{V_{1}(x)/E_1}\frac{(x^{max})'(E_1 s)}{\sqrt{1-s}} ds.
\]
In view of \eqref{ineq:|u|} and  $\int_0^t\frac{(x^{max})'(E_1 s)}{\sqrt{1-s}} ds<\int_0^1\frac{(x^{max})'(E_1 s)}{\sqrt{1-s}} ds=A(E_1)<+\infty$ for every $0\leq t<1$ and $E_1>0$, using arguments similar to those in the proof of Proposition~\ref{prop:a}, for every $E_1>V_{1}(x)$ we obtain
\begin{align*}
\sqrt{2}\frac{d}{dE_{1}}\psi_{1}(x,E_1)&=\frac{\psi_{1}(x,E_1)}{E_1\sqrt{2}}-\sqrt{E_1}\frac{(x^{max})'(V_{1}(x))}{\sqrt{1-V_{1}(x)/E_1}}\frac{V_{1}(x)}{E_1^2}+\sqrt{E_1}\int_0^{V_{1}(x)/E_1}\frac{(x^{max})''(E_1 s)s}{\sqrt{1-s}} ds\\
&=\frac{1}{E_1}\int_0^{x}\!\frac{1}{\sqrt{E_1\!-\!V_{1}(y)}}\left(\frac{1}{2}\!-\!\frac{V_{1}''(y)V_{1}(y)}{(V_{1}'(y))^2}\right)dy
\!-\!\frac{1}{E_1\sqrt{E_1\!-\!V_{1}(x)}}\frac{V_{1}(x)}{V_{1}'(x)}.
\end{align*}

In view of \eqref{eq:iv}, for every $x>0$ we have
\[\left|\frac{V_{1}''(y)V_{1}(y)}{(V'_{1}(y))^2}\right|\leq C_{x}\text{ for every }y\in(0,x].\]
Therefore for every $E_1>V_1(x)$ we have
\begin{align*}
\left|\int_0^{x}\!\frac{1}{\sqrt{E_1\!-\!V_{1}(y)}}\left(\frac{1}{2}\!-\!\frac{V_{1}''(y)V_{1}(y)}{(V_{1}'(y))^2}\right)dy\right|&\leq
\left(\frac{1}{2}+C_{x}\right)\int_0^{x}\!\frac{1}{\sqrt{V_1(x)\!-\!V_{1}(y)}}\,dy\\
&=\frac{\sqrt{2}}{4}\left(\frac{1}{2}+C_{x}\right)T_{1}(V_1(x))<+\infty.
\end{align*}
As
\[\lim_{E_1\searrow V_1(x)}\frac{1}{E_1\sqrt{E_1-V_1(x)}}\frac{V_1(x)}{V_{1}'(x)}=+\infty,\]
this gives \eqref{eq:dera3}.
\end{proof}

 Suppose that $V_1,V_2:\R\to\R_{\geq 0}$ are even $C^2$-potentials satisfying \eqref{eq:i}.
Let $E$, $x_0$, $y_0$ be positive numbers such that $V_1(x_0)+V_2(y_0)<E$. In view of Proposition~\ref{prop:ay},
$\psi_1(x_0,\cdot):(V_1(x_0),+\infty)\to\R_{>0}$ and $\psi_2(y_0,E-\cdot):[0,E-V_2(y_0))\to\R_{>0}$ are analytic. If additionally $V_1,V_2\in Deck$ then
\begin{equation}\label{eq:-infty}
\lim_{E_1\searrow V_1(x_0)}\frac{d}{dE_1}\psi_1(x_0,E_1)=-\infty,\quad\lim_{E_1\nearrow E-V_2(y_0)}\frac{d}{dE_1}\psi_2(y_0,E-E_1)=+\infty.
\end{equation}
Moreover, by Proposition~\ref{prop:a}, $T_1,T_2(E-\cdot):(0,E)\to\R_{> 0}$ are also analytic.

\begin{proposition}\label{prop:indep2}
Let  $V_1,V_2:\R\to\R_{\geq 0}$ be two $Deck$-potentials.
Fix an energy level $E>0$.
Assume that  $0<x_1<\ldots<x_N$ and $0<y_1<\ldots<y_K$ are such that $V_1(x_N)+V_2(y_K)<E$. Then,
for any sequence $(\gamma_j)_{j=0}^{N+K+1}$ with  \(\sum^{N+K}_{j=1}|\gamma_j|\neq0\), for all but countably many $E_1\in [V_1(x_N),E-V_2(y_K)]$, we have:  \begin{equation}\label{neq:ga1}
\gamma_0\,T_1(E_1)+\sum_{j=1}^N\gamma_j\,\psi_1({x_j},E_1)+\sum_{j=1}^K\gamma_{N+j}\,\psi_2({y_j},E-E_1)+\gamma_{N+K+1}\,T_2(E-E_1)\neq 0.
\end{equation}

\end{proposition}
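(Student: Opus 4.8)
The plan is to treat the left-hand side of \eqref{neq:ga1} as a single function
\[
F(E_1):=\gamma_0\,T_1(E_1)+\sum_{j=1}^N\gamma_j\,\psi_1(x_j,E_1)+\sum_{j=1}^K\gamma_{N+j}\,\psi_2(y_j,E-E_1)+\gamma_{N+K+1}\,T_2(E-E_1)
\]
on the open interval $J:=\big(V_1(x_N),E-V_2(y_K)\big)$, which is non-degenerate since $V_1(x_N)+V_2(y_K)<E$ and whose left endpoint $V_1(x_N)$ is positive since $x_N>0$. First I would note that $F$ is real-analytic on $J$: by Proposition~\ref{prop:ay} each $\psi_1(x_j,\cdot)$ is analytic on $(V_1(x_j),+\infty)\supseteq J$ (as $V_1(x_j)\le V_1(x_N)$), each $\psi_2(y_j,E-\cdot)$ is analytic wherever $E_1<E-V_2(y_j)$, which holds throughout $J$ (as $V_2(y_j)\le V_2(y_K)$), and by Proposition~\ref{prop:a} the periods $T_1$ and $T_2(E-\cdot)$ are analytic on $(0,+\infty)\supseteq J$ and on $(-\infty,E)\supseteq J$ respectively. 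Granted analyticity, it then suffices to prove that $F$ is not identically zero on $J$: a real-analytic function on an open interval that is not identically zero has only isolated zeros, hence at most countably many, and re-adjoining the two endpoints of $[V_1(x_N),E-V_2(y_K)]$ keeps the zero set countable, which is exactly the assertion that \eqref{neq:ga1} fails for at most countably many $E_1$.

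To show $F\not\equiv 0$ I would argue by contradiction, assuming $F\equiv 0$ on $J$ (hence $F'\equiv 0$), and peel off the $\psi$-coefficients from the two endpoints using the derivative blow-up in \eqref{eq:dera3} and \eqref{eq:-infty}. Suppose first that $\{1\le j\le N:\gamma_j\neq 0\}$ is non-empty and let $j_0$ be its largest element. Since $\gamma_j=0$ for $j_0<j\le N$, the function $F$ coincides on $J$ with the same linear combination but with the $\psi_1(x_j,\cdot)$-terms for $j>j_0$ removed, and this reduced combination is real-analytic on the larger interval $(V_1(x_{j_0}),E-V_2(y_K))$; by the identity principle it vanishes there too. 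Now every term of it except $\gamma_{j_0}\psi_1(x_{j_0},\cdot)$ — namely $T_1$, the $\psi_1(x_j,\cdot)$ with $j<j_0$, the $\psi_2(y_j,E-\cdot)$ and $T_2(E-\cdot)$ — has a derivative that stays bounded as $E_1\searrow V_1(x_{j_0})$, because each of their arguments remains strictly inside the corresponding domain of analyticity there (this uses $x_1<\dots<x_N$, $y_1<\dots<y_K$ and $V_1(x_N)+V_2(y_K)<E$). Hence $F'(E_1)=\gamma_{j_0}\tfrac{d}{dE_1}\psi_1(x_{j_0},E_1)+(\text{a bounded term})$, which by \eqref{eq:dera3} tends to $\pm\infty$ as $E_1\searrow V_1(x_{j_0})$, contradicting $F'\equiv 0$. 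Therefore $\gamma_1=\dots=\gamma_N=0$. A mirror-image argument at the right endpoint — peeling the $\psi_2(y_j,E-\cdot)$-terms and invoking the second limit in \eqref{eq:-infty} as $E_1\nearrow E-V_2(y_{j_0})$ for the largest index $j_0$ with $\gamma_{N+j_0}\neq 0$ — gives $\gamma_{N+1}=\dots=\gamma_{N+K}=0$. This contradicts $\sum_{j=1}^{N+K}|\gamma_j|\neq 0$, so $F\not\equiv 0$ on $J$, which completes the argument.

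I expect the only genuinely delicate point to be the bookkeeping in the peeling step: one must make sure that, when differentiating and letting $E_1$ approach $V_1(x_{j_0})$ (respectively $E-V_2(y_{j_0})$), every surviving summand other than the single $\psi$-term whose derivative blows up keeps its argument bounded away from its own singular value, so that its derivative stays bounded. This is exactly where the strict orderings of the $x_j$'s and of the $y_j$'s, together with $V_1(x_N)+V_2(y_K)<E$, are needed; everything else — the analyticity input from Propositions~\ref{prop:a} and~\ref{prop:ay}, the identity principle, and the isolated-zeros conclusion — is routine.
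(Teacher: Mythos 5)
Your proof is correct and takes essentially the same route as the paper's: both reduce to the identity principle for real-analytic functions and derive a contradiction from the one-sided derivative blow-up of $\psi_1(x_{j_0},\cdot)$ (respectively $\psi_2(y_{j_0},E-\cdot)$) at the boundary of its domain of analyticity, using the strict orderings of the $x_j$'s and $y_j$'s to keep all other derivatives bounded. The only cosmetic difference is that the paper invokes a WLOG reduction to $\gamma_N\neq 0$ or $\gamma_{N+K}\neq 0$ (implicitly shrinking $N$ or $K$), whereas you keep $N,K$ fixed, take the largest nonzero index $j_0$, and extend the vanishing identity to the enlarged interval $(V_1(x_{j_0}),E-V_2(y_K))$; this is a slightly more explicit version of the same bookkeeping.
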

\begin{proof}
Suppose, contrary to our claim, that \eqref{neq:ga1} does not hold for uncountably many $E_1 \in (V_1(x_N),E-V_2(y_K))$. Since $T_1$, $T_2(E-\cdot)$, $\psi_1({x_j},\cdot)$ for $1\leq j\leq N$ and $\psi_2({y_j},E-\cdot)$, $1\leq j\leq K$ are analytic on $(V_1(x_N),E-V_2(y_K))$,
we have
\begin{equation}\label{eq:gamma}
\gamma_0\,T_1(E_1)+\sum_{j=1}^N\gamma_j\,\psi_1({x_j},E_1)+\sum_{j=1}^K\gamma_{N+j}\,\psi_2({y_j},E-E_1)+\gamma_{N+K+1}\,T_2(E-E_1)= 0
\end{equation}
for all $E_1\in (V_1(x_N),E-V_2(y_K))$.
Without loss of generality we can assume that $\gamma_N$ or $\gamma_{N+K}$ are non-zero.
To simplify the writing, since \(T_{1}\) and \(T_{2}\) are also analytic functions,   we denote $\psi_1({x_0},\cdot):=T_1(\cdot)$ and $\psi_2({y_{K+1}},\cdot):=T_2(\cdot)$.

Suppose that $\gamma_N\neq 0$.
In view of Propositions~\ref{prop:a}~and~\ref{prop:ay}, the maps   $\psi_1({x_j},\cdot)$ for $0\leq j\leq N-1$ and $\psi_2({y_j},\cdot)$ for $1\leq j\leq K+1$ are analytic on  $(V_1(x_{N-1}),E-V_2(y_K))$,
in particular at $V_1(x_N)$.
As $\gamma_N\neq 0$, if  \eqref{eq:gamma} holds, the limit
\begin{align*}
\lim_{E_1\searrow V_1(x_N)}\frac{d}{dE_1}\psi_1({x_N},E_1)&=-\sum_{j=0}^{N-1}\frac{\gamma_j}{\gamma_N}\frac{d}{dE_1}\psi_1({x_j},E_1)|_{E_1=V_{1}(x_N)}\\
&\quad
-\sum_{j=1}^{K+1}\frac{\gamma_{N+j}}{\gamma_N}\frac{d}{dE_1}\psi_2({y_j},E-E_1)|_{E_1=V_{1}(x_N)}
\end{align*}
is finite.
On the other hand, by \eqref{eq:-infty}, the limit of \(\lim_{E_1\searrow V_1(x_N)}\frac{d}{dE_1}\psi_1({x_N},E_1)\) is $-\infty$. This contradiction completes the proof when $\gamma_N\neq 0$.


If $\gamma_{N+K}\neq 0$, then a contradiction follows from similar arguments based on studying the left-side limit of $\tfrac{d}{dE_1}\psi_2({y_K},E-E_1)$ at $E- V_2(y_K)$.
This completes the proof.
\end{proof}

\begin{lemma}\label{lem:negposder}
Suppose that $V_1,V_2:\R\to\R_{\geq 0}$ are are even $C^2$-potentials satisfying \eqref{eq:i}.
Then for any $x_0,y_0>0$ we have
\begin{align}
&\frac{d}{dE_1}\psi_1({x_0},E_1)<0\;\text{ for all }\;  E_1>V_1(x_0)\label{ineq:dera2}\\
&\frac{d}{dE_1}\psi_2({x_0},E-E_1)>0\;\text{ for all }\; E_1\in(0,E-V_2(y_0)).\label{ineq:dera4}
\end{align}

Assume additionally that $V_1, V_2\in Deck$ and satisfy \eqref{eq:vi}.
Then for $i=1,2$ we have
\begin{equation}\label{neq:>12}
\frac{V_i(x)V_i''(x)}{(V_i'(x))^2}> \frac{1}{2}\;\text{ for all but countably many }\; x>0
\end{equation}
and
\begin{align}
&\frac{d}{dE_1}T_1(E_1)<0\;\text{ for every }\; E_1>0\label{ineq:dera1}\\
&\frac{d}{dE_1}T_2(E-E_1)>0\;\text{ for every }\; E_1\in(0,E)\label{ineq:dera3}.
\end{align}
\end{lemma}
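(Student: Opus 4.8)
The plan is to establish each of the six displayed inequalities using the explicit integral formulas already proven in Propositions~\ref{prop:a} and~\ref{prop:ay}, together with the convexity characterisation of condition~\eqref{eq:v} promised in Lemma~\ref{lem:square}. First I would prove \eqref{ineq:dera2}: formula~\eqref{eq:dera1} with $n=1$ gives
\[
\frac{d}{dE_1}\psi_1(x_0,E_1)=-\frac{1}{2\sqrt2}\int_0^{x_0}\frac{dy}{(E_1-V_1(y))^{3/2}},
\]
and since $V_1(y)\le V_1(x_0)<E_1$ on $(0,x_0)$ (using that $V_1$ is strictly increasing on $\R_{\ge0}$ by~\eqref{eq:i}), the integrand is positive and finite, so the derivative is strictly negative. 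The estimate~\eqref{ineq:dera4} follows by the same computation applied to $\psi_2(y_0,E-E_1)$, picking up a sign from the chain rule $\tfrac{d}{dE_1}=-\tfrac{d}{dE_2}$. Note these two statements need only the mild hypothesis~\eqref{eq:i}, not the $Deck$ or~\eqref{eq:vi} assumptions.

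Next, under the stronger hypotheses $V_i\in Deck$ and~\eqref{eq:vi}, I would deduce~\eqref{neq:>12}. Condition~\eqref{eq:v} says $\tfrac{V_iV_i''}{(V_i')^2}\ge\tfrac12$ everywhere, i.e.\ $V_i$ is the square of a convex function (Lemma~\ref{lem:square}); condition~\eqref{eq:vi} says moreover that this ratio is not identically $\tfrac12$. Writing $V_i=W_i^2$ with $W_i$ convex, one computes $\tfrac{V_iV_i''}{(V_i')^2}-\tfrac12=\tfrac{W_iW_i''}{2(W_i')^2}$, so the ratio equals $\tfrac12$ exactly at points where $W_i''=0$. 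If the set where $W_i''=0$ were uncountable, then (since $W_i'$ is monotone, $W_i''\ge0$) $W_i'$ would be constant on an interval, forcing $W_i$ affine there and hence — by analyticity of $V_i$ on $(0,\infty)$ from~\eqref{eq:ii} — $V_i$ quadratic on all of $(0,\infty)$, contradicting~\eqref{eq:vi}. (The analyticity of $W_i=\sqrt{V_i}$ on the set where $V_i>0$, i.e.\ on $(0,\infty)$, is what makes the "countably many" sharp — the zero set of the real-analytic $W_i''$ is discrete unless $W_i''\equiv0$.) I expect this to be the main obstacle: one must be careful that $W_i$ need not be analytic at $0$, but since all the $x>0$ in the statement avoid the origin this is harmless, and one must correctly invoke the identity theorem for the real-analytic function $W_i''$ on the connected set $(0,\infty)$.

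Finally, for~\eqref{ineq:dera1} I would use formula~\eqref{eq:det}:
\[
\frac14 T_1'(E_1)=\frac{1}{E_1\sqrt2}\int_0^{x^{max}(E_1)}\frac{1}{\sqrt{E_1-V_1(x)}}\Big(\frac12-\frac{V_1''(x)V_1(x)}{(V_1'(x))^2}\Big)\,dx.
\]
By~\eqref{eq:v} the bracket is $\le0$ for every $x>0$, and by~\eqref{neq:>12} it is strictly negative for all but countably many $x$; since the weight $\tfrac{1}{\sqrt{E_1-V_1(x)}}$ is positive and the integrand is dominated near $x=x^{max}(E_1)$ by an integrable singularity (the bracket stays bounded there by~\eqref{eq:iv}), the integral is strictly negative, hence $T_1'(E_1)<0$ for every $E_1>0$. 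The inequality~\eqref{ineq:dera3} follows identically for $T_2(E-E_1)$ with the chain-rule sign flip, giving a strictly positive derivative. I would close by remarking that the convergence of the integral at the endpoint is exactly the finiteness already established in the proof of Proposition~\ref{prop:ay} (the estimate bounding $\int_0^x|\tfrac12-\tfrac{V_1''V_1}{(V_1')^2}|/\sqrt{E_1-V_1(y)}\,dy$ by a multiple of $T_1(V_1(x))$), so no new estimate is needed.
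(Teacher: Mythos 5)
Your proof is correct and follows the same route as the paper: \eqref{ineq:dera2} and \eqref{ineq:dera4} from the explicit integral \eqref{eq:dera1}, \eqref{neq:>12} from analyticity plus \eqref{eq:vi}, and \eqref{ineq:dera1}, \eqref{ineq:dera3} from \eqref{eq:det} and \eqref{neq:>12}. One small remark on \eqref{neq:>12}: the detour through $W_i=\sqrt{V_i}$ is unnecessary, since the quotient $V_iV_i''/(V_i')^2-\tfrac12$ is itself real-analytic on $(0,\infty)$ (denominator non-vanishing), non-negative by \eqref{eq:v}, and not identically zero by \eqref{eq:vi}, so it has only countably many zeros; moreover your intermediate claim that an uncountable zero set of $W_i''$ would force $W_i'$ to be constant on an interval is false as stated (an uncountable closed set need not contain an interval), and it is precisely the parenthetical you added --- that a non-trivial real-analytic function on $(0,\infty)$ has a discrete, hence countable, zero set --- that actually carries the argument.
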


\begin{proof}
The inequalities \eqref{ineq:dera2} and \eqref{ineq:dera4} follow immediately from \eqref{eq:dera1}.
The inequality  \eqref{neq:>12} follows immediately from the analyticity of $V_i$ and \eqref{eq:vi}.
Finally \eqref{ineq:dera1} and \eqref{ineq:dera3} follow immediately from  \eqref{eq:det} and \eqref{neq:>12}.
\end{proof}

\begin{lemma}\label{lem:square}
Let $V_1:\R\to\R_{\geq 0}$ be a $Deck$-potential. Then $V_1$ satisfies \eqref{eq:v} if and only if $V_1^{1/2}$ is convex.
The following three conditions are equivalent:
\begin{itemize}
\item[(a)] $V_1$ satisfies \eqref{eq:v} and does not meet \eqref{eq:vi};
\item[(b)] $\displaystyle\frac{V_1(x)V_1''(x)}{(V_1'(x))^2}= \frac{1}{2}$ for all $x>0$;
\item[(c)] $V_1(x)=\frac{1}{2}\omega _{1}^{2}x^2$ for some $\omega_1>0$.
\end{itemize}
Moreover, if $V_{1}(x)=\frac{1}{2}\omega _{1}^{2}x^2$ then
\[T_1(E_1)=\frac{2\pi}{\omega _{1}}\quad\text{and}\quad \psi_1({x_0},E_1)=\frac{1}{\omega _{1}}\arcsin\frac{\omega _{1}x_0}{\sqrt{2E_1}}=\frac{1}{\omega _{1}}\arcsin\sqrt{\frac{V_1(x_0)}{E_1}}.\]
\end{lemma}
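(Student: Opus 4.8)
The plan is to reduce everything to one differentiation and one explicit integral. First I would introduce $W:=V_1^{1/2}$; by \eqref{eq:i} we have $V_1>0$ and $V_1'>0$ on $(0,+\infty)$, so $W$ is $C^2$ and strictly increasing there, and a direct computation gives, for $x>0$,
\[W''(x)=\frac{V_1''(x)V_1(x)-\tfrac12(V_1'(x))^2}{2\,V_1(x)^{3/2}}=\frac{(V_1'(x))^2}{2\,V_1(x)^{3/2}}\Big(\frac{V_1(x)V_1''(x)}{(V_1'(x))^2}-\frac12\Big).\]
Since the prefactor is strictly positive on $(0,+\infty)$, this identity immediately shows that $W''\ge 0$ on $(0,+\infty)$ is equivalent to \eqref{eq:v}, which is the first assertion, once one notes that an even continuous function which is convex and non-decreasing on $[0,+\infty)$ is in fact convex on all of $\R$ (the one-sided derivatives of $W$ are non-decreasing on each half-line and $W$ has a non-negative derivative jump at $0$ because $W'(0^+)\ge 0$).

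Next I would settle the equivalences (a)$\Leftrightarrow$(b)$\Leftrightarrow$(c). The implication (a)$\Leftrightarrow$(b) is purely logical: since \eqref{eq:vi} reads ``\eqref{eq:v} holds and $\tfrac{V_1V_1''}{(V_1')^2}$ is not identically $\tfrac12$'', assuming \eqref{eq:v} together with the failure of \eqref{eq:vi} is exactly the statement $\tfrac{V_1V_1''}{(V_1')^2}\equiv\tfrac12$ on $(0,+\infty)$, which is (b). By the displayed identity, (b) is equivalent to $W''\equiv 0$ on $(0,+\infty)$, hence $W(x)=ax+b$ there; continuity forces $b=\lim_{x\searrow 0}\sqrt{V_1(x)}=\sqrt{V_1(0)}=0$ and $V_1'>0$ forces $a>0$, so $V_1(x)=a^2x^2$ for $x\ge 0$ and, by evenness, for all $x$; setting $\omega_1:=a\sqrt 2>0$ gives (c). Conversely (c)$\Rightarrow$(b) is the one-line check $\tfrac{V_1V_1''}{(V_1')^2}=\tfrac{(\tfrac12\omega_1^2x^2)\,\omega_1^2}{(\omega_1^2x)^2}=\tfrac12$.

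Finally, for the explicit formulas I would specialize $V_1(x)=\tfrac12\omega_1^2x^2$, so that $x^{max}(E_1)=\sqrt{2E_1}/\omega_1$, and substitute $u=\omega_1 s/\sqrt{2E_1}$ in \eqref{def:psires}: for $0\le x_0\le\sqrt{2E_1}/\omega_1$ this turns $\psi_1(x_0,E_1)=\int_0^{x_0}\frac{ds}{\sqrt2\sqrt{E_1-\tfrac12\omega_1^2s^2}}$ into $\tfrac1{\omega_1}\int_0^{\omega_1x_0/\sqrt{2E_1}}(1-u^2)^{-1/2}\,du=\tfrac1{\omega_1}\arcsin(\omega_1x_0/\sqrt{2E_1})$, and $\omega_1x_0/\sqrt{2E_1}=\sqrt{V_1(x_0)/E_1}$ since $V_1(x_0)=\tfrac12\omega_1^2x_0^2$; taking $x_0=x^{max}(E_1)$ and using \eqref{def:a} gives $\tfrac14T_1(E_1)=\tfrac1{\omega_1}\arcsin 1=\tfrac{\pi}{2\omega_1}$, i.e.\ $T_1(E_1)=2\pi/\omega_1$. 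The argument for $V_2$, $\psi_2$, $T_2$ is verbatim the same. I do not expect a genuine obstacle here: the only point requiring a moment's care is the passage from convexity of $V_1^{1/2}$ on $(0,+\infty)$ to convexity on $\R$ together with its boundary behaviour at the origin, both of which follow from the evenness and positivity of $V_1$; everything else is the identity above plus a standard substitution.
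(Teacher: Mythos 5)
Your proof is correct and follows essentially the same route as the paper: the key identity relating $V_1V_1''/(V_1')^2-\tfrac12$ to $(V_1^{1/2})''$ (the paper's formula with $m=2$, up to rearrangement), positivity of the prefactor for the first assertion, and integration of the resulting ODE for (b)$\Rightarrow$(c). The only cosmetic difference is that you solve $W''\equiv0$ directly for $W=V_1^{1/2}$ affine, while the paper integrates $\tfrac{d}{dx}\log V_1'=\tfrac{d}{dx}\log\sqrt{V_1}$, which amounts to the same elementary step; your extra remark about convexity of $W$ across the origin is a small sharpening the paper leaves implicit.
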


\begin{proof}
The first part of the lemma follows directly from the formula (derived by taking derivatives of \((V_1^{1/m}(x))^{m}\))
:\begin{equation}
\frac{V_1(x)V_1''(x)}{(V_1'(x))^2}=\frac{m-1}{m}+\frac{V_1^{1/m}(x)(V_1^{1/m})''(x)}{m((V_1^{1/m})'(x))^2}
\end{equation}
that holds for all $m\in\N$ and  $x>0$. Since $V_1(x)$, $V_1'(x)$, $V_1^{1/m}(x)$ and $(V_1^{1/m})'(x)$ are
positive for all $x>0$, we have
\[\frac{V_1(x)V_1''(x)}{(V_1'(x))^2}\geq \frac{m-1}{m}\;\text{ for all }\; x>0\]
if and only if the map $V_1^{1/m}$ has non-negative second derivative on $\R\setminus\{0\}$.

\medskip

The implications (c)$\Rightarrow$(b)$\Rightarrow$(a)$\Rightarrow$(b) are obvious.
To prove (b)$\Rightarrow$(c) suppose that $V_1V_1''=\frac{1}{2}V_1^2$. Then
\[\frac{d}{dx}\log (V_1'(x))=\frac{V_1''(x)}{V_1'(x)}=\frac{1}{2}\frac{V_1'(x)}{V_1(x)}=\frac{d}{dx}\log \sqrt{V_1(x)}.\]
Hence, for some $C>0$ we have
$V_1'(x)=C\sqrt{V_1(x)}$.
Thus
\[\frac{d}{dx}\sqrt{V_1(x)}=\frac{1}{2}\frac{V_1'(x)}{\sqrt{V_1(x)}}=\frac{C}{2}.\]
As $V_1(0)=0$, it follows that $V_1(x)=(\frac{C}{2})^2x^2=\frac{1}{2}\omega _{1}^{2}x^2$.
The form of the maps $T_1(E_1)$ and $\psi_1({x_0},E_1)$ when $V_1(x)= \frac{1}{2}\omega _{1}^{2}x^2$ follows from direct computations.
\end{proof}

The following lemma is the counterpart of Proposition~\ref{prop:indep2} in the case where all $\gamma_j$ for $1\leq j\leq N+K$ are zero.

\begin{lemma}\label{lem:indep3}
Assume $V_1,V_2:\R\to\R_{\geq 0}$ are $Deck$-potentials satisfying \eqref{eq:v} and $\gamma_0,\gamma_{N+K+1}$ are real numbers with $|\gamma_0|+|\gamma_{N+K+1}|\neq 0$.

If at least one potential $V_1$ or $V_2$ satisfies \eqref{eq:vi} (i.e.\ is not quadratic) then
\begin{equation}\label{eq:gamex}
\gamma_0\,T_1(E_1)+\gamma_{N+K+1}\,T_2(E-E_1)\neq 0 \text{  for all but countably many } E_1\in(0,E).
\end{equation}

If both potentials $V_1$ and $V_2$ are quadratic and
\begin{equation}\label{eq:gamex0}
\gamma_0\,T_1(E_1)+\gamma_{N+K+1}\,T_2(E-E_1)= 0\ \text{  for some }\ E_1\in(0,E),
\end{equation}
then $\gamma_{0}/\gamma_{N+K+1}=-\Omega$.\footnote{If $V_1$ and $V_2$ are quadratic then $\Omega(E,E_1)$ does not depend on $E,E_1$.}
\end{lemma}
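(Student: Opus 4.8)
The plan is to handle the two assertions separately; the quadratic one reduces to a one-line computation once Lemma~\ref{lem:square} is invoked, and the non-quadratic one follows from monotonicity of the period maps.

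\emph{The quadratic case.} Suppose $V_1(x)=\tfrac12\omega_1^2x^2$ and $V_2(x)=\tfrac12\omega_2^2x^2$. By Lemma~\ref{lem:square} the periods are constant: $T_1(E_1)\equiv 2\pi/\omega_1$ and $T_2(E-E_1)\equiv 2\pi/\omega_2$. First I would observe that if \eqref{eq:gamex0} holds then both coefficients are nonzero: if, say, $\gamma_{N+K+1}=0$, then $\gamma_0\neq 0$ (since $|\gamma_0|+|\gamma_{N+K+1}|\neq 0$) and \eqref{eq:gamex0} would force $T_1(E_1)=0$, impossible as $T_1>0$; the case $\gamma_0=0$ is symmetric. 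Then \eqref{eq:gamex0} reads $2\pi\gamma_0/\omega_1+2\pi\gamma_{N+K+1}/\omega_2=0$, i.e.\ $\gamma_0/\gamma_{N+K+1}=-\omega_1/\omega_2$; and since in the quadratic case $\omega_i(I_i)\equiv\omega_i$ and hence $\Omega(E,E_1)=\omega_1/\omega_2$ for all $E,E_1$, this is exactly $\gamma_0/\gamma_{N+K+1}=-\Omega$.

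\emph{The non-quadratic case.} Put $f(E_1)=\gamma_0T_1(E_1)+\gamma_{N+K+1}T_2(E-E_1)$ on $(0,E)$; I want to show $f$ has at most one zero, which is stronger than \eqref{eq:gamex}. If $\gamma_0\gamma_{N+K+1}\geq 0$ then, since $T_1,T_2>0$ and the two coefficients are not both zero, $f$ has constant strict sign and never vanishes. Otherwise $\gamma_0\gamma_{N+K+1}<0$, and after replacing $(\gamma_0,\gamma_{N+K+1})$ by $(-\gamma_0,-\gamma_{N+K+1})$ if needed (which does not change the zero set of $f$) I may assume $\gamma_0>0>\gamma_{N+K+1}$. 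By Proposition~\ref{prop:a}, formula \eqref{eq:det}, together with \eqref{eq:v}, the integrand in \eqref{eq:det} is $\leq 0$, so $T_1$ is non-increasing and hence $E_1\mapsto T_2(E-E_1)$ is non-decreasing; and if $V_i$ additionally satisfies \eqref{eq:vi} then, $V_i$ being analytic on $(0,\infty)$ by \eqref{eq:ii}, one has $V_iV_i''/(V_i')^2>\tfrac12$ off a countable set (as in Lemma~\ref{lem:negposder}), so that integrand is strictly negative on a set of positive measure and the corresponding monotonicity becomes strict. Consequently $f(E_1)=\gamma_0T_1(E_1)-|\gamma_{N+K+1}|T_2(E-E_1)$ is a sum of two non-increasing functions, at least one of which is strictly decreasing because at least one of $V_1,V_2$ satisfies \eqref{eq:vi}; hence $f$ is strictly decreasing and vanishes at most once.

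\emph{Expected obstacle.} There is no genuine difficulty here, only careful bookkeeping, and two points deserve attention: (i) after the sign normalization the two terms of $f$ must be co-monotone (both non-increasing), so that their sum stays monotone — this is why one first disposes of the case $\gamma_0\gamma_{N+K+1}\geq0$ separately; and (ii) the monotonicity invoked needs only the single relevant potential to be non-quadratic, whereas Lemma~\ref{lem:negposder} is stated for both $V_1,V_2$ satisfying \eqref{eq:vi} — but its proof gives the per-index chain \eqref{eq:vi}$\Rightarrow$\eqref{neq:>12}$\Rightarrow$ strict monotonicity of that one period, which is all that is used.
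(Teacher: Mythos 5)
Your proof is correct, and for the non-quadratic case it takes a genuinely different (and cleaner) route than the paper's. The paper's argument goes by contradiction through the identity theorem: it notes that $T_1(\cdot)$ and $T_2(E-\cdot)$ are analytic, so if the zero set of $f(E_1):=\gamma_0T_1(E_1)+\gamma_{N+K+1}T_2(E-E_1)$ were uncountable it would be all of $(0,E)$; one then sees both coefficients are nonzero, writes $T_1 = \gamma T_2(E-\cdot)$ with $\gamma=-\gamma_{N+K+1}/\gamma_0>0$, differentiates, and invokes Lemma~\ref{lem:negposder} (ultimately \eqref{eq:det} together with \eqref{eq:v}--\eqref{eq:vi}) for a sign contradiction. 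You instead show directly that $f$ is strictly monotone on $(0,E)$ — after first disposing of $\gamma_0\gamma_{N+K+1}\geq 0$, where $f$ has constant strict sign — and hence has at most one zero. Both proofs hinge on the same input (the sign of the integrand in \eqref{eq:det}, and strict negativity under \eqref{eq:vi}), but your packaging avoids the analyticity/identity-theorem step entirely and produces the stronger conclusion that at most one $E_1$ is exceptional, whereas the statement (and the paper's route) only asserts countably many. Your side-remark (ii) is also right: Lemma~\ref{lem:negposder} is stated under the hypothesis that both potentials satisfy \eqref{eq:vi}, but the derivative sign for $T_i$ only uses \eqref{eq:vi} for $V_i$, which is exactly what the split into "non-increasing vs.\ strictly decreasing" requires; the paper's own proof implicitly makes the same per-index observation when it separately assumes "$V_1$ is not quadratic." The quadratic case in your write-up is essentially identical to the paper's (reading off $T_i=2\pi/\omega_i$ from Lemma~\ref{lem:square}), though worth noting: you correctly conclude $\gamma_0/\gamma_{N+K+1}=-\Omega$, while the paper's displayed final equality has a sign typo that you have silently fixed.
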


\begin{proof}
Suppose that $V_1$ is not quadratic and \eqref{eq:gamex} does not hold.   Since $T_1(\cdot)$ and $T_2(E-\cdot)$ are analytic and take only positive values, we have:
\[\gamma_0\,T_1(E_1)+\gamma_{N+K+1}\,T_2(E-E_1)= 0 \text{  for all  } E_1\in(0,E),\]
both \(\gamma_{0}\) and \(\gamma_{N+K+1}\) do not vanish, and  $\gamma:= -\gamma_{N+K+1}/\gamma_0>0$. It follows that
\[\frac{d}{dE_1}T_1(E_1)=\gamma\frac{d}{dE_1}T_2(E-E_1) \text{  for all  } E_1\in(0,E).\]
On the other hand, by Lemma~\ref{lem:negposder}, $\frac{d}{dE_1}T_1(E_1)>0$ and $\frac{d}{dE_1}T_2(E-E_1)\leq 0$ for all $E_1\in E$. This gives a contradiction.

Suppose that $V_i(x)=\tfrac{1}{2}\omega_i^2x^2$ for $i=1,2$ and \eqref{eq:gamex0} holds, since \(T_{i}=\frac{2\pi }{\omega_{i}}\), Eq.~\eqref{eq:gamex0} implies $\tfrac{\gamma_0}{\omega_1}+\tfrac{\gamma_{N+K+1}}{\omega_2}=0$, so
\[\frac{\gamma_0}{\gamma_{N+K+1}}=-\frac{\omega_1}{\omega_2}=\Omega.\]
\end{proof}

Recall that for $n$ real-valued $C^{n-1}$-functions $f_1, \ldots , f_n$ on an interval $I$ their Wronskian at $x\in I$ is defined by
\[\mathscr{W}(f_1,\ldots,f_n)(x)=\det\big[f_i^{(j-1)}(x)\big]_{1\leq i,j\leq n}.\]
We will also deal with the bracket
\[[f,g](x)=\mathscr{W}(g,f)=f'(x)g(x)-f(x)g'(x)\]
for $C^1$-maps $f,g:I\to\R$.

\begin{proposition}\label{prop:indep1}
Let  $V_1,V_2:\R\to\R_{\geq 0}$ be even $C^2$-potentials satisfying \eqref{eq:i}.
Assume that  $0<x_1<\ldots<x_N$ and $0<y_1<\ldots<y_K$ are such that $V_1(x_N)+V_2(y_K)<E$.
Then for all $E_1\in (V_1(x_N), E-V_2(y_K))$ we have
\begin{gather}
\label{neq:ga}
\mathscr{W}\big(\psi_1({x_1},E_1),\ldots,\psi_1({x_N},E_1),\psi_2({y_1},E-E_1),\ldots,\psi_2({y_K},E-E_1)\big)\neq 0.
\end{gather}
\end{proposition}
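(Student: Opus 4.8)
The plan is to express every entry of the Wronskian matrix as an integral of a negative half-power against a fixed positive measure, to differentiate under the integral sign using \eqref{eq:dera1}, and then, after telescoping row operations and an expansion by multilinearity, to identify the determinant as an average of Vandermonde determinants whose sign is the same at every point of the domain of integration. Since a continuous nonvanishing integrand of one sign, integrated against a positive measure with positive total mass, cannot yield $0$, this gives \eqref{neq:ga}.

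First I would introduce the positive Borel measures $\rho_1=(V_1|_{[0,\infty)})_*\Leb$ and $\rho_2=(V_2|_{[0,\infty)})_*\Leb$, for which $\rho_1((a,b])=x^{max}(b)-x^{max}(a)>0$ and $\rho_2((a,b])=y^{max}(b)-y^{max}(a)>0$ whenever $0\le a<b$, and rewrite, by the change of variables $u=V_1(y)$, $v=V_2(y)$,
\[
\psi_1(x_i,E_1)=\tfrac{1}{\sqrt2}\int_0^{V_1(x_i)}(E_1-u)^{-1/2}\,d\rho_1(u),\qquad
\psi_2(y_i,E-E_1)=\tfrac{1}{\sqrt2}\int_0^{V_2(y_i)}(E-E_1-v)^{-1/2}\,d\rho_2(v).
\]
By \eqref{eq:dera1}, the $j$-th derivative in $E_1$ of $\psi_1(x_i,E_1)$ is $\tfrac{c_j}{\sqrt2}\int_0^{V_1(x_i)}(E_1-u)^{-1/2-j}\,d\rho_1(u)$, where $c_j=(-1)^j(2j-1)!!/2^j\neq0$ (with $c_0=1$), and that of $\psi_2(y_i,E-E_1)$ is $\tfrac{(-1)^jc_j}{\sqrt2}\int_0^{V_2(y_i)}(E-E_1-v)^{-1/2-j}\,d\rho_2(v)$, the extra $(-1)^j$ coming from the chain rule. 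These integrals converge because on the relevant ranges $u\le V_1(x_N)<E_1$ and $v\le V_2(y_K)<E-E_1$, where $\rho_1$ and $\rho_2$ are finite; differentiation under the integral is justified by the resulting uniform bounds. Factoring the nonzero scalar $c_j/\sqrt2$ out of the $j$-th column, \eqref{neq:ga} reduces to $\det M\neq0$, where $M$ (columns $j=0,\dots,N+K-1$) has rows $\int_0^{V_1(x_i)}(E_1-u)^{-1/2-j}\,d\rho_1(u)$ for $1\le i\le N$ followed by $(-1)^j\int_0^{V_2(y_i)}(E-E_1-v)^{-1/2-j}\,d\rho_2(v)$ for $1\le i\le K$.

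Next I would carry out the telescoping row operations replacing each row by its difference with the preceding one, separately inside the $\psi_1$-block and inside the $\psi_2$-block; these do not change $\det M$ and turn the domains of integration into the pairwise disjoint intervals $J_1=(0,V_1(x_1)]$, $J_i=(V_1(x_{i-1}),V_1(x_i)]$ for $2\le i\le N$, and $L_1=(0,V_2(y_1)]$, $L_i=(V_2(y_{i-1}),V_2(y_i)]$ for $2\le i\le K$, with $\rho_1(J_i)=x_i-x_{i-1}>0$ and $\rho_2(L_i)=y_i-y_{i-1}>0$. Expanding $\det M$ multilinearly in the rows (Fubini),
\[
\det M=\int_{J_1}\!\!\cdots\!\int_{J_N}\int_{L_1}\!\!\cdots\!\int_{L_K}\det\widetilde M(u_1,\dots,u_N,v_1,\dots,v_K)\;d\rho_1(u_1)\cdots d\rho_2(v_K),
\]
where $\widetilde M$ carries $(E_1-u_i)^{-1/2-j}$ in its first $N$ rows and $(-1)^j(E-E_1-v_i)^{-1/2-j}$ in its last $K$ rows. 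Putting $p_i=(E_1-u_i)^{-1}$ and $q_i=-(E-E_1-v_i)^{-1}$, one sees $\widetilde M=\operatorname{diag}(d_1,\dots,d_{N+K})\,V(p_1,\dots,p_N,q_1,\dots,q_K)$ with all $d_\ell>0$ and $V$ the Vandermonde matrix in these $N+K$ nodes; hence $\det\widetilde M$ is a strictly positive number times $\prod_{a<b}(\mathrm{node}_b-\mathrm{node}_a)$.

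The crux — and really the only step calling for care — is that this Vandermonde product has a fixed sign over the whole domain of integration. Disjointness of the $J_i$ forces $0<u_1<\cdots<u_N<E_1$, hence $0<p_1<\cdots<p_N$; symmetrically $q_1>\cdots>q_K$ with every $q_i<0$; and $p_i>0>q_{i'}$ for all $i,i'$. So the $N+K$ nodes are pairwise distinct and always appear in the same order $q_K<\cdots<q_1<p_1<\cdots<p_N$, so that $\prod_{a<b}(\mathrm{node}_b-\mathrm{node}_a)$ is continuous, nonzero and of constant sign on the (connected) set $J_1\times\cdots\times J_N\times L_1\times\cdots\times L_K$. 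Consequently $\det\widetilde M$ is nowhere zero and of one sign there, and since $\rho_1,\rho_2$ are positive with $\prod_i\rho_1(J_i)\prod_i\rho_2(L_i)>0$, we conclude $\det M\neq0$, i.e.\ the Wronskian in \eqref{neq:ga} does not vanish for any $E_1\in(V_1(x_N),E-V_2(y_K))$. Beyond this sign argument I expect no genuine obstacle: what is left is the routine verification of differentiation under the integral and the bookkeeping that makes the telescoped intervals disjoint and monotone.
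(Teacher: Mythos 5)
Your proof is correct and is essentially the paper's own argument: both telescope the Wronskian into rows that are integrals over pairwise disjoint intervals, pull the determinant inside by multilinearity and Fubini, and identify the integrand as a positive multiple of a Vandermonde determinant whose nodes are strictly ordered (hence of constant sign) over the whole domain of integration. The only cosmetic difference is that you phrase the change of variables via the pushforward measures $\rho_1,\rho_2$ while the paper substitutes $s_j\mapsto V_1(s_j)$, $u_j\mapsto V_2(u_j)$ explicitly, and your sign conventions on the columns differ from the paper's by an overall factor that does not affect the nonvanishing conclusion.
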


\begin{proof}
Since $\mathscr{W}$ is an alternating linear form, for all $E_1\in (V_1(x_N), E-V_2(y_K))$ we  have
\begin{align*}\label{eq:W=0}
\begin{aligned}
&\mathscr{W}\big(\psi_1({x_1},E_1),\ldots,\psi_1({x_N},E_1),\psi_2({y_1},E\!-\! E_1),\ldots,\psi_2({y_K},E\!-\! E_1)\big)
\\&
=\mathscr{W}\big(\psi_1({x_1},E_1),\psi_1({x_2},E_1)-\psi_1({x_1},E_1),\ldots,\psi_1({x_N},E_1)-\psi_1({x_{N-1}},E_1),\\
&\quad\psi_2({y_1},E\!-\! E_1),\psi_2({y_2},E\!-\! E_1)\!-\!\psi_2({y_1},E\!-\! E_1),\ldots,\psi_2({y_K},E\!-\! E_1)\!-\!\psi_2({y_{K-1}},E\!-\! E_1)\big).
\end{aligned}
\end{align*}
Moreover, in view of \eqref{eq:dera1} (in Proposition~\ref{prop:ay}), we have
\begin{gather*}
\frac{d^k}{dE_1^k}\big(\psi_1({x_j},E_1)-\psi_1({x_{j-1}},E_1)\big)= \frac{(-1)^k(2k-1)!!}{2^{\frac{2k+1}{2}}}\int_{x_{j-1}}^{x_j}\frac{1}{(E_1-V_{1}(s_j))^{\frac{2k+1}{2}}}ds_j\\
\frac{d^k}{dE_1^k}\big(\psi_2({y_j},E-E_1)-\psi_2({y_{j-1}},E-E_1)\big)= \frac{(2k-1)!!}{2^{\frac{2k+1}{2}}}\int_{y_{j-1}}^{y_j}\frac{1}{(E-E_1-V_{2}(u_j))^{\frac{2k+1}{2}}}du_j.
\end{gather*}
Hence
\begin{align*}
&\mathscr{W}\big(\psi_1({x_1},E_1),\psi_1({x_2},E_1)-\psi_1({x_1},E_1),\ldots,\psi_1({x_N},E_1)-\psi_1({x_{N-1}},E_1),\\
&\quad\psi_2({y_1},E\!-\! E_1),\psi_2({y_2},E\!-\! E_1)\!-\!\psi_2({y_1},E\!-\! E_1),\ldots,\psi_2({y_K},E\!-\! E_1)\!-\!\psi_2({y_{K-1}},E\!-\! E_1)\big)\\
&=\frac{\displaystyle\prod_{n=1}^{N+K-1}(2n-1)!!}{2^{\frac{(N+K)^2}{2}}}\int_{0}^{x_1}\ldots\int_{x_{N-1}}^{x_N}\int_{0}^{y_1}\ldots\int_{y_{K-1}}^{y_{K}}\det C\,ds_1\ldots ds_N\, du_1\ldots du_K,
\end{align*}
where $C=[c_{jk}]_{1\leq j,k\leq N+K}$ is an $(N+K)\times(N+K)$-matrix given by
\[c_{jk}=\left\{
\begin{array}{ccc}
\displaystyle\frac{(-1)^{k-1}}{(E_1-V_1(s_j))^{\frac{2k-1}{2}}}&\text{ if }&1\leq j\leq N\\
\displaystyle\frac{1}{(E-E_1-V_2(u_{j-N}))^{\frac{2k-1}{2}}}&\text{ if }&N+1\leq j\leq N+K.
\end{array}
\right.\]
Fix
\begin{gather*}
s_1\in(0,x_1),\, s_2\in(x_1,x_2),\ldots, s_N\in(x_{N-1},x_N),\\
u_1\in(0,y_1),\, u_2\in(y_1,y_2),\ldots, u_K\in(y_{K-1},y_K).
\end{gather*}
Then
\begin{gather*}
V_1(s_j)<V_1(s_{j'})\text{ if }1\leq j<j'\leq N,\quad V_2(u_j)<V_2(u_{j'})\text{ if }1\leq j<j'\leq K\text{ and }\\
V_1(s_j)+V_2(u_{j'})\leq V_1(x_N)+V_2(y_K)<E\text{ if }1\leq j\leq N\text{ and }1\leq j'\leq K.
\end{gather*}
By the Vandermonde determinant formula,  we have
\begin{align*}
\det C=&\prod_{j=1}^N\frac{-1}{\sqrt{E_1-V_1(s_j)}}\prod_{1\leq j<j'\leq N}
\Big(\frac{-1}{E_1-V_1(s_{j'})}-\frac{-1}{E_1-V_1(s_j)}\Big)\\
&\cdot\prod_{j=1}^K\frac{1}{\sqrt{E-E_1-V_2(u_j)}}\prod_{1\leq j<j'\leq K}
\Big(\frac{1}{E-E_1-V_2(u_{j'})}-\frac{1}{E-E_1-V_2(u_j)}\Big)\\
&\cdot \prod_{\substack{1\leq j\leq N\\ 1\leq j'\leq K}}
\Big(\frac{1}{E-E_1-V_2(u_{j'})}+\frac{1}{E_1-V_1(s_{j})}\Big)\\
=&(-1)^{\frac{N(N+1)}{2}}\prod_{j=1}^N\frac{1}{(E_1-V_1(s_j))^{N+K-\frac{1}{2}}}
\prod_{j=1}^K\frac{1}{(E-E_1-V_2(u_j))^{N+K-\frac{1}{2}}}\\
&\cdot\prod_{1\leq j<j'\leq N}(V_1(s_{j'})-V_1(s_j))
\prod_{1\leq j<j'\leq K}(V_2(u_{j'})-V_2(u_j))\\
&\cdot\prod_{\substack{1\leq j\leq N\\ 1\leq j'\leq K}}(E-V_1(s_j)-V_2(u_{j'})).
\end{align*}
where, by the assigned intervals of \(s_j,s_{j'},u_j,u_{j'}\),  all elements under the products signs are well defined and positive.  It follows that
\[(-1)^{\frac{N(N+1)}{2}}\mathscr{W}\big(\psi_1({x_1},E_1),\ldots,\psi_1({x_N},E_1),\psi_2({y_1},E-E_1),\ldots,\psi_2({y_K},E-E_1)\big)> 0\]
for all $E_1\in (V_1(x_N), E-V_2(y_K))$.
\end{proof}

\begin{remark}\label{rem:indep}
Since all maps $\psi_1({x_1},\cdot),\ldots,\psi_1({x_N},\cdot),\psi_2({y_1},E-\cdot),\ldots,\psi_2({y_K},E-\cdot)$ are analytic on the interval $(V_1(x_N),E-V_2(y_K))$ (see Proposition~\ref{prop:ay}),
the condition \eqref{neq:ga} implies that
for any sequence $(\gamma_j)_{j=1}^{N+K}$ of at least one non-zero real numbers we have
\begin{equation}\label{eq:sumpsinonvan}
\sum_{j=1}^N\gamma_{j}\psi_1({x_j},E_1)+\sum_{j=1}^k\gamma_{j+N}\psi_2({y_j},E-E_1)\neq 0
\end{equation}
for all but countable many $E_1\in[V_1(x_N),E-V_2(y_K)]$.  Notice that Proposition~\ref{prop:indep2} also implies  Eq.~\eqref{eq:sumpsinonvan}, but under the stronger Deck conditions on the potentials.
\end{remark}

\section{General criterion for unique ergodicity and its application}\label{sec:genequ}
Now consider an interval \(I \) of \(E_1\) values on which the topological data is fixed, so that the numerical data on \(I\), as proved above, depends smoothly on \(E_1\). More generally let $I\ni E_1\mapsto \mathbf{P}(E_1)\in \mathscr{R}$ be a $C^\infty$ curve of polygonal billiard tables in $\mathscr{R}$, i.e.\
\[\mathbf{P}(E_1)=\bigcup_{\varsigma_1,\varsigma_2\in\{\pm\}} P(\overline{x}^{\varsigma_1\varsigma_2}(E_1),
\overline{y}^{\varsigma_1\varsigma_2}(E_1)),
\]
where $x_k^{\varsigma_1\varsigma_2},y_k^{\varsigma_1\varsigma_2}:I\to\R_{>0}$ are $C^\infty$ maps for all $\varsigma_1,\varsigma_2\in\{\pm\}$, $1\leq k\leq k(\overline{x}^{\varsigma_1\varsigma_2},\overline{y}^{\varsigma_1\varsigma_2})$.
Let us consider two finite sets of real $C^\infty$ maps on $I$ given by
\begin{align*}
\mathscr{X}_{\mathbf{P}}:&=\{x_k^{\varsigma_1\varsigma_2}(\cdot): \varsigma_1,\varsigma_2\in\{\pm\},1\leq k\leq k(\overline{x}^{\varsigma_1\varsigma_2},\overline{y}^{\varsigma_1\varsigma_2}) \}\\
\mathscr{Y}_{\mathbf{P}}:&=\{y_k^{\varsigma_1\varsigma_2}(\cdot): \varsigma_1,\varsigma_2\in\{\pm\},1\leq k\leq k(\overline{x}^{\varsigma_1\varsigma_2},\overline{y}^{\varsigma_1\varsigma_2}) \}.
\end{align*}

\begin{theorem}[cf.\ Theorem~4.2 in \cite{frkaczek2019recurrence}]\label{thm:mainfr}
Suppose that
\begin{itemize}
\item[($i$)] for any choice of integer numbers $n_{\mathbf{x}}$ for $\mathbf{x}\in\mathscr{X}_{\mathbf{P}}$ and $m_{\mathbf{y}}$ for $\mathbf{y}\in\mathscr{Y}_{\mathbf{P}}$
such that not all of them are zero, we have
\begin{equation}\label{cond:i}
\sum_{\mathbf{x}\in\mathscr{X}_{\mathbf{P}}}n_{\mathbf{x}}\mathbf{x}(E_1)+\sum_{\mathbf{y}\in\mathscr{Y}_{\mathbf{P}}}m_{\mathbf{y}}\mathbf{y}(E_1)\neq 0\text{ for a.e. }E_1\in I;
\end{equation}
\item[($ii_{+-}$)] for all $\mathbf{x}\in\mathscr{X}_{\mathbf{P}}$ and $\mathbf{y}\in\mathscr{Y}_{\mathbf{P}}$ we have $\mathbf{x}'(E_1)\geq 0$ and $\mathbf{y}'(E_1)\leq 0$ for all $E_1\in I$,
moreover for at least one $\mathbf{x}\in\mathscr{X}_{\mathbf{P}}$ or $\mathbf{y}\in\mathscr{Y}_{\mathbf{P}}$ the inequality is sharp for a.e.\ $E_1\in I$ or;
\item[($ii_{-+}$)] for all $\mathbf{x}\in\mathscr{X}_{\mathbf{P}}$ and $\mathbf{y}\in\mathscr{Y}_{\mathbf{P}}$ we have $\mathbf{x}'(E_1)\leq 0$ and $\mathbf{y}'(E_1)\geq 0$ for all $E_1\in I$,
moreover for at least one $\mathbf{x}\in\mathscr{X}_{\mathbf{P}}$ or $\mathbf{y}\in\mathscr{Y}_{\mathbf{P}}$ the inequality is sharp for a.e.\ $E_1\in I$.
\end{itemize}
Then for a.e.\ $E_1\in I$ the billiard flow on $\mathbf P(E_1)$ in directions $\pm\pi/4,\pm 3\pi/4$ is uniquely ergodic.
\end{theorem}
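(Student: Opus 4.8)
The plan is to deduce Theorem~\ref{thm:mainfr} from the abstract unique ergodicity criterion of \cite{frkaczek2019recurrence} (Theorem~4.2 there), of which the present statement is a slight variant, and the first step is to pass from billiards to a curve of translation flows. For each $E_1\in I$ the polygon $\mathbf P(E_1)\in\mathscr R$ is a right-angled polygon, and its billiard flow in the four directions $\pm\pi/4,\pm 3\pi/4$ unfolds, via the standard unfolding procedure, to the translation flow in the fixed direction $\pi/4$ on a translation surface $X(E_1)$ assembled from finitely many reflected copies of $\mathbf P(E_1)$. Since the sides of $\mathbf P(E_1)$ are horizontal and vertical segments whose lengths are the maps $x_k^{\varsigma_1\varsigma_2}(E_1),\,y_k^{\varsigma_1\varsigma_2}(E_1)$ and their consecutive partial sums (the staircase widths and heights), the relative period coordinates of $X(E_1)$ depend on $E_1$ only through $\Z$-linear combinations of the maps in $\mathscr X_{\mathbf P}\cup\mathscr Y_{\mathbf P}$. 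Hence $E_1\mapsto X(E_1)$ is a $C^\infty$ curve inside one fixed stratum of translation surfaces, and unique ergodicity of the billiard flow on $\mathbf P(E_1)$ in the stated directions is equivalent to unique ergodicity of the direction-$\pi/4$ translation flow on $X(E_1)$.

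Second, I would match the hypotheses (i) and ($ii_{+-}$)/($ii_{-+}$) to the two structural inputs of the criterion in \cite{frkaczek2019recurrence}. The first input there is a non-degeneracy assumption: no nonzero integer combination of the period functions vanishes on a positive-measure subset of $I$; this is precisely \eqref{cond:i}, and it guarantees that the curve $E_1\mapsto X(E_1)$ is not locally trapped in a proper affine subvariety, so the chosen saddle connections do not degenerate for a.e.\ parameter. The second input is a monotonicity (``coherent transversality'') assumption: along the curve all horizontal side-lengths move in one direction and all vertical ones in the opposite direction, at least one of them at an a.e.\ nonzero rate; this is exactly ($ii_{+-}$) or ($ii_{-+}$). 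Given these two inputs, the argument of \cite{frkaczek2019recurrence} proceeds by constructing, for each $E_1$, a partition of $X(E_1)$ into finitely many parallelograms bounded by saddle connections, reducing unique ergodicity to a Minsky--Weiss type sufficient condition \cite{MiWe2014}; the monotonicity assumption forces the relevant saddle connections to change length at a controlled speed as $E_1$ varies, so the set of $E_1$ where the condition fails has measure zero by a Borel--Cantelli/Fubini estimate, while \eqref{cond:i} keeps the partition combinatorially stable. Unique ergodicity for a.e.\ $E_1\in I$ follows.

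The only genuine departure from \cite{frkaczek2019recurrence}, and the point I expect to require real care, is that there the criterion is set up for a curve of staircase polygons of a more restricted type, whereas here $\mathbf P(E_1)$ is the union of the four staircase polygons $P(\varsigma_1\overline x^{\varsigma_1\varsigma_2}(E_1),\varsigma_2\overline y^{\varsigma_1\varsigma_2}(E_1))$ glued along their extremal sides, and ($ii_{+-}$) only asks that \emph{one} of the side functions be strictly monotone a.e. The gluing causes no difficulty: the matching of extremal sides means the shared sides carry a common length function, so $X(E_1)$ is still built from the finite list $\mathscr X_{\mathbf P}\cup\mathscr Y_{\mathbf P}$, and one re-runs the estimates of \cite{frkaczek2019recurrence} keeping all four staircase pieces bookkept at once --- routine but notationally heavier. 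The delicate part is verifying that a single strictly monotone direction still controls every saddle connection used in the partition; as in \cite{frkaczek2019recurrence} this works because those saddle connections can be chosen with holonomy whose relevant coordinate is a combination in which that one function enters with a definite sign (the staircase ordering, $x$'s increasing against $y$'s decreasing or vice versa, is exactly what makes such combinations monotone), and I would establish it by following the construction of the partition in \cite{frkaczek2019recurrence} essentially verbatim. Corners of $\mathbf P(E_1)$, where orbits die, produce only finitely many singular leaves on $X(E_1)$ and are irrelevant to the a.e.\ statement, exactly as in the reference.
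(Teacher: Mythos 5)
Your proposal matches the paper's own argument: the paper also reduces Theorem~\ref{thm:mainfr} to the surface-level unique ergodicity criterion in \cite{frkaczek2019recurrence} (its Theorem~2.11) after observing that the present hypotheses ($i$), ($ii_{\pm\mp}$) are exactly the intermediate conditions established inside the proof of Theorem~4.2 there, with reference function $\ell\equiv 1$. The only thing to flag is a minor mislabelling: what you call the ``abstract criterion'' is not Theorem~4.2 of \cite{frkaczek2019recurrence} (that one carries the staircase-specific functional forms), but Theorem~2.11, the translation-surface criterion to which both Theorem~4.2 and the present statement reduce.
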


\begin{proof}
We show that the above conditions imply some intermediate steps of Theorems~4.2 in \cite{frkaczek2019recurrence} which are used to show that the results of Theorem~2.11 in \cite{frkaczek2019recurrence} about unique ergodicity on surfaces imply  the unique ergodicity on the related polygons.

First, we take the reference function in Theorems~4.2  to be a constant  ($\ell=1$). Second, conditions ($i$)  in Theorem~4.2 is used to prove the above condition \eqref{cond:i} which is then used to prove that condition  ($i$) in Theorem~2.11 in \cite{frkaczek2019recurrence} holds. Hence, by the same reasoning as in  Theorem~4.2 in \cite{frkaczek2019recurrence}  assumption  ($i$)   implies assumption ($i$) of Theorem~2.11 in \cite{frkaczek2019recurrence}.

Third, similarly, conditions ($ii$)  in Theorem~4.2 are used to prove the above conditions  ($ii$)   which are then used to prove that condition  ($ii$) in Theorem~2.11 in \cite{frkaczek2019recurrence} holds. Hence, by the same reasoning as in  Theorem~4.2 in \cite{frkaczek2019recurrence}  assumption ($ii$)  implies assumption ($ii$)  of Theorem~2.11 in \cite{frkaczek2019recurrence}.

We conclude that by Theorem~2.11 in \cite{frkaczek2019recurrence} the billiard flow on $\mathbf P(E_1)$ in directions $\pm\pi/4,\pm 3\pi/4$ is uniquely ergodic for a.e. \(E_{1}\in I\).
\end{proof}

Assume $V_1,V_2:\R\to\R_{\geq 0}$ are two $Deck$-potentials satisfying \eqref{eq:v}.
Recall that, by Lemma~\ref{lem:square}, if $V_i$ does not satisfy \eqref{eq:vi}, then $V_i$ is quadratic.

\begin{theorem}\label{thm:albegacopdet}
Assume $V_1,V_2:\R\to\R_{\geq 0}$ are $Deck$-potentials satisfying \eqref{eq:v}. Let $P$ be any  polygon in $\mathscr R$.
Suppose that
\begin{itemize}
\item[($\alpha$)] at least one potential $V_1$ or $V_2$ satisfies \eqref{eq:vi} or;
\item[($\beta$)] both $V_1,V_2$ are quadratic maps such that $V_1=\Omega^{2} V_2$  and ${\Omega}$ is irrational.
\end{itemize}
Then for every energy level $E>0$ and almost every $E_1\in [0,E]$ the restricted Hamiltonian flow
$(\varphi^{P,E,E_1}_t)_{t\in\R}$ is uniquely ergodic.

Suppose that
\begin{itemize}
\item[($\gamma$)] both $V_1,V_2$ are quadratic maps such that $V_1=\Omega^{2} V_2$  and ${\Omega}$ is rational.
\end{itemize}
Then for any $E>\min\{\max_{\varsigma_1,\varsigma_2\in\{\pm\}}V_2(y_1^{\varsigma_1\varsigma_2}),\max_{\varsigma_1,\varsigma_2\in\{\pm\}}V_1(x_{k(\bar{x}^{\varsigma_1\varsigma_2},\bar{y}^{\varsigma_1\varsigma_2})}^{\varsigma_1\varsigma_2})\}
$ and almost every
\[E_1\in \Big[0,E-\max_{\varsigma_1,\varsigma_2\in\{\pm\}}V_2(y_1^{\varsigma_1\varsigma_2})\Big]
\cup \Big[\max_{\varsigma_1,\varsigma_2\in\{\pm\}}V_1(x_{k(\bar{x}^{\varsigma_1\varsigma_2},\bar{y}^{\varsigma_1\varsigma_2})}^{\varsigma_1\varsigma_2}),E\Big]\]
the restricted Hamiltonian flow
$(\varphi^{P,E,E_1}_t)_{t\in\R}$ is uniquely ergodic.
\end{theorem}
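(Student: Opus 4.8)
The plan is to reduce, via the coordinate change of Section~\ref{sec:osctobil}, to unique ergodicity of directional billiard flows, and then to apply the general criterion Theorem~\ref{thm:mainfr} on each interval of constant topological data. Fix $E>0$. By Section~\ref{sec:osctobil} the map $\psi$ conjugates $(\varphi^{P,E,E_1}_t)_{t\in\R}$ to the billiard flow in directions $\pm\pi/4,\pm3\pi/4$ on $\mathbf P_{E,E_1}\in\mathscr R$, and on each $I\in\mathcal J_E$ the curve $E_1\mapsto\mathbf P_{E,E_1}$ is analytic (using $V_1,V_2\in Deck$ together with Propositions~\ref{prop:a} and~\ref{prop:ay}). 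Since $\mathcal J_E$ is finite, it suffices to establish unique ergodicity of the billiard flow on $\mathbf P_{E,E_1}$ for a.e.\ $E_1$ in each $I\in\mathcal J_E$ — in case $(\gamma)$ only for those $I$ contained in one of the two prescribed intervals. First peel off the degenerate situation $X_I=Y_I=\emptyset$: there $\mathbf P_{E,E_1}$ is a rectangle with sides $\tfrac{1}{2}T_1(E_1)$ and $\tfrac{1}{2}T_2(E-E_1)$ and there are no impacts, so the billiard flow unfolds to the linear flow of slope $\Omega(E,E_1)$ on a torus, uniquely ergodic whenever $\Omega$ is irrational; this holds for a.e.\ $E_1$ in case $(\beta)$ ($\Omega$ constant and irrational) and in case $(\alpha)$ (by \eqref{eq:vi} and Lemma~\ref{lem:negposder}, $\omega_i'\geq 0$ with one inequality strict, so the iso-energy non-degeneracy condition holds and $\Omega$ is strictly monotone in $E_1$), and, as noted below, in case $(\gamma)$ this situation does not arise on the prescribed intervals. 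From now on assume $X_I\cup Y_I\neq\emptyset$ and apply Theorem~\ref{thm:mainfr}.

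To verify hypothesis ($i$), note that by Remark~\ref{rem:XYI} every member of $\mathscr X_{\mathbf P}$ is either $\psi_1(x,\cdot)$ for some $x\in X_I$ or the function $\tfrac{1}{4}T_1(\cdot)$, and every member of $\mathscr Y_{\mathbf P}$ is either $\psi_2(y,E-\cdot)$ for some $y\in Y_I$ or $\tfrac{1}{4}T_2(E-\cdot)$; hence any integer combination as in \eqref{cond:i} has the form
\[\gamma_0\,T_1(E_1)+\sum_{j=1}^N\gamma_j\,\psi_1(x_j,E_1)+\sum_{j=1}^K\gamma_{N+j}\,\psi_2(y_j,E-E_1)+\gamma_{N+K+1}\,T_2(E-E_1),\]
where $X_I=\{x_1,\dots,x_N\}$, $Y_I=\{y_1,\dots,y_K\}$ with $x_1<\dots<x_N$ and $y_1<\dots<y_K$ (so $V_1(x_N)+V_2(y_K)<E$ and $I\subset[V_1(x_N),E-V_2(y_K)]$), the coefficients $\gamma_1,\dots,\gamma_{N+K}$ are integers, $\gamma_0,\gamma_{N+K+1}\in\tfrac{1}{4}\Z$, and not all of them vanish. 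If some $\gamma_j$ with $1\leq j\leq N+K$ is nonzero, Proposition~\ref{prop:indep2} shows the combination is nonzero for all but countably many $E_1\in I$. If all of them vanish, only $\gamma_0,\gamma_{N+K+1}$ survive and Lemma~\ref{lem:indep3} applies: in case $(\alpha)$ it gives $\gamma_0T_1+\gamma_{N+K+1}T_2\neq 0$ off a countable set; in case $(\beta)$ a vanishing would force $\gamma_0/\gamma_{N+K+1}=-\Omega$, impossible since $\gamma_0,\gamma_{N+K+1}\in\tfrac{1}{4}\Z$ but $\Omega\notin\Q$ (and if just one of $\gamma_0,\gamma_{N+K+1}$ is nonzero the combination is nonzero because $T_1,T_2>0$); in case $(\gamma)$ we invoke the prescribed range — for a.e.\ $E_1$ in the right interval one has $E_1>V_1(x_k^{\varsigma_1\varsigma_2})$ for all $k,\varsigma_1,\varsigma_2$, hence $X_I=X$, every width of $\mathbf P^{\varsigma_1\varsigma_2}_{E,E_1}$ is of the form $\psi_1(x,\cdot)$, so $\gamma_0=0$ and only $\gamma_{N+K+1}T_2\neq 0$ remains; symmetrically $\gamma_{N+K+1}=0$ on the left interval. (This also confirms $X_I\cup Y_I\neq\emptyset$ on the prescribed intervals in case $(\gamma)$.) In all three cases ($i$) holds.

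For hypothesis ($ii_{-+}$), by \eqref{ineq:dera2} and \eqref{eq:det} (the latter being $\leq 0$ under \eqref{eq:v}) every $\mathbf x\in\mathscr X_{\mathbf P}$ has $\mathbf x'(E_1)\leq 0$ on $I$, and dually, by \eqref{ineq:dera4} and the $V_2$-analogue of \eqref{eq:det}, every $\mathbf y\in\mathscr Y_{\mathbf P}$ has $\mathbf y'(E_1)\geq 0$. For the sharpness clause: since $X_I\cup Y_I\neq\emptyset$, the polygon $\mathbf P_{E,E_1}$ has a genuine staircase or an impacting extremal side, so $\mathscr X_{\mathbf P}$ contains some $\psi_1(x,\cdot)$ with $x\in X_I$ or $\mathscr Y_{\mathbf P}$ contains some $\psi_2(y,E-\cdot)$ with $y\in Y_I$, and by \eqref{ineq:dera2} (resp.\ \eqref{ineq:dera4}) such a function has strictly negative (resp.\ positive) derivative throughout $I\subset(V_1(x),+\infty)$ (resp.\ $I\subset(0,E-V_2(y))$). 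Hence ($ii_{-+}$) holds, Theorem~\ref{thm:mainfr} yields unique ergodicity of the billiard flow on $\mathbf P_{E,E_1}$ for a.e.\ $E_1\in I$, and pulling this back through the conjugacy of Section~\ref{sec:osctobil} and taking the union over the finitely many $I\in\mathcal J_E$ proves the theorem.

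The main obstacle is the case $(\gamma)$ part of ($i$): one must recognise that the two prescribed $E_1$-intervals are exactly the ranges on which the resonant relation $\gamma_0T_1(E_1)+\gamma_{N+K+1}T_2(E-E_1)=0$ cannot occur, because on them at most one of the two constants $\tfrac{1}{4}T_1$, $\tfrac{1}{4}T_2$ arises as a genuine side-length function of $\mathbf P_{E,E_1}$. For intermediate $E_1$ this relation does hold for suitable integer ratios, $[0,E]$ must be refined further, and periodic ribbons appear — the regime treated separately later in the paper. The remaining ingredients (analyticity and monotonicity of $T_i$, $\psi_i$, and the linear-independence statements) are exactly Propositions~\ref{prop:a},~\ref{prop:ay},~\ref{prop:indep2} and Lemmas~\ref{lem:negposder},~\ref{lem:indep3},~\ref{lem:square}, all established above.
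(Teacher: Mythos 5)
Your proof follows the paper's strategy closely — conjugate to a directional billiard on a staircase polygon, restrict to intervals $I\in\mathcal J_E$ of constant topological data, and apply Theorem~\ref{thm:mainfr} using Propositions~\ref{prop:indep2}, \ref{prop:a}, \ref{prop:ay} and Lemmas~\ref{lem:indep3}, \ref{lem:negposder} to verify its hypotheses. The treatment of case~($\gamma$) via the observation that on the two prescribed $E_1$-intervals the functions $\tfrac{1}{4}T_1$ (resp.\ $\tfrac{1}{4}T_2$) drop out of $\mathscr X_{\mathbf P}$ (resp.\ $\mathscr Y_{\mathbf P}$) is exactly the paper's argument.

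However, there is a gap in your handling of case~($\beta$), rooted in an incorrect implication. You write ``since $X_I\cup Y_I\neq\emptyset$, the polygon $\mathbf P_{E,E_1}$ has a genuine staircase or an impacting extremal side, so $\mathscr X_{\mathbf P}$ contains some $\psi_1(x,\cdot)$ or $\mathscr Y_{\mathbf P}$ contains some $\psi_2(y,E-\cdot)$.'' This is false. The set $X_I=\{x\in X:V_1(x)<E_1\}$ records the $x$-coordinates of corners of $P$ to the left of $x^{\max}(E_1)$, but a corner $(x_k^{\varsigma_1\varsigma_2},y_{k+1}^{\varsigma_1\varsigma_2})$ contributes a staircase length only if \emph{both} $V_1(x_k^{\varsigma_1\varsigma_2})<E_1$ \emph{and} $V_2(y_{k+1}^{\varsigma_1\varsigma_2})<E-E_1$, i.e.\ if the concave corner is interior to $R^{(E,E_1)}$. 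For example, take a two-step symmetric staircase with corners $(x_1,y_1),(x_2,y_2)$; if $V_1(x_1)+V_2(y_2)<E<V_1(x_2)+V_2(y_2)$, then on $I=(E-V_2(y_2),V_1(x_2))\in\mathcal J_E$ we have $\bar k=\underline k=2$ in every quadrant, so $\mathbf P_{E,E_1}$ is a rectangle with sides $\tfrac12 T_1,\tfrac12 T_2$ and there are no impacts — yet $x_1\in X_I$, so $X_I\cup Y_I\neq\emptyset$. Under hypothesis~($\beta$), $T_1$ and $T_2$ are both constant, so on such an interval every function in $\mathscr X_{\mathbf P}\cup\mathscr Y_{\mathbf P}$ has vanishing derivative, the sharpness clause of~($ii_{-+}$) in Theorem~\ref{thm:mainfr} fails, and your argument leaves those intervals unaddressed. (In case~($\alpha$) the gap is harmless, since then $T_1'<0$ or $T_2'(E-\cdot)>0$ supplies the sharpness regardless; in case~($\gamma$) your prescribed intervals happen to force a non-constant width or height, so the conclusion you want is true even though the stated implication is not.) The fix — used in the paper — is to replace the dichotomy ``$X_I\cup Y_I=\emptyset$ vs.\ not'' by ``$\mathscr X_{\mathbf P}=\{\tfrac14 T_1\}$ and $\mathscr Y_{\mathbf P}=\{\tfrac14 T_2\}$ vs.\ not''; in the former (rectangle) case one argues unique ergodicity directly from the irrationality of $\Omega$, and only in the latter does one invoke Theorem~\ref{thm:mainfr}.
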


\begin{proof}
We fix an energy \(E>0\). First note that we can restrict our attention to any subinterval $I\in \mathcal J_E$. As we already have observed,
for every $E_1\in I$ the flow $(\varphi^{P,E,E_1}_t)_{t\in\R}$ is topologically conjugated to the billiard flow in directions $\pm\pi/4,\pm3\pi/4$ on the polygon $\mathbf P(E_{1}):=\mathbf P_{E_{1},E}=\psi (P\cap R^{(E_{1},E)})\in \mathscr R$. Moreover, by Remark~\ref{rem:XYI} and Propositions~\ref{prop:a}~and~\ref{prop:ay}, we have
\begin{gather}\label{eq:XYP}
\begin{split}
\mathscr X_\mathbf{P}&\subset\{\psi_{1}(x,E_1):{x}\in X_I\}\cup\{\frac{1}{4}T_{1}(E_1)\},\\
 \mathscr Y_\mathbf{P}&\subset\{\psi_{2}(y,E-E_1):{y}\in Y_I\}\cup\{\frac{1}{4}T_{2}(E-E_1)\}
\end{split}
\end{gather}
and the curve $I\ni E_1\mapsto \mathbf P(E_1)\in \mathscr R$
is analytic.

\medskip

\noindent\textbf{Cases ($\alpha$) and ($\beta$).}
Assume that the sets $\mathscr X_\mathbf{P}$, $\mathscr Y_\mathbf{P}$ do not satisfy the condition ($i$) in Theorem~\ref{thm:mainfr}.
In view of Proposition~\ref{prop:indep2}, there exists a rational positive number $\gamma>0$ such that $T_{2}(E-\cdot)=\gamma T_{1}(\cdot)$ (since condition  ($i$) involves integer coefficients).
However, by Lemma \ref{lem:indep3} this is impossible if ($\alpha$) is satisfied. If ($\beta$) is satisfied then, by Lemma \ref{lem:indep3}, we have $\gamma=-{\Omega}$ which  contradicts  the
irrationality of \(\Omega\).

In summary, it follows that either under assumption ($\alpha$) or ($\beta$), the condition ($i$) of Theorem~\ref{thm:mainfr} holds.

\medskip

Now we verify (in both cases ($\alpha$) and ($\beta$)) the condition ($ii_{-+}$) of Theorem~\ref{thm:mainfr} holds. First suppose that
($\alpha$) holds. Then, by Lemma~\ref{lem:negposder}, for all $\mathbf{x}\in\mathscr{X}_{\mathbf{P}}$ and $\mathbf{y}\in\mathscr{Y}_{\mathbf{P}}$
and for every
$E_1\in I$ we have
\begin{itemize}
\item $\mathbf{x}'(E_1)<0$ and $\mathbf{y}'(E_1)\geq 0$, if $V_1$ satisfies \eqref{eq:vi};
\item $\mathbf{x}'(E_1)\leq 0$ and $\mathbf{y}'(E_1)>0$, if $V_2$ satisfies \eqref{eq:vi},
\end{itemize}
so we have ($ii_{-+}$) of Theorem~\ref{thm:mainfr}.

Suppose that ($\beta$) holds, then $T_{1}'(E_1)=T_{2}'(E-E_1)=0$ for every
$E_1\in I$.  Assume first that there is at least one impact for the level sets in \(I\), so $\mathscr{X}_\mathbf{P}\neq\{\frac{1}{4}T_{1}\}$ or $\mathscr{Y}_\mathbf{P}\neq\{\frac{1}{4}T_{2}\}$.
 Then, by Lemma~\ref{lem:negposder}, for all $\mathbf{x}\in\mathscr{X}_{\mathbf{P}}\setminus\{\frac{1}{4}T_{1}\}$ and $\mathbf{y}\in\mathscr{Y}_{\mathbf{P}}\setminus\{\frac{1}{4}T_{2}\}$ we have $\mathbf{x}'(E_1)<0$ and $\mathbf{y}'(E_1)>0$ for every
$E_1\in I$, so we also have ($ii_{-+}$) of Theorem~\ref{thm:mainfr}, whenever the union of these sets is non-empty.

In summary, in both these cases the unique ergodicity of $(\varphi^{P,E,E_1}_t)_{t\in\R}$ for a.e.\ $E_1\in I$ follows directly from Theorem~\ref{thm:mainfr}.

\medskip

Finally, when ($\beta$) holds and $\mathscr{X}_\mathbf{P}=\{\frac{1}{4}T_{1}\}$ and $\mathscr{Y}_\mathbf{P}=\{\frac{1}{4}T_{2}\}$ the motion on all level sets in \(I\) occurs with no impacts at all, namely, the motion corresponds to   the billiard flow on the rectangle  $\mathbf{P}_I=\mathbf{P}(E_1)$  in directions $\pm\pi/4,\pm 3\pi/4$ and since  \(\Omega\) is irrational, the motion is also unique ergodic for all \(E_{1}\in I\).

\medskip
\noindent
\textbf{Case ($\gamma$).} We consider a subinterval $I\in \mathcal J_E$ such that impacts occur with either all the horizontal boundaries of \(P\) (the intersection of all light green wedges in the IEMBD figures) or with all the vertical boundaries of \(P\) (the intersection of all pink regions in the IEMBD figures):
\[I\subset \Big[0,E-\max_{\varsigma_1,\varsigma_2\in\{\pm\}}V_2(y_1^{\varsigma_1\varsigma_2})\Big] \ \text{ or }\
I\subset \Big[\max_{\varsigma_1,\varsigma_2\in\{\pm\}}V_1(x_{k(\bar{x}^{\varsigma_1\varsigma_2},\bar{y}^{\varsigma_1\varsigma_2})}^{\varsigma_1\varsigma_2}),E\Big].\]
Then, in the first case we have
\(Y_I=Y\) and in the second case \(X_I=X\) and
 \begin{equation}\label{eq:XYgamma}
\mathscr Y_\mathbf{P}=\{\psi _{2}(y,E-E_1):{y}\in Y\} \ \text{ or }\ \mathscr X_\mathbf{P}\subset\{\psi _{1}(x,E_1):{x}\in X\}\ \text{ respectively}
\end{equation}
(see Remark \ref{rem:XYI}).
Suppose that the condition ($i$) in Theorem~\ref{thm:mainfr} does not hold, then, in the first case we have
 \[k\frac{1}{4}T_{1}(E_1)+\sum_{{x}\in X_I}n_x\psi _{1}(x,E_1)+\sum_{{y}\in Y_I}m_y\psi _{2}(y,E-E_1)=0\]
whereas in the second case we have
\[\sum_{{x}\in X_I}n_x\psi _{1}(x,E_1)+\sum_{{y}\in Y_I}m_y\psi _{2}(y,E-E_1)+k\frac{1}{4}T_{2}(E-E_1)=0\]
on a subset of positive measure, where \(|k|+\sum _{x\in X_I}|n_{x}|+\sum _{y\in Y_I}|m_{y}|\neq 0\). It follows that in both cases at least one $n_x$, $x\in X_I$ or $m_y$, $y\in Y_I$ is non-zero. This contradicts the conclusion of Proposition~\ref{prop:indep2}.

Finally, we check that condition ($ii_{-+}$)\ in Theorem~\ref{thm:mainfr}   is satisfied when ($\gamma$) holds. For every
$E_1\in I$, by Lemma~\ref{lem:negposder}, for all $\mathbf{x}\in\mathscr{X}_{\mathbf{P}}\setminus\{\frac{1}{4}T_{1}\}$ and $\mathbf{y}\in\mathscr{Y}_{\mathbf{P}}\setminus\{\frac{1}{4}T_{2}\}$ we have $\mathbf{x}'(E_1)<0$ and $\mathbf{y}'(E_1)>0$.  Moreover, by \eqref{eq:XYgamma}, when ($\gamma$) holds, at least one of these sets is non-empty.  Hence we also have ($ii_{-+}$) in Theorem~\ref{thm:mainfr} holds. Finally the unique ergodicity of $(\varphi^{P,E,E_1}_t)_{t\in\R}$ for a.e.\ $E_1\in I$  follows directly from Theorem~\ref{thm:mainfr}.
This completes the proof.
\end{proof}

\begin{corollary}\label{cor:highE}
Suppose that $V_1,V_2$ are quadratic so that $V_1=\Omega^{2} V_2$  and ${\Omega}$ is rational.
If the energy is sufficiently large:
\begin{equation}\label{eq:bigE}
E\geq\max_{\varsigma_1,\varsigma_2\in\{\pm\}}V_1(x_{k(\bar{x}^{\varsigma_1\varsigma_2},\bar{y}^{\varsigma_1\varsigma_2})}^{\varsigma_1\varsigma_2})
+\max_{\varsigma_1,\varsigma_2\in\{\pm\}}V_2(y_1^{\varsigma_1\varsigma_2})
\end{equation}
then the flow
$(\varphi^{P,E,E_1}_t)_{t\in\R}$ is uniquely ergodic for a.e.\ $E_1\in[0,E]$.
\end{corollary}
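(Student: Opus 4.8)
The plan is to derive the corollary directly from case~($\gamma$) of Theorem~\ref{thm:albegacopdet}, so the work is purely a matter of checking that hypothesis \eqref{eq:bigE} is strong enough to (i) meet the threshold required by that theorem and (ii) make the two intervals appearing in its conclusion cover all of $[0,E]$. Abbreviate $a:=\max_{\varsigma_1,\varsigma_2\in\{\pm\}}V_1(x_{k(\bar{x}^{\varsigma_1\varsigma_2},\bar{y}^{\varsigma_1\varsigma_2})}^{\varsigma_1\varsigma_2})$ and $b:=\max_{\varsigma_1,\varsigma_2\in\{\pm\}}V_2(y_1^{\varsigma_1\varsigma_2})$. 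Since every staircase width $x_k^{\varsigma_1\varsigma_2}$ and every staircase height $y_1^{\varsigma_1\varsigma_2}$ is strictly positive and the potentials satisfy \eqref{eq:i}, we have $a>0$ and $b>0$; hence $E\geq a+b>\min\{a,b\}$, which is exactly the hypothesis $E>\min\{\max_{\varsigma_1,\varsigma_2}V_2(y_1^{\varsigma_1\varsigma_2}),\max_{\varsigma_1,\varsigma_2}V_1(x_{k(\bar{x}^{\varsigma_1\varsigma_2},\bar{y}^{\varsigma_1\varsigma_2})}^{\varsigma_1\varsigma_2})\}$ imposed in Theorem~\ref{thm:albegacopdet}($\gamma$).

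Next I would observe that the conclusion of Theorem~\ref{thm:albegacopdet}($\gamma$) gives unique ergodicity of $(\varphi^{P,E,E_1}_t)_{t\in\R}$ for a.e.\ $E_1$ in $[0,E-b]\cup[a,E]$, and that under \eqref{eq:bigE} this union is the whole interval:
\[
[0,E-b]\cup[a,E]=[0,E],
\]
because the two intervals $[0,E-b]$ and $[a,E]$ overlap precisely when $a\leq E-b$, i.e.\ when $E\geq a+b$, which is exactly \eqref{eq:bigE}. Therefore the set of $E_1\in[0,E]$ for which the theorem furnishes unique ergodicity has full measure in $[0,E]$, which is the assertion. (Equivalently, one can argue pointwise: for any $E_1<a$, \eqref{eq:bigE} forces $E-E_1>E-a\geq b$, so $E_1<E-b$; thus every $E_1\in[0,E]$ lies in one of the two intervals.)

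I would also note, for completeness, that the standing hypotheses of Theorem~\ref{thm:albegacopdet} — that $V_1,V_2\in Deck$ and satisfy \eqref{eq:v} — hold automatically here: a quadratic potential $\tfrac12\omega_i^2x^2$ is analytic and unimodal, hence belongs to $Deck$ by Proposition~\ref{prop:anDeck}, and it satisfies \eqref{eq:v} with equality by Lemma~\ref{lem:square}; and $V_1=\Omega^2V_2$ with $\Omega$ rational is precisely assumption ($\gamma$). There is no analytic obstacle in this corollary — the only point to get right is the elementary bookkeeping that the sum‑of‑maxima threshold \eqref{eq:bigE} is exactly what is needed to connect the two intervals $[0,E-b]$ and $[a,E]$ into $[0,E]$ (and a fortiori to clear the weaker $\min$‑threshold of Theorem~\ref{thm:albegacopdet}).
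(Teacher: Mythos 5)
Your proposal is correct and matches the route the paper intends: the corollary is presented immediately after Theorem~\ref{thm:albegacopdet} with no separate proof, precisely because it follows from case~($\gamma$) of that theorem once one notices that \eqref{eq:bigE}, i.e.\ $E\geq a+b$, both clears the $\min$-threshold (since $a,b>0$) and forces the two intervals $[0,E-b]$ and $[a,E]$ to overlap and exhaust $[0,E]$. The bookkeeping and the verification of the standing hypotheses (quadratic potentials are $Deck$ by Proposition~\ref{prop:anDeck} and satisfy \eqref{eq:v} with equality by Lemma~\ref{lem:square}) are exactly as you wrote.
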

This is a weaker version of Theorem~\ref{thm:highE} and it will be helpful in the proof of Theorem~\ref{thm:highE}.

\medskip

At the end of this section we give the following partial result, which is met with very slight assumptions on potentials $V_1$ and $V_2$.
\begin{proposition}\label{prop:largegmax}
Suppose that $V_1,V_2:\R\to\R_{\geq 0}$ are even $C^2$-potentials satisfying \eqref{eq:i}. Assume that
\begin{equation*}
E>\max_{\varsigma_1,\varsigma_2\in\{\pm\}}V_1(x_{k(\bar{x}^{\varsigma_1\varsigma_2},\bar{y}^{\varsigma_1\varsigma_2})}^{\varsigma_1\varsigma_2})
+\max_{\varsigma_1,\varsigma_2\in\{\pm\}}V_2(y_1^{\varsigma_1\varsigma_2}).
\end{equation*}
and let
\[E_1\in I:=\Big[\max_{\varsigma_1,\varsigma_2\in\{\pm\}}V_1(x_{k(\bar{x}^{\varsigma_1\varsigma_2},\bar{y}^{\varsigma_1\varsigma_2})}^{\varsigma_1\varsigma_2}),
E-\max_{\varsigma_1,\varsigma_2\in\{\pm\}}V_2(y_1^{\varsigma_1\varsigma_2})\Big].\]
Then the flow
$(\varphi^{P,E,E_1}_t)_{t\in\R}$ is uniquely ergodic for a.e.\ $E_1\in I$.
\end{proposition}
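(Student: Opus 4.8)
The plan is to reduce, via the isomorphism of Section~\ref{sec:osctobil}, to an application of Theorem~\ref{thm:mainfr} to the curve $E_1\mapsto\mathbf P_{E,E_1}$ on each subinterval of $\mathcal J_E$ lying in $I$, exploiting the crucial fact that on $I$ the numerical data of $\mathbf P_{E,E_1}$ involves only the functions $\psi_1(x,\cdot)$ and $\psi_2(y,E-\cdot)$ and never the period functions $T_1,T_2$ — this is precisely why no $Deck$ assumption is needed. Write $x_N:=\max X$ and $y_K:=\max Y$. Since in each staircase polygon the step lengths increase and the step heights decrease, $V_1(x_N)=\max_{\varsigma_1,\varsigma_2}V_1(x^{\varsigma_1\varsigma_2}_{k(\overline{x}^{\varsigma_1\varsigma_2},\overline{y}^{\varsigma_1\varsigma_2})})$ and $V_2(y_K)=\max_{\varsigma_1,\varsigma_2}V_2(y^{\varsigma_1\varsigma_2}_1)$, so the standing hypothesis reads $V_1(x_N)+V_2(y_K)<E$ and $I=[V_1(x_N),E-V_2(y_K)]$ has nonempty interior. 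As $\mathcal J_E$ is a finite partition of $[0,E]$ into open intervals whose breakpoints include $V_1(x_N)$ and $E-V_2(y_K)$, every $I'\in\mathcal J_E$ with $I'\subset I$ in fact satisfies $I'\subset(V_1(x_N),E-V_2(y_K))$, and it suffices to prove unique ergodicity for a.e.\ $E_1\in I'$ for each such $I'$ and then take the finite union.

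Fix such an $I'$. For $E_1\in I'$ one has $V_1(x^{\varsigma_1\varsigma_2}_k)<E_1$ and $V_2(y^{\varsigma_1\varsigma_2}_k)<E-E_1$ for all $\varsigma_1,\varsigma_2$ and all $k$, so by \eqref{eq:kbarunder} $\overline k^{\varsigma_1\varsigma_2}=k(\overline{x}^{\varsigma_1\varsigma_2},\overline{y}^{\varsigma_1\varsigma_2})$ and $\underline k^{\varsigma_1\varsigma_2}=1$; inspecting \eqref{eq:psinumk}--\eqref{eq:psinumlast} (equivalently, Remark~\ref{rem:XYI} with $X_I=X$ and $Y_I=Y$) then shows that every width, staircase length, height and staircase height of $\mathbf P_{E,E_1}$ is of the form $\psi_1(x,E_1)$ with $x\in X$ or $\psi_2(y,E-E_1)$ with $y\in Y$. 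Hence
\[\mathscr{X}_{\mathbf P}\subset\{\psi_1(x,\cdot):x\in X\},\qquad \mathscr{Y}_{\mathbf P}\subset\{\psi_2(y,E-\cdot):y\in Y\},\]
and by Proposition~\ref{prop:ay} the curve $I'\ni E_1\mapsto\mathbf P_{E,E_1}\in\mathscr R$ is analytic.

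It then remains to check the two hypotheses of Theorem~\ref{thm:mainfr}. Condition ($i$) follows from Proposition~\ref{prop:indep1} together with Remark~\ref{rem:indep}, applied to the full enumerations $0<x_1<\cdots<x_N$ of $X$ and $0<y_1<\cdots<y_K$ of $Y$: the requirement $V_1(x_N)+V_2(y_K)<E$ of those results is exactly the standing hypothesis, and they yield that any nontrivial real — in particular integer — combination $\sum_x n_x\psi_1(x,E_1)+\sum_y m_y\psi_2(y,E-E_1)$ vanishes for at most countably many $E_1\in(V_1(x_N),E-V_2(y_K))\supset I'$, hence for a.e.\ $E_1\in I'$. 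Condition ($ii_{-+}$) follows from Lemma~\ref{lem:negposder}: for every $E_1\in I'$ we have $\tfrac{d}{dE_1}\psi_1(x,E_1)<0$ for all $x\in X$ and $\tfrac{d}{dE_1}\psi_2(y,E-E_1)>0$ for all $y\in Y$, so every element of $\mathscr{X}_{\mathbf P}$ is strictly decreasing and every element of $\mathscr{Y}_{\mathbf P}$ strictly increasing on $I'$ — and these sets are nonempty since each staircase polygon has a width and a height. Theorem~\ref{thm:mainfr} now gives that the billiard flow on $\mathbf P_{E,E_1}$ in directions $\pm\pi/4,\pm 3\pi/4$ is uniquely ergodic for a.e.\ $E_1\in I'$, and by the conjugacy of Section~\ref{sec:osctobil} the same holds for $(\varphi^{P,E,E_1}_t)_{t\in\R}$; taking the union over the finitely many admissible $I'$ finishes the proof.

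There is no genuine analytic difficulty here; the one point requiring care is the verification that on $I$ the extremal sides of each $\mathbf P^{\varsigma_1\varsigma_2}_{E,E_1}$ are actually hit, so that the relevant width and height functions are $\psi_1(x,\cdot)$ and $\psi_2(y,E-\cdot)$ rather than $\tfrac14T_1$ and $\tfrac14T_2$. This is exactly what lets us invoke Proposition~\ref{prop:indep1} and Remark~\ref{rem:indep} — which need only the $C^2$ condition \eqref{eq:i} — in place of Proposition~\ref{prop:indep2}, thereby dispensing with the $Deck$ hypothesis.
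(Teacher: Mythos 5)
Your proof is correct and follows essentially the same route as the paper: on $I$ all widths, heights and step sizes of $\mathbf P_{E,E_1}$ are of the form $\psi_1(x,\cdot)$ with $x\in X$ or $\psi_2(y,E-\cdot)$ with $y\in Y$, so Proposition~\ref{prop:indep1} with Remark~\ref{rem:indep} gives condition ($i$), Lemma~\ref{lem:negposder} gives ($ii_{-+}$), and Theorem~\ref{thm:mainfr} concludes. You simply spell out more explicitly the verification that the paper handles by reference to the case $(\gamma)$ argument in Theorem~\ref{thm:albegacopdet}.
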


\begin{proof}
The argument used at the beginning of the proof of the part ($\gamma$) in Theorem~\ref{thm:albegacopdet} shows that
\begin{equation*}
 \mathscr X_\mathbf{P}\subset\{\psi _{1}(x,E_1 ):{x}\in X\} \ \text{ and }\ \mathscr Y_\mathbf{P}=\{\psi _{2}(y,E-E_1 ):{y}\in Y\}.
\end{equation*}
Notice that Proposition~\ref{prop:indep1} combined with Remark~\ref{rem:indep} shows that the condition ($i$) in Theorem~\ref{thm:mainfr}
is satisfied. The negativity of the derivative for all maps from $\mathscr X_\mathbf{P}$ and the positivity of the derivative for all maps from $\mathscr Y_\mathbf{P}$ follow directly from Lemma~\ref{lem:negposder}. Thus the application of Theorem~\ref{thm:mainfr} again completes the proof.
\end{proof}
The interval \(I \) in Proposition \ref{prop:largegmax} corresponds to the case of impacts with all boundaries of \(P\) (the intersection of all the coloured wedges in the IEMBD figures), and, as will be shown in Section~\ref{sec:billtotrans}, this corresponds to motion on surface of genus \(g_{max}\).

\section{Topological data revisited}

\subsection{Short introduction to translation surfaces}
 Since our main criterion for unique ergodicity (Theorem~\ref{thm:mainquadrational}) is formulated in the framework of translation surfaces, in this section we give a short introduction to this subject. For further background material we refer the reader to \cite{Viana2008lecturenotes}, \cite{Yo} and \cite{Zorich2006}.

\medskip

A \emph{translation surface} $(M,\omega)$ is a compact connected orientable topological surface $M$, together with a finite set
of points $\Sigma$ (called \emph{singular} points) and an atlas of charts $\omega=\{\zeta_\alpha:U_\alpha\to \C:\alpha\in\mathcal{A}\}$ on $M\setminus \Sigma$ such that every transition map
$\zeta_\beta\circ\zeta^{-1}_\alpha:\zeta_\alpha(U_\alpha\cap U_\beta)\to \zeta_\beta(U_\alpha\cap U_\beta)$ is a translation, i.e.\ for every connected component $C$ of $U_\alpha\cap U_\beta$ there exists
$v_{\alpha,\beta}^C\in \C$ such that  $\zeta_\beta\circ\zeta^{-1}_\alpha(z)=z+v^C_{\alpha,\beta}$ for $z\in \zeta_\alpha^{-1}(C)$. All points in $M\setminus \Sigma$ are called \emph{regular}. For every point $x\in M$ the translation structure $\omega$ allow us to define the total angle around $x$.
If $x$ is regular then the total angle is $2\pi$. If $\sigma$ is singular then the total angle is $2\pi(k_\sigma+1)$, where $k_\sigma\in\N$ is the multiplicity of $\sigma$. Then
\begin{equation}\label{eq:genus}
\sum_{\sigma\in\Sigma}k_\sigma=2g-2,
\end{equation}
where $g$ is the genus of the surface $M$.

\medskip

For every $\theta\in\R/2\pi\Z$ let $X_{\theta}$ be a tangent vector field on $M\setminus\Sigma$ which is the pullback of the unit constant vector field $e^{i\theta}$ on $\C$ through the charts of the atlas.
Since the derivative of any transition map is the identity, the vector field  $X_{\theta}$ is well defined on $M\setminus\Sigma$.
Denote by $(\psi^{\theta}_t)_{t\in\R}$ the corresponding local flow, called the translation flow on $(M,\omega)$ in direction $\theta$. The flow preserves the measure $\lambda_{\omega}$ which is the pullback of the Lebesgue measure on $\C$. We distinguish the vertical flow $(\psi^v_t)_{t\in\R}$, i.e.\ for $\theta=\pi/2$.

For every $\theta\in\R/2\pi\Z$ and a translation surface $(M,\omega)$ denote by $(M,e^{i\theta}\omega)$ the rotated translation surface,
i.e.\ the  new charts in $e^{i\theta}\omega$ are defined by postcomposition of charts from $\omega$ with the rotation by $\theta$.
Then the flow $(\psi^{\theta}_t)_{t\in\R}$ on $(M,\omega)$ coincide with the vertical flow $(\psi^v_t)_{t\in\R}$ on $(M,e^{i(\frac{\pi}{2}-\theta)}\omega)$.

A \emph{saddle connection} in direction $\theta$ is an orbit segment of $(\psi^{\theta}_t)_{t\in\R}$ that goes
from a singularity to a singularity (possibly, the same one) and has no interior singularities. A semi-infinite orbit of $(\psi^{\theta}_t)_{t\in\R}$ that goes from or to a singularity is  called  a \emph{separatrix}. Recall that if $(M,\omega)$ has no saddle connection in direction $\theta$, then the flow $(\psi^{\theta}_t)_{t\in\R}$ is \emph{minimal}, i.e.\
every  orbit (which can be semi-infinite or double-infinite) is dense in $M$, see \cite{Yo}.

\subsection{From billiards to translation surfaces}\label{sec:billtotrans}
Formally the directional billiard flow on $\mathbf{P}(E_1)=\mathbf P_{E,E_1}$ in directions $\pm\pi/4, \pm 3\pi/4$ acts on the union of four copies of $\mathbf{P}(E_1)$, denoted by  $\mathbf{P}(E_1)_{\pi/4}$,
$\mathbf{P}(E_1)_{-\pi/4}$, $\mathbf{P}(E_1)_{3\pi/4}$, $\mathbf{P}(E_1)_{-3\pi/4}$. Each copy $\mathbf{P}(E_1)_{\vartheta}$ for $\vartheta\in \{\pm\pi/4$, $\pm 3\pi/4\}$ represents
all unit vectors flowing in the same direction $\vartheta$. After applying the horizontal or vertical reflection (or both) to each copy separately, we can arrange all unit vectors to flow in the same direction $\pi/4$.
More precisely, after such transformations, all unit vectors in $\mathbf{P}(E_1)_{\pi/4}$, $\gamma_h\mathbf{P}(E_1)_{-\pi/4}$, $\gamma_v\mathbf{P}(E_1)_{3\pi/4}$ and $\gamma_h\circ\gamma_v\mathbf{P}(E_1)_{-3\pi/4}$
flow in the same direction $\pi/4$. By gluing corresponding sides of these four polygons, we get a compact connected orientable surface $M(E_1)$ with a translation structure inherited from the Euclidean plan, see Figure~\ref{fig:surface}.
Moreover, the directional billiard flow on $\mathbf{P}(E_1)$ in directions $\pm\pi/4$, $\pm 3\pi/4$ is conjugate to the translation flow $(\psi^{\pi/4}_t)_{t\in\R}$ in direction $\pi/4$  on the translation surface $M(E_1)$. This is an example of using the so called unfolding procedure coming from \cite{Fox-Ker} and \cite{Ka-Ze}. Additionally, the surface $M(E_1)$ has a natural partition into $16$ staircase polygons $\{\mathbf{P}(E_1)^{\varsigma_1\varsigma_2}_{\sigma_1\sigma_2}: \varsigma_1,\varsigma_2,\sigma_1,\sigma_2\in\{\pm\}\}$ so that
\begin{gather*}
\mathbf{P}(E_1)^{\varsigma_1\varsigma_2}_{++}=\mathbf{P}(E_1)^{\varsigma_1\varsigma_2}_{\pi/4},\quad
\mathbf{P}(E_1)^{\varsigma_1\varsigma_2}_{+-}=\gamma_h\mathbf{P}(E_1)^{\varsigma_1\varsigma_2}_{-\pi/4},\\
\mathbf{P}(E_1)^{\varsigma_1\varsigma_2}_{-+}=\gamma_v\mathbf{P}(E_1)^{\varsigma_1\varsigma_2}_{3\pi/4},\quad
\mathbf{P}(E_1)^{\varsigma_1\varsigma_2}_{--}=\gamma_h\circ \gamma_v\mathbf{P}(E_1)^{\varsigma_1\varsigma_2}_{-3\pi/4}.
\end{gather*}

\begin{figure}[h]
\includegraphics[width=1 \textwidth]{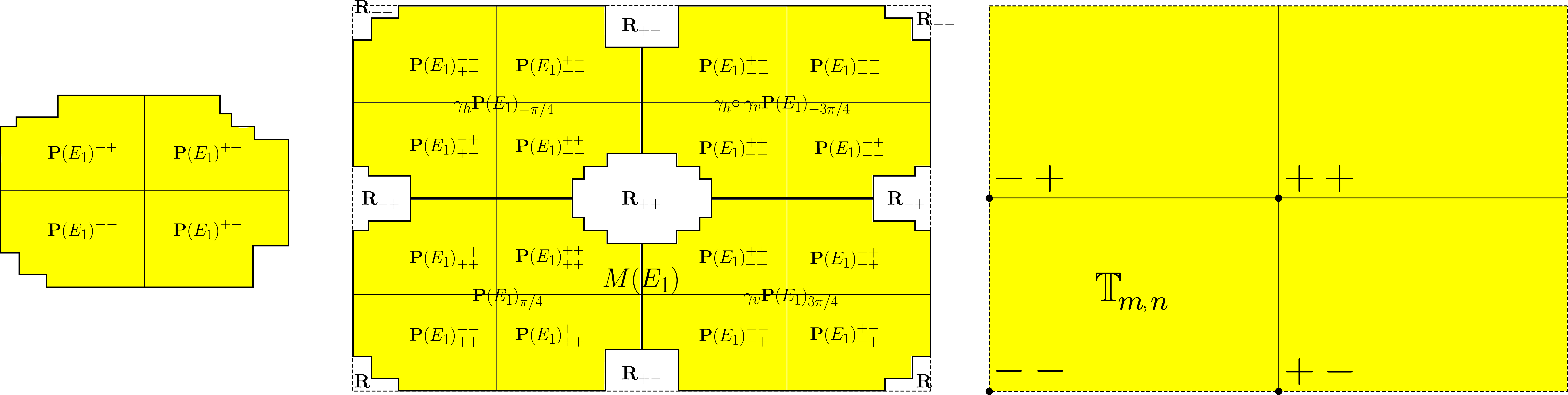}
\caption{The billiard table $\mathbf{P}(E_1)$, the translation surface $M(E_1)$ and the marked torus $\T_{m,n}$. }
\label{fig:surface}
\end{figure}

\begin{proof}[Proof of Theorem \ref{thm:genus}]
The above defined translation surface $M(E_1)$ is formed by cutting out four polygons, denoted by $\mathbf R^{++},\mathbf R^{+-},\mathbf R^{-+},\mathbf R^{--}$, from the torus and gluing their opposite sides (see white polygons in the middle part of Figure~\ref{fig:surface}).

Let us consider corners of a removed polygon $\mathbf R^{\varsigma_1\varsigma_2}$.
Notice that convex corners (of $\mathbf{P}(E_1)^{\varsigma_1\varsigma_2}$) give rise to regular points whereas concave corners give rise to singular points of multiplicity $2$.
Indeed, each convex corner is a point on $M(E_1)$ with the total angle $2\pi$ (the pink point on Figure~\ref{fig:removed}), whereas
 each  concave corner is a point on $M(E_1)$ with the total angle $4\tfrac{3\pi}{2}=6\pi$ (the blue point on Figure~\ref{fig:removed}).
\begin{figure}[h]
\includegraphics[width=0.4 \textwidth]{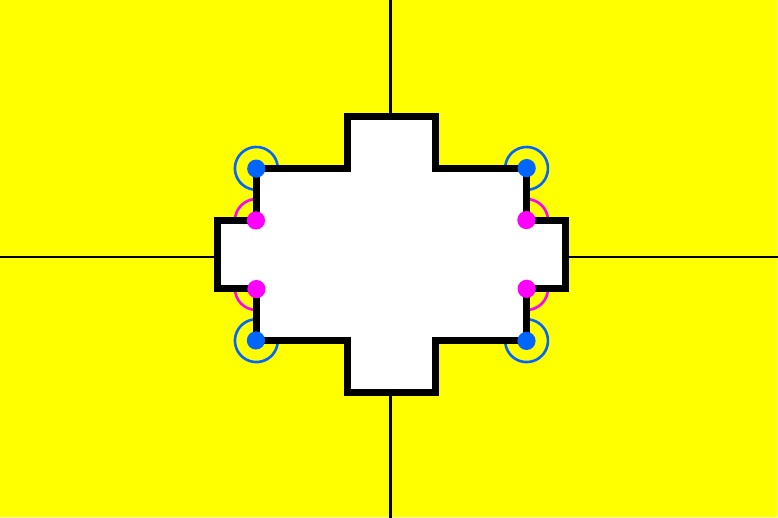}
\caption{Singularities of  $M(E_1)$.}
\label{fig:removed}
\end{figure}
In view of Eq.~\eqref{eq:genus}, it follows that the genus $g(E,E_1)$ of $M(E_1)$ (and equivalently  the genus of the level set $S^P_{E,E_1}$) is the number of concave corners in \(P\cap R^{(E,E_{1})}\) plus one:
\begin{align*}
g(E,E_{1})&=1+\sum_{\varsigma_1,\varsigma_2\in\{\pm\}}\max\{0,(\overline{k}^{\varsigma_1\varsigma_2}(E_1)-\underline{k}^{\varsigma_1\varsigma_2}(E_1))\}\\
&=1+\sum_{\varsigma_1,\varsigma_2\in\{\pm\}}\#\{1\leq k<k(\bar{x}^{\varsigma_1\varsigma_2},\bar{y}^{\varsigma_1\varsigma_2}):V_1(x^{\varsigma_1\varsigma_2}_k)<E_1<E-V_2(y^{\varsigma_1\varsigma_2}_{k+1})\}.
\end{align*}
Moreover,
\begin{align*}
g(E,E_1)&\leq
1+\sum_{\varsigma_1,\varsigma_2\in\{\pm\}}(k(\bar{x}^{\varsigma_1\varsigma_2},\bar{y}^{\varsigma_1\varsigma_2})-1)\\
&=\sum_{\varsigma_1,\varsigma_2\in\{\pm\}}k(\bar{x}^{\varsigma_1\varsigma_2},\bar{y}^{\varsigma_1\varsigma_2})-3=:g_{\max}
\end{align*}
and the equality holds if and only if $\overline{k}^{\varsigma_1\varsigma_2}(E_1)=k(\bar{x}^{\varsigma_1\varsigma_2},\bar{y}^{\varsigma_1\varsigma_2})$ and $\underline{k}^{\varsigma_1\varsigma_2}(E_1)=1$
for any $\varsigma_1,\varsigma_2\in\{\pm\}$. If
\[E_{1}\in[0,\min_{\varsigma_1,\varsigma_2\in\{\pm\}}V_1(x_{1}^{\varsigma_1\varsigma_2})]\cup[\max_{\varsigma_1,\varsigma_2\in\{\pm\}}E-V_2(y_{k(\bar{x}^{\varsigma_1\varsigma_2},\bar{y}^{\varsigma_1\varsigma_2})}^{\varsigma_1\varsigma_2}),E],\]
then the polygon \(\mathbf{P}_{E,E_{1}}\) is a rectangle, so, for all \(E\), close to the end points of  the \(E_{1}\)-interval, the genus of \(M(E_1)\) is one.
Suppose that
\[E>\max_{\varsigma_1,\varsigma_2\in\{\pm\}}V_1(x_{k(\bar{x}^{\varsigma_1\varsigma_2},\bar{y}^{\varsigma_1\varsigma_2})}^{\varsigma_1\varsigma_2})+\max_{\varsigma_1,\varsigma_2\in\{\pm\}}V_2(y_1^{\varsigma_1\varsigma_2}).\]
Then, for
\[ E_1\in I_{\max}:=\big[\max_{\varsigma_1,\varsigma_2\in\{\pm\}}V_1(x_{k(\bar{x}^{\varsigma_1\varsigma_2},\bar{y}^{\varsigma_1\varsigma_2})}^{\varsigma_1\varsigma_2}),
E-\max_{\varsigma_1,\varsigma_2\in\{\pm\}}V_2(y_1^{\varsigma_1\varsigma_2})\big]\Rightarrow g(E,E_1)=g_{\max}.\]
In view of Proposition~\ref{prop:largegmax}, the flow
$(\varphi^{P,E,E_1}_t)_{t\in\R}$ is uniquely ergodic for a.e.\ $E_1\in I_{\max}$. Notice that in general, the interval of \(E_{1}\) values for which \(g(E,E_1)=g_{\max}\)  is larger than \(I_{\max}\)  as the maximal genus interval includes cases at which all concave corners of the polygon \(P \) are inside \(R^{(E,E_1)}\) yet not   all of its extremal boundaries are inside \(R^{(E,E_1)}\).
\end{proof}

\section{The case when $V_1$ and $V_2$ are quadratic and $\Omega$ is rational}\label{sec:quadrat}

In this section we deal with the remaining case when both potentials $V_1$ and $V_2$ are quadratic with $V_1=\Omega^2 V_2$, $\Omega$ rational
and the energy $E$ is not too high, i.e.\ the condition \eqref{eq:bigE} does not hold.
The rationality of  $\Omega$ enables the appearance of periodic orbits and some coexistence of periodic orbits with uniquely ergodic components.

First we focus on the case when the energy is low.
\begin{proof}[Proof of Proposition~\ref{prop:formIcp}]
As $I$ satisfies \eqref{eq:formIcp},  in view of \eqref{eq:rectab}, \(R^{(E,E_1)}\cap P\) is a rectangle for every $E_1\in I$ so that  its width and height are $\tfrac{1}{2}T_{1}(E_1)$ and  $\tfrac{1}{2}T_{2}(E-E_1)$. Since $T_1$ and $T_2$ are constants with $T_{2}/T_{1}=\Omega$ rational, the  flow
$(\varphi^{P,E,E_1}_t)_{t\in\R}$ is isomorphic to the translation flow in a rational direction on the standard torus $\R^2/\Z^2$, so it is completely periodic for all $E_1\in I$.
\end{proof}

\begin{proof}[Proof of Proposition~\ref{prop:lowE}]
 Eq.~\eqref{eq:lowE} implies that for every $\varsigma_1,\varsigma_2\in\{\pm\}$ we have
\[E\leq V_1(x^{\varsigma_1\varsigma_2}_k)+V_2(y^{\varsigma_1\varsigma_2}_{k+1})\text{ for every }0\leq k\leq k(\bar{x}^{\varsigma_1\varsigma_2},\bar{y}^{\varsigma_1\varsigma_2}).\]
Hence, for all \(E_{1}\in [0,E]\),  \(R^{(E,E_1)}\cap P\) is a rectangle (with no intersections of \(P\) boundaries), so as above the motion is periodic.
\end{proof}

Propositions~\ref{prop:lowE}~and~\ref{prop:formIcp} describe completely all intervals $I\in\mathcal J_E$ for which \textbf{(cp)}
scenario can appear (the pure grey regions  in the IEMBD figures).
We now focus on the description of all intervals for which \textbf{(ue)} or \textbf{(coex)} scenario occur.
They are presented in Theorems~\ref{thm:highmedH}~and~\ref{thm:U+-}. Unfortunately, Theorem~\ref{thm:mainfr} is not effective enough to prove these
theorems. Their proofs (presented in Section~\ref{sec:proofs})
need a more subtle version of Theorem~\ref{thm:mainfr}, i.e.\ Theorem~\ref{thm:mainquadrational} in Section~\ref{sec:spec}.
Theorem~\ref{thm:mainquadrational} helps to prove the coexistence of periodic orbits with uniquely ergodic components for intermediate energies. To implement this plan we must now go deeper into the proof
of Theorem~2.11 in \cite{frkaczek2019recurrence} and into the framework of translation surfaces.

In the rational case there is no problem with verifying the condition ($ii_{-+}$) in Theorem~\ref{thm:mainfr}. Verifying the condition ($i$) in Theorem~\ref{thm:mainfr} is impossible.
Indeed, the condition ($i$)  implies the minimality of the billiard flow (every orbit is dense) for a.e.\ parameter $E_1$. Because of the existence of periodic orbit, the condition ($i$) cannot be met.

\medskip

Suppose that $\Omega=n/m$ for some coprime natural numbers $m$ and $n$. Then $V_1(x)=n^2c^2x^2/2$ and $V_2(y)=m^2c^2y^2/2$ for some $c>0$.
In view of Lemma~\ref{lem:square}, we have $T_{1}(E_1)=\tfrac{2\pi}{n{c}}$ and $T_{2}(E-E_1)=\tfrac{2\pi}{m{c}}$,
so
\[\frac{1}{4}T_{1}(E_1)=mC\frac{\pi}{2}\text{ and }\frac{1}{4}T_{2}(E-E_1)=nC\frac{\pi}{2}\text{ for }C=\frac{1}{mn{c}}.\]
Moreover,
\begin{align*}
\psi_{1}(x,E_1)&=mC\arcsin\sqrt{\frac{V_1(x)}{E_1}}\text{ for }E_1\geq V_1(x),\\
\psi_{2}(y,E-E_1)&=nC\arcsin\sqrt{\frac{V_2(y)}{E-E_1}}\text{ for }E_1\leq E- V_2(y).
\end{align*}

Suppose that
\begin{equation}\label{eq:assumE}
E<\min_{\varsigma_1,\varsigma_2\in\{\pm\}}V_1(x^{\varsigma_1\varsigma_2}_{k(\bar{x}^{\varsigma_1\varsigma_2},\bar{y}^{\varsigma_1\varsigma_2})})+\min_{\varsigma_1,\varsigma_2\in\{\pm\}}V_2(y^{\varsigma_1\varsigma_2}_{1}).
\end{equation}
For higher energies, Theorem~\ref{thm:highE} yields \textbf{(ue)}-scenario for the interval $[0,E]$.

In fact, by Theorem~\ref{thm:highmedH},  for \(E>\min\{\min_{\varsigma_1,\varsigma_2\in\{\pm\}}V_2(y_1^{\varsigma_1\varsigma_2}),\min_{\varsigma_1,\varsigma_2\in\{\pm\}}V_1(x_{k(\bar{x}^{\varsigma_1\varsigma_2},\bar{y}^{\varsigma_1\varsigma_2})}^{\varsigma_1\varsigma_2})\}\), the same conclusion occurs  in the intervals
\[ \big[0,E-\min_{\varsigma_1,\varsigma_2\in\{\pm\}}V_2(y_1^{\varsigma_1\varsigma_2})\big]\text{ and }
\Big[\min_{\varsigma_1,\varsigma_2\in\{\pm\}}V_1(x_{k(\bar{x}^{\varsigma_1\varsigma_2},\bar{y}^{\varsigma_1\varsigma_2})}^{\varsigma_1\varsigma_2}),E\Big],\]
where at least one of the extremal side of \(P\) intersects  \( R^{(E,E_{1})}\). Therefore, we focus our attention on
 $I\in\mathcal J_E$ such that
\begin{equation}\label{eq:assumI}
I\subset I_{intimp}=\big(E-\min_{\varsigma_1,\varsigma_2\in\{\pm\}}V_2(y_1^{\varsigma_1\varsigma_2}),\min_{\varsigma_1,\varsigma_2\in\{\pm\}}V_1(x_{k(\bar{x}^{\varsigma_1\varsigma_2},\bar{y}^{\varsigma_1\varsigma_2})}^{\varsigma_1\varsigma_2})\big)\setminus I_{nonimp}.
\end{equation}
So now we consider level sets that do impact some parts of the polygon sides but none of its extremal sides, namely, the width of the polygon $\mathbf{P}_{E,E_1}$
is $\frac{1}{2}T_{1}(E_1)$, its height is $\frac{1}{2}T_{2}(E-E_1)$ and  $\mathbf{P}_{E,E_1}$ is not a rectangle.
Then the width of $M(E_1)$ is $T_{1}(E_1)=2\pi mC$ and its height is $T_{2}(E-E_1)=2\pi nC$. Let us consider the translation torus
\[\T_{m,n}=\R^2/(2\pi mC\Z\times 2\pi nC\Z)\]
with four marked Weierstrass points $(0,0),(\pi mC,0),(0,\pi nC),(\pi mC,\pi nC)$ labeled by $--,+-,-+,++$ respectively, see Figure~\ref{fig:surface}. Then $M(E_1)$ can be treated as $\T_{m,n}$ with four polygons cut out. In this \(E_{1}\) interval at least one removed polygon is non-empty.
{ Removed polygons (even trivial) are denoted by $\mathbf{R}^{--},\mathbf{R}^{+-},\mathbf{R}^{-+},\mathbf{R}^{++}$ as in Figure~\ref{fig:surface}, i.e.\ $\mathbf{R}^{\varsigma_1\varsigma_2}$ is associated with the polygon $\mathbf{P}(E_1)^{\varsigma_1\varsigma_2}$.
Each removed polygon $\mathbf{R}^{\varsigma_1\varsigma_2}$ has only vertical and horizontal sides,
is vertically and horizontally symmetric and its center of symmetry coincide with the marked point $\varsigma_1\varsigma_2$, see Figure~\ref{fig:surface}.} Moreover, the opposite sides of each removed polygon are identified in $M(E_1)$.

We equip the translation  torus $\T_{m,n}$  with the quotient taxicab metric $d$ (the sum of horizontal and vertical distances).
Denote by $e^{i\pi}:\T_{m,n}\to\T_{m,n}$ the hyperelliptic involution that fixes each marked point, namely \(e^{i\pi}\) is the rotation by \(\pi\).

Let us consider periodic orbits in direction $\pi/4$ on $\T_{m,n}$ passing thorough all four marked points. There are exactly two such periodic
orbits $red$ and $green$, each of them contains two marked points (their pairing depends on the parity of $m$ and $n$), see Figure~\ref{fig:cylinders}.
As all marked points are labeled by $++,+-,-+,--$,
this gives a partition $\{red,green\}$ of $\{++,+-,-+,--\}$ into two two-element subsets, according to this correspondence. This is exactly the partition defined by \eqref{eq:green}.


{
For every $\underline{k}^{\varsigma_1\varsigma_2}(E_1)\leq k< \overline{k}^{\varsigma_1\varsigma_2}(E_1)$ let
\[D_{E,E_{1}}(x_k^{\varsigma_1\varsigma_2},{y_{k+1}^{\varsigma_1\varsigma_2}}):=\frac{1}{4}T_{1}(E_1)-\psi_1(x_k^{\varsigma_1\varsigma_2},E_1)+\frac{1}{4}T_{2}(E-E_1)-\psi_2({y_{k+1}^{\varsigma_1\varsigma_2}},E-E_1).\]
Then $D_{E,E_{1}}(x_k^{\varsigma_1\varsigma_2},{y_{k+1}^{\varsigma_1\varsigma_2}})$ is the maximal distance of the four corners of $\mathbf{R}^{\varsigma_1,\varsigma_2}$ associated with $(x_k^{\varsigma_1\varsigma_2},{y_{k+1}^{\varsigma_1\varsigma_2}})\in P^{\varsigma_1\varsigma_2}\cap R^{(E,E_1)}$ from the segment of the orbit in direction $\pi/4$ passing through the marked point $\varsigma_1\varsigma_2$  (the center of $\mathbf{R}^{\varsigma_1,\varsigma_2}$) in the time interval \((-\min\{m,n\}C\pi,\min\{m,n\}C\pi)\). Indeed, for this segment, we can consider the taxicab metric on $\R^2$, then the distance of a point $(x_1,y_1)$ from the line $\{(x_0+t,y_0+t):t\in\R\}$ is equal to
\[\min_{t}|x_1-x_0-t|+|y_1-y_{0}-t|=|x_1-x_0-(y_{1}-y_{0})|.\]
Shifting  the center of  $\mathbf{R}^{\varsigma_1\varsigma_2}$ to \((\frac{\sigma_{1}}{4}T_{1}(E_1),\frac{\sigma_{2}}{4}T_{2}(E-E_1))\) with \(\sigma_{1},\sigma_2\in\{\pm\}\), one of the corners considered is \((\sigma_{1}\psi_1(x_k^{\varsigma_1\varsigma_2},E_1),\sigma_{2}\psi_2({y_{k+1}^{\varsigma_1\varsigma_2}},E-E_1))\) and then its distance from the corresponding orbit segment is
\begin{displaymath}
\Big|(\frac{\sigma_{1}}{4}T_{1}(E_1)-\sigma_{1}\psi_1(x_k^{\varsigma_1\varsigma_2},E_1))-(\frac{\sigma_{2}}{4}T_{2}(E-E_1)-\sigma_{2}\psi_2({y_{k+1}^{\varsigma_1\varsigma_2}},E-E_1))\Big|\leq D_{E,E_{1}}(x_k^{\varsigma_1\varsigma_2},{y_{k+1}^{\varsigma_1\varsigma_2}})
\end{displaymath}
and equality is realized when \(\sigma_{1}=-\sigma_{2}\).}

\medskip

By the definition of $\delta^{colour}$, for every $colour\in\{red,green\}$ and $E_1\in I$ we have
\begin{align*}
&C\delta^{colour}(E_1):
=\max_{\substack{\varsigma_1\varsigma_2\in colour\\
\underline{k}^{\varsigma_1\varsigma_2}(E_1)\leq k< \overline{k}^{\varsigma_1\varsigma_2}(E_1)}}
C\Big(m\arccos\sqrt{\frac{V_1(x_k^{\varsigma_1\varsigma_2})}{E_1}}+n\arccos\sqrt{\frac{V_2(y_{k+1}^{\varsigma_1\varsigma_2})}{E-E_1}}\Big)\\
&=\max_{\substack{\varsigma_1\varsigma_2\in colour\\\underline{k}^{\varsigma_1\varsigma_2}(E_1)\leq k< \overline{k}^{\varsigma_1\varsigma_2}(E_1)}}
C\Big(m\Big(\frac{\pi}{2}-\arcsin\sqrt{\frac{V_1(x_k^{\varsigma_1\varsigma_2})}{E_1}}\Big)+n\Big(\frac{\pi}{2}-\arcsin\sqrt{\frac{V_2(y_{k+1}^{\varsigma_1\varsigma_2})}{E-E_1}}\Big)\Big)\\
&
=\max_{\substack{\varsigma_1\varsigma_2\in colour\\
\underline{k}^{\varsigma_1\varsigma_2}(E_1)\leq k< \overline{k}^{\varsigma_1\varsigma_2}(E_1)}}
\big(\frac{1}{4}T_{1}(E_1)-\psi_1(x_k^{\varsigma_1\varsigma_2},E_1)+\frac{1}{4}T_{2}(E-E_1)-\psi_2({y_{k+1}^{\varsigma_1\varsigma_2}},E-E_1)\big)\\
&
=\max_{\substack{\varsigma_1\varsigma_2\in colour\\
\underline{k}^{\varsigma_1\varsigma_2}(E_1)\leq k< \overline{k}^{\varsigma_1\varsigma_2}(E_1)}}D_{E,E_{1}}(x_k^{\varsigma_1\varsigma_2},{y_{k+1}^{\varsigma_1\varsigma_2}}).
\end{align*}
Therefore $C\delta^{colour}(E_1)$ measures the maximal distance of points in removed polygons $\mathbf{R}^{colour}:=\bigcup_{\varsigma_1\varsigma_2}\mathbf{R}^{\varsigma_1\varsigma_2}$
from the corresponding segments of the $colour$ periodic orbit.
Let us consider two closed cylinders $R$ and $G$ in $\T_{m,n}$ that consist of points distant from the $red$ or  $green$ periodic orbit by no more than $C\delta^{red}(E_1)$ or $C\delta^{green}(E_1)$ respectively.
Then $R$ and $G$ are two closed cylinders in direction $\pi/4$ consisting of periodic orbits passing through removed polygons whose centers lie on $red$ or $green$ periodic orbit respectively. Denote by $\partial R$ and $\partial G$ the boundary of $R$ and $G$ respectively.
Since all removed polygons are $e^{i\pi}$-invariant, both cylinders $R$ and $G$ also are.

\begin{figure}[h]
\includegraphics[width=1 \textwidth]{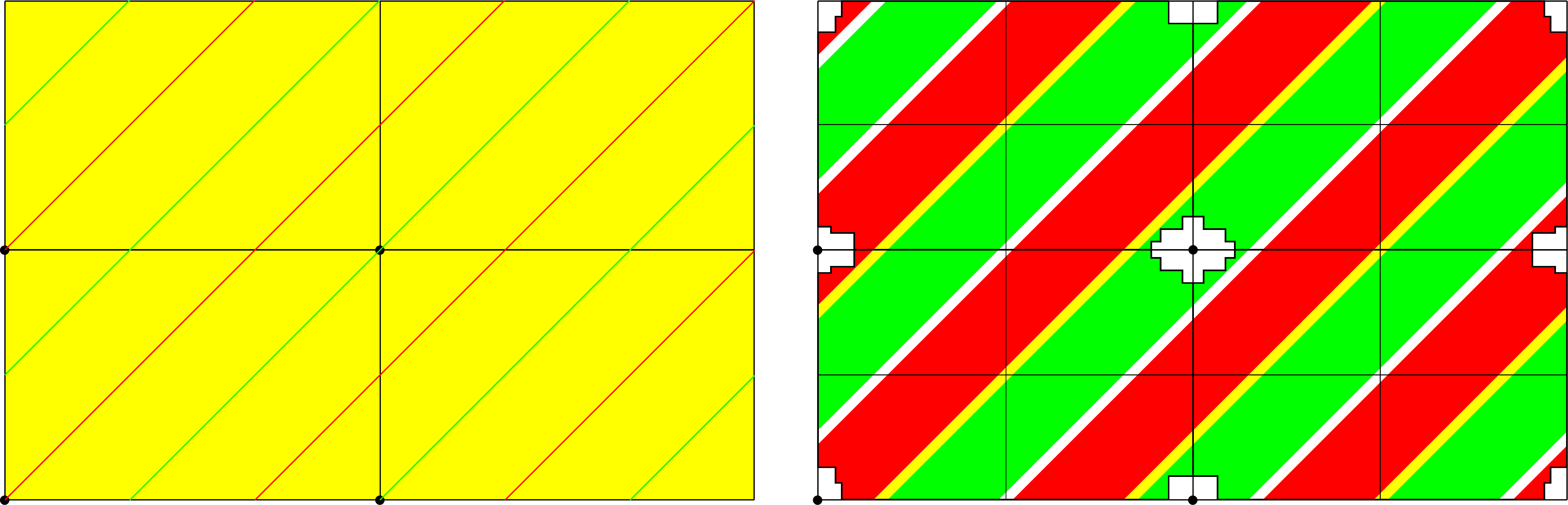}
\caption{Two periodic orbits ($red$ and $green$) on $\T_{m,n}$ and the corresponding cylinders on $M(E_1)$.}
\label{fig:cylinders}
\end{figure}

\begin{remark}\label{rem:meetbound}
Note that the distance between $red$ and $green$ periodic orbits is $\pi C$.
 Recall that \(M(E_1)=\T_{m,n}\setminus(\mathbf{R}^{green}\cup\ \mathbf{R}^{red})\), where the opposite sides of removed polygons are identified.  Now we consider two cases:

\noindent \textbf{Case 1.} If $C\delta^{red}(E_1)+C\delta^{green}(E_1)<\pi C$, namely, $E_1\in J\in\mathcal U_I^-$, then $\T_{m,n}\setminus(R\cup G)$ is non-empty and consists of two open cylinders in direction $\pi/4$, $yellow$ and $white$ on Figure~\ref{fig:cylinders}.
Since $yellow$ and $white$ cylinders  do not meet the removed polygons, they correspond to periodic cylinders $Y$ and $W$ on the translation surface $M(E_1)$. Moreover, the flow $(\psi_t^{\pi/4})_{t\in\R}$ on
$M(E_1)$ has two more invariant sets, $R\backslash\mathbf{R}^{red}$ and $G\backslash\mathbf{R}^{green}$, which are traces of the corresponding cylinders from $\T_{m,n}$. The sets $R\backslash\mathbf{R}^{red} ,G\backslash\mathbf{R}^{green}\subset M(E_1)$ are translation surfaces with  boundary and we will analyze them later.
Notice  that all $(\psi_t^{\pi/4})_{t\in\R}$-orbits on $R\backslash\mathbf{R}^{red}$ and $G\backslash\mathbf{R}^{green}$  meet the boundary of \(\mathbf{R}^{red}\) and \(\mathbf{R}^{green}\), and both $\partial (R\backslash\mathbf{R}^{red})$, $\partial (G\backslash\mathbf{R}^{green})$ consist of
saddle connections for $(\psi_t^{\pi/4})_{t\in\R}$ on $M(E_1)$.

\noindent \textbf{Case 2.} If $C\delta^{red}(E_1)+C\delta^{green}(E_1)\geq \pi C$, namely, $E_1\in J\in\mathcal U_I^+$, then $R\cup G$ fills the whole torus $\T_{m,n}$. Therefore all $(\psi_t^{\pi/4})_{t\in\R}$-orbits on $M(E_1)$ meet the boundary of either  \(\mathbf{R}^{red}\) or \(\mathbf{R}^{green}\) or both.
\end{remark}
\subsection{Periodic cylinders (case 1)}
We first note some non-degeneracy properties.
{ Recall that each removed set, \(\mathbf{R}^{colour}\), is composed of two removed polygons $\mathbf{R}_{\varsigma_1\varsigma_2}$, $\varsigma_1\varsigma_2\in colour$,  which are associated with the \(\mathbf{P}^{\varsigma_1\varsigma_2}(E_{1}), \varsigma_1\varsigma_2\in colour\).  We first note that saddle connections between different corners of one removed polygon $\mathbf{R}^{\varsigma_1\varsigma_2}$ rarely occur. We then note that only when there is a degeneracy between the polygons \(P^{\varsigma_1\varsigma_2}\) it can happen that saddle connections between the two different removed polygons persist for all \(E_{1}\in I\).} Finally we note that only rarely the periodic cylinders degenerate to a line.
\begin{lemma}\label{lem:dotyk}
{
Let $colour\in\{red,green\}$.
\begin{enumerate}
\item
For every  $\varsigma_1\varsigma_2\in colour$ and for all but countably many $E_1\in I$ we have
\[C\delta^{colour}(E_1)=D_{E,E_{1}}(x_k^{\varsigma_1\varsigma_2},{y_{k+1}^{\varsigma_1\varsigma_2}})\]
for at most one $1\leq k< k(\bar{x}^{\varsigma_1\varsigma_2},\bar{y}^{\varsigma_1\varsigma_2})$.
\item If $\varsigma_1\varsigma_2$, $\varsigma'_1\varsigma'_2$ are different elements  of a  $colour$ and
\begin{align}\label{eq:twoint}
\begin{aligned}
C\delta^{colour}(E_1)&=D_{E,E_{1}}(x_k^{\varsigma_1\varsigma_2},{y_{k+1}^{\varsigma_1\varsigma_2}})=D_{E,E_{1}}(x_l^{\varsigma_1'\varsigma_2'},{{y_{l+1}^{\varsigma_1'\varsigma_2'}}})
\end{aligned}
\end{align}
for uncountably many $E_1\in I$, then $(x_k^{\varsigma_1\varsigma_2},y_{k+1}^{\varsigma_1\varsigma_2})=(x_l^{\varsigma'_1\varsigma'_2},
y_{l+1}^{\varsigma'_1\varsigma'_2})$.
\item Moreover,
\[\delta^{red}(E_1)+\delta^{green}(E_1)\neq\pi \]
for all but countably many $E_1\in(0,E)$.
\end{enumerate}}
In addition, all three properties are met on an open set.
\end{lemma}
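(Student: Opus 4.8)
The plan is to reduce all three assertions to the non-vanishing of nontrivial linear relations among the real-analytic functions $T_1$, $T_2$, $\psi_1(x,\cdot)$, $\psi_2(y,E-\cdot)$, which is supplied by Remark~\ref{rem:indep} and Proposition~\ref{prop:indep2} (quadratic potentials are $Deck$, so the latter applies), and then to extract the openness statement from the fact that the relevant degenerate loci are closed. Throughout I fix the interval $I\in\mathcal J_E$, on which, by Remark~\ref{rem:XYI}, the collection of concave corners $(x_k^{\varsigma_1\varsigma_2},y_{k+1}^{\varsigma_1\varsigma_2})$ contributing to the maxima defining $\delta^{red},\delta^{green}$ is independent of $E_1$, and I use the explicit identity $D_{E,E_1}(x_k^{\varsigma_1\varsigma_2},y_{k+1}^{\varsigma_1\varsigma_2})=\tfrac14 T_1(E_1)-\psi_1(x_k^{\varsigma_1\varsigma_2},E_1)+\tfrac14 T_2(E-E_1)-\psi_2(y_{k+1}^{\varsigma_1\varsigma_2},E-E_1)$, where $\tfrac14 T_1=mC\tfrac\pi2$ and $\tfrac14 T_2=nC\tfrac\pi2$ are constants; by Propositions~\ref{prop:a}~and~\ref{prop:ay} each $E_1\mapsto D_{E,E_1}(x_k^{\varsigma_1\varsigma_2},y_{k+1}^{\varsigma_1\varsigma_2})$, and hence each pairwise difference of such functions, is real-analytic on $I$.

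For (1) and (2) I would argue uniformly. Suppose two distinct terms both realize $C\delta^{colour}(E_1)$ for uncountably many $E_1\in I$; then these two analytic terms agree on a set with an accumulation point inside $I$, hence agree identically on $I$. In case (1) the terms come from one quadrant $\varsigma_1\varsigma_2$ with indices $k\neq k'$, and after cancelling the constants the identity reads $\psi_1(x_{k'}^{\varsigma_1\varsigma_2},\cdot)-\psi_1(x_{k}^{\varsigma_1\varsigma_2},\cdot)+\psi_2(y_{k'+1}^{\varsigma_1\varsigma_2},E-\cdot)-\psi_2(y_{k+1}^{\varsigma_1\varsigma_2},E-\cdot)\equiv0$; since staircase widths are strictly increasing and heights strictly decreasing, $x_k^{\varsigma_1\varsigma_2}\neq x_{k'}^{\varsigma_1\varsigma_2}$ and $y_{k+1}^{\varsigma_1\varsigma_2}\neq y_{k'+1}^{\varsigma_1\varsigma_2}$, so this is a nontrivial relation, contradicting Remark~\ref{rem:indep}. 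In case (2) the terms come from distinct quadrants of the same colour and the analogous identity is $\psi_1(x_k^{\varsigma_1\varsigma_2},\cdot)-\psi_1(x_l^{\varsigma_1'\varsigma_2'},\cdot)+\psi_2(y_{k+1}^{\varsigma_1\varsigma_2},E-\cdot)-\psi_2(y_{l+1}^{\varsigma_1'\varsigma_2'},E-\cdot)\equiv0$; after merging possibly coincident nodes this relation is nontrivial unless $x_k^{\varsigma_1\varsigma_2}=x_l^{\varsigma_1'\varsigma_2'}$ and $y_{k+1}^{\varsigma_1\varsigma_2}=y_{l+1}^{\varsigma_1'\varsigma_2'}$, so Remark~\ref{rem:indep} forces the two corners to coincide. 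As there are finitely many colours, quadrants and indices, the exceptional $E_1$-sets are each countable.

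For (3) I would work one interval $I'\in\mathcal J_E$ at a time. On $I'$ both $\delta^{red}$ and $\delta^{green}$ are pointwise maxima of fixed finite families of real-analytic functions (with the convention that the maximum over the empty family is $0$), so $\delta^{red}+\delta^{green}$ is again a maximum of finitely many real-analytic functions. If $\{\delta^{red}+\delta^{green}=\pi\}$ had an accumulation point in $I'$, it would contain a subinterval on which $\delta^{red}+\delta^{green}$ agrees with a single analytic branch, i.e.\ $D_{E,E_1}(x_k^{\varsigma_1\varsigma_2},y_{k+1}^{\varsigma_1\varsigma_2})+D_{E,E_1}(x_l^{\varsigma_1'\varsigma_2'},y_{l+1}^{\varsigma_1'\varsigma_2'})\equiv C\pi$ with one term from each colour (or a single term $\equiv C\pi$ in the degenerate case of an empty colour); using $\tfrac14 T_1=mC\tfrac\pi2$ and $\tfrac14 T_2=nC\tfrac\pi2$ this rewrites as $\psi_1(x_k^{\varsigma_1\varsigma_2},\cdot)+\psi_1(x_l^{\varsigma_1'\varsigma_2'},\cdot)+\psi_2(y_{k+1}^{\varsigma_1\varsigma_2},E-\cdot)+\psi_2(y_{l+1}^{\varsigma_1'\varsigma_2'},E-\cdot)\equiv\mathrm{const}$. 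Since $\tfrac14 T_1$ is a nonzero constant this is a nontrivial linear relation among $T_1$ and the $\psi$-functions whose $\psi$-coefficients remain positive after merging equal nodes, contradicting Proposition~\ref{prop:indep2}. Hence $\{\delta^{red}+\delta^{green}=\pi\}$ is discrete in each $I'$, so countable in $(0,E)$.

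Finally, for the openness assertion I would observe that the degenerate sets just analysed — for (1) and (2), finite unions over pairs of terms of loci of the form $\{E_1:D_i(E_1)=D_j(E_1)=\max_\ell D_\ell(E_1)\}$, and for (3) the set $\{E_1\in I:\delta^{red}(E_1)+\delta^{green}(E_1)=\pi\}$ — are cut out by equalities among functions continuous on $I$, hence are closed in $I$, while by (1), (2) and (3) they have empty interior. Their finite union $B$ is therefore closed in $I$ with empty interior, so $I\setminus B$ is an open, dense, co-countable subset of $I$ on which all three non-degeneracy properties hold simultaneously. I expect the only genuinely delicate point to be the reduction in (3) of the piecewise-analytic function $\delta^{red}+\delta^{green}$ to a single analytic branch, together with checking that the resulting relation stays nontrivial after merging coincident nodes so that Proposition~\ref{prop:indep2} applies; everything else is analytic continuation plus bookkeeping over the finitely many colours, quadrants and step indices.
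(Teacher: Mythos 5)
Your proof is correct and follows essentially the same route as the paper: all three non-degeneracy statements are reduced, after cancelling or absorbing the constant period terms $T_1=2\pi mC$, $T_2=2\pi nC$, to non-trivial linear relations among the analytic maps $T_1,\psi_1(x,\cdot),\psi_2(y,E-\cdot)$, which Proposition~\ref{prop:indep2} (or Remark~\ref{rem:indep}) rules out off a countable set, and the openness claim follows because the exceptional loci are closed as zero sets of finitely many continuous functions. The only distinctions from the paper's argument are presentational — you route parts (1)–(2) through Remark~\ref{rem:indep} rather than Proposition~\ref{prop:indep2}, and you address explicitly a degenerate empty-colour case in (3) that the paper passes over — so the substance is the same.
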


\begin{proof}
\textbf{(1)} Suppose, contrary to our claim, that there exist $k\neq l$  such that
\[D_{E,E_{1}}(x_k^{\varsigma_1\varsigma_2},{y_{k+1}^{\varsigma_1\varsigma_2}})=D_{E,E_{1}}(x_l^{\varsigma_1\varsigma_2},{y_{l+1}^{\varsigma_1\varsigma_2}})\]
holds for uncountably many $E_1$. Then \begin{displaymath}
\psi_1(x_k^{\varsigma_1\varsigma_2},E_1)+\psi_2({y_{k+1}^{\varsigma_1\varsigma_2}},E-E_1)=\psi_1(x_l^{\varsigma_1\varsigma_2},E_1)+\psi_2({y_{l+1}^{\varsigma_1\varsigma_2}},E-E_1)
\end{displaymath}for uncountably many $E_1$, which contradicts Proposition~\ref{prop:indep2}.

\textbf{(2)} Suppose that \eqref{eq:twoint} holds for uncountably many $E_1$. Then, similarly
\[\psi_1(x_k^{\varsigma_1\varsigma_2},E_1)+\psi_2({y_{k+1}^{\varsigma_1\varsigma_2}},E-E_1)=\psi_1(x_l^{\varsigma_1'\varsigma_2'},E_1)+\psi_2(,{{y_{l+1}^{\varsigma_1'\varsigma_2'}}},E-E_1)\]
for uncountably many $E_1$. By Proposition~\ref{prop:indep2}, it follows that $x_k^{\varsigma_1\varsigma_2}=x_l^{\varsigma'_1\varsigma'_2}$
and $y_{k+1}^{\varsigma_1\varsigma_2}=y_{l+1}^{\varsigma'_1\varsigma'_2}$.

\textbf{(3)} Now suppose, contrary to our claim, that
for uncountably many $E_1$\[C\delta^{red}(E_1)+C\delta^{green}(E_1)=D_{E,E_{1}}(x_k^{\varsigma_1\varsigma_2},{y_{k+1}^{\varsigma_1\varsigma_2}})+D_{E,E_{1}}(x_l^{\varsigma_1'\varsigma_2'},{y_{l+1}^{\varsigma_1'\varsigma_2'}})=C\pi \]
for some $\varsigma_1\varsigma_2\in red$,  $\varsigma'_1\varsigma'_2\in green$,  $1\leq k< k(\bar{x}^{\varsigma_1\varsigma_2},\bar{y}^{\varsigma_1\varsigma_2})$
and  $1\leq l< k(\bar{x}^{\varsigma'_1\varsigma'_2},\bar{y}^{\varsigma'_1\varsigma'_2})$. { Then,
since \(\frac{1}{4}T_{1}=mC\frac{\pi }{2},\frac{1}{4}T_{2}=nC\frac{\pi }{2}\), we have
\begin{gather*}
\psi_1(x_k^{\varsigma_1\varsigma_2},E_1)+\psi_2({y_{k+1}^{\varsigma_1\varsigma_2}},E-E_1)+\psi_1(x_l^{\varsigma_1'\varsigma_2'},E_1)+\psi_2(y_{l+1}^{\varsigma_1'\varsigma_2'},E-E_1)\\
=C\pi(m+n-1)=\frac{m+n-1}{2m}T_{1}(E_1)
\end{gather*}}
holds on an uncountable subset. This again contradicts Proposition~\ref{prop:indep2}.

Finally, the complement of the set of $E_1$'s for which all three properties hold is a closed set - the set of zeros for finitely many continuous maps.
\end{proof}

By Lemma~\ref{lem:dotyk}, for every $E>0$ satisfying \eqref{eq:assumE} and for every $I\in \mathcal J_E$ satisfying \eqref{eq:assumI}, there are two families $\mathcal U^+_I$ and  $\mathcal U^-_I$ of open subintervals of $I$
such that:
\begin{itemize}
\item for every $E_1 \in\bigcup_{J\in\mathcal U^+_I}J\cup\bigcup_{J\in \mathcal U^-_I}J\subset I$ all conclusions of Lemma~\ref{lem:dotyk} hold;
\item $\delta^{red}(E_1)+\delta^{green}(E_1)>\pi $ for all $E_1\in J$ if $J\in \mathcal U^+_I$;
\item $\delta^{red}(E_1)+\delta^{green}(E_1)<\pi $ for all $E_1\in J$ if $J\in \mathcal U^-_I$.
\end{itemize}
Notice that by  Lemma~\ref{lem:dotyk},  $\bigcup_{J\in\mathcal U^+_I}J\cup\bigcup_{J\in \mathcal U^-_I}J\subset I$ has at most countable complement in $I$.
The following result  describes fully the dynamics of the flow $(\psi^{\pi/4}_t)_{t\in\R}$ on $M(E_1)$ for
a.e.\ $E_1\in I$. This gives full information about dynamics of the flow $(\varphi^{P,E,E_1}_t)_{t\in\R}$ on $S^P_{E,E_1}$ for  almost every $E_1\in I$.

\begin{theorem}\label{thm:mainquad}
If $J\in \mathcal U^+_I$ then the flow $(\psi^{\pi/4}_t)_{t\in\R}$ on $M(E_1)$ is uniquely ergodic for a.e.\ $E_1\in J$.
If $J\in \mathcal U^-_I$ then the flows $(\psi^{\pi/4}_t)_{t\in\R}$ on $(R\setminus\mathbf{R}^{red})\setminus\partial(R\setminus\mathbf{R}^{red})\subset M(E_{1})$ and on $(G\setminus\mathbf{R}^{green})\setminus\partial(G\setminus\mathbf{R}^{green})$ are uniquely ergodic for a.e.\ $E_1\in J$ and are periodic on the complement of these sets with motion that does not impact the polygon \(P\).
\end{theorem}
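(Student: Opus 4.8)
The plan is to reduce Theorem~\ref{thm:mainquad} to the abstract unique ergodicity criterion Theorem~\ref{thm:mainquadrational} (announced in Section~\ref{sec:spec}), by building, for each of the two cases, a suitable analytic curve of translation surfaces equipped with a partition into polygons whose sides may be parallel to the flow direction, together with a choice of distinguished sides such that every orbit of the directional flow hits one of them. Throughout we work on the interval $I\in\mathcal J_E$ satisfying \eqref{eq:assumI}, so that the curve $E_1\mapsto \mathbf P_{E,E_1}$, hence $E_1\mapsto M(E_1)$, is analytic, and the numerical data consists of the functions $\psi_1(x,\cdot)$, $x\in X_I$, $\psi_2(y,E-\cdot)$, $y\in Y_I$, together with the constants $\tfrac14 T_1=mC\tfrac\pi2$, $\tfrac14 T_2=nC\tfrac\pi2$. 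The geometric description from Remark~\ref{rem:meetbound} is the starting point: the torus $\T_{m,n}$ minus the removed polygons $\mathbf R^{red}\cup\mathbf R^{green}$ is $M(E_1)$, and the cylinders $R$, $G$ of periodic orbits through the marked points cover $\T_{m,n}$ exactly when $\delta^{red}(E_1)+\delta^{green}(E_1)\geq\pi$.

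\textbf{Case $J\in\mathcal U^+_I$.} Here $R\cup G=\T_{m,n}$, so by Remark~\ref{rem:meetbound} every $(\psi^{\pi/4}_t)$-orbit on $M(E_1)$ meets $\partial\mathbf R^{red}\cup\partial\mathbf R^{green}$. The plan is to take as partition of $M(E_1)$ the finitely many polygonal pieces cut out by the boundaries of the removed polygons and by the two periodic orbits (suitably subdivided so that each piece is a polygon with horizontal, vertical, and $45^\circ$ sides), and to declare the sides lying on $\partial\mathbf R^{red}\cup\partial\mathbf R^{green}$ to be the distinguished sides. The key assumption of Theorem~\ref{thm:mainquadrational} — every finite, half-infinite or double-infinite orbit hits a distinguished side — holds precisely because $R\cup G$ fills the torus. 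It then remains to verify the algebraic non-degeneracy hypotheses of Theorem~\ref{thm:mainquadrational}: the relevant side lengths are affine combinations (with integer, or rational-with-bounded-denominator, coefficients) of $\psi_1(x,E_1)$, $\psi_2(y,E-E_1)$ and the constants $mC\tfrac\pi2$, $nC\tfrac\pi2$, and no non-trivial such combination vanishes on a positive-measure set. This is exactly what Proposition~\ref{prop:indep2} gives (one reduces any relation with a non-zero $\psi$-coefficient to a contradiction via the $-\infty$ limit \eqref{eq:-infty}; relations among the constants alone are excluded because they would force $m/n$ to equal a forbidden ratio, or are handled by the distinguished-side bookkeeping). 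The monotonicity of $\psi_1$, $\psi_2$ from Lemma~\ref{lem:negposder} supplies the remaining transversality/twist-type input. Theorem~\ref{thm:mainquadrational} then yields unique ergodicity of $(\psi^{\pi/4}_t)$ on $M(E_1)$ for a.e.\ $E_1\in J$.

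\textbf{Case $J\in\mathcal U^-_I$.} Now $\T_{m,n}\setminus(R\cup G)$ consists of the two periodic cylinders $Y$, $W$, which miss the removed polygons and hence descend to genuine periodic cylinders on $M(E_1)$ carrying motion that never impacts $P$; that accounts for two of the four components and the "periodic on the complement" clause. For the remaining two components $R\setminus\mathbf R^{red}$ and $G\setminus\mathbf R^{green}$ we argue separately; consider $R\setminus\mathbf R^{red}$, a translation surface with boundary whose boundary $\partial(R\setminus\mathbf R^{red})$ consists of saddle connections. The idea is to \emph{double} or otherwise glue this bordered surface to itself along its boundary (the "glued surface" construction advertised in the strategy subsection) to obtain a closed translation surface $\widehat M^{red}(E_1)$, on which the original dynamics on the interior of $R\setminus\mathbf R^{red}$ is recovered, and on which every orbit — by construction, since every orbit of $R\setminus\mathbf R^{red}$ meets $\partial\mathbf R^{red}$ by Remark~\ref{rem:meetbound} — hits a distinguished side, namely a copy of a side of $\partial\mathbf R^{red}$. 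One then applies Theorem~\ref{thm:mainquadrational} to the analytic curve $E_1\mapsto\widehat M^{red}(E_1)$; the non-degeneracy input is again furnished by Proposition~\ref{prop:indep2} (plus parts (1)--(2) of Lemma~\ref{lem:dotyk}, which guarantee that the combinatorics of which corners of the removed polygons realize $\delta^{red}$ is stable and that no spurious saddle connections between distinct removed sub-polygons persist, unless forced by an exact coincidence of corner data), and monotonicity by Lemma~\ref{lem:negposder}. Symmetrically for $G\setminus\mathbf R^{green}$. Passing from unique ergodicity of the flow on $\widehat M^{red}(E_1)$ back to unique ergodicity of $(\psi^{\pi/4}_t)$ on the interior of $R\setminus\mathbf R^{red}$ is the final bookkeeping step.

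\textbf{Main obstacle.} The delicate point is the gluing construction in Case~$J\in\mathcal U^-_I$: one must glue $R\setminus\mathbf R^{red}$ into a closed translation surface in such a way that (a) the translation structure extends across the seams (matching horizontal/vertical side lengths of $\mathbf R^{red}$, which vary analytically with $E_1$), (b) the distinguished sides genuinely catch \emph{every} orbit class including the double-infinite ones, and (c) the resulting curve of surfaces still satisfies the non-degeneracy hypothesis of Theorem~\ref{thm:mainquadrational} — in particular that the gluing does not introduce new identically-vanishing linear relations among the side-length functions. Verifying (c) is where Lemma~\ref{lem:dotyk}(1)--(2) and Proposition~\ref{prop:indep2} must be combined carefully, and it is the step I expect to require the most attention; the rest is a matter of assembling already-established monotonicity and analyticity facts and invoking Theorem~\ref{thm:mainquadrational}.
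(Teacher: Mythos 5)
Your high-level plan --- conjugate to a curve of compact translation surfaces with a proper partition and distinguished sides, then invoke Theorem~\ref{thm:mainquadrational} with the hypothesis $(*)$ supplied by Remark~\ref{rem:meetbound} / Remark~\ref{rem:hit}, condition $(i)$ by Proposition~\ref{prop:indep2}, and monotonicity by Lemma~\ref{lem:negposder} plus the constancy of $T_1,T_2$ --- is the paper's plan. But both of your concrete constructions diverge from what actually works, and the one in the $\mathcal U^-_I$ case has a real gap.

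For $J\in\mathcal U^+_I$, you propose to cut $M(E_1)$ along the boundaries of the removed polygons \emph{and along the two periodic orbits}, creating tiles with $45^\circ$ sides. This is unnecessary and risky: the paper simply uses the natural partition of $M(E_1)$ into the $16$ staircase polygons $\mathbf{P}(E_1)^{\varsigma_1\varsigma_2}_{\sigma_1\sigma_2}$ from the unfolding, all of whose sides are horizontal or vertical, and takes the sides lying on the removed-polygon boundaries as $\mathbf D^*$. No cutting along periodic orbits is needed, and introducing $\pi/4$-direction sides (which become \emph{vertical} after the $e^{i\pi/4}$ rotation) brings the constraints of Definition~\ref{def:part}$(vi)$ and Definition~\ref{def:mathcalS}$(viii)$ into play for no gain. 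Those constraints are specifically engineered for the $\mathcal U^-_I$ case and are not automatically met by arbitrary $45^\circ$ cuts.

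For $J\in\mathcal U^-_I$, the proposal to ``double'' $R\setminus\mathbf R^{red}$ along its boundary does not produce a translation surface: the boundary saddle connections lie in direction $\pi/4$, and gluing a reflected copy along them reverses orientation along the seam, which yields at best a half-translation (quadratic-differential) structure, not one compatible with Theorem~\ref{thm:mainquadrational}. It would also double the dynamics, whereas you need the flow on the interior of $R\setminus\mathbf R^{red}$ itself. The paper's actual construction (Section~\ref{sec:compl}) is a \emph{completion}, not a doubling: one blows up the single intersection point $r$ (or the two symmetric points $r_1,r_2$, by Lemma~\ref{lem:dotyk}(1)--(2)) into a beginning $r_b$ and an end $r_e$, and then re-glues the two resulting boundary intervals to each other, turning $r_b,r_e$ into regular points and producing a closed translation surface $\overline{R}(E_1)$ that contains the interior of $R\setminus\mathbf R^{red}$ unchanged. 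The partition $\mathcal P_R(E_1)$ is then obtained by intersecting the natural $16$-tile partition with $\overline{R}(E_1)$; its only $\pi/4$-direction sides are the completion seams, and these are non-distinguished, have regular endpoints, and contribute displacement $0$ --- which is exactly why Definition~\ref{def:part}$(vi)$ and Definition~\ref{def:mathcalS}$(viii)$ are phrased as they are. Your sketch leaves all of this unspecified (``double or otherwise glue''), and the part you flag as the main obstacle --- verifying that the gluing does not create spurious vanishing relations --- is not actually where the work lies: since the seams have zero displacement and the distinguished sides come from the same $\psi_1,\psi_2$ data, Proposition~\ref{prop:indep2} applies verbatim; the work is in making the completion a bona fide translation surface with a partition in $\mathcal S$ at all.

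One smaller imprecision: you write that relations ``among the constants alone'' are excluded because they would ``force $m/n$ to equal a forbidden ratio.'' In fact in case~$(\gamma)$ the constants \emph{do} satisfy the relation $nT_1 - mT_2 = 0$; condition~$(i)$ of Theorem~\ref{thm:mainquadrational} sidesteps this by requiring that at least one coefficient of an $\mathscr X$- or $\mathscr Y$-function be nonzero, after which Proposition~\ref{prop:indep2} applies directly. The pure-constant relation is excluded by the hypothesis of the theorem, not by a number-theoretic argument.
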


Since the flow $(\psi^{\pi/4}_t)_{t\in\R}$ on $M(E_1)$ is isomorphic to the flow $(\varphi^{P,E,E_1}_t)_{t\in\R}$ on $S^P_{E,E_1}$,
Theorem~\ref{thm:U+-} is a direct consequence  of Theorem~\ref{thm:mainquad}.
To prove this result we need a more subtle version of Theorem 2.11 in \cite{frkaczek2019recurrence} applied to $(\psi^{\pi/4}_t)_{t\in\R}$ on some completions
$\overline{R}(E_1)$ of $(R\setminus\mathbf{R}^{red})\setminus\partial(R\setminus\mathbf{R}^{red})\subset M(E_{1})$ and $\overline{G}(E_1)$ of
$(G\setminus\mathbf{R}^{green})\setminus\partial(G\setminus\mathbf{R}^{green})$ as described below.
We formulate a bit more general construction in Sections~\ref{sec:compl}-\ref{sec:spec} and then return to the proof of Theorem~\ref{thm:mainquad} in Section~\ref{sec:proofs}.
\subsection{Construction of completion for  $J\in \mathcal U^-_I$ }\label{sec:compl}
 Suppose that $E_1\in J$ for some $J\in\mathcal U_I^-$, so $\delta^{red}(E_1)+\delta^{green}(E_1)<\pi $.
From now on we proceed only with the set $(R\setminus\mathbf{R}^{red})\setminus\partial(R\setminus\mathbf{R}^{red})$,  but the following construction of completion obviously works with   $(G\setminus\mathbf{R}^{green})\setminus\partial(G\setminus\mathbf{R}^{green})$.
Property (1) of Lemma~\ref{lem:dotyk} imply that  for  $E_{1}\in J\in \mathcal U^-_I$,   the boundary of the two removed polygons has one or two intersection points with \(\partial(R\setminus\mathbf{R}^{red})\)  in $M(E_1)$.

\begin{figure}[h]
\includegraphics[width=1 \textwidth]{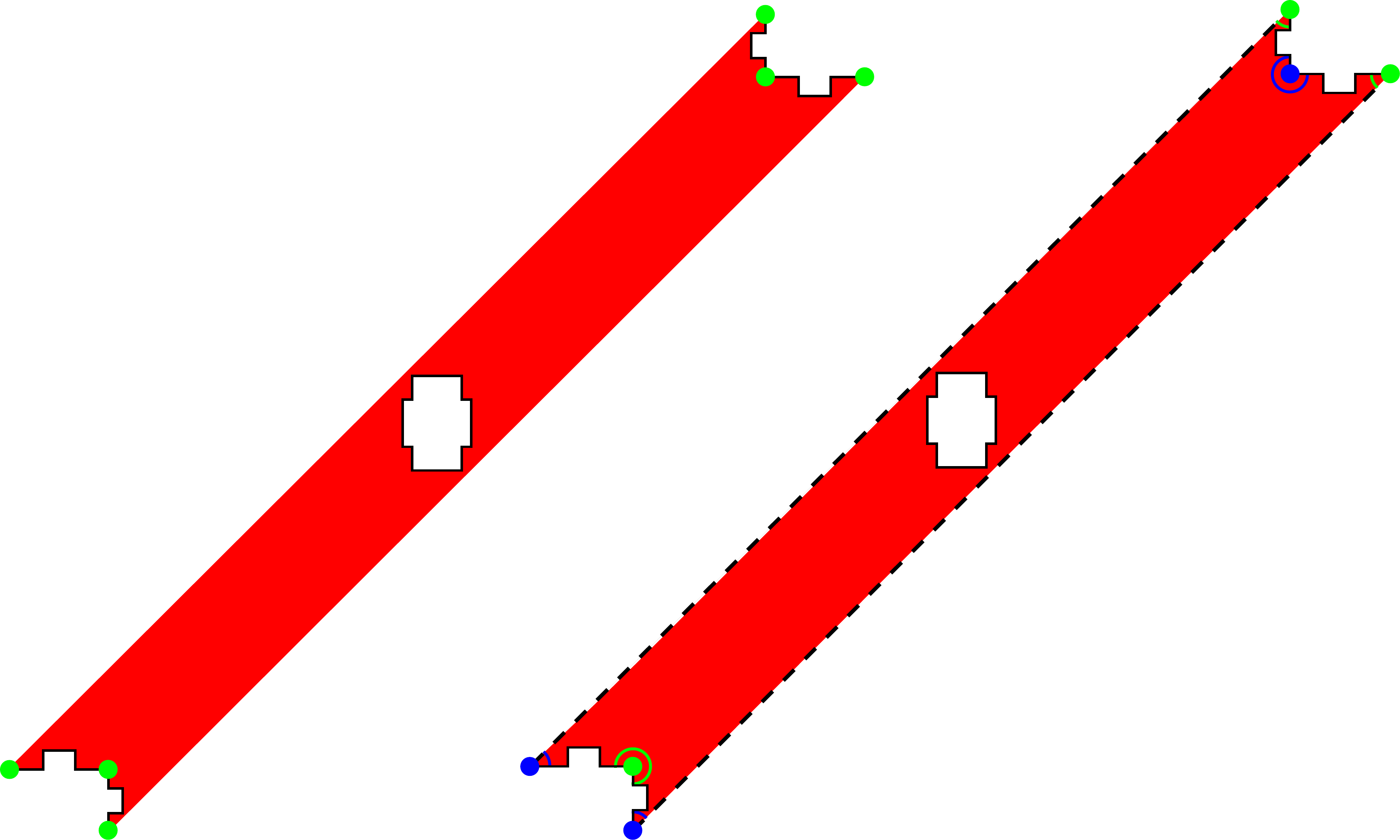}
\caption{The subset $R\setminus \partial R$ of $M(E_1)$ and its completion $\overline{R}(E_1)$.}
\label{fig:completion}
\end{figure}

We assume first that there is only one intersection, then we show that for two intersections the construction of completion
is almost the same.
Denote by $r\in M(E_1)$ the single intersection point (the green point on the left part of Figure~\ref{fig:completion}). The total angle around $r$ in $R\setminus\mathbf{R}^{red}$ is $4\pi$.
Recall that $R\setminus\mathbf{R}^{red}$ is a translation surface with the boundary and its boundary consists of two saddle connection starting and ending at the point $r$.
Both saddle connection have the same direction $\pi/4$ and the same length. In the completion $\overline{R}(E_1)$ we separate the beginning from the end of the saddle connections, their beginning we denote by $r_b$ (the blue point on the right part of Figure~\ref{fig:completion}) and their end by $r_e$ (the green point on the right part of Figure~\ref{fig:completion}),
and then we glue the intervals thus obtained (the dashed lines on the right part of Figure~\ref{fig:completion}), creating a kind of cylinder where its lower boundary is connected to the upper boundary through the inner green and inner blue corners.
The resulting object $\overline{R}(E_1)$ is a compact translation surface such that the total angle around $r_b$ and $r_e$ is $2\pi$, so they became regular points.

\begin{figure}[h]
\includegraphics[width=1 \textwidth]{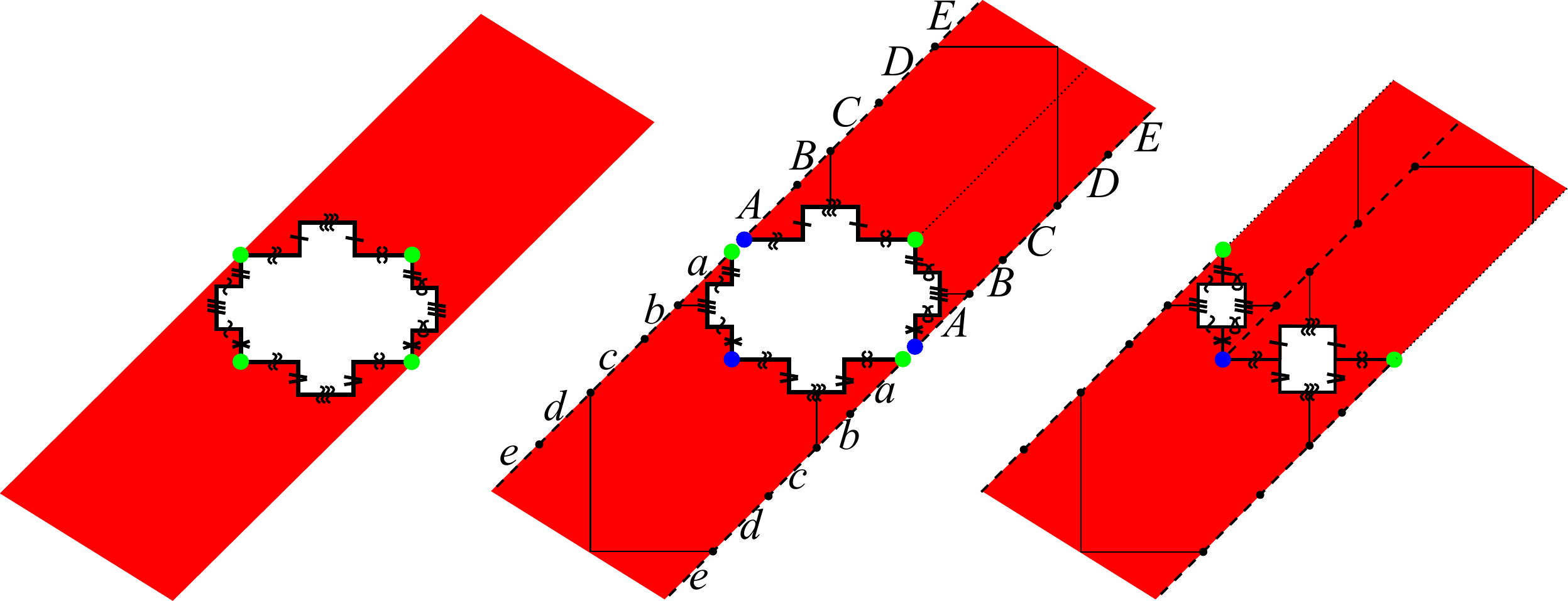}
\caption{Separation/regularization procedure}
\label{fig:completion2}
\end{figure}

If $\partial (R\setminus\mathbf{R}^{red})$ has two intersection points,  $r_1,r_{2}\in M(E_{1})$  with the removed polygons,   by property (1) of Lemma~\ref{lem:dotyk}, each one of them must belong to a distinct removed polygon.
By property (2) of Lemma~\ref{lem:dotyk}, the two points are symmetric (they have  the same critical energies \((V_{1}(x_{k}),V_{2}(y_{k+1}))\)). Thus, on the surface, they correspond to corners of the two removed polygons that are at identical horizontal and vertical distances from the removed polygon center. Hence, \(r_{2}\) is exactly at the middle point of the line connecting \(r_{1}\) to itself (namely $r_2=\psi^{\pi/4}_{mnC\pi}r_1$ and similarly  $r_1=\psi^{\pi/4}_{mnC\pi}r_2$). We carry out the separation procedure for both intersection points $r_1$ and $r_2$.
Since all four saddle connection creating $\partial (R\setminus\mathbf{R}^{red})$ have the same length, we can finalize gluing the pairs of relevant intervals on the boundary.
In this way we get rid of two singular points creating four regular points.

\begin{remark}\label{rem:hit}
Every $(\psi_t^{\pi/4})_{t\in\R}$ orbit on $\overline{R}(E_1)$ hits the boundary of a removed polygon.
\end{remark}

\subsection{Partitions of translation surfaces into polygons}\label{sec:parttranssurf}
In this section we recall some basic concepts introduced in \cite{frkaczek2019recurrence} and modify them so that the completion  surface, which has natural partition into polygons with vertical and  distinguished sides, can be treated with these tools.

\begin{definition}\label{def:part}
Let $(M,\omega)$ be a compact translation surface with singular points at \(\Sigma\). A finite partition $\mathcal{P}=\{P_\alpha:\alpha\in\mathcal A\}$ of $M$ is called a \emph{proper partition} if
\begin{itemize}
\item[$(i)$] every $P_\alpha$, $\alpha\in\mathcal A$ is closed connected subset of $M$ and  $\bigcup_{\alpha\in \mathcal A}P_\alpha=M$;
\item[$(ii)$]
for every $\alpha\in \mathcal A$ there exists a chart $\zeta_\alpha:U_\alpha\to \C$ in $\omega$ such that
\begin{itemize}
\item the interior of $P_\alpha$ is a subset of $U_\alpha$;
\item $\zeta_\alpha(\operatorname{Int}P_\alpha)$ is the interior of a compact connected polygon $\widetilde{P}_\alpha\subset \C$;
\item  $\zeta^{-1}_\alpha:\operatorname{Int}\widetilde{P}_\alpha\to\operatorname{Int}{P}_\alpha$ has a  continuous extension $\overline{{\zeta}^{-1}_\alpha}:\widetilde{P}_\alpha\to{P}_\alpha$;
\end{itemize}
then ${P}_\alpha$ is called a polygon and the $\overline{{\zeta}^{-1}_\alpha}$-image of any side/corner in $\widetilde{P}_\alpha$ is called a side/corner of ${P}_\alpha$; in particular, a side \(s\) of ${P}_\alpha$ is called vertical if  $(\overline{{\zeta}^{-1}_\alpha})^{-1}(s)$   is vertical;

\item[$(iii)$]  $\overline{{\zeta}^{-1}_\alpha}:\widetilde{P}_\alpha\setminus\{\text{vertical sides}\}\to{P}_\alpha\setminus\{\text{vertical sides}\}$
is a homeomorphism, its inverse we denote by  $\bar{\zeta}_\alpha:{P}_\alpha\setminus\{\text{vertical sides}\}\to\widetilde{P}_\alpha\setminus\{\text{vertical sides}\}$;

\item[$(iv)$] if $P_\alpha\cap P_\beta\neq \emptyset$ then it is the union of common sides and corners of the polygons $P_\alpha$, $P_\beta$;
\item[$(v)$] if $\sigma\in P_\alpha\cap \Sigma$ then $\sigma$ is a corner of $P_\alpha$;
\item[$(vi)$] if a common side of the polygons $P_\alpha$, $P_\beta$ is vertical then its ends are regular points in $(M,\omega)$;\end{itemize}
\end{definition}
Notice that we modify the corresponding definition in  \cite{frkaczek2019recurrence} to allow polygons with vertical boundaries. Non vertical sides always connect different tiles, whereas vertical sides can glue within a single tile or with other tiles having vertical boundaries.
 When vertical sides of a single polygon are glued, this polygon becomes cylindrical under the completion procedure, so the function \(  \overline{{\zeta}^{-1}_\alpha}\) is not one to one on this polygon vertical boundary, thus in condition \((iii)\) the vertical boundaries are excluded. This condition is needed since in some cases vertical boundaries of different polygons are glued together (see Figure~\ref{fig:completion2}). Condition $(v)$ and  $(vi)$  are  imposed for convenience so that the singularities in \(P_\alpha\)  do not belong to  glued vertical boundaries.  \begin{definition}[\cite{frkaczek2019recurrence}]
 For any proper partition  $\mathcal{P}=\{P_\alpha:\alpha\in\mathcal A\}$ of the translation surface $(M,\omega)$ let:
\begin{itemize}
\item ${\mathbf{D}}={D}(\omega,\mathcal P)$ be the set of all non-vertical sides in $\mathcal P$;
\item $\mathbf{B}=B(\omega,\mathcal P)$ the set of pairs $(\sigma,\beta)\in\Sigma\times \mathcal A$ for which $\sigma\in P_\beta$ and
there exits a vertical (upward) orbit segment in $P_\beta$ which begins in $\sigma$;
\item $\mathbf{E}=E(\omega,\mathcal P)$ the set of pairs $(\sigma,\alpha)\in\Sigma\times \mathcal A$ for which $\sigma\in P_\alpha$ and
there exits a vertical (upward) orbit segment in $P_\alpha$ which ends in $\sigma$.
\end{itemize}

The sets \(\mathbf{D},\mathbf{B},\mathbf{E}\) code some topological properties of the partition.
\begin{itemize}
\item Let $s_{\alpha\beta}\in \mathbf{D}$ be a common  side of $P_\alpha$ and $P_\beta$ and suppose that every vertical (upward) orbit through the side $s_{\alpha\beta}$ passes from $P_\alpha$ to $P_\beta$.
Then the displacement $\mathfrak{D}_{\omega}(s_{\alpha\beta}):=\bar{\zeta}_\alpha(x)-\bar{\zeta}_\beta(x)$ does not depend on the choice of $x\in s_{\alpha\beta}$.
\item For every $(\sigma,\beta)\in \mathbf{B}$ let $\mathfrak{B}_{\omega}(\sigma,\beta)=- \bar{\zeta}_\beta(\sigma)$.
\item For every $(\sigma,\alpha)\in \mathbf{E}$ let $\mathfrak{E}_{\omega}(\sigma,\alpha)= \bar{\zeta}_\alpha(\sigma)$.
\end{itemize}
\end{definition}
Notice that the corners  arising from the regularized points of the completion surface are not in \(\mathbf{B}\) nor in \(\mathbf{E}\).

\begin{definition}\label{def:curve1}
Let $\{(M,\omega_{E_1})\}_{E_1\in J}$ be a family of translation surfaces.
We call $J\ni E_1\mapsto ((M,\omega_{E_1}),\mathcal P(E_1))$  a \emph{$C^\infty$-curve of translation surfaces equipped with proper partition} if
there is a finite open cover $(U_\alpha)_{\alpha\in\mathcal A}$ of $M\setminus\Sigma$ such that:
\begin{itemize}
\item $\mathcal P(E_1)=\{P_\alpha(E_1):\alpha\in \mathcal A\}$ is a proper partition of $(M,\omega_{E_1})$ into polygons for every $E_1\in J$;
\item $\operatorname{Int}P_{\alpha}(E_1)\subset U_\alpha$ for every $\alpha\in \mathcal A$ and $E_1\in J$;
\item for every $\alpha\in \mathcal A$ the polygons $\widetilde{P}_\alpha(E_1)$, $E_1\in J$ are diffeomorphic  and the coordinates of their sides/corners
vary $C^\infty$-smoothly with $E_1\in J$;
\item if $J\ni E_1\mapsto s_{\alpha\beta}(E_1)\subset P_\alpha(E_1)\cap P_\beta(E_1)$ is a $C^{\infty}$-curve of common sides and $s_{\alpha\beta}(E_{10})$ is vertical for some $E_{10}\in J$, then $s_{\alpha\beta}(E_1)$ is vertical for all $E_1\in J$.
\end{itemize}
\end{definition}
For every  $C^\infty$-curve $\mathcal{J}:E_1\in J\mapsto ((M,\omega_{E_1}),\mathcal P(E_1))$ the topological data of the partitions, in particular the sets $D(\omega_{E_1},\mathcal P(E_1))$, $B(\omega_{E_1},\mathcal P(E_1))$, $E(\omega_{E_1},\mathcal P(E_1))$, do not depend on $E_1\in J$. Therefore, we  write $\mathbf{D}$, $\mathbf{B}$ and $\mathbf{E}$ for short, for all \(E_1\in J\).

Let us distinguish a non-empty subset of sides $\mathbf{D}^*\subset \mathbf{D}$. They correspond to sides with impacts.
We will deal with four finite families in $C^\infty(J,\C)$:
\begin{gather*}
\mathscr D :=\{\mathfrak{D}_{\omega_{E_1}}(s_{\alpha\beta}):s_{\alpha\beta}\in \mathbf{D}\},\quad
\mathscr D^* :=\{\mathfrak{D}_{\omega_{E_1}}(s_{\alpha\beta}):s_{\alpha\beta}\in \mathbf{D}^*\},\\
\mathscr B :=\{\mathfrak{B}_{\omega_{E_1}}(\sigma,\alpha):(\sigma,\alpha)\in \mathbf{B}\},\quad
\mathscr E :=\{\mathfrak{E}_{\omega_{E_1}}(\sigma,\alpha):(\sigma,\alpha)\in \mathbf{E}\}.
\end{gather*}

\begin{theorem}\label{thm:gencrit}
Suppose that $\mathcal{J}:J\ni E_1\mapsto ((M,\omega_{E_1}),\mathcal P(E_1))$ is a $C^\infty$-curve of translation surfaces equipped with proper partition and that there exists a distinguished set $\mathbf{D}^*$ such that:
\begin{itemize}
\item[$(*)$] for every $E_1\in J$ every vertical orbit in $(M,\omega_{E_1})$ hits at least one side in $\mathbf{D}^*$.
\end{itemize}
Let $\ell:J\to\R_{>0}$ be a $C^\infty$-map.
Assume that  \((\mathcal{J},\mathbf{D}^*,\ell) \) satisfy:
\begin{itemize}

\item[$(i)$]  Consider any sequence $(n_h)_{h\in \mathscr D}$  in $\Z_{\geq 0}$ such that $n_h>0$ for some $h\in \mathscr D^*$. Assume that for any $f\in \mathscr B$, $g\in \mathscr E$
  such that the map $f+g+\sum_{h\in \mathscr D}n_hh$ is non-zero, we have
        \[\rp f(E_1)+\rp g(E_1)+\sum_{h\in \mathscr D} n_h\rp h(E_1)\neq 0\ \text{ for a.e. }\ E_1\in J;\]
\item[$(ii_+)$] $[\rp h,\ell](E_1)\geq 0$ for all $h\in  \mathscr D$ and $E_1\in J$  with
\[\sum_{h\in  \mathscr D}[\rp h,\ell](E_1)> 0 \quad\text{for a.e. $E_1\in J$, or}\]
\item[$(ii_-)$] $[\rp h,\ell](E_1)\leq 0$ for all $h\in  \mathscr D$ and $E_1\in J$  with
\[\sum_{h\in  \mathscr D}[\rp h,\ell](E_1)< 0\quad\text{for a.e.}\ E_1\in J.\]
\end{itemize}
Then the vertical flow $(\psi^v_t)_{t\in\R}$ on $(M,\omega_{E_1})$ is uniquely ergodic for a.e.\ $E_1\in J$.
\end{theorem}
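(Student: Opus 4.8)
The plan is to reduce this statement to Theorem~2.11 in \cite{frkaczek2019recurrence} (the abstract unique-ergodicity criterion for curves of translation surfaces), in the same spirit as the proof of Theorem~\ref{thm:mainfr}, but with the extra bookkeeping forced by the three new features of the present setup: polygons with vertical sides, the distinguished subset $\mathbf D^*$, and the hypothesis $(*)$ that every vertical orbit hits $\mathbf D^*$. First I would record the combinatorial/geometric data attached to the curve $\mathcal J$: the finite sets $\mathbf D,\mathbf B,\mathbf E$ (which, by the remark after Definition~\ref{def:curve1}, are $E_1$-independent along $J$) and the associated families $\mathscr D,\mathscr D^*,\mathscr B,\mathscr E\subset C^\infty(J,\C)$ of displacement, begin-, and end-vectors. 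The geometry of a proper partition shows that a vertical orbit segment is encoded by a finite itinerary through the polygons $P_\alpha(E_1)$, and the holonomy of such a segment is an integer combination $f+g+\sum_{h\in\mathscr D}n_h h$ with $f\in\mathscr B$, $g\in\mathscr E$, $n_h\in\Z_{\ge 0}$; the hypothesis $(*)$ guarantees that any long vertical orbit must cross some side in $\mathbf D^*$, i.e.\ $n_h>0$ for some $h\in\mathscr D^*$, so that the ``Diophantine'' non-vanishing is only ever required along those combinations actually realized by orbits. This is exactly the configuration in which condition~$(i)$ of the present theorem is the right substitute for condition~($i$) of Theorem~\ref{thm:mainfr}.

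The core of the argument is then to verify that $(i)$ and $(ii_\pm)$ imply, respectively, hypotheses~($i$) and ($ii$) of Theorem~2.11 in \cite{frkaczek2019recurrence} for the curve $E_1\mapsto(M,\omega_{E_1})$ with reference function $\ell$. For the first: given a vertical saddle connection or more generally a vertical orbit that returns near a singularity, its holonomy has the form $\rp f+\rp g+\sum_h n_h\rp h$ evaluated at $E_1$; hypothesis~$(*)$ forces such a holonomy to involve a distinguished side, so the non-vanishing assumption in $(i)$ applies and, for a.e.\ $E_1\in J$, this real part is nonzero—precisely the ``no vertical saddle connection for a.e.\ parameter'' type input that feeds Theorem~2.11. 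One must be slightly careful that the countable union over all finitely many itineraries of measure-zero exceptional sets is still measure zero, which is immediate since $\mathbf D,\mathbf B,\mathbf E$ are finite and the coefficient sequences $(n_h)$ range over a countable set. For the second: the bracket $[\rp h,\ell](E_1)=\rp h'(E_1)\ell(E_1)-\rp h(E_1)\ell'(E_1)$ is, up to the positive factor $\ell^2$, the derivative of $\rp h/\ell$; so $(ii_+)$ (resp.\ $(ii_-)$) says that every $\rp h/\ell$ is nondecreasing (resp.\ nonincreasing) in $E_1$ and at least one is strictly monotone on a full-measure set. This is the monotonicity/twist hypothesis of Theorem~2.11, expressed in the normalization used there; the passage between ``$\mathbf x'\ge 0$'' statements and bracket statements is the analogue of the step in Theorem~\ref{thm:mainfr} where conditions ($ii$) of Theorem~4.2 of \cite{frkaczek2019recurrence} are translated into the hypotheses of Theorem~2.11.

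Having matched both hypotheses, Theorem~2.11 of \cite{frkaczek2019recurrence} yields unique ergodicity of the vertical flow $(\psi^v_t)_{t\in\R}$ on $(M,\omega_{E_1})$ for a.e.\ $E_1\in J$, which is the conclusion. The step I expect to be the main obstacle is the careful handling of the new ingredients in Definition~\ref{def:part}—polygons with glued vertical sides (which become cylinders after completion) and the regularized corners that lie on vertical sides and are excluded from $\mathbf B$ and $\mathbf E$. One has to check that such vertical gluings do not create vertical orbit segments that evade $\mathbf D^*$ (this is where $(*)$ is used as a genuine hypothesis rather than a consequence), and that the coding of orbits by polygon itineraries, together with the displacement cocycle $\mathfrak D_\omega$, $\mathfrak B_\omega$, $\mathfrak E_\omega$, still fits the abstract framework of \cite{frkaczek2019recurrence} verbatim after this modification—i.e.\ that the ``distinguished side'' machinery is a clean refinement and not a structural change. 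Once that compatibility is established, the rest is a faithful transcription of the proof of Theorem~\ref{thm:mainfr}.
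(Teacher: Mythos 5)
Your overall strategy matches the paper's: reduce to Theorem~2.11 of \cite{frkaczek2019recurrence}, observe that the two hypotheses of that theorem decompose the argument into (I) absence of vertical saddle connections for a.e.\ $E_1$ (fed by condition~$(i)$) and (II) sign control on the Minsky--Weiss function $L_{E_1}$ (fed by $(ii_\pm)$), and note that $(*)$ is now a genuine hypothesis because $\mathbf D^*$ is a proper subset. You correctly use the finiteness of $\mathbf D,\mathbf B,\mathbf E$ and the countability of coefficient sequences for the measure-zero union, and you correctly interpret the bracket $[\rp h,\ell]$. Part~(II) indeed does not change, as the paper confirms.

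The gap is in part~(I), precisely where you flagged ``the main obstacle'' and then declared that the holonomy coding ``fits the abstract framework verbatim'' once the vertical gluings are understood. It does not fit verbatim: the holonomy formula $\langle\omega_{E_1},\gamma\rangle = f_\gamma+g_\gamma+\sum_{s_{\alpha\beta}\in\mathbf D}n_{s_{\alpha\beta},\gamma}\,h_{s_{\alpha\beta}}$ (Theorem~2.8 of \cite{frkaczek2019recurrence}) is established only for piecewise linear curves that \emph{avoid} vertical sides and corners of $\mathcal P(E_1)$. But the objects you want to rule out here are \emph{vertical} saddle connections, and some of those may run along vertical sides of the partition for part or all of their length, or pass through corners---exactly the degenerate case the new class of partitions allows. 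For such $\gamma$ the crossing numbers $n_{s_{\alpha\beta},\gamma}$ are ill-defined and the formula cannot be invoked directly. The paper resolves this by a homological deformation: using condition~$(vi)$ of Definition~\ref{def:part} (vertical common sides have regular endpoints), one replaces $\gamma$ by a nearby piecewise linear curve $\gamma'$, homologous to $\gamma$ in $H_1(M,\Sigma,\Z)$, that avoids all vertical sides and corners and still passes upward through the non-vertical sides. Because $\omega_{E_1}$ is a closed $1$-form and $\gamma'$ is homologous to $\gamma$, one has $\langle\omega_{E_1},\gamma'\rangle = \langle\omega_{E_1},\gamma\rangle = i\tau_\gamma$, and closeness of $\gamma'$ to $\gamma$ preserves the positivity of the crossing number for the distinguished side $\gamma$ crosses by $(*)$. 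Without this deformation step your argument applies only to vertical saddle connections that happen to avoid the vertical sides, which is insufficient; and condition~$(vi)$ is used in a load-bearing way that your sketch does not surface.
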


\begin{proof}
Note that Theorem~\ref{thm:gencrit} is a more general version of Theorem~2.11 in \cite{frkaczek2019recurrence} and their proofs are similar as explained next.
The difference between them is that Theorem~2.11 in \cite{frkaczek2019recurrence} prohibits the existence of vertical sides in $\mathcal{P}(E_1)$
and assumes that $\mathbf{D}^*=\mathbf{D}$, i.e.\ all sides are distinguished. Then the assumption $(*)$ about hitting the sides is obviously fulfilled.

 The proof of  Theorem~2.11 in \cite{frkaczek2019recurrence} consists of two parts:
\begin{itemize}
\item[$(I)$] showing that the condition $(i)$ (together with the two restrictions listed above) implies the absence of vertical saddle connections in $(M,\omega_{E_{1}})$ for a.e.\ $E_1\in J$;
\item[$(II)$] showing that the assumption $(ii_\pm)$ implies that the corresponding piecewise constant function defined in Minsky-Weiss \cite{MiWe2014}, $L_{E_1}:I\to\R$, for the return map to any horizontal section, takes
non-negative/non-positive values with at least one positive/negative value (cf.\ Theorem~2.4 in \cite{frkaczek2019recurrence}, i.e.\ a copy of  Theorem~6.2 in \cite{MiWe2014}) for a.e.\ $E_1\in J$.
\end{itemize}
In view of Corollary~2.6 in \cite{frkaczek2019recurrence} (a direct consequence of Theorem~6.2 in \cite{MiWe2014}), both properties (the absence of saddle connections and some positivity/negativity of $L_{E_1}$) give the unique ergodicity of $(\psi^v_t)_{t\in\R}$ on $(M,\omega_{E_1})$ for a.e.\ $E_1\in J$.

We follow the same line. As the current part $(II)$ of the proof does not differ from the corresponding part in the proof of Theorem~2.11 in \cite{frkaczek2019recurrence},
we focus only on showing the current assumption $(i)$ implies the absence of vertical saddle connections.

\medskip

If $\gamma$ is a vertical saddle connection in $(M,\omega_{E_1})$  then
\begin{equation*}
\langle \omega_{E_1},\gamma\rangle=\int_\gamma\omega_{E_1}=i\tau_\gamma,
\end{equation*}
where $\tau_\gamma>0$ is the length of $\gamma$.  Indeed,
  \(\omega_{E_{1}}\) is the holomorphic one-form (Abelian differential) on $(M,\omega_{E_1})$ which is given by \(dz \) in the local coordinates on $M\setminus\Sigma$. By Theorem 2.8 in \cite{frkaczek2019recurrence}, if $\gamma$ is a  piecewise linear curve with ends at $\Sigma$, $\gamma$ avoids  vertical sides and corners of $\mathcal{P}(E_1)$, and passes  upward through the non-vertical sides of  $\mathcal{P}(E_1)$, then
\begin{equation*}
\left\langle\omega_{E_{1}},\gamma \right\rangle =f_\gamma(E_1)+g_{\gamma}(E_1)+\sum_{s_{\alpha\beta}\in\mathbf{D}}n_{s_{\alpha\beta},\gamma}h_{s_{\alpha\beta}}(E_1),
\end{equation*}
with
\begin{itemize}
\item $f_\gamma(E_1)=\mathfrak{B}_{\omega_{E_1}}(\sigma_+,\alpha)$, where $\sigma_+\in\Sigma\cap P_\alpha(E_1)$ is the beginning of $\gamma$, so $f_{\gamma}\in \mathscr B$;
\item $g_\gamma(E_1)=\mathfrak{E}_{\omega_{E_1}}(\sigma_-,\beta)$, where $\sigma_-\in\Sigma\cap P_\beta(E_1)$ is the end of $\gamma$, so $g_{\gamma}\in \mathscr E$;
\item for every $h_{s_{\alpha\beta}}\in \mathscr D$ of the form $h_{s_{\alpha\beta}}(E_1)=\mathfrak{D}_{\omega_{E_1}}(s_{\alpha\beta})$ for some $s_{\alpha\beta}\in \mathbf{D}$, $n_{s_{\alpha\beta},\gamma}$ is the crossing number - the number of hits of the side $s_{\alpha\beta}$ by the curve $\gamma$.
\end{itemize}
Clearly, if \(\gamma',\gamma''\) are homologous in $H_1(M,\Sigma,\Z)$, then \(\left\langle\omega_{E_{1}},\gamma' \right\rangle =\left\langle\omega_{E_{1}},\gamma'' \right\rangle \).

For any vertical saddle connection that avoids vertical sides and corners of $\mathcal{P}(E_1)$, we obtain immediately that
\begin{equation}\label{eq:neq0}
i\tau_\gamma=\langle\omega_{E_{1}},\gamma\rangle=f_{\gamma}(E_1)+g_{\gamma}(E_1)+\sum_{s_{\alpha\beta}\in\mathbf{D}}n_{s_{\alpha\beta},\gamma}h_{s_{\alpha\beta}}(E_1).
\end{equation}
Moreover, by  assumption $(*)$, such a connection always crosses at least once one distinguished side, \(s_{\alpha\beta}\in\mathbf{D}^{*}\), so the crossing number is positive: $n_{s_{\alpha\beta},\gamma}>0$.

We show that similar formula holds  even when the saddle connection $\gamma$ runs along vertical sides or meets some corner points of $\mathcal{P}(E_1)$. For such connections, the crossing numbers through vertical sides and corners are ill defined, yet, as we show next, small deformations of \(\gamma\) provide the same value for the saddle connection length. Indeed, by assumption (\(vi\)) of Definition \ref{def:part}, we can find a piecewise linear curve $\gamma'$ homologous with $\gamma$ (in $H_1(M,\Sigma,\Z)$) such that $\gamma'$ does not meet any vertical side nor corner of $\mathcal{P}(E_1)$, $\gamma'$ passes  upward though the non-vertical sides of $\mathcal{P}(E_1)$,  and $\gamma'$ is very close to $\gamma$. Then, for any such \(\gamma'\) we have
\begin{equation}\label{eq:neq00}
i\tau_{\gamma}=\langle\omega_{E_{1}},\gamma\rangle=\langle\omega_{E_{1}},\gamma'\rangle=f_{\gamma'}(E_1)+g_{\gamma'}(E_1)+\sum_{s_{\alpha\beta}\in\mathbf{D}}n_{s_{\alpha\beta},\gamma'}h_{s_{\alpha\beta}}(E_1).
\end{equation}
Again, by  assumption $(*)$, there exists at least one side \(s_{\alpha\beta}\in\mathbf{D}^{*}\)  such that  \(\gamma\) crosses this side.
Hence,  since  $\gamma'$ is close enough to $\gamma$, $n_{s_{\alpha\beta},\gamma'}>0$.
\medskip

Summarizing, in view of \eqref{eq:neq0} and \eqref{eq:neq00}, for any vertical saddle connection in $(M,\omega_{E_1})$ there exist $f\in\mathscr{B}$,
$g\in\mathscr{E}$ and a sequence $(n_h)_{h\in\mathscr{D}}$ in $\Z_{\geq 0}$ such that $n_h>0$ for some $h\in\mathscr{D}^*$ and
\begin{equation}\label{eq:neq000}
i\tau_\gamma =f(E_1)+ g(E_1)+\sum_{h\in \mathscr D} n_h h(E_1).
\end{equation}
Hence, if a vertical saddle connection exists at \(E_{1}\), by assumption $(i)$, we have
\[\rp f(E'_1)+\rp g(E'_1)+\sum_{h\in \mathscr D} n_h\rp h(E'_1)\neq 0\ \text{ for a.e. }\ E'_1\in J.\]
On the other hand, by \eqref{eq:neq000}, we have
\[
0=\rp f(E_1)+\rp g(E_1)+\sum_{h\in \mathscr D}n_h\rp h(E_1).
\]
This gives the absence of vertical saddle connections for a.e.\ $E_1\in J$, which completes the proof.
\end{proof}

The above theorem proves unique ergodicity for almost all \(E_{1}\) once the triplet  \((\mathcal{J},\mathbf{D}^*,\ell) \) satisfying assumptions \((*),(i),(ii)\) are found.

 \subsection{Construction of the  \((\mathcal{J},\mathbf{D}^*,\ell) \) triplet for resonant staircase  dynamics  }\label{sec:spec}
We construct a proper partition to the $(\psi^{\pi/4}_t)_{t\in\R}$ flow on \(\hat M(E_1)\), where \(\hat M(E_1)\) is either  $M(E_1)$ for $J\in \mathcal U^+_I$   or on the completions
$\overline{R}(E_1)$ and $\overline{G}(E_1)$  for  $J\in \mathcal U^-_I$.

The set of distinguished sides, \(\mathbf{D}^*\), is naturally defined by the sides of the removed polygons \(\mathbf{R}^{colour}\).  Assumption \((*)\) follows from the division of the \(E_{1}\) interval to the segments $J\in \mathcal U^{\pm}_I$ and the completion construction.
The function \(\ell\) is taken to be constant as in previous sections.
The main challenge is to compute the functional form of \(\mathcal{J}\). We first construct a tiling and show that it induces specific computable rules that satisfy assumptions \((i)\) and \((ii)\).

Recall that  \( M(E_1)\) is composed of $16$ tiles, each of these corresponds to a staircase polygon
$\mathbf{P}(E_1)^{\varsigma_1\varsigma_2}_{\sigma_1\sigma_2}= P(\sigma_1\overline{x}^{\varsigma_1\varsigma_2}(E_1),\sigma_2\overline{y}^{\varsigma_1\varsigma_2}(E_1))$ (see Figure~\ref{fig:surface}). For  $J\in \mathcal U^+_I$   these tiles will be used as the proper partition of   \( M(E_1)\). The translation surfaces $\overline{R}(E_1)$ and $\overline{G}(E_1)$ have natural partition  which arises as the intersection of tiles of $M(E_1)$
with the strip $R\setminus\mathbf{R}^{red}$ (or $G\setminus\mathbf{R}^{green}$) (see Figure~\ref{fig:cylinders}).
To calculate  \(\mathcal{J}\), we divide these tiles  to those having distinguished sides and are called distinguished and to those not having distinguished sides, called non-distinguished  tiles. To ease notation, we omit the dependence on \(\varsigma_1,\varsigma_2\) and on \(E_{1}\) when they are inessential.

More precisely, we define these tiles so that they satisfy the following properties of basic polygons:

\begin{definition}\label{def:BP}
The class of \emph{basic polygons}, $BP$, consists of polygons  of the form $P(\sigma_1\overline{x},\sigma_2\overline{y})\cap B$.  $P(\sigma_1\overline{x},\sigma_2\overline{y})$ is a staircase polygon, with, possibly, distinguished sides corresponding to all sides of concave corners (having one end  of the form $(\sigma_1 x_l,\sigma_2 y_{l+1})$ for $1\leq l<k(\overline{x},\overline{y})$, green dashed sides on Figure~\ref{fig:basicpolygonstrip}).  $B\subset \R^2$ is a stripe in direction $\pi/4$. The stripe \(B\) is chosen so that the polygons in \(BP\) satisfy the below properties:
\begin{enumerate}
\item
 Basic polygons containing distinguished sides are called  \textit{distinguished} and are  said to belong to $BP^*$.
\item The sides in direction $\pi/4$  of distinguished  basic polygons are either  disjoint from the distinguished chain or touch it in a single concave corner (without crossing the chain), see Figure~\ref{fig:basicpolygonstrip}.
In the latter case, the distinguished basic polygon
is split to two smaller distinguished basic polygon, called \(SBP^*\), so that each of them is connected when this point is removed; namely, if $P(\sigma\overline{x},-\sigma\overline{y})\cap B\in  BP^*$ and the boundary of $B$ intersect $P(\sigma\overline{x},-\sigma\overline{y})$ at a corner $(\sigma x_l,-\sigma y_{l+1})$, see Figure~\ref{fig:basicpolygonstrip1}, this corner point breaks the basic polygon $P(\sigma\overline{x},-\sigma\overline{y})\cap B$ into two smaller distinguished basic polygons in  \(SBP^*\subset BP^*\) as in Figure~\ref{fig:basicpolygonstrip1}. Each of these smaller polygon is of the form
$P(\sigma\overline{x},-\sigma\overline{y})\cap B^{\pm}$, where  $B^{\pm}$ is a half-strip.
\item
The sides in direction $\pi/4$ may have additional artificial corners (black dots on Figure~\ref{fig:basicpolygonstrip}) that split such sides into smaller
sides. Each smaller side glues to a unique  basic polygon, see Figure~\ref{fig:completion2}.
\item On every basic polygon $P(\sigma_1\overline{x},\sigma_2\overline{y})\cap B$ ($B$ is a strip or half-strip) we deal always with local coordinates inherited from local coordinates on $P(\sigma_1\overline{x},\sigma_2\overline{y})$. We call $x_{k(\overline{x},\overline{y})}$ the \emph{formal width} of the basic polygon $P(\sigma_1\overline{x},\sigma_2\overline{y})\cap B$ and $y_{1}$ its \emph{formal height}.
\end{enumerate}
\end{definition}

\begin{figure}[h]
\includegraphics[width=0.6 \textwidth]{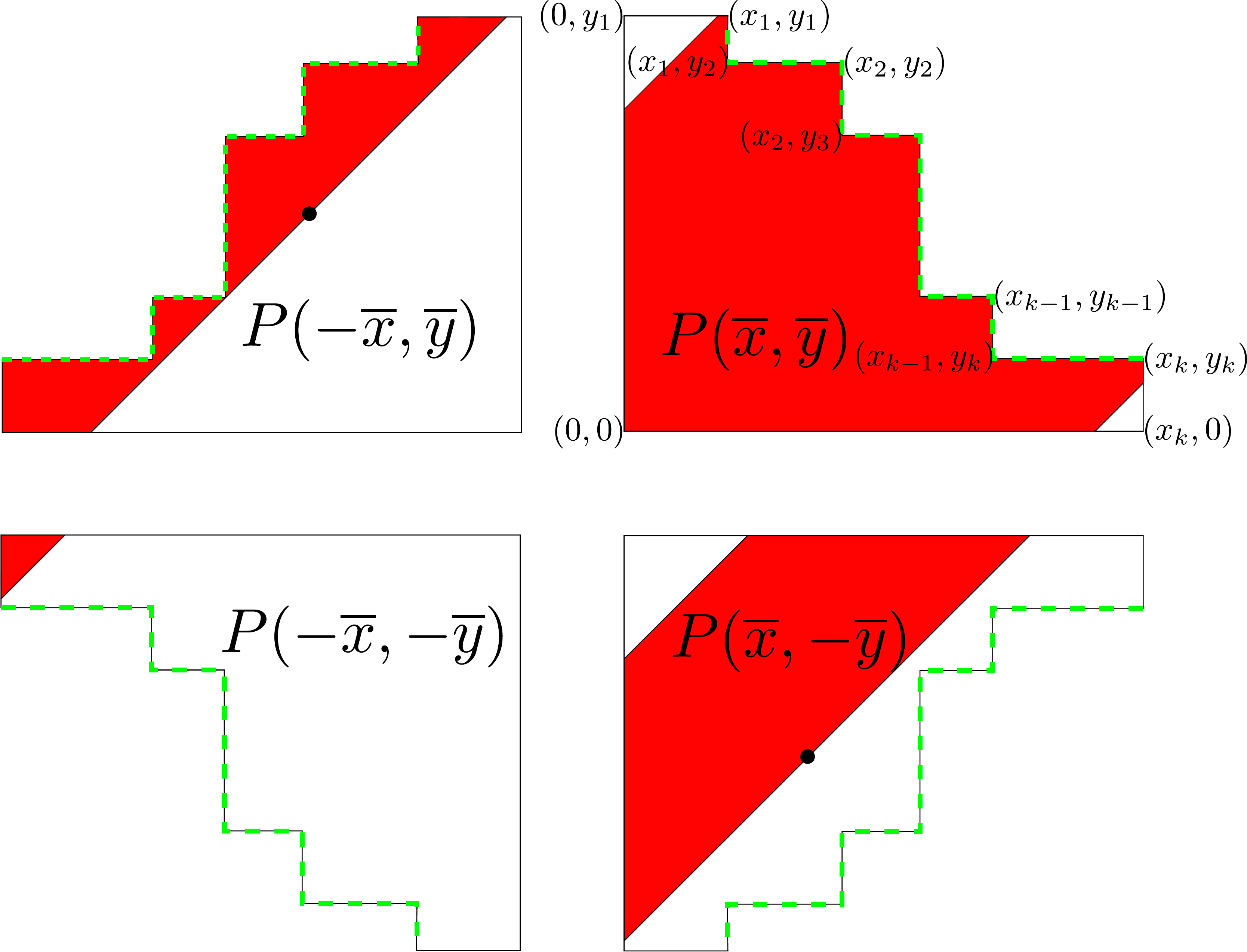}
\caption{New basic polygons.}\label{fig:basicpolygonstrip}
\end{figure}

Notice that by properties (1) and (2) the  sides in direction $\pi/4$ do not cross the distinguished chain.
  By item (1) of Lemma  \ref{lem:dotyk} the above constructed tiles of the surfaces  $\overline{R}(E_1)$ and $\overline{G}(E_1)$ satisfy this property and the above definition so they are basic polygons.

We will deal with a family $\mathcal S$ of compact translation surfaces equipped with partitions
into basic polygons described above.
\begin{figure}[h]
\includegraphics[width=0.8 \textwidth]{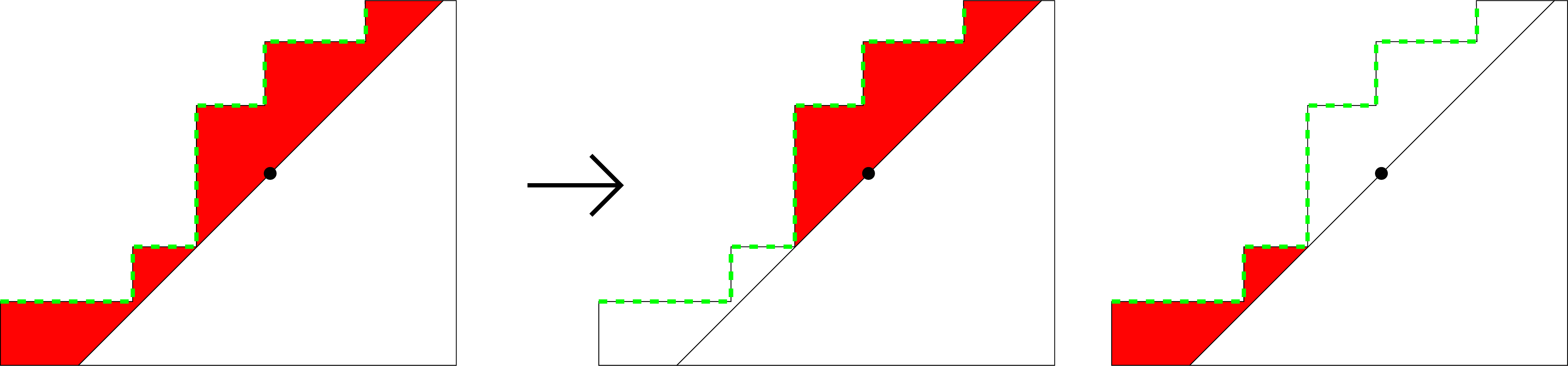}
\caption{Breaking procedure.}\label{fig:basicpolygonstrip1}
\end{figure}

\begin{definition}\label{def:mathcalS}
We say that $((M,\omega),\mathcal{P})$ belongs to $\mathcal S$ if:

\begin{itemize}
\item[$(i)$] $(M,\omega)$ is a compact translation surface;
\item[$(ii)$] $\mathcal{P}=(P_\alpha)_{\alpha\in \mathcal A}$ is partition of $M$ such that the rotated partition
$e^{i\pi/4}\mathcal{P}$ is a partition a polygons of the rotated translation surface $(M,e^{i\pi/4}\omega)$ in the sense
of Definition~\ref{def:part} (with  $e^{i\pi/4}\widetilde{P}_\alpha$ tiles);
\item[$(iii)$] for every $\alpha\in \mathcal A$ the corresponding polygon $\widetilde{P}_\alpha\in BP$. Denote by $\mathcal{A}^*\subset\mathcal{A}$
the set of  $\alpha\in \mathcal A$ such that $\widetilde{P}_\alpha\in BP^*$. The polygons $P_\alpha\in\mathcal{P}$ for $\alpha\in \mathcal A^*$
are called distinguished;
\item[$(iv)$] the set $\mathcal{A}^*$ (or equivalently, the subset of distinguished polygons in the partition $\mathcal{P}$)
is divided into four-tuples or six-tuples as follows:
\begin{itemize}
\item if $P_{\alpha_{++}}$, $P_{\alpha_{+-}}$, $P_{\alpha_{-+}}$, $P_{\alpha_{--}}$ form a four-tuple, then there exists $(\bar{x},\bar{y})\in\Xi$ such that $\widetilde P_{\alpha_{\sigma_{1}\sigma_{2}}}=P(\sigma_{1}\overline{x},\sigma_{2}\overline{y})\cap B_{\sigma_{1},\sigma_{2}}\in BP^*$, where
$B_{\sigma_{1},\sigma_{2}}$ is a strip  for $\sigma_1,\sigma_2\in\{\pm\}$;
\item if $P_{\alpha_{++}}$, $P_{\alpha_{+-}^+}$,  $P_{\alpha_{+-}^-}$, $P_{\alpha_{-+}^+}$, $P_{\alpha_{-+}^-}$, $P_{\alpha_{--}}$ form a six-tuple, then there exists $(\bar{x},\bar{y})\in\Xi$ such that $\widetilde P_{\alpha_{\sigma\sigma}}=P(\sigma\overline{x},\sigma\overline{y})\cap B_{\sigma,\sigma}\in BP^*$, where $B_{\sigma,\sigma}$ is a strip  for $\sigma\in\{\pm\}$, and
$\widetilde P_{\alpha_{\sigma,-\sigma}^{\varsigma}}=P(\sigma\overline{x},-\sigma\overline{y})\cap B^{\varsigma}_{\sigma,-\sigma}\in SBP^*$, where $B_{\sigma,-\sigma}^{\varsigma}$ is a half-strip  for $\varsigma,\sigma\in\{\pm\}$, the \(SBP^*\) polygons
$\widetilde P_{\alpha_{\sigma,-\sigma}^{+}}$, $\widetilde P_{\alpha_{\sigma,-\sigma}^{-}}$ arise from a polygon $P(\sigma\overline{x},-\sigma\overline{y})\cap B_{\sigma,-\sigma}\in BP^*$ in the breaking procedure described in item (2) of Definition~\ref{def:BP}, i.e.\ the strip $B_{\sigma,-\sigma}$ splits into two half-strips $B_{\sigma,-\sigma}^+$ and $B_{\sigma,-\sigma}^-$.  For convenience (see \((v)\)) we denote $P_{\alpha_{\sigma,-\sigma}}:=P_{\alpha_{\sigma,-\sigma}^{+}}\cup P_{\alpha_{\sigma,-\sigma}^{-}}$ for $\sigma\in\{\pm\}$, yet, note that this is a slight\ abuse of notation as in this case $\alpha_{+-}$ and $\alpha_{-+}$ do not belong to $\mathcal{A}$;

\end{itemize}
\item[$(v)$]  the distinguished vertical sides of $P_{\alpha_{++}}$ ($P_{\alpha_{+-}}$ resp.) are glued with the corresponding sides of  $P_{\alpha_{-+}}$ ($P_{\alpha_{--}}$ resp.)
and the distinguished horizontal sides of $P_{\alpha_{++}}$ ($P_{\alpha_{-+}}$ resp.) are glued with the corresponding sides of  $P_{\alpha_{+-}}$ ($P_{\alpha_{--}}$ resp.);
\item[$(vi)$] all singular points in $(M,\omega)$ come only from the concave corners of polygons in the partition $\mathcal{P}$ (by definition,
such polygons have to belong to $BP^*$);
\item[$(vii)$] the formal widths of all basic polygons in $\mathcal P$ are identical, herafter denoted by  $w>0$. Similarly,
their formal heights are identical and denoted by $h>0$.
\item[$(viii)$]
Note that for every vertical/horizontal common side $s_{\alpha\beta}$ of $P_\alpha$ and $P_\beta$ such that every  orbit through the side $s_{\alpha\beta}$ in direction $\pi/4$ passes from $P_\alpha$ to $P_\beta$, the displacement (of local coordinates) for $s_{\alpha\beta}$ is given by the difference of the local charts
 $\mathfrak{D}_{\omega}(s_{\alpha\beta})=\bar{\zeta}_\alpha(x)-\bar{\zeta}_\beta(x)$ for any $x\in s_{\alpha\beta}$ (by the flat translation structure it does not depend on the choice of $x\in s_{\alpha\beta}$).
We  assume that
 if $s_{\alpha\beta}$ is a non-distinguished vertical side in $\mathcal P$ then $\mathfrak{D}_{\omega}(s_{\alpha\beta})$
is $0$ or $2w$ and if $s_{\alpha\beta}$ is a non-distinguished  horizontal side  in $\mathcal P$ then $\mathfrak{D}_{\omega}(s_{\alpha\beta})$
is $0$ or $2hi$.
\end{itemize}
\end{definition}

\begin{remark}\label{rem:displ}
Notice that, in view of condition $(v)$, if $s_{\alpha\beta}$ is a distinguished vertical side in $\mathcal P$ (so $s_{\alpha\beta}$ comes from a distinguished side of $P(\sigma_1\overline{x},\sigma_2\overline{y})\cap B\in BP^*$) then $\mathfrak{D}_{\omega}(s_{\alpha\beta})=2 x_l$ for some $1\leq l<k(\bar{x},\bar{y})$.
Similarly, if $s_{\alpha\beta}$ is a distinguished horizontal side in $\mathcal P$ then  $\mathfrak{D}_{\omega}(s_{\alpha\beta})=2 y_li$ for some $1< l\leq k(\bar{x},\bar{y})$.

With the above remark, we see that the set \(\mathfrak{D}_{\omega}\) of all  vertical and horizontal sides in $\mathcal{P}$ is fully defined by items \((v)\) and  \((viii)\).
Next we examine the form of \(\mathfrak{B}_{\omega}\) and  \(\mathfrak{E}_{\omega}\).

Suppose that $\sigma\in P_\beta$ is a singularity in $(M,\omega)$ and $\sigma$ is the beginning of an orbit segment in direction $\pi/4$
contained in $P_\beta$. By $(vi)$, $\widetilde P_\beta=P(\sigma_1\overline{x},\sigma_2\overline{y})\cap B_{\sigma_1,\sigma_2}\in BP^*$  and
$\bar{\zeta}_\beta(\sigma)=\sigma_1x_l+i\sigma_2y_{l+1}$ for some  $1\leq l<k(\overline{x},\overline{y})$. However, the point
$x_l+iy_{l+1}\in P(\overline{x},\overline{y})\cap B_{++}$ can not be the starting point of any orbit segment in direction $\pi/4$
in the polygon $P(\overline{x},\overline{y})\cap B_{++}$. It follow that at least one $\sigma_1$ or $
\sigma_2$ is $-$, so \(\mathfrak{B}_{\omega}\), which consists of  the values \(-\bar{\zeta}_\beta(\sigma)\) is of the form \(x_l+iy_{l+1},x_l-iy_{l+1},-x_l+iy_{l+1}\).

Similar argument shows that if $\sigma\in P_\alpha$ is a singularity in $(M,\omega)$ and $\sigma$ is the end of an orbit segment in direction $\pi/4$
contained in $P_\alpha$, then $\widetilde P_\alpha=P(\sigma_1\overline{x},\sigma_2\overline{y})\cap B_{\sigma_1,\sigma_2}\in BP^*$,
$\bar{\zeta}_\alpha(\sigma)=\sigma_1x_l+i\sigma_2y_{l+1}$ for some  $1\leq l<k(\overline{x},\overline{y})$ and at least one $\sigma_1$ or $
\sigma_2$ is $+$,
so \(\mathfrak{E}_{\omega}\) which consists of  the values \(\bar{\zeta}_\alpha(\sigma)\), is  of the form \(x_l+iy_{l+1},x_l-iy_{l+1},-x_l+iy_{l+1}\).\end{remark}

Next we examine how the numerical data appear in the rotated partition.
Suppose that $((M,\omega),\mathcal P)\in\mathcal S$ and let us consider the rotated surface $(M,e^{\pi i/4}\omega)$.
The flow $(\psi_t^{\pi/4})_{t\in\R}$ on $(M,\omega)$ is equivalent to the vertical flow on $(M,e^{\pi i/4}\omega)$
and the rotated partition $e^{\pi i/4}\mathcal P=(e^{\pi i/4}P_{\alpha})_{\alpha\in\mathcal A}$ (i.e.\ all local coordinated are rotated by $\pi/4$)
is a proper partition of $(M,e^{\pi i/4}\omega)$ into polygons in the sense of Definition~\ref{def:part}. Denote by $\mathbf D^*=D^*(e^{\pi i/4}\omega,e^{\pi i/4}\mathcal P)$ the set of sides coming from distinguished sides in the partition  $\mathcal{P}$ of $(M,\omega)$.

\begin{remark}\label{rem:displfunct}
In view Definition~\ref{def:mathcalS} and Remark~\ref{rem:displ}, we have:
\begin{itemize}
\item if $s_{\alpha\beta}$ is a non-distinguished side in $e^{\pi i/4}\mathcal P$ then
\[\mathfrak{D}_{e^{\pi i/4}\omega}(s_{\alpha\beta})=\left\{
\begin{array}{l}
0\text{ or }\\
e^{\pi i/4}2w=\sqrt{2}(w+iw)\text{ or }\\
e^{\pi i/4}2hi=\sqrt{2}(-h+ih);
\end{array}\right.\]
\item if $s_{\alpha\beta}$ is a distinguished side in $e^{\pi i/4}\mathcal P$ then
\[\mathfrak{D}_{e^{\pi i/4}\omega}(s_{\alpha\beta})=\left\{
\begin{array}{l}
e^{\pi i/4}2x_k=\sqrt{2}(x_k+ix_k)\text{ or }\\
e^{\pi i/4}2y_li=\sqrt{2}(-y_l+iy_l)
\end{array}\right.\]
for some $1\leq k< k(\bar{x},\bar{y})$ or $1< l\leq k(\bar{x},\bar{y})$;
\item if $(\sigma,e^{\pi i/4}P_{\beta})\in B(e^{\pi i/4}\omega,e^{\pi i/4}\mathcal P)$ then
\begin{align*}
\mathfrak{B}_{e^{\pi i/4}\omega}(\sigma,e^{\pi i/4}P_{\beta})&=-e^{\pi i/4}\bar{\zeta}_{\beta}(\sigma)\\
&=\left\{
\begin{array}{l}
e^{\pi i/4}(x_l+iy_{l+1})=\frac{\sqrt{2}}{2}((x_l-y_{l+1})+i(x_l+y_{l+1}))\text{ or }\\
e^{\pi i/4}(x_l-iy_{l+1})=\frac{\sqrt{2}}{2}((x_l+y_{l+1})+i(x_l-y_{l+1}))\text{ or }\\
e^{\pi i/4}(-x_l+iy_{l+1})=\frac{\sqrt{2}}{2}((-x_l-y_{l+1})+i(-x_l+y_{l+1}))
\end{array}\right.
\end{align*}
for some $1\leq l< k(\bar{x},\bar{y})$;
\item if $(\sigma,e^{\pi i/4}P_{\alpha})\in E(e^{\pi i/4}\omega,e^{\pi i/4}\mathcal P)$ then
\begin{align*}
\mathfrak{E}_{e^{\pi i/4}\omega}(\sigma,e^{\pi i/4}P_{\alpha})&=e^{\pi i/4}\bar{\zeta}_{\alpha}(\sigma)\\
&=\left\{
\begin{array}{l}
e^{\pi i/4}(x_l+iy_{l+1})=\frac{\sqrt{2}}{2}((x_l-y_{l+1})+i(x_l+y_{l+1}))\text{ or }\\
e^{\pi i/4}(x_l-iy_{l+1})=\frac{\sqrt{2}}{2}((x_l+y_{l+1})+i(x_l-y_{l+1}))\text{ or }\\
e^{\pi i/4}(-x_l+iy_{l+1})=\frac{\sqrt{2}}{2}((-x_l-y_{l+1})+i(-x_l+y_{l+1}))
\end{array}\right.
\end{align*}
for some $1\leq l< k(\bar{x},\bar{y})$.
\end{itemize}
\end{remark}

\begin{definition}\label{def:curve2}
A curve $\mathcal{J}:E_1\in J\mapsto ((M,\omega_{E_1}),\mathcal P(E_1))\in\mathcal S$,  where \(\mathcal S\) is defined by Definition \ref{def:mathcalS}, is called a \emph{$C^\infty$-curve in $\mathcal S$} if the rotated curve   $E_1\mapsto (M,e^{\pi i/4}\omega_{E_1},e^{\pi i/4}\mathcal P(E_1))$ is a $C^{\infty}$-curve of translation surfaces
equipped with proper partitions in the sense of Definition~\ref{def:curve1}.
\end{definition}

 We next show that  each such curves  \(\mathcal{J}\) determines  two sets of functions $\mathscr X$ and $\mathscr Y$ which help to verify  the assumptions of Theorem \ref{thm:gencrit}. By assumption
topological data of $\mathcal{J}(E_1)$ does not change for \(E_1\in J\), so, for every $\alpha\in\mathcal A$ we have $P_\alpha(E_1)=P(\sigma_1^\alpha \bar{x}^\alpha(E_1),\sigma_2^\alpha \bar{y}^\alpha(E_1))\cap B^\alpha(E_1)$,
where $k^\alpha:=k(\bar{x}^\alpha(E_1),\bar{y}^\alpha(E_1))$ does not depend on $E_1\in J$ and the map
\[J\ni E_1\mapsto (\bar{x}^\alpha(E_1),\bar{y}^\alpha(E_1))\in \R_{> 0}^{k^\alpha}\times \R_{> 0}^{k^\alpha}\] is of class $C^\infty$. Moreover, there are two $C^\infty$-maps $w,h:J\to\R_{>0}$ such that
\[y^\alpha_1(E_1)=h(E_1)\text{ and }x^\alpha_{k^\alpha}(E_1)=w(E_1)\text{ for all }E_1\in J\text{ and }\alpha\in\mathcal{A}.\]
Let us consider two finite families of real $C^\infty$ maps on $J$:
\[\mathscr X=\{x^\alpha_l:\alpha\in\mathcal A,1\leq l<k^\alpha\},\quad \mathscr Y=\{y^\alpha_l:\alpha\in\mathcal A,1< l\leq k^\alpha\}.\]

\begin{theorem}\label{thm:mainquadrational}
Let $J\ni E_1\mapsto ((M,\omega_{E_1}),\mathcal P(E_1))\in\mathcal S$ be a $C^\infty$-curve in $\mathcal{S}$ so that:
\begin{itemize}
\item[$(*)$] for every $E_1 \in J$ every $(\psi_t^{\pi/4})_{t\in\R}$-orbit on $(M,\omega_{E_1})$ hits some distinguished side of the partition
$\mathcal P(E_1)$.
\end{itemize}
Suppose that:
\begin{itemize}
\item[($i$)] for any choice of integer numbers $n_{\mathbf{x}}$ for $\mathbf{x}\in\mathscr{X}$ and $m_{\mathbf{y}}$ for $\mathbf{y}\in\mathscr{Y}$
such that not all of them are zero and any  $n_{w},m_{h}\in\Z$, we have
\begin{equation*}
\sum_{\mathbf{x}\in\mathscr{X}\cup\{w\}}n_{\mathbf{x}}\mathbf{x}(E_1)+\sum_{\mathbf{y}\in\mathscr{Y}\cup\{h\}}m_{\mathbf{y}}\mathbf{y}(E_1)\neq 0\text{ for a.e. }E_1\in J;
\end{equation*}
\item[($ii_{+-}$)] for every $E_1\in J$ we have  $\mathbf{x}'(E_1)> 0$ for all $\mathbf{x}\in\mathscr{X}$, $\mathbf{y}'(E_1)< 0$ for all $\mathbf{y}\in\mathscr{Y}$,  $w'(E_1)\geq 0$ and $h'(E_1)\leq 0$
or;
\item[($ii_{-+}$)] for every $E_1\in J$ we have  $\mathbf{x}'(E_1)< 0$ for all $\mathbf{x}\in\mathscr{X}$, $\mathbf{y}'(E_1)> 0$ for all $\mathbf{y}\in\mathscr{Y}$,  $w'(E_1)\leq 0$ and $h'(E_1)\geq 0$.
\end{itemize}
Then for a.e.\ $E_1\in J$ the  flow $(\psi_t^{\pi/4})_{t\in\R}$ on $(M,\omega_{E_1})$  is uniquely ergodic.
\end{theorem}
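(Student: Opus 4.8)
The plan is to deduce Theorem~\ref{thm:mainquadrational} from the abstract criterion Theorem~\ref{thm:gencrit} by rotating everything by $\pi/4$. First I would pass to the rotated family $E_1\mapsto\bigl((M,e^{\pi i/4}\omega_{E_1}),e^{\pi i/4}\mathcal P(E_1)\bigr)$, which by Definition~\ref{def:curve2} is a $C^\infty$-curve of translation surfaces equipped with proper partitions, and whose vertical flow is precisely $(\psi^{\pi/4}_t)_{t\in\R}$ on $(M,\omega_{E_1})$. I take $\mathbf D^*$ to be the set of sides of $e^{\pi i/4}\mathcal P(E_1)$ coming from the distinguished sides of $\mathcal P(E_1)$, so that assumption $(*)$ of Theorem~\ref{thm:mainquadrational} is literally assumption $(*)$ of Theorem~\ref{thm:gencrit}; note that $(*)$ forces $\mathbf D^*\neq\emptyset$. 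I take the reference map $\ell\equiv 1$. It then remains to verify conditions $(i)$ and $(ii_\pm)$ of Theorem~\ref{thm:gencrit} for the families $\mathscr D,\mathscr D^*,\mathscr B,\mathscr E$ whose members are listed explicitly in Remark~\ref{rem:displfunct}.

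\textbf{The easy condition.} Condition $(ii_\pm)$ is immediate: since $\ell$ is constant, $[\rp h,\ell](E_1)$ has the same sign as $(\rp h)'(E_1)$, and by Remark~\ref{rem:displfunct} each $\rp h$ for $h\in\mathscr D$ equals $0$, $\sqrt2\,w$, $-\sqrt2\,h$, $\sqrt2\,x_k$ or $-\sqrt2\,y_l$, the last two only for distinguished sides (whose displacements are $\sqrt2(x_k+ix_k)$ or $\sqrt2(-y_l+iy_l)$ with $x_k,y_l$ genuine staircase coordinates, hence strictly positive and nonconstant). Under hypothesis $(ii_{-+})$ of Theorem~\ref{thm:mainquadrational} all these derivatives are $\leq 0$, and are $<0$ for the distinguished sides; since $\mathbf D^*\neq\emptyset$ this gives $\sum_{h\in\mathscr D}[\rp h,\ell](E_1)<0$ for all $E_1\in J$, i.e.\ condition $(ii_-)$ of Theorem~\ref{thm:gencrit}. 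Hypothesis $(ii_{+-})$ gives $(ii_+)$ symmetrically.

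\textbf{The substantive condition.} Fix a sequence $(n_h)_{h\in\mathscr D}$ in $\Z_{\geq 0}$ with $n_{h_0}>0$ for some $h_0\in\mathscr D^*$, and any $f\in\mathscr B$, $g\in\mathscr E$. By Remark~\ref{rem:displfunct}, $\tfrac{\sqrt2}{2}\rp h$ for $h\in\mathscr D$ is $0$, $w$, $-h$, $x_k$ or $-y_l$, while $\tfrac{\sqrt2}{2}\rp f$ and $\tfrac{\sqrt2}{2}\rp g$ are each of the form $\pm x_l\pm y_{l+1}$ with sign pattern $(+,-)$, $(+,+)$ or $(-,-)$ for genuine $x_l\in\mathscr X$ and $y_{l+1}\in\mathscr Y$. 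Hence $\rp\bigl(f+g+\sum_h n_h h\bigr)$ is $\tfrac{\sqrt2}{2}$ times an integer linear combination of the functions in $\mathscr X\cup\mathscr Y\cup\{w,h\}$, in which each of $f,g$ feeds a coefficient $\pm1$ into exactly one member of $\mathscr X$ and exactly one member of $\mathscr Y$, each distinguished side feeds $+2n_h$ into a genuine $x_k\in\mathscr X$ or $-2n_h$ into a genuine $y_l\in\mathscr Y$, and the remaining sides feed only into $w$ or $h$. The key point is that the $\mathscr X\cup\mathscr Y$-part of this combination cannot vanish identically: in the coefficient of the $x_k$ (or $y_l$) pinned by $h_0$, the distinguished contribution is a positive even integer, so a cancellation would force $n_{h_0}=1$ and force both $f$ and $g$ to be of the one sign pattern that also feeds $-1$ into a member of the opposite family — whose coefficient then remains negative and uncancelled, a contradiction. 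Therefore condition $(i)$ of Theorem~\ref{thm:mainquadrational} applies and gives $\rp\bigl(f+g+\sum_h n_h h\bigr)(E_1)\neq 0$ for a.e.\ $E_1\in J$; in particular $f+g+\sum_h n_h h\not\equiv 0$, so the case excluded in Theorem~\ref{thm:gencrit}$(i)$ does not arise, and condition $(i)$ there holds.

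\textbf{Conclusion and the main obstacle.} With $(*)$, $(i)$ and $(ii_\pm)$ verified, Theorem~\ref{thm:gencrit} yields unique ergodicity of the vertical flow on $(M,e^{\pi i/4}\omega_{E_1})$, hence of $(\psi^{\pi/4}_t)_{t\in\R}$ on $(M,\omega_{E_1})$, for a.e.\ $E_1\in J$. I expect the main obstacle to be the combinatorial verification in the third paragraph: one must track the sign patterns of Remark~\ref{rem:displfunct} with enough care to be certain that a positive crossing number through a distinguished side always produces a genuine staircase coordinate (not $w$ or $h$) with a nonzero, uncancellable coefficient in the real part; the rest is a faithful translation of the (already established) machinery of Theorem~\ref{thm:gencrit}.
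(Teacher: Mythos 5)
Your proposal is correct and follows essentially the same route as the paper: rotate the family by $\pi/4$, apply Theorem~\ref{thm:gencrit} with the constant reference map $\ell\equiv 1$, and translate the hypotheses through the explicit forms in Remark~\ref{rem:displfunct}. The only place you diverge is the final combinatorial check that the resulting $\mathscr X\cup\mathscr Y$-combination cannot vanish: the paper absorbs the $\sigma$-signs into modified coefficients $\widetilde n_{\mathbf x},\widetilde m_{\mathbf y}$ and observes that $\sum\widetilde n_{\mathbf x}+\sum\widetilde m_{\mathbf y}=\sum 2n_{\mathbf x}+\sum 2m_{\mathbf y}+\sigma_1^B+\sigma_2^B+\sigma_1^E+\sigma_2^E>0$ (using that each allowed sign pair has non-negative sum), so at least one coefficient is non-zero in a single line, whereas you track the coefficient pinned by $h_0$ and, if it cancels, push the non-vanishing over to the opposite family; both work. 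One cosmetic remark: you drift between the $\tfrac{\sqrt2}{2}$ and $\tfrac{2}{\sqrt2}$ normalizations (e.g.\ calling the distinguished contribution a ``positive even integer'' fits the latter, not the former), and the cancellation condition should really be stated in terms of the aggregate $n_{\mathbf x_0}=\sum_{h:\,\rp h=\sqrt2\,\mathbf x_0}n_h$ rather than $n_{h_0}$ alone, though these coincide in the case you need.
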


\begin{proof}
We apply Theorem~\ref{thm:gencrit} to the curve $E_1\mapsto ((M,e^{\pi i/4}\omega_{E_1}),e^{\pi i/4}\mathcal P(E_1))$
of translation surfaces equipped with partitions into polygons and to the reference function $\ell=1$.
Denote by $\mathbf D^*$ the set of rotated distinguished sides coming
from distinguished basic polygons in $\mathcal P(E_1)$. By assumption $(*)$, for every $E_1\in J$
every vertical orbit in $(M,e^{\pi i/4}\omega_{E_1})$ hits at least one side in $\mathbf D^*$, the condition $(*)$ in Theorem~\ref{thm:gencrit} is verified.

In view Remark~\ref{rem:displfunct} we have:
\begin{itemize}
\item every map in $\mathscr B$ and $\mathscr E$ is of the form
\begin{equation}\label{eq:BE}
\tfrac{\sqrt{2}}{2}(\mathbf{x}-\mathbf{y}+i(\mathbf{x}+\mathbf{y}))\;\text{ or }\;
\tfrac{\sqrt{2}}{2}(\mathbf{x}+\mathbf{y}+i(\mathbf{x}-\mathbf{y}))\;\text{ or }\;
\tfrac{\sqrt{2}}{2}(-\mathbf{x}-\mathbf{y}+i(-\mathbf{x}+\mathbf{y}))
\end{equation}
for some $\mathbf{x}\in\mathscr{X}$ and $\mathbf{y}\in\mathscr{Y}$;
\item every map in $\mathscr D^*$  is of the form
\begin{equation}\label{eq:D*}
{\sqrt{2}}(\mathbf{x}+i\mathbf{x})\quad\text{ or }\quad
{\sqrt{2}}(-\mathbf{y}+i\mathbf{y})
\end{equation}
for some $\mathbf{x}\in\mathscr{X}$ or $\mathbf{y}\in\mathscr{Y}$;
\item every map in $\mathscr D\setminus \mathscr D^*$  is of the form
\begin{equation}\label{eq:D}
0\text\quad\text{ or }\quad
{\sqrt{2}}(w+iw)\quad\text{ or }\quad
{\sqrt{2}}(-h+ih).
\end{equation}
\end{itemize}
We now verify the condition $(i)$ in Theorem~\ref{thm:gencrit}  for the curve $E_1\mapsto ((M,e^{\pi i/4}\omega_{E_1}),e^{\pi i/4}\mathcal P(E_1))$. Suppose, contrary to our claim, that there are $f\in \mathscr B$, $g\in \mathscr E$ and a sequence $(n_h)_{h\in\mathscr D}$ in $\Z_{\geq 0}$
such that $n_h>0$ for some $h\in \mathscr D^*$ and
\[\rp f+\rp g+\sum_{h\in \mathscr D} n_h\rp h= 0\ \text{ on a subset of $J$ of positive measure.}\]
In view of \eqref{eq:BE}, \eqref{eq:D*} and \eqref{eq:D}, we have
\[\sigma_1^B\mathbf{x}_B-\sigma_2^B\mathbf{y}_B+\sigma_1^E\mathbf{x}_E-\sigma_2^E\mathbf{y}_E+
\sum_{\mathbf{x}\in\mathscr X}2n_{\mathbf{x}}\mathbf{x}-\sum_{\mathbf{y}\in\mathscr Y}2m_{\mathbf{y}}\mathbf{y}
+2n_{w}w-2m_{h}h=0\]
on a subset of positive measure with:
\begin{itemize}
\item $n_{\mathbf{x}}\in\Z_{\geq 0}$ for $\mathbf{x}\in\mathscr X\cup\{w\}$ and
$m_{\mathbf{y}}\in\Z_{\geq 0}$ for $\mathbf{y}\in\mathscr Y\cup\{h\}$  such that
at least one  $n_{\mathbf{x}}$, $\mathbf{x}\in\mathscr X$ or
$m_{\mathbf{y}}$, $\mathbf{y}\in\mathscr Y$ is positive;
\item $\mathbf{x}_B,\mathbf{x}_E\in\mathscr X$ and $\mathbf{y}_B,\mathbf{y}_E\in\mathscr Y$;
\item $(\sigma_1^B,\sigma_2^B),(\sigma_1^E,\sigma_2^E)\in\{(1,1),(1,-1),(-1,1)\}$.
\end{itemize}
If follows that
\begin{equation}\label{eq:positive}
\sum_{\mathbf{x}\in\mathscr X}{n}_{\mathbf{x}}+\sum_{\mathbf{y}\in\mathscr Y}m_{\mathbf{y}}>0 , \quad\sigma_1^B
+\sigma_2^B\geq 0\quad\text{and}\quad\sigma_1^E+\sigma_2^E\geq 0.
\end{equation}
Moreover,
\begin{equation}\label{eq:zerotilde}
\sum_{\mathbf{x}\in\mathscr X}\widetilde{n}_{\mathbf{x}}\mathbf{x}-\sum_{\mathbf{y}\in\mathscr Y}\widetilde m_{\mathbf{y}}\mathbf{y}
+2n_{w}w-2m_{h}h=0
\end{equation}
on a subset of positive measure, where
\begin{align*}
\widetilde{n}_{\mathbf{x}}=2n_{\mathbf{x}}+\sigma_1^B\delta_{\mathbf{x},\mathbf{x}_B}+\sigma_1^E\delta_{\mathbf{x},\mathbf{x}_E},\quad
\widetilde{m}_{\mathbf{y}}=2m_{\mathbf{y}}+\sigma_2^B\delta_{\mathbf{y},\mathbf{y}_B}+\sigma_2^E\delta_{\mathbf{y},\mathbf{y}_E}.
\end{align*}
Notice that at least one  $\widetilde n_{\mathbf{x}}$ for $\mathbf{x}\in\mathscr X$ or
$\widetilde m_{\mathbf{y}}$ for $\mathbf{y}\in\mathscr Y$ is non-zero.
Indeed, by the definition of $\widetilde{n}_{\mathbf{x}}$ and $\widetilde{m}_{\mathbf{y}}$ and using \eqref{eq:positive}, we have
\[\sum_{\mathbf{x}\in\mathscr X}\widetilde{n}_{\mathbf{x}}+\sum_{\mathbf{y}\in\mathscr Y}\widetilde m_{\mathbf{y}}=
\sum_{\mathbf{x}\in\mathscr X}2{n}_{\mathbf{x}}+\sum_{\mathbf{y}\in\mathscr Y}2m_{\mathbf{y}}+\sigma_1^B
+\sigma_2^B+\sigma_1^E+\sigma_2^E>0.\]
Hence, we have \eqref{eq:zerotilde} on a subset of positive measure with at least one  $\widetilde n_{\mathbf{x}}$, $\mathbf{x}\in\mathscr X$ or   $\widetilde m_{\mathbf{y}}$, $\mathbf{y}\in\mathscr Y$ is positive. This contradicts the assumption $(i)$ of the theorem. It follows that
the condition $(i)$ in Theorem~\ref{thm:gencrit} holds for the curve $E_1\mapsto ((M,e^{\pi i/4}\omega_{E_1}),e^{\pi i/4}\mathcal P(E_1))$.

We now show that the assumption $(ii_{+-})$ of the theorem implies  the condition $(ii_+)$ in Theorem~\ref{thm:gencrit} holds for the curve $E_1\mapsto ((M,e^{\pi i/4}\omega_{E_1}),e^{\pi i/4}\mathcal P(E_1))$. In view of \eqref{eq:D*} and \eqref{eq:D}, every $h\in \mathscr D$
is of the form
\[0\ \text{or}\ \sqrt{2}(\mathbf{x}+i\mathbf{x})\ \text{or}\ \sqrt{2}(-\mathbf{y}+i\mathbf{y})\ \text{or}\ \sqrt{2}(w+iw)\ \text{or}\ \sqrt{2}(-h+ih)\]
for some $\mathbf{x}\in\mathscr X$ or $\mathbf{y}\in\mathscr Y$. It follows that
\begin{equation}\label{eq:hform}
[\rp h,\ell](E_1)=\tfrac{d}{dE_1}\rp h(E_1)
=\left\{
\begin{array}{cl}
0&\text{or}\\
\sqrt{2}\mathbf{x}'(E_1)&\text{or}\\
-\sqrt{2}\mathbf{y}'(E_1)&\text{or}\\
\sqrt{2}w'(E_1)&\text{or}\\
-\sqrt{2}h'(E_1).&
\end{array}
\right.
\end{equation}
In view of the assumption $(ii_{+-})$, we have $[\rp h,\ell](E_1)\geq 0$ for all $h\in \mathscr D$ and $E_1\in J$ and
 $[\rp h,\ell](E_1)> 0$ for all $h\in \mathscr D^*$ and $E_1\in J$, which  gives the condition $(ii_+)$ in Theorem~\ref{thm:gencrit}.

The proof that the assumption $(ii_{-+})$ of the theorem implies  the condition $(ii_-)$ in Theorem~\ref{thm:gencrit} also follows directly from \eqref{eq:hform}.

We finish the proof by applying Theorem~\ref{thm:gencrit} to  $E_1\mapsto ((M,e^{\pi i/4}\omega_{E_1}),e^{\pi i/4}\mathcal P(E_1))$.
This yields the unique ergodicity for the  flow $(\psi_t^{v})_{t\in\R}$ on $(M,e^{\pi i/4}\omega_{E_1})$ for a.e.\ $E_1\in J$.
Thus, the flow $(\psi_t^{\pi/4})_{t\in\R}$ on $(M,\omega_{E_1})$ is uniquely ergodic  for a.e.\ $E_1\in J$.
\end{proof}

\section{The proof of  Theorem~\ref{thm:mainquad},~\ref{thm:U+-}~and~\ref{thm:highmedH}}\label{sec:proofs}
\begin{proof}
[Proof of Theorem~\ref{thm:mainquad}]
Assume that $J\in\mathcal U_I^+$. Then we use the natural partition \(\mathcal P(E_1)\) of the translation surface \(  M(E_1)\) into $16$ staircase polygons $\{P(E_1)^{\varsigma_1\varsigma_2}_{\sigma_1\sigma_2}:\varsigma_1,\varsigma_2,\sigma_1,\sigma_2\in\{\pm\}\}$, shown in  Figure~\ref{fig:surface}, for every $E_1\in J$. It follows that the corresponding curve   $\mathcal{J}$ has the following properties:
\begin{itemize}
\item has a natural partition $\mathcal P(E_1)$ into basic polygons, see Figure~\ref{fig:surface};
\item every element of $\mathcal P(E_1)$ is a staircase polygon, i.e.\ has no sides in direction $\pi/4$;
\item the width of every polygon in $\mathcal P(E_1)$ is $\tfrac{1}{4}T_1(E_1)$ and its height is $\tfrac{1}{4}T_2(E-E_1)$;
\item at least one removed polygon in  $M(E_1)$ is non-trivial ($J\subset I\subset I_{intimp}$);
\item every $(\psi^{\pi/4}_t)_{t\in\R}$-orbit on $M(E_1)$ hits the boundary of some removed polygon, see Remark~\ref{rem:meetbound}.
\end{itemize}
Denote by $\mathbf D^*$ all sides in $\mathcal P(E_1)$ which are part of the boundary of removed polygons.
By Remark~\ref{rem:XYI},    $\mathcal{J}$ is a $C^\infty$-curve in $\mathcal S$, $J\ni E_1\mapsto(M(E_1),\mathcal P(E_1))\in\mathcal{S}$, where
\begin{gather*}
\mathscr X=\{\psi_1(x,E_1):x\in X_I\},\quad\mathscr Y=\{\psi_2(y,E-E_1):y\in Y_I\},\\
\quad w(E_1)=\tfrac{1}{4}T_1(E_1)\ \text{ and }\ h(E_1)=\tfrac{1}{4}T_2(E-E_1).
\end{gather*}
As for every $E_1 \in J$ every $(\psi_t^{\pi/4})_{t\in\R}$-orbit on $M(E_1)$ hits some side in $\mathbf D^*$, condition $(*)$ of  Theorem~\ref{thm:mainquadrational} is satisfied. Condition $(i)$ in Theorem~\ref{thm:mainquadrational} follows directly from Proposition~\ref{prop:indep2}  and
 condition $(i_{-+})$ in Theorem~\ref{thm:mainquadrational} follows directly from Lemma~\ref{lem:negposder} and the fact that both $T_1$ and $T_2$ are constant. This gives unique ergodicity of the flow $(\psi_t^{\pi/4})_{t\in\R}$ on $M(E_1)$ for a.e.\ $E_1\in J$.

\medskip

Assume next that $J\in\mathcal U_I^-$. Without loss of generality we deal only with $(\psi_t^{\pi/4})_{t\in\R}$ restricted to $(R\setminus\mathbf R^{red})\setminus\partial (R\setminus\mathbf R^{red})$.
The same arguments apply to the set  $(G\setminus\mathbf R^{green})\setminus\partial (G\setminus\mathbf R^{green})$. We will prove the unique ergodicity of  $(\psi_t^{\pi/4})_{t\in\R}$ on $\overline{R}(E_1)$ for a.e.\ $E_1\in J$, which implies the unique ergodicity on $(R\setminus\mathbf R^{red})\setminus\partial (R\setminus\mathbf R^{red})$. For every $E_1\in J$ let ${\mathcal P}_R(E_1)$ be a partition of $\overline{R}(E_1)$
such that each polygon in  ${\mathcal P}_R(E_1)$ is a connected component of the intersection of a polygon from $\mathcal P(E_1)$ (the partition of $M(E_1)$ used in the first part of the proof) and $\overline{R}(E_1)$
(see Figures~\ref{fig:cylinders}-\ref{fig:completion2}).
Then
\begin{itemize}
\item every polygon in ${\mathcal P}_R(E_1)$ is a basic polygon, see Figures~\ref{fig:cylinders} and  \ref{fig:completion2};
\item the ends of every side in direction $\pi/4$ are regular points in $\overline{R}(E_1)$, see the construction of $\overline{R}(E_1)$
in Section~\ref{sec:compl};
\item the formal width of every basic polygon in ${\mathcal P}_R(E_1)$  is $\tfrac{1}{4}T_1(E_1)$ and its formal  height is $\tfrac{1}{4}T_2(E-E_1)$;
\item the set $\mathbf D^*$ of distinguished sides is non-empty;
\item every $(\psi^{\pi/4}_t)_{t\in\R}$-orbit on $\overline{R}(E_1)$ hits $\mathbf D^*$, see Remark~\ref{rem:hit}.
\end{itemize}
Therefore, $J\ni E_1\mapsto(\overline{R}(E_1),{\mathcal P}_R(E_1))\in\mathcal{S}$ is a $C^\infty$-curve  in $\mathcal S$ such that
\begin{gather*}
\mathscr X\subset\{\psi_1(x,E_1):x\in X_I\},\quad\mathscr Y\subset\{\psi_2(y,E-E_1):y\in Y_I\},\\
\quad w(E_1)=\tfrac{1}{4}T_1(E_1)\ \text{ and }\ h(E_1)=\tfrac{1}{4}T_2(E-E_1).
\end{gather*}
and at least one set $\mathscr X$ or $\mathscr Y$ is non-empty.
As for every $E_1 \in J$ every $(\psi_t^{\pi/4})_{t\in\R}$-orbit on $\overline{R}(E_1)$ hits $\mathbf D^*$, condition $(*)$ of Theorem~\ref{thm:mainquadrational} is satisfied by the curve $E_1\mapsto(\overline{R}(E_1),{\mathcal P}_R(E_1))$.
Condition $(i)$ in Theorem~\ref{thm:mainquadrational} follows directly from Proposition~\ref{prop:indep2} and
the condition $(i_{-+})$ in Theorem~\ref{thm:mainquadrational} follows directly from Lemma~\ref{lem:negposder} and the fact that both $T_{1}$ and $T_{2}$ are constant. This gives unique ergodicity of the flow $(\psi_t^{\pi/4})_{t\in\R}$ on $\overline{R}(E_1)$ for a.e.\ $E_1\in J$, which completes the proof.
\end{proof}
\begin{proof}
[Proof of Theorem~\ref{thm:U+-}]
In  Section~\ref{sec:osctobil}  we showed that the flow \(\varphi_{t}^{P,E,E_1}\) is topologically conjugated to the directional  billiard flow in the billiard table   \(\mathbf{P}_{E,E_1}\) in the standard directions \((\frac{\pi}{4},-\frac{\pi}{4},\frac{3\pi}{4},-\frac{3\pi}{4})\). This flow is conjugated to the flow   $(\psi_t^{\pi/4})_{t\in\R}$   on the translational  \(M(E_{1})  \) (see Section~\ref{sec:billtotrans}). Thus, Theorem~\ref{thm:U+-} is a direct consequence  of Theorem~\ref{thm:mainquad}.
\end{proof}
\begin{proof}[Proof of Theorem~\ref{thm:highmedH}]
 As in the proof of the first part of Theorem~\ref{thm:mainquad} (when there was no need for the completion procedure), we construct a surface and partition on which  Theorem~\ref{thm:mainquadrational} can be applied. Some subtle adjustments are necessary since a straight forward application of the procedure leads to basic polygons with different heights or widths.
Suppose that $I\in \mathcal J_E$ such that
\[I\subset \Big[0,E-\min_{\varsigma_1,\varsigma_2\in\{\pm\}}V_2(y_1^{\varsigma_1\varsigma_2})\Big]
\cup \Big[\min_{\varsigma_1,\varsigma_2\in\{\pm\}}V_1(x_{k(\bar{x}^{\varsigma_1\varsigma_2},\bar{y}^{\varsigma_1\varsigma_2})}^{\varsigma_1\varsigma_2}),E\Big].\]
In view of part ($\gamma$) of Theorem \ref{thm:albegacopdet}~we can assume that $I$ is not contained in
\[\Big[0,E-\max_{\varsigma_1,\varsigma_2\in\{\pm\}}V_2(y_1^{\varsigma_1\varsigma_2})\Big]
\cup \Big[\max_{\varsigma_1,\varsigma_2\in\{\pm\}}V_1(x_{k(\bar{x}^{\varsigma_1\varsigma_2},\bar{y}^{\varsigma_1\varsigma_2})}^{\varsigma_1\varsigma_2}),E\Big].\]
So we need to consider three cases:
\begin{align}\label{eq:energyintrvls}
\begin{split}
I \subset I_1=&\Big[E-\max_{\varsigma_1,\varsigma_2\in\{\pm\}}V_2(y_1^{\varsigma_1\varsigma_2}), E-\min_{\varsigma_1,\varsigma_2\in\{\pm\}}V_2(y_1^{\varsigma_1\varsigma_2})\Big]\\
&\cap\Big[0,\min_{\varsigma_1,\varsigma_2\in\{\pm\}}V_1(x_{k(\bar{x}^{\varsigma_1\varsigma_2},\bar{y}^{\varsigma_1\varsigma_2})}^{\varsigma_1\varsigma_2})\Big) \\
I  \subset I_2=&\Big[\min_{\varsigma_1,\varsigma_2\in\{\pm\}}V_1(x_{k(\bar{x}^{\varsigma_1\varsigma_2},\bar{y}^{\varsigma_1\varsigma_2})}^{\varsigma_1\varsigma_2}),E-\min_{\varsigma_1,\varsigma_2\in\{\pm\}}V_2(y_1^{\varsigma_1\varsigma_2})\Big]
 \\
 I  \subset I_{3}=&\Big[\min_{\varsigma_1,\varsigma_2\in\{\pm\}}V_1(x_{k(\bar{x}^{\varsigma_1\varsigma_2},\bar{y}^{\varsigma_1\varsigma_2})}^{\varsigma_1\varsigma_2}),\max_{\varsigma_1,\varsigma_2\in\{\pm\}}V_1(x_{k(\bar{x}^{\varsigma_1\varsigma_2},\bar{y}^{\varsigma_1\varsigma_2})}^{\varsigma_1\varsigma_2})\Big]\\
&\cap\Big(E-\min_{\varsigma_1,\varsigma_2\in\{\pm\}}V_2(y_1^{\varsigma_1\varsigma_2}),E\Big].
\end{split}
\end{align}
First we consider \(I\subset I_{1}\), where impacts occur with one extremal horizontal boundary but not with the other.
Without loss of generality, we assume that the upper staircase polygons are taller than the lower ones, so the impact occurs with the lower extremal  horizontal boundary:\begin{equation}\label{eq:Iminmax1}
\max_{\varsigma_1,\varsigma_2\in\{\pm\}}V_2(y_1^{\varsigma_1\varsigma_2})=V_2(y_1^{++})=V_2(y_1^{-+})>\min_{\varsigma_1,\varsigma_2\in\{\pm\}}V_2(y_1^{\varsigma_1\varsigma_2})=V_2(y_1^{+-})=V_2(y_1^{--}).
\end{equation}
In view of Remark~\ref{rem:XYI},  and Eqs. (\ref{eq:energyintrvls}) and \eqref{eq:Iminmax1}, for every $E_1\in I_{1}$ the height of $\mathbf P^{++}_{E,E_1}$ and $\mathbf P^{-+}_{E,E_1}$ is $h=\tfrac{1}{4}T_2(E-E_1)$ and the height of $\mathbf P^{+-}_{E,E_1}$ and $\mathbf P^{--}_{E,E_1}$ is $\psi_2({y_1^{+-}},E-E_1)=\psi_2({y_1^{--}},E-E_1)<h$, and the width
of any $\mathbf P^{\varsigma_1\varsigma_2}_{E,E_1}$ is $w=\tfrac{1}{4}T_1(E_1)$.
It follows that the natural partition $\mathcal P(E_1)$ of $M(E_1)$ into staircase polygons formally does not give an element in the class $\mathcal S$ since not all polygons have the same heights (part $(vii)$ of Definition~\ref{def:mathcalS}).
To to get rid of this problem we  artificially increase the polygons $\mathbf P^{+-}_{E,E_1}$ and $\mathbf P^{--}_{E,E_1}$ to  $\widehat{\mathbf  P}^{+-}_{E,E_1}$ and $\widehat{\mathbf P}^{--}_{E,E_1}$ by adding a vertical segment as the first step in these polygons as in  Figure~\ref{fig:Degenerated}, so, formally:
\begin{gather*}
\widehat{\mathbf  P}^{+-}_{E,E_1}=P((0,\bar{\Psi}_1^{+-}(E,E_1)),-(h,\bar{\Psi}_2^{+-}(E,E_1))),\\
\widehat{\mathbf  P}^{--}_{E,E_1}=P(-(0,\bar{\Psi}_1^{--}(E,E_1)),-(h,\bar{\Psi}_2^{--}(E,E_1)))\text{ if }\\
{\mathbf  P}^{+-}_{E,E_1}=P(\bar{\Psi}_1^{+-}(E,E_1),-\bar{\Psi}_2^{+-}(E,E_1)),\
{\mathbf  P}^{--}_{E,E_1}=P(-\bar{\Psi}_1^{--}(E,E_1),-\bar{\Psi}_2^{--}(E,E_1)).
\end{gather*}
 Now, collecting these extended polygons, we denote by $\widehat{\mathbf  P}_{E,E_1}$ the extension of the polygon ${\mathbf  P}_{E,E_1}$ by one vertical interval so that $\widehat{\mathbf  P}_{E,E_1}$ is the union of
${\mathbf  P}^{++}_{E,E_1}$, ${\mathbf  P}^{-+}_{E,E_1}$, $\widehat{\mathbf  P}^{+-}_{E,E_1}$ and $\widehat{\mathbf  P}^{--}_{E,E_1}$, see Figure~\ref{fig:Degenerated}. Let $\widehat M(E_1)$ be the object arising after applying the unfolding procedure to the degenerated polygon $\widehat{\mathbf  P}_{E,E_1}$, see Figure~\ref{fig:Degenerated}. Formally, $\widehat M(E_1)$ is the translation surface $M(E_1)$ with two vertical loops attached. The directional flow on $\widehat M(E_1)$ in direction $\pi/4$ coincides with the flow $(\psi_t^{\pi/4})_{t\in\R}$ on $M(E_1)$.
\begin{figure}[h]
\includegraphics[width=1 \textwidth]{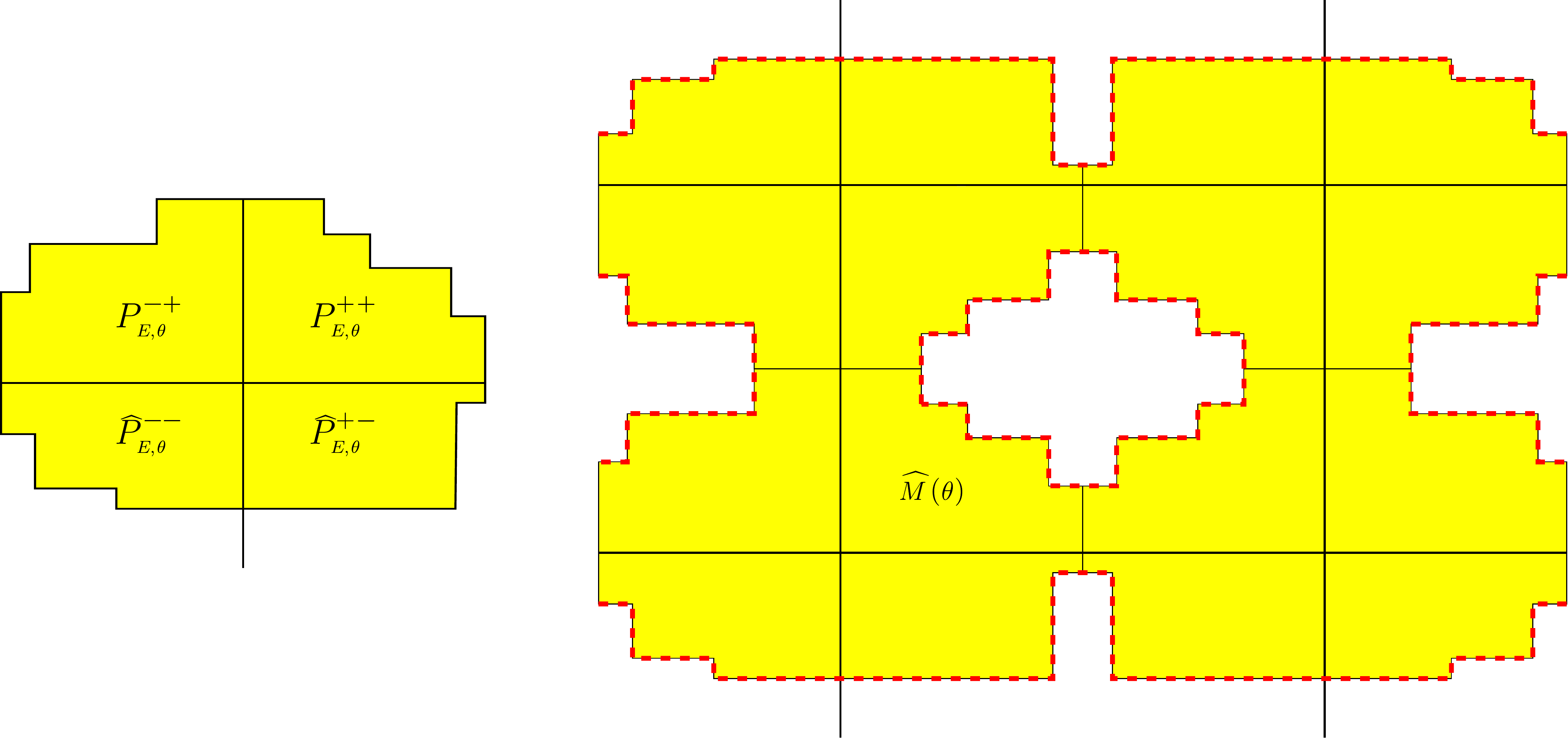}
\caption{The extended billiard table $\widehat{{\mathbf  P}}_{E,E_1}$ and  $\widehat M(E_1)$.}
\label{fig:Degenerated}
\end{figure}
Moreover, $\widehat M(E_1)$ has a natural partition $\widehat{\mathcal P}(E_1)$ into basic polygons, eight of them are degenerated having additional vertical segments.
Let us consider
the $C^\infty$-curve $I\ni E_1\mapsto(\widehat M(E_1),\widehat{\mathcal P}(E_1))$. Since for every $E_1\in I\subset I_{1}$ all sides of
$\widehat{\mathcal P}(E_1)$ are vertical or horizontal and every $(\psi^{\pi/4}_t)_{t\in\R}$-orbit in $\widehat M(E_1)$ hits $\mathbf D^*$ (the red dashed sides on Figure~\ref{fig:Degenerated}) and does not hit the additional vertical loops, we can apply the same arguments to those presented in Theorem~\ref{thm:mainquadrational} to this curve with\begin{gather*}
\mathscr X=\{\psi_1(x,E_1):x\in X_I\},\quad\mathscr Y=\{\psi_2(y,E-E_1):y\in Y_I\},\\
\quad w(E_1)=\tfrac{1}{4}T_1(E_1)\ \text{ and }\ h(E_1)=\tfrac{1}{4}T_2(E-E_1).
\end{gather*}
The final argument is the same as in the proof of the first part of Theorem~\ref{thm:mainquad}.

Similar arguments apply to the case of \(I\subset I_{3}\), where  one adds horizontal segments to the staircase polygons for achieving a fixed width to all of them.

Finally, for the case of  \(I\subset I_{2}\), adding both horizontal and vertical segments completes the proof.  \end{proof}

\begin{proof}[Proof of Theorem~\ref{thm:highE}]
Note that Theorem~\ref{thm:highE} is a simple consequence of Theorems~\ref{thm:highmedH} and part ($\gamma$) of Theorem \ref{thm:albegacopdet} as in this case\begin{displaymath}
[0,E]=\Big[0,E-\max_{\varsigma_1,\varsigma_2\in\{\pm\}}V_2(y_1^{\varsigma_1\varsigma_2})\Big]
\cup I_{1}\cup I_2 \cup I_3  \cup \Big[\max_{\varsigma_1,\varsigma_2\in\{\pm\}}V_1(x_{k(\bar{x}^{\varsigma_1\varsigma_2},\bar{y}^{\varsigma_1\varsigma_2})}^{\varsigma_1\varsigma_2}),E\Big].
\end{displaymath}
\end{proof}
This finishes the proof of the chain of results announced in the Introduction.

\section{Non-uniform ergodic averages in the configuration space}\label{subsec:ergodicav}

We show first that in the common case, when unique ergodcity of the motion on level sets is established, it induces a smooth measure for  ergodic averages in the configuration space. On the other hand, we show that when unique ergodcity holds only on the red/green sets (as established  in Theorem \ref{thm:U+-} for resonant quadratic potentials), it induces non-smooth measure in the configuration space. Notably, here we do not assume that the potentials are quadratic, yet we do assume that the level set is resonant and is partitioned to periodic ribbons and green and red invariant sets, with UE established on the green and red sets.

 Fix the energies $0<E_1<E$ and let us consider the invariant set $S_{E,E_1}^P$. Denote by $\pi:S_{E,E_1}^P\to R^{(E,E_1)}\cap P$ the projection on the configuration space.
 For every $(x,y)\in R^{(E,E_1)}\cap P$ and $\sigma_1,\sigma_2\in\{\pm\}$
let\begin{equation}\label{eq:psigma1sigma2}
p^{\sigma_1\sigma_2}(x,y)=(\sigma_1\sqrt{2}\sqrt{E_1-V_1(x)},\sigma_2\sqrt{2}\sqrt{E-E_1-V_2(y)}).
\end{equation}
For every $\sigma_1,\sigma_2\in\{\pm\}$
let
\[S_{E,E_1}^{\sigma_1,\sigma_2}=\{(x,y,p^{\sigma_1\sigma_2}(x,y)):(x,y)\in R^{(E,E_1)}\cap P\}.\]
Then $S_{E,E_1}^{\sigma_1,\sigma_2}$ is naturally identified (via $\pi$) with the configuration space $R^{(E,E_1)}\cap P$ and
$S_{E,E_1}^P$ is the union of $S_{E,E_1}^{\sigma_1,\sigma_2}$ for $\sigma_1,\sigma_2\in\{\pm\}$ so that
$(x,y,p^{\sigma_1+}(x,y))$ and $(x,y,p^{\sigma_1-}(x,y))$ {are identified} when $(x,y)$ lies on a horizontal side of $R^{(E,E_1)}\cap P$ and
$(x,y,p^{+\sigma_2}(x,y))$ and $(x,y,p^{-\sigma_2}(x,y))$ {are identified} when $(x,y)$ lies on a vertical side of $R^{(E,E_1)}\cap P$.
 On $S_{E,E_1}^P$ we have unique probability  Liouville measure $\mu_{E,E_1}$ such that $\mu_{E,E_1}$ restricted to $S_{E,E_1}^{\sigma_1,\sigma_2}$
is identified with the measure $\tfrac{1}{4}g(x,y)dxdy=\tfrac{1}{4}d\psi _{1}d\psi_{2}$ on $R^{(E,E_1)}\cap P$, where, by Eq. (\ref{def:psi}),
\[g(x,y)=\frac{1}{\sqrt{(E_1-V_1(x))(E-E_1-V_2(y))}}\frac{1}{\int_{R^{(E,E_1)}\cap P}\frac{dsdu}{\sqrt{(E_1-V_1(s))(E-E_1-V_2(u))}}}.\]
 For every $(x,y)\in R^{(E,E_1)}\cap P$, $\sigma_1,\sigma_2\in\{\pm\}$ and $t\in\R$ let
 \[(x_t^{\sigma_1\sigma_2},y_t^{\sigma_1\sigma_2}):= \pi(\varphi_t^{P,E,E_1}(x,y,p^{\sigma_1\sigma_2}(x,y))).\]

Suppose that the flow $(\varphi_t^{P,E,E_1})_{t\in\R}$ is uniquely ergodic. Then {for an observable of the configuration space}, namely every continuous map $f:R^{(E,E_1)}\cap P\to\R$ and all $(x,y)\in R^{(E,E_1)}\cap P$ and
$\sigma_1,\sigma_2\in\{\pm\}$ we have
\begin{align*}
\lim_{T\to\pm\infty}\frac{1}{T}&\int_{0}^Tf(x_t^{\sigma_1\sigma_2},y_t^{\sigma_1\sigma_2})\,dt=
\lim_{T\to\pm\infty}\frac{1}{T}\int_{0}^Tf\circ\pi(\varphi_t^{P,E,E_1}(x,y,p^{\sigma_1\sigma_2}(x,y)))\,dt\\
&=\int_{S_{E,E_1}^P}f\circ\pi\, d\mu_{E,E_1}=\sum_{\sigma_1,\sigma_2\in\{\pm\}}\int_{S_{E,E_1}^{\sigma_1,\sigma_2}}f\circ\pi\, d\mu_{E,E_1}\\
&=\sum_{\sigma_1,\sigma_2\in\{\pm\}}\int_{R^{(E,E_1)}\cap P}f(x,y)\frac{g(x,y)}{4}\,dx\,dy=\int_{R^{(E,E_1)}\cap P}f(x,y)g(x,y)\,dx\,dy.
\end{align*}
It follows that every infinite semi-orbit in the configuration space $R^{(E,E_1)}\cap P$ is equidistributed with respect to the measure $g(x,y)dxdy$. This measure has smooth density exploding to $+\infty$ on the boundary of
$R^{(E,E_1)}$.

\medskip

Now consider the case of level sets with coexistence of periodic and uniquely ergodic behaviour for $(\varphi_t^{P,E,E_1})_{t\in\R}$   (e.g.   $E_{1} \in J\in\ \mathcal U^-_I$ of Theorem  \ref{thm:U+-}). Namely, assume that  the phase space $S_{E,E_1}^P$ splits into two completely periodic (connected) components and
two uniquely ergodic components, {and assume each of these components have a positive width as in Theorem  \ref{thm:U+-}. In this case there are three types of averages one needs to consider: averages on periodic orbits, and averages over the green/red sets. We show next that the coloured averages induce non-smooth measures in the configuration space.}

Denote the uniquely ergodic component corresponding to the  subset $(R\setminus \mathbf R^{red})\setminus\partial(R\setminus \mathbf R^{red})$
of the surface $M(E_1)$ by $S_{E,E_1}^{red}$ (and similarly, for $(G\setminus \mathbf R^{green})\setminus\partial(G\setminus \mathbf R^{green})$, by $S_{E,E_1}^{green}$).
Recall that the surface $M(E_1)$ has a partition into four star-shaped polygons \(\mathbf P(E_1)_{\sigma_1\sigma_2}\):
\begin{gather*}\mathbf P(E_1)_{++}=\mathbf P(E_1)_{\pi/4},\ \mathbf P(E_1)_{+-}=\gamma_h\mathbf P(E_1)_{-\pi/4},\\
 \mathbf P(E_1)_{-+}=\gamma_v\mathbf P(E_1)_{3\pi/4},\ \mathbf P(E_1)_{--}=\gamma_h\circ\gamma_v\mathbf P(E_1)_{-3\pi/4},
 \end{gather*}
see Figure~\ref{fig:surface}. Since the surface $M(E_1)$ and the invariant set $R\setminus \mathbf R^{red}$ is $e^{i\pi}=\gamma_h\circ\gamma_v$ invariant, we have
\begin{gather}\label{eq:redpm}
\begin{split}
\gamma_h\circ\gamma_v(\mathbf P(E_1)_{--}\cap(R\setminus \mathbf R^{red}))=\mathbf P(E_1)_{++}\cap(R\setminus \mathbf R^{red}),\\
\gamma_v(\mathbf P(E_1)_{-+}\cap(R\setminus \mathbf R^{red}))=\gamma_h(\mathbf P(E_1)_{+-}\cap(R\setminus \mathbf R^{red})),
\end{split}
\end{gather}
see Figure~\ref{fig:cylinders}. Moreover, all four sets have the same Lebesgue measure  since the removed parts have identical measures in each set. Since the boundary of $\mathbf P(E_1)_{++}\cap(R\setminus \mathbf R^{red})$ consists of linear segment in direction $\pi/4$ and the boundary of $\gamma_h(\mathbf P(E_1)_{+-}\cap(R\setminus \mathbf R^{red}))$ consists of linear segment in direction $-\pi/4$, the sets differ.

In fact, by the same argument, we notice that
 $\mathbf P(E_1)_{++}\cap(R\setminus \mathbf R^{red})$ intersects the horizontal folding of all the other invariant sets:
  $\gamma_h(\mathbf P(E_1)_{+-}\cap(G\setminus \mathbf R^{green}))$ as well as the periodic ribbons   $\gamma_h(\mathbf P(E_1)_{+-}\cap W)$ and    $\gamma_h(\mathbf P(E_1)_{+-}\cap Y)$. The periodic ribbons map one to the other under  \(\gamma_h\circ\gamma_v\) (rotation by \(\pi\)), as these correspond to the same periodic orbits in configuration space with opposite directions of motion along the orbit.

By the construction of the isomorphism between $S_{E,E_1}^P$ and $M(E_1)$, we have
\begin{align}\label{eq:redpm1}
\begin{split}
S_{E,E_1}^{red}\cap S_{E,E_1}^{++}&=\psi^{-1}\big(\mathbf P(E_1)_{++}\cap((R\setminus \mathbf R^{red})\setminus\partial(R\setminus \mathbf R^{red}))\big)\\
S_{E,E_1}^{red}\cap S_{E,E_1}^{+-}&=\psi^{-1}\circ \gamma_h\big(\mathbf P(E_1)_{+-}\cap((R\setminus \mathbf R^{red})\setminus\partial(R\setminus \mathbf R^{red}))\big)\\
S_{E,E_1}^{red}\cap S_{E,E_1}^{-+}&=\psi^{-1}\circ \gamma_v\big(\mathbf P(E_1)_{-+}\cap((R\setminus \mathbf R^{red})\setminus\partial(R\setminus \mathbf R^{red}))\big)\\
S_{E,E_1}^{red}\cap S_{E,E_1}^{--}&=\psi^{-1}\circ \gamma_h\circ\gamma_v\big(\mathbf P(E_1)_{--}\cap((R\setminus \mathbf R^{red})\setminus\partial(R\setminus \mathbf R^{red}))\big).
\end{split}
\end{align}
Every set $S_{E,E_1}^{red}\cap S_{E,E_1}^{\sigma_1,\sigma_2}$ is identified via the projection $\pi$ with a subset $P_{E,E_1}^{red,\sigma_1,\sigma_2}\subset R^{(E,E_1)}\cap P$.
In view of \eqref{eq:redpm} and \eqref{eq:redpm1}, we have
\[P_{E,E_1}^{red,--}=P_{E,E_1}^{red,++},\ P_{E,E_1}^{red,-+}=P_{E,E_1}^{red,+-},\  P_{E,E_1}^{red,+-}\neq P_{E,E_1}^{red,++}\]
and all four sets have the same $\mu_{E,E_1}$-measure, denoted by $\tfrac{1}{4}\mu^{red}$ ($\mu^{red}:=\mu_{E,E_1}( S_{E,E_1}^{red})$). As $(\varphi_t^{P,E,E_1})_{t\in\R}$ is uniquely ergodic on $S_{E,E_1}^{red}$, for every $(x_0^{\sigma_1\sigma_2},y_0^{\sigma_1\sigma_2})=(x_{0},y_{0},p^{\sigma_1,\sigma_2}(x_0,y_0))\in S_{E,E_1}^{red}$ and any continuous function $f:R^{(E,E_1)}\cap P$ we have
\begin{align*}
\lim_{T\to\pm\infty}\frac{1}{T}&\int_{0}^Tf(x_t^{\sigma_1\sigma_2},y_t^{\sigma_1\sigma_2})\,dt=
\lim_{T\to\pm\infty}\frac{1}{T}\int_{0}^Tf\circ\pi(\varphi_t^{P,E,E_1}(x,y,p^{\sigma_1\sigma_2}(x,y)))\,dt\\
&=\int_{S_{E,E_1}^{red}}f\circ\pi\, \frac{d\mu_{E,E_1}}{\mu^{red}}=\sum_{\sigma'_1,\sigma'_2\in\{\pm\}}\int_{S_{E,E_1}^{red}\cap S_{E,E_1}^{\sigma'_1,\sigma'_2}}f\circ\pi\, \frac{d\mu_{E,E_1}}{\mu^{red}}\\
&=\sum_{\sigma'_1,\sigma'_2\in\{\pm\}}\int_{P_{E,E_1}^{red,\sigma'_1,\sigma'_2}}f(x,y)\frac{g(x,y)}{4\mu^{red}}\,dx\,dy\\
&=\int_{R^{(E,E_1)}\cap P}f(x,y)g(x,y)\frac{\chi_{P_{E,E_1}^{red,++}}(x,y)+\chi_{P_{E,E_1}^{red,+-}}(x,y)}{2\mu^{red}}\,dx\,dy.
\end{align*}
It follows that if $(x,y)\in P_{E,E_1}^{red,\sigma_1,\sigma_2}$ , then every of its infinite $\sigma_1\sigma_2$-semi-orbit in the configuration space $R^{(E,E_1)}\cap P$ is equidistributed
on $P_{E,E_1}^{red,++}\cup P_{E,E_1}^{red,+-}\subset R^{(E,E_1)}\cap P$
with respect to the measure $g(x,y)\frac{\chi_{P_{E,E_1}^{red,++}}(x,y)+\chi_{P_{E,E_1}^{red,+-}}(x,y)}{2\mu^{red}}dxdy$. The same phenomenon is observed also on the  \emph{green} component.
Since \({P_{E,E_1}^{red,++}}\neq {P_{E,E_1}^{red,+-}}\),  the resulting measure is only piecewise smooth. Thus, we have three types of measures, the green and red measures which are piecewise smooth  and the continuum of singular measures supported on the periodic orbits. The denominator \(\chi_{P_{E,E_1}^{red,++}}(x,y)+\chi_{P_{E,E_1}^{red,+-}}(x,y)\) equals to 1 on the configuration points at which the red measure has common support with one of the other measures, and equals to 2 at points where only the red measure is supported. We established that the area of both of these sets  is positive.

\medskip

\section{some open problems}

\subsection*{Non-uniform ergodic  properties along non-linear resonant curves}

We considered curves parameterized by \(E_1\) on a given energy surface and proved either unique ergodicity for almost all \(E_1\) on each such surface or, for the resonant linear oscillators case, a more exciting divisions to subintervals, where, for some of which periodic and unique ergodic flows co-exist. More generally, we can consider any curve in the energy space, \((E(E_{1}),E_1),\  E_1\in J\), such that the topological data on this curve is fixed and the numerical data on this curve is properly non-degenerate and monotone, and apply the same tools to the resulting curve.
 In particular, inspired by Theorem \ref{thm:U+-}, one would like to study the dynamics along resonant curves. For example,   consider curves  of the form \(n T_1(E_1)= m T_2(E_{2}^{\frac{n}{m}}=E^{\frac{n}{m}}(E_{1})-E_1)\) for some \(m,n\in\mathbb{N}\), and for which at least one oscillator is non-harmonic. Such curves are of interest, as simulations show that similar to  the smooth case, under perturbations, they produce resonant islands of the impact flow.  Along the corresponding curves, similar  splitting as for the resonant linear oscillators case (Theorem \ref{thm:U+-}) are expected to emerge. Indeed, the construction of such curves and its division to  segments  \( \mathcal U^\pm_I \) is similar to the construction in the proof of Theorem \ref{thm:U+-}. Yet, we are unable to verify the conditions of  Theorem~\ref{thm:mainquadrational} for the general case and leave this to future work.

\subsection*{Quasi-integrable dynamics with other types of potentials and other types of right angled polygons}

 The frameworks introduced in \cite{Issi2019,PRk2020} allows to study a larger class of quasi-integrable HIS for which the ergodic properties are yet to be established. Major differences are expected to arise when one or both of the potentials have  local maxima (namely, is not unimodal). Then, the period dependence on energy is singular and non-monotone  and the scaled translation surfaces can be non-compact. Thus, the ergodic properties for this case are unknown.

 When the right angled polygon is either non star-shaped or even when it is star-shaped but its  kernel does not include the origin, our current methodologies are insufficient to prove unique ergodicity.
{Indeed, the main example presented in \cite{Issi2019}, of impacts of oscillators  from a single step, is included here, for symmetric potentials, iff  the step belongs to a single quadrant (Figure~5a in \cite{Issi2019}). Then, for any fixed \(E\) we can consider the star-shaped polygon which is composed of three sufficiently large rectangles (with respect to \(E\)) and a one-step staircase polygon. Then, the right angled polygons, \(P \cap R^{(E,E_{1})}\) always belong to $\mathscr{R}$. When the step crosses any of the axis (Figure~5b,c,d in \cite{Issi2019}), the kernel of the corresponding star-shaped polygon does not include the origin, so, presently, it cannot be analyzed with our tools. Similarly, when additional finite barriers  and beams are introduced  (see Figure~10 in \cite{Issi2019}), the right angled polygons \(P \cap R^{(E,E_{1})}\) do not always belong to the class  $\mathscr{R}$, and the current results hold only for segments of level sets of this type.}

\subsection*{Acknowledgements}
The authors would like to thank the University of Sydney for their hospitality during the conference
"Workshop on Mathematical Billiards", Sydney, 24-27 June 2019. This meeting was the beginning of the author's collaboration on the project that resulted in the present article.
The first author would like to thank Sasha Gomilko for help in fixing some analytical issues.

\appendix

\section{Examples of $Deck$-potentials}\label{sec:Deckpot}
\begin{proposition}\label{prop:anDeck}
If $V:\R\to\R_{\geq 0}$ is an even analytic map satisfying \eqref{eq:i}, then $V\in Deck$.
\end{proposition}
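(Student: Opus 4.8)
The plan is to verify that an even analytic $V:\R\to\R_{\geq 0}$ satisfying \eqref{eq:i} automatically satisfies the three remaining conditions \eqref{eq:ii}, \eqref{eq:iii}, \eqref{eq:iv}. Condition \eqref{eq:ii} is immediate: the restriction $V:(0,+\infty)\to(0,+\infty)$ is analytic by hypothesis (positivity on $(0,+\infty)$ follows from $V'(x)>0$ for $x>0$ and $V(0)=0$), so it extends holomorphically to an open neighborhood $U\subset\C$ of $(0,+\infty)$. Moreover, since $V'(x)>0$ for all real $x>0$, after shrinking $U$ we may assume $V'(z)\neq0$ on $U$, so $V$ is locally biholomorphic there. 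The real content is in producing, for each fixed $E_0>0$, the bounded biholomorphic domain $U_{E_0}$ with $C(E_0,r)\subset V(U_{E_0})$ and the bound \eqref{eq:iv} on it.

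First I would fix $E_0>0$ and set $a_0=x^{max}(E_0)$, the unique positive solution of $V(a_0)=E_0$. On the compact real interval $[0,a_0]$ we have $V'>0$ except at $0$ where $V'(0)=0$; but $V$ is analytic and even, so near $0$ we have $V(x)=c_{2m}x^{2m}+\cdots$ with $c_{2m}>0$ for some $m\geq1$, hence $x\mapsto V(x)$ is still a local homeomorphism of $[0,\varepsilon)$ onto $[0,V(\varepsilon))$, and the inverse $x^{max}$ is continuous at $0$ (it is the branch $u\mapsto (u/c_{2m})^{1/(2m)}(1+o(1))$). For the complex construction, I would choose a simply connected bounded open neighborhood $W\subset\C$ of the interval $[\varepsilon_0,a_0]$ (for small $\varepsilon_0>0$) on which $V'\neq0$ and $V$ is injective — such $W$ exists by compactness and the inverse function theorem, thickening $[\varepsilon_0,a_0]$ slightly — and separately handle a neighborhood of $0$: on a small disc $B(0,\rho)$, $V$ maps biholomorphically a suitable sector or slit neighborhood onto a neighborhood of $0$ in $\C$. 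Gluing these (being careful that $V$ is injective on the union, which holds if $\rho,\varepsilon_0$ are small and $W$ is thin enough that its $V$-image stays away from $V$ of the near-zero piece except near the common point) yields a bounded open $U_{E_0}\subset U$ with $V:U_{E_0}\to V(U_{E_0})$ biholomorphic and $[0,E_0]\subset V(U_{E_0})$ with room to spare, so $C(E_0,r)\subset V(U_{E_0})$ for $r$ small since the droplet $C(E_0,r)=\bigcup_{s\in(0,1]}sB(E_0,r)$ is a bounded set clustering only on $[0,E_0]$ and $0$, which we have covered.

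Finally, for \eqref{eq:iv}: on the closure of $U_{E_0}$ (a compact set, after possibly shrinking to a slightly smaller bounded open set with compact closure in $U_{E_0}$, which still contains $C(E_0,r')$ for smaller $r'$), the functions $V''(z)$, $V(z)$, $V'(z)$ are continuous, and $V'(z)\neq0$ there. The only possible obstruction to boundedness of $|V''(z)V(z)/(V'(z))^2|$ on a compact set is a zero of $V'$ in the closure — which we have excluded away from $0$ — or the behavior near $z=0$ if $0$ lies in the closure of our domain. Near $0$ the even analytic structure $V(z)=c_{2m}z^{2m}(1+O(z^2))$ gives $V'(z)=2mc_{2m}z^{2m-1}(1+O(z^2))$ and $V''(z)=2m(2m-1)c_{2m}z^{2m-2}(1+O(z^2))$, whence $V''V/(V')^2=\frac{2m-1}{2m}+O(z^2)$, which is bounded near $0$. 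So the quantity is bounded on all of (the shrunk) $U_{E_0}$, giving \eqref{eq:iv} with a suitable $C_{E_0}$. This establishes $V\in Deck$.

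The main obstacle I anticipate is the geometric bookkeeping of the biholomorphic domain near $x=0$: because $V$ is only a $2m$-fold branched cover near $0$ (not a local biholomorphism there), one must take $U_{E_0}$ to be, e.g., a slit neighborhood or a thin petal, and then check that (i) $V$ is genuinely injective on the glued domain $U_{E_0}$ and (ii) the image $V(U_{E_0})$ still contains the full droplet $C(E_0,r)$, which near $0$ degenerates to a thin wedge that the slit domain must cover. The cleanest way is probably to choose $U_{E_0}$ so that $V(U_{E_0})$ is itself a slightly enlarged droplet-type region around $[0,E_0]$, defined directly in the $E_1$-variable, and pull it back; this is essentially the same construction already used implicitly in Proposition~\ref{prop:a}, where $z^{max}=V^{-1}$ was taken analytic on $C(E_{10},r)$. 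I would lean on that precedent, making the gluing-and-injectivity argument explicit here.
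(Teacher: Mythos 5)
You correctly isolate \eqref{eq:iii} and \eqref{eq:iv} as the real content, and you correctly identify the difficulty: $V$ has a critical point of even order $2m$ at $0$, so $V$ is a branched cover near $0$ and cannot be made biholomorphic on any full neighborhood of $0$, yet the droplet $C(E_0,r)$ has $0$ in its closure. However, the proposal does not actually resolve that difficulty; it names it (``slit neighborhood or thin petal,'' ``gluing-and-injectivity argument'') and then defers it. In particular, the appeal to Proposition~\ref{prop:a} as a precedent is circular: Proposition~\ref{prop:a} \emph{assumes} conditions \eqref{eq:iii}--\eqref{eq:iv} and uses them, so it cannot be invoked to construct the domain $U_{E_0}$ whose existence is exactly what \eqref{eq:iii} asserts. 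As it stands, the two steps you flag as needing care — injectivity of $V$ on the glued domain near $0$, and coverage of the wedge-shaped tip of the droplet — are precisely the places the proof would have to do work, and they are not done.

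The paper's proof sidesteps the branch point entirely. Since $V^{(j)}(0)=0$ for $j<m$ and $V^{(m)}(0)\neq0$ with $m$ even, one can write $V=V_*^m$ where $V_*$ is holomorphic and \emph{biholomorphic} on a full disc around $0$ (this is the standard local normal form for a finite-order zero, \cite[Sec.~3.12.5]{Sa-Ge}); on $[0,\varepsilon)$, $V_*=V^{1/m}$, and $V_*$ extends to a neighborhood $\widetilde U$ of $[0,+\infty)$ with $V_*'\neq0$ everywhere, including at $0$. Because $V_*$ has no critical point, a simple rectangle $[-R,x^{max}(E_0)+R]\times[-R,R]\subset\widetilde U$ on which $V_*$ is injective exists by compactness, $V_*$ of that rectangle is a neighborhood of $[0,E_0^{1/m}]$, and one pulls back a droplet by composing with the $m$-th power. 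The bound \eqref{eq:iv} then falls out of the identity
\begin{equation*}
\frac{V''(z)V(z)}{(V'(z))^2}=\frac{m-1}{m}+\frac{V_*''(z)V_*(z)}{m\,(V_*'(z))^2},
\end{equation*}
with the right-hand side bounded on the compact rectangle because $V_*'\neq0$ there. Your Taylor-expansion remark $V(z)=c_{2m}z^{2m}(1+O(z^2))$ is the germ of this idea, but taking the literal $m$-th root $V_*$ globally (not just as a leading-order approximation near $0$) is what turns it into a clean proof: it converts the branched geometry of $V$ near $0$ into unbranched geometry for $V_*$, and a rectangle replaces the slit/petal domain you were worried about. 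If you want to salvage your approach, the fix is to introduce $V_*$ explicitly rather than glue domains for $V$.
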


\begin{proof}
The conditions \eqref{eq:i} and \eqref{eq:ii} result directly from the assumptions. {
Thus for every $E_0>0$ we need to find a complex neighborhood \(U_{E_0}\) on which $V$ is biholomorphic and with image under \(V\)  including a droplet of \(E_0\) (condition  \eqref{eq:iii}), and on which  condition  \eqref{eq:iv}  is satisfied. }

Let $U\subset\C$ be an open neighborhood of $[0,+\infty)$ and $V:U\to\C$ be a holomorphic extension of $V:[0,+\infty)\to[0,  +\infty)$.
As $V(0)=V'(0)=0$, we have $m>1$ such that
\[V(0)=V'(0)=\ldots=V^{(m-1)}(0)=0\;\text{ and }\;V^{(m)}(0)\neq 0.\]
As $V$ is even, $m$ is also even.
In view of \cite[Sec.~3.12.5]{Sa-Ge}, there exists biholomorphic  $V_*:B(0,\vep)\to V_*(B(0,\vep))$ such that
\[V_*^m(z)=V(z)\;\text{ for every }\;z\in B(0,\vep)\;\text{ and }\; V_*(x)=V(x)^{1/m}\;\text{ for every }\; x\in [0,\vep).\]
Therefore, there exists $\widetilde{U}\subset U$ an open neighbourhood of $[0,+\infty)$ and $V_*:\widetilde{U}\to\C$ a holomorphic extension of $V^{1/m}:[0,+\infty)\to[0,+\infty)$
so that $V'_*(z)\neq 0$ for every $z\in \widetilde{U}$.


Take any $E_0>0$. Then there exists $R=R_{E_0}>0$ such that $[-R,x^{max}(E_0)+R]\times[-R,R]\subset \widetilde{U}$ and $V_*$ on $[-R,x^{max}(E_0)+R]\times[-R,R]$ is surjective.
Indeed, suppose, contrary to our claim, that for all $R>0$ the rectangle $[-R,x^{max}(E_0)+R]\times[-R,R]$ is not a subset of  $\widetilde{U}$. Then there exists a sequence
$(z_n)_{n\geq 1}$ in $\C$ such that $z_n \in [-1/n,x^{max}(E_0)+1/n]\times[-1/n,1/n]$ and $z_n\notin \widetilde{U}$. Passing to a subsequence, if necessary, we have $z_n\to x\in [0, x^{max}(E_0)]$ and  $x\notin \widetilde{U}$,
contrary to $[0,+\infty)\subset \widetilde{U}$.

Next,  suppose, contrary to our claim, that for every $R>0$ the map $V_*$ on the rectangle $[-R,x^{max}(E_0)+R]\times[-R,R]$ is not surjective.
Then there are two sequences
$(z_n)_{n\geq 1}$ and $(z'_n)_{n\geq 1}$ in $\C$ such that $z_n,z'_n \in [-1/n,x^{max}(E_0)+1/n]\times[-1/n,1/n]$, $z_n\neq z'_n$ and $V_*(z_n)=V_*(z'_n)$. Passing to  subsequences, if necessary, we have
$z_n\to x\in [0,x^{max}(E_0)]$, $z'_n\to x'\in [0,x^{max}(E_0)]$ and  $V(x)^{1/m}=V(x')^{1/m}$. Since $V:[0,+\infty)\to[0,  +\infty)$ is strictly increasing, we have $x=x'$. This contradicts local invertibility of $V_*$
around $x\in \widetilde{U}$.

Summarizing,
\[V_*:[-R,x^{max}(E_0)+R]\times[-R,R]\to V_*([-R,x^{max}(E_0)+R]\times[-R,R])\] is biholomorphic. Since, {by definition \(V(x^{max}(E_0))=E_0\), so}
\[V_*([0,x^{max}(E_0)])=V^{1/m}([0,x^{max}(E_0)])=[0,V^{1/m}(x^{max}(E_0))]=[0,E_0^{1/m}],\]
the set $V_*((-R,x^{max}(E_0)+R)\times(-R,R))$ is an open neighbourhood of $[0,E_0^{1/m}]$. Let $\C_+=\{z\in \C: \rp z> 0\}$.
Denote by $\overline{\C}_+\ni z\mapsto z^{1/m}\in \overline{\C}_+$  the power complex map so that on $\C_+$ it is a holomorphic extension of the real power map.
Then there exists $0<r<E_0$ such that $\overline{C(E_0,r)}^{1/m}\subset V_*((-R,x^{max}(E_0)+R)\times(-R,R))$, where \(C(E_0,r) \) is the droplet emanating from \(E_0\). This follows from $\bigcap_{r>0}\overline{C(E_0,r)}^{1/m}=[0,E_0^{1/m}]$.
Let us consider $z^{max}$, the complex extension of \(x^{max}(E)\) on:
\[z^{max}:V((-R,x^{max}(E_0)+R)\times(-R,R))\cap\overline{\C}_+\to\C\]
defined by
\[z^{max}(E):=V^{-1}_*(E^{1/m})\]
($E$ is a complex variable in the proof).
Then, \(V_{*}(z^{max}(E))=E^{1/m}\), in particular, \(V_{*}(x_{max}(E))=E^{1/m}\) for real positive \(E\).
Define
\begin{align}\label{def:Uy}
\begin{aligned}
U_{E_0}:&=z^{max}(V((-R,x^{max}(E_0)+R)\times(-R,R))\cap\overline{\C}_+)\\
&\subset (-R,x^{max}(E_0)+R)\times(-R,R)\subset\widetilde{U}\subset U.
\end{aligned}
\end{align}
{Since we showed that }for every \(E_0>0\)  and $R=R_{E_0}>0$ there exists \(0<r<E_0\) such that { $\overline{C(E_0,r)}^{1/m}\subset V_*((-R,x^{max}(E_0)+R)\times(-R,R))$, we obtain that }
\[{C(E_0,r)}\subset V((-R,x^{max}(E_0)+R)\times(-R,R))\cap \C_+.\]
Moreover, for every $E\in V((-R,x^{max}(E_0)+R)\times(-R,R))\cap\C_+$ we have
\[V(z^{max}(E))=V^m_*(V^{-1}_*(E^{1/m}))=E.\]
It follows that $V:U_{E_0}\to V(U_{E_0})$ is biholomorphic and ${C(E_0,r)}\subset V(U_{E_0})$, so the condition  \eqref{eq:iii} holds.

As $V_*:[-R,x^{max}(E_0)+R]\times[-R,R]\to V_*([-R,x^{max}(E_0)+R]\times[-R,R])$ is biholomorphic, there exists $C_x$ such that
\[\left|\frac{V_*''(z)V_*(z)}{(V_*'(z))^2}\right|\leq c_{E_0}\;\text{ for all}\;z\in [-R,x^{max}(E_0)+R]\times[-R,R].\]
Since $V_*^m=V$, we have
\[V'(z)=mV_*(z)^{m-1}V_*'(z),\;V''(z)=m(m-1)V_*(z)^{m-2}(V_*'(z))^2+mV_*(z)^{m-1}V_*''(z).\]
Hence, by \eqref{def:Uy}, for every $z\in U_{E_0}$ we have
\[\left|\frac{V''(z)V(z)}{(V'(z))^2}\right|=\left|\frac{m-1}{m}+\frac{V_*''(z)V_*(z)}{m(V_*'(z))^2}\right|\leq C_{E_0}:=\frac{c_{E_0}}{m}+1,\]
so the condition \eqref{eq:iv} also holds.
\end{proof}


\begin{lemma}\label{lem:ye}
The function $V:\R\to \R_{\geq 0}$ given by $V(x)=|x|e^{-1/|x|}$ for $x\neq 0$ and $V(0)=0$ is a convex $Deck$-potential.
\end{lemma}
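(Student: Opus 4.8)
\textbf{Proof plan for Lemma~\ref{lem:ye}.}

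The plan is to verify the four $Deck$ conditions \eqref{eq:i}--\eqref{eq:iv} together with convexity for the explicit function $V(x)=|x|e^{-1/|x|}$. First I would deal with the elementary parts. For $x>0$ we have $V(x)=xe^{-1/x}$, and a direct computation gives $V'(x)=e^{-1/x}(1+1/x)>0$ and $V''(x)=e^{-1/x}\cdot\frac{1}{x^3}>0$ for $x>0$; since $V$ is even, $V$ is convex on $\R$ once one checks the behaviour at $0$ (both one-sided derivatives at $0$ vanish because $e^{-1/x}\to 0$ faster than any power, so $V\in C^1$ with $V'(0)=0$, and the second derivative $V''(x)\to 0$ as $x\to 0^+$, giving $C^2$). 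This also establishes \eqref{eq:i}: $V(0)=V'(0)=0$, $V'(x)>0$ for $x>0$, and $V(x)=xe^{-1/x}\to+\infty$ as $x\to+\infty$. The analyticity condition \eqref{eq:ii} is clear on $(0,+\infty)$ since $z\mapsto ze^{-1/z}$ is holomorphic wherever $z\neq 0$.

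Next I would address the boundedness condition \eqref{eq:iv}, which is the most computational but still routine part. One computes for $x>0$:
\[
\frac{V''(x)V(x)}{(V'(x))^2}=\frac{\big(e^{-1/x}x^{-3}\big)\big(xe^{-1/x}\big)}{e^{-2/x}(1+1/x)^2}=\frac{x^{-2}}{(1+1/x)^2}=\frac{1}{(x+1)^2}.
\]
So on the real axis this quantity is bounded by $1$; moreover, one sees immediately that it is $<1/2$ for $x>\sqrt2-1$ and tends to $1$ as $x\to 0^+$, so $V$ does \emph{not} satisfy \eqref{eq:v} (though the lemma does not claim it does). The substantive point for the $Deck$ property is that the \emph{complex} version $\big|V''(z)V(z)/(V'(z))^2\big|=|z+1|^{-2}$ stays bounded on a suitable complex neighbourhood $U_E$; since this expression equals $|z+1|^{-2}$, it is bounded by a constant $C_E$ on any $U_E$ that stays a fixed positive distance away from $z=-1$, which is automatic for $U_E$ chosen as a thin neighbourhood of a bounded interval $(0,x^{max}(E)+r]$ of the positive real axis.

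The main obstacle is verifying \eqref{eq:iii}: for every $E>0$ producing a bounded open $U_E$ on which $V$ is biholomorphic and whose image contains a droplet $C(E,r)$. Here I cannot invoke Proposition~\ref{prop:anDeck} directly, since $V$ is \emph{not} analytic at $0$ --- the map $z\mapsto ze^{-1/z}$ has an essential singularity there. However, for a fixed $E>0$ the relevant interval $[0,x^{max}(E)]$ has $x^{max}(E)>0$, and I only need biholomorphicity on a neighbourhood of the compact interval $[\epsilon, x^{max}(E)+r]$ for small $\epsilon>0$ together with control near $0$. The plan is: (a) on $(0,\infty)$, $V'(z)=e^{-1/z}(1+1/z)\neq 0$ for $z$ in a thin complex neighbourhood of $(0,\infty)$ avoiding $z=-1$ and $z=0$, so $V$ is locally biholomorphic there; (b) injectivity on a thin enough rectangle-type neighbourhood of $[\epsilon,x^{max}(E)+r]$ follows by the same compactness/contradiction argument as in the proof of Proposition~\ref{prop:anDeck} (if injectivity failed on every shrinking neighbourhood, one extracts convergent sequences $z_n\to x$, $z_n'\to x'$ in $[0,x^{max}(E)]$ with $V(x)=V(x')$, forcing $x=x'$ since $V$ is strictly increasing on $[0,\infty)$, contradicting local injectivity); (c) near $z=0$ one uses that $\rp(1/z)\to+\infty$ as $z\to 0$ within a small sector around the positive real axis, so $|V(z)|=|z|e^{-\rp(1/z)}\to 0$, which lets one absorb a neighbourhood of $0$ into the domain while keeping the image of the boundary away from the droplet $C(E,r)$; (d) finally, the image $V(U_E)$ contains a droplet $C(E,r)$ for small $r$ because $V$ maps $[\,0,x^{max}(E)+r']$ onto $[0,E']$ with $E'>E$, and the open mapping theorem together with the fact that $\bigcap_{r>0}\overline{C(E,r)}=[0,E]$ forces $C(E,r)\subset V(U_E)$ for $r$ small enough, exactly as in the last paragraph of the proof of Proposition~\ref{prop:anDeck}. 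This establishes $V\in Deck$, and combined with the convexity computation above, completes the lemma.
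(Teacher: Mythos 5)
Your plan is sound for conditions \eqref{eq:i}, \eqref{eq:ii}, \eqref{eq:iv}, and convexity — these parts match the paper's computations (in particular the identity $V''(z)V(z)/(V'(z))^2 = (1+z)^{-2}$ is exactly what the paper uses). The gap is in step (c), and it is a genuine one.

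You propose to absorb a neighbourhood of $z=0$ by working in ``a small sector around the positive real axis,'' using $|V(z)|\to 0$ there. But $V$ is \emph{not} injective on any sector at the origin, no matter how thin. Write $z=re^{i\phi}$ with $r$ small. Then $V(z)=re^{i\phi}\exp(-\cos\phi/r)\exp(i\sin\phi/r)$, so for fixed $\phi\in(0,\theta)$ the pair $z_\pm=re^{\pm i\phi}$ satisfies $|V(z_+)|=|V(z_-)|$ always, and $\Arg V(z_\pm)=\pm(\phi+\sin\phi/r)$; since $\sin\phi/r\to\infty$ as $r\to 0$, the equation $\phi+\sin\phi/r\in\pi\Z$ has arbitrarily small solutions $r$, producing distinct points $z_+\neq z_-$ in the sector with $V(z_+)=V(z_-)$. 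In other words, the essential singularity makes the argument of $V$ wind infinitely often along any ray $\Arg z=\phi\neq 0$ approaching $0$, and a sector cannot be a domain of injectivity. The compactness argument you borrow from Proposition~\ref{prop:anDeck} for step (b) also cannot be pushed down to $\epsilon=0$ precisely because $V$ is not locally invertible at $0$.

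The correct domain of injectivity near $0$ is a cusp-shaped (parabolic) region, roughly $\{x+iy: x>0,\ |y|\lesssim x^2\}$, not a sector. The paper obtains this cleanly by conjugating with the inversion $I(z)=1/z$: one writes $V=I\circ\widetilde V\circ I^{-1}$ with $\widetilde V(w)=we^w$, proves $\widetilde V$ is injective on a \emph{half-strip} $A=\{\rp w>0,\ |\ip w|<y_0\}$ (via $\rp\widetilde V'(w)>0$ there), and shows $\widetilde V(A)$ contains a sector $\mathcal S_{y_0}$. Pulling back by $I$ turns the half-strip at infinity into the cusp at $0$ and turns the sector into a sector, yielding a single global domain $U$ on which $V$ is biholomorphic onto $\mathcal S_{y_0}\supset C(E,r)$ for every $E$ (so the droplet condition is then immediate). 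Your step (d) would face a second obstruction even if injectivity near $0$ were fixed: you need the \emph{image} near $0$ to contain the thin end of the droplet, i.e.\ a genuine sector around $(0,\delta]$, and showing that the biholomorphic image of a cusp is a sector is exactly what the conjugation argument delivers and what an ad hoc patching near $0$ would struggle to produce. So the statement is true and most of your verification is fine, but step (c) as written fails and needs to be replaced by the half-strip/inversion construction or an equivalent that respects the parabolic geometry at the essential singularity.
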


\begin{proof}
First note that the conditions \eqref{eq:i} and \eqref{eq:ii} are obviously satisfied. We focus only on \eqref{eq:iii} and \eqref{eq:iv}.

Let us consider its holomorphic extension $V:\C\setminus \{0\}\to\C$ given by $V(z)=z\exp(-1/z)$. Let $I:\C\setminus\{0\}\to\C\setminus\{0\}$,
$I(z)=1/z$. Then $V=I\circ\widetilde{V}\circ I^{-1}$ on $\C\setminus \{0\}$, where
$\widetilde{V}:\C\to\C$ is the holomorphic map $\widetilde{V}(z)=z\exp(z)$. Since the map $[0,\pi/2)\ni y\mapsto y\tan(y)\in[0,+\infty)$ is strictly increasing with $\lim_{y\to\pi/2}y\tan(y)=+\infty$,
there exists $y_0\in(0,\pi/2)$ such that $y_0\tan(y_0)=1$. Then $y\tan(y)<1$ for all $y\in(-y_0,y_0)$.
Therefore
\[\rp \widetilde{V}'(z)>0\quad\text{for all}\quad z\in A=\{z\in\C;\;\rp z> 0,\;|\ip z|<y_0\}.\]
Indeed, if $z=x+iy$ with $x>0$ and $|y|<y_0$, then
\[\rp \widetilde{V}'(z)=\rp [(z+1)\exp(z)]=e^x((x+1)\cos y-y\sin y)>0.\]
It follows that $\widetilde{V}$ on {the half strip} $A$ is injective. Indeed, suppose, contrary to our claim, that
$\widetilde{V}(z_1)=\widetilde{V}(z_2)$ for distinct $z_1$, $z_2$ in $A$. Then
\[0=\rp\frac{\widetilde{V}(z_2)-\widetilde{V}(z_1)}{z_2-z_1}=\int_0^1\rp \widetilde{V}'(z_1+t(z_2-z_1))\,dt>0.\]

As $\widetilde{V}'(z)\neq 0$ for every $z\in A$, the restriction $\widetilde{V}:A\to\widetilde{V}(A)$ is biholomorphic.
Moreover, $\widetilde{V}(A)$ is an open domain located between the three curves $\{iye^{iy}:y\in(-y_0,y_0)\}$, $\{(x+iy_0)e^{x+iy_0}:x>0\}$ and $\{(x-iy_0)e^{x-iy_0}:x>0\}$.
{Next, we show that }
\begin{equation}\label{inc:VA}
\mathcal{S}_{y_0}:=\{z\in\C\setminus\{0\}:\operatorname{Arg}(z)\in (-y_0,y_0)\}\subset \widetilde{V}(A).
\end{equation}
Indeed, suppose that $z\in \mathcal{S}_{y_0}$, i.e.  $\rp z>0$ and $\Big|\frac{\ip z}{\rp z}\Big|<\tan(y_0)$ {(recall that \(y_0\in(0,\pi/2) \))}. Take $x\geq 0$ such that
\[\rp[(x+iy_0)e^{x+iy_0}]=\rp z=\rp[(x-iy_0)e^{x-iy_0}].\]
Then it is enough to show that
\[\ip[(x+iy_0)e^{x+iy_0}]>\ip z>\ip[(x-iy_0)e^{x-iy_0}]=-\ip[(x+iy_0)e^{x+iy_0}],\]
or equivalently
\[\Big|\frac{\ip z}{\rp z}\Big|<\frac{\ip[(x+iy_0)e^{x+iy_0}]}{\rp[(x+iy_0)e^{x+iy_0}]}.\]
By assumption, we need to show that
\[\tan(y_0)=\frac{\ip e^{iy_0}}{\rp e^{iy_0}}<\frac{\ip[(x+iy_0)e^{x+iy_0}]}{\rp[(x+iy_0)e^{x+iy_0}]}.\]
 It follows from
\begin{gather*}\ip((x+iy_0)e^{x+iy_0})\rp e^{iy_0}-\rp((x+iy_0)e^{x+iy_0})\ip e^{iy_0}\\
=\ip[(x+iy_0)e^{x+iy_0}\overline{e^{iy_0}}]=y_0e^x>0
\end{gather*}
and $\rp e^{iy_0}>0$, $\rp((x+iy_0)e^{x+iy_0})=\rp z>0$. This gives $z\in \widetilde{V}(A)$, and hence \eqref{inc:VA} holds.

\medskip

Let $U:=I^{-1}(\widetilde{V}^{-1}(\mathcal{S}_{y_0}))$. By \eqref{inc:VA}, $U$ is an open subset of $I^{-1}(A)\subset\C_+$ which contains the half-line $(0,+\infty)$.
Since $I(\mathcal{S}_{y_0})=\mathcal{S}_{y_0}$ and $\widetilde{V}:A\to\widetilde{V}(A)$ is biholomorphic, the map $V:U\to \mathcal{S}_{y_0}$ is biholomorphic, so \eqref{eq:iii}  holds.
Moreover, for every
$z\in \C\setminus\{0\}$ we have
\begin{equation}\label{eq:derV}
V'(z)=\Big(1+\frac{1}{z}\Big)\exp(-1/z)\quad\text{and}\quad V''(z)=\frac{1}{z^3}\exp(-1/z),
\end{equation}
and hence
\begin{equation*}
\frac{V(z)V''(z)}{(V'(z))^2}=\frac{1}{(1+z)^2}.
\end{equation*}
As $U\subset I^{-1}(A)\subset\C_+$, it follows that $|{V(z)V''(z)}/{(V'(z))^2}|\leq 1$ for $z\in U$, so \eqref{eq:iv} holds with $U_E=U$ and $C_E=1$.

The convexity of $V$ follows from \eqref{eq:derV}.
\end{proof}

\begin{lemma}\label{lem:i-iv}
If $V:\R\to\R_{\geq 0}$ is a $Deck$-potential, then $V^m:\R\to\R_{\geq 0}$ is a $Deck$-potential for every $m\in\N$.
\end{lemma}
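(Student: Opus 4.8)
\textbf{Proof plan for Lemma~\ref{lem:i-iv}.}
The plan is to verify the four defining conditions \eqref{eq:i}--\eqref{eq:iv} for $V^m$ directly from the corresponding conditions for $V$, by restricting attention to one energy level $E_0>0$ at a time. First I would dispose of condition \eqref{eq:i}: since $V(0)=V'(0)=0$, $V'(x)>0$ for $x>0$ and $V(x)\to+\infty$, one has immediately $(V^m)(0)=0$, $(V^m)'(x)=mV(x)^{m-1}V'(x)$, which vanishes at $0$ (as $m>1$, so $V(0)^{m-1}=0$; and if $m=1$ there is nothing to prove), is positive for $x>0$, and $V^m(x)\to+\infty$. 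Evenness of $V^m$ is clear, and $C^2$-smoothness is clear. Condition \eqref{eq:ii} is equally immediate: $V:(0,+\infty)\to(0,+\infty)$ analytic implies $V^m:(0,+\infty)\to(0,+\infty)$ analytic, with holomorphic extension $V(z)^m$ on the same neighbourhood $U$ of $(0,+\infty)$ on which $V$ extends.

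The substantive points are \eqref{eq:iii} and \eqref{eq:iv}. For \eqref{eq:iii}, fix $E_0>0$. Applying \eqref{eq:iii} for $V$ to the energy level $E_0^{1/m}>0$ (the positive real $m$-th root), I obtain $0<r'<E_0^{1/m}$ and a bounded open set $U'\subset U$ such that $V:U'\to V(U')$ is biholomorphic with the droplet $C(E_0^{1/m},r')\subset V(U')$. I would then take $U_{E_0}:=U'$ and observe that $V^m:U_{E_0}\to V^m(U_{E_0})$ is the composition of the biholomorphism $V:U_{E_0}\to V(U_{E_0})$ with the $m$-th power map $z\mapsto z^m$; since $V(U_{E_0})$ contains the droplet $C(E_0^{1/m},r')$, which lies in the right half-plane $\C_+$ (as $r'<E_0^{1/m}$, by the very definition $C(z_0,r)=\bigcup_{s\in(0,1]}sB(z_0,r)$ with $r<|z_0|$ forcing $0\notin\overline{B(z_0,r)}$ and the whole droplet to stay in the sector around the ray through $z_0$), the power map is biholomorphic on it. Hence $V^m$ is biholomorphic on $U_{E_0}$. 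It remains to check that $(C(E_0^{1/m},r'))^m$ contains a droplet $C(E_0,r)$ for some $0<r<E_0$; this is a continuity/shrinking argument: $(C(E_0^{1/m},r'))^m$ is an open neighbourhood of $E_0$ in $\C_+$, and since $\bigcap_{r>0}\overline{C(E_0,r)}=[0,E_0]$ (the droplets shrink to the segment) while $(C(E_0^{1/m},r'))^m$ is open and contains the point $E_0$ together with points near the segment $(0,E_0]$ arising as $m$-th powers of points near $(0,E_0^{1/m}]$, one can choose $r$ small enough that $C(E_0,r)\subset(C(E_0^{1/m},r'))^m\subset V^m(U_{E_0})$.

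For \eqref{eq:iv}, the computation is purely algebraic and short: writing $W:=V^m$, one has $W'=mV^{m-1}V'$ and $W''=m(m-1)V^{m-2}(V')^2+mV^{m-1}V''$, hence
\[
\frac{W''(z)W(z)}{(W'(z))^2}=\frac{\big(m(m-1)V^{m-2}(V')^2+mV^{m-1}V''\big)V^m}{m^2V^{2m-2}(V')^2}=\frac{m-1}{m}+\frac{1}{m}\cdot\frac{V''(z)V(z)}{(V'(z))^2}.
\]
By \eqref{eq:iv} for $V$ there is $C_{E_0}>0$ with $|V''V/(V')^2|\le C_{E_0}$ on $U_{E_0}$ (one may shrink $U_{E_0}=U'$ to lie inside the set where \eqref{eq:iv} for $V$ holds; since the hypothesis of \eqref{eq:iii} for $V$ already produced a bounded open set and \eqref{eq:iv} for $V$ holds on the associated set, take the intersection), so $|W''W/(W')^2|\le \frac{m-1}{m}+\frac{1}{m}C_{E_0}=:C'_{E_0}$, giving \eqref{eq:iv} for $V^m$. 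I expect the only genuinely delicate step to be the droplet-containment claim in \eqref{eq:iii}, i.e.\ verifying that the $m$-th power of a droplet around $E_0^{1/m}$ swallows a smaller droplet around $E_0$; this is geometrically plausible and follows from the shrinking property $\bigcap_{r>0}\overline{C(z_0,r)}=[0,z_0]$ together with openness, but it is the one point that requires a careful, if brief, argument rather than a one-line remark. An induction on $m$ (using $V^m=(V^{m-1})\cdot V$, or simply iterating the case established for a single factor) then closes the lemma, though in fact the direct computation above already handles arbitrary $m$ at once.
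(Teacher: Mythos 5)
The verification of condition~\eqref{eq:iii} has a genuine gap. You take $U_{E_0}:=U'$ and argue that the $m$-th power map is biholomorphic on the droplet $C(E_0^{1/m},r')$, then conclude that $V^m$ is biholomorphic on $U_{E_0}$. The conclusion does not follow: for $V^m=(z\mapsto z^m)\circ V$ to be injective on $U'$ you need $z\mapsto z^m$ to be injective on all of $V(U')$, and $V(U')$ can be a much larger set than the droplet, with no reason to lie inside a sector of angular opening $2\pi/m$. Worse, for $m\geq 3$ the claim that the power map is injective even on the droplet is itself false in general: $C(E_0^{1/m},r')$ is contained in the sector $|\Arg z|<\arcsin(r'/E_0^{1/m})$, which can be as close to $\pi$ as one likes, so for $r'$ close to $E_0^{1/m}$ the droplet contains distinct points $z_1,z_2$ with $z_2=e^{2\pi i/m}z_1$ and hence $z_1^m=z_2^m$. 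Membership in the right half-plane $\C_+$ (opening $\pi$) is only enough for $m\leq 2$.

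The paper closes both holes. It first puts $U_m:=V^{-1}(\mathcal{S}_{\pi/m})$, where $\mathcal{S}_{\pi/m}=\{z\neq 0:|\Arg z|<\pi/m\}$, and works on $U_{E_0}^m:=U_{E_0^{1/m}}\cap U_m$; this forces $V(U_{E_0}^m)\subset\mathcal{S}_{\pi/m}$, the largest sector on which $z\mapsto z^m$ is injective, so $V^m$ is genuinely biholomorphic on $U_{E_0}^m$. Second, it uses the freedom to shrink $r$ so that $C(E_0^{1/m},r)\subset\mathcal{S}_{\pi/m}$ as well (this is always possible since the droplet's angular opening $2\arcsin(r/E_0^{1/m})\to 0$ as $r\to 0$); only then does the argument that $C(E_0^{1/m},r)^m$ swallows a droplet $C(E_0,r_m)$ go through. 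Your treatment of conditions~\eqref{eq:i},~\eqref{eq:ii} and the algebraic identity
$\frac{(V^m)''V^m}{((V^m)')^2}=\frac{m-1}{m}+\frac{1}{m}\cdot\frac{V''V}{(V')^2}$
for~\eqref{eq:iv} are correct and match the paper; the missing ingredient is the sector restriction in~\eqref{eq:iii}.
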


\begin{proof}
As usual, the only challenge in the proof is to show \eqref{eq:iii} and \eqref{eq:iv}.
Assume that $V:U\to \C$ is a holomorphic extension of $V:(0,+\infty)\to(0,+\infty)$ such that $(0,+\infty)\subset U$ is open.
Let $U_m:=V^{-1}(\mathcal{S}_{\pi/m})$. Then $V^m:U_m\to\C$ is a holomorphic extension of $V^m:(0,+\infty)\to(0,+\infty)$ such that $U_m$ is open.

Take any $E_0>0$. As $V$ satisfies \eqref{eq:iii} and \eqref{eq:iv}, there exists $0<r<E_0^{1/m}$, an open $U_{E_0^{1/m}}\subset U$ and $C_{E_0^{1/m}}>0$ such that $V:U_{E_0^{1/m}}\to V(U_{E_0^{1/m}})$ is biholomorphic,
\begin{equation}\label{C1m}
C(E_0^{1/m},r)\subset V(U_{E_0^{1/m}}),\quad C(E_0^{1/m},r)\subset \mathcal{S}_{\pi/m}
\end{equation}
and
\begin{equation}\label{C1m1}
\left|\frac{V''(z)V(z)}{(V'(z))^2}\right|\leq C_{E_0^{1/m}}\;\text{ for all}\;z\in U_{E_0^{1/m}}.
\end{equation}
Taking $U_{E_0}^m:=U_{E_0^{1/m}}\cap U_m$ we have $V^m:U_{E_0}^m\to V^m(U_{E_0}^m)$ biholomorphic and $C(E_0^{1/m},r)^m\subset V^m(U_{E_0^{1/m}})$.
Then there exists $0<r_m<E_0$ such that
\[C(E_0,r_m)\subset C(E_0^{1/m},r)^m.\]
In view of \eqref{C1m}, it follows that
\[C(E_0,r_m)\subset  \big(V(U_{E_0^{1/m}})\cap \mathcal{S}_{\pi/m}\big)^m\subset  V^m(U_{E_0^{1/m}}\cap U_m)=V^m(U_{E_0}^m),\]
which gives \eqref{eq:iii} for $V^m$. Moreover, by\eqref{C1m1}, for every $z\in U_{E_0}^m\subset U_{E_0^{1/m}}$ we have
\[\left|\frac{(V^m)''(z)V^m(z)}{((V^m)'(z))^2}\right|=\left|\frac{m-1}{m}+\frac{V''(z)V(z)}{m(V'(z))^2}\right|\leq \frac{C_{E_0^{1/m}}}{m}+1,\]
which gives \eqref{eq:iv} for $V^m$.
\end{proof}

\begin{example}
In view of Lemma~\ref{lem:ye}~and~\ref{lem:i-iv}, for every $m\geq 2$ the function $V_m:\R\to \R_{\geq 0}$ given by $V_m(x)=|x|^me^{-1/|x|}$ for $x\neq 0$ and $V_m(0)=0$ is also a $Deck$-potential. Indeed, it immediately follows from  $V_m(x)=\tfrac{1}{m^m}V_1^m(mx)$. Moreover, $V_m$ satisfies the key condition \eqref{eq:vi}. Indeed, by \eqref{eq:derV}, for every $x>0$ we have
\[\frac{(V_m)''(x)V_m(x)}{((V_m)'(x))^2}=\frac{m-1}{m}+\frac{V_1''(x)V_1(x)}{m(V_1'(x))^2}>\frac{m-1}{m}\geq \frac{1}{2}.\]
\end{example}

\begin{proposition}\label{prop:smpot}
Let $V:\R\to\R_{\geq 0}$ be an analytic even uni-modal potential. If $V$ satisfies \eqref{eq:v} then $V''(0)\geq 0$ and $V^{(4)}(0)\geq 0$.
Conversely, if $V^{(2m)}(0)\geq 0$ for  all $m\geq 1$ then $V$ satisfies \eqref{eq:v}.
\end{proposition}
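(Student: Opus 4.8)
\textbf{Proof proposal for Proposition~\ref{prop:smpot}.}

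The plan is to exploit the Taylor expansion of $V$ at $0$ together with the differential inequality \eqref{eq:v}, namely $2V(x)V''(x)\geq (V'(x))^2$ for all $x>0$. Since $V$ is even, analytic, and uni-modal with $V(0)=V'(0)=0$, write $V(x)=\sum_{m\geq 1}a_{2m}x^{2m}$ with $a_{2m}=V^{(2m)}(0)/(2m)!$ and $a_2=V''(0)/2$; by \eqref{eq:i} we know $V(x)>0$ for $x\neq 0$ and $V'(x)>0$ for $x>0$, and $V$ is not identically $0$, so not all $a_{2m}$ vanish. Let $2k$ be the smallest index with $a_{2k}\neq 0$; positivity of $V$ near $0$ forces $a_{2k}>0$.

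\emph{First direction.} To show $V''(0)=2a_2\geq 0$: if $a_2<0$ were the leading coefficient (i.e.\ $k=1$, $a_2<0$), then $V(x)<0$ for small $x>0$, contradicting \eqref{eq:i}; and if $k\geq 2$ then $a_2=0\geq 0$ trivially. So $V''(0)\geq 0$ in all cases. For $V^{(4)}(0)\geq 0$, the idea is to divide \eqref{eq:v} into cases according to $k$. If $k=1$ (so $a_2=V''(0)/2>0$), expand both sides of $2V V''-(V')^2\geq 0$ to low order: one finds the $x^0$ and $x^2$ terms vanish identically and the coefficient of $x^4$ equals a positive multiple of $a_2\,a_4$ (the precise constant comes from $2(a_2 x^2+a_4x^4+\dots)(2a_2+12a_4x^2+\dots)-(2a_2x+4a_4x^3+\dots)^2$, whose $x^4$-coefficient is $8a_2a_4$, say), forcing $a_4\geq 0$, i.e.\ $V^{(4)}(0)\geq 0$. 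If $k\geq 3$ then $a_4=0\geq 0$. The remaining borderline is $k=2$, i.e.\ $V''(0)=0$ but $a_4\neq 0$; here positivity of $V$ near $0$ again forces $a_4=V^{(4)}(0)/4!>0$. Collecting the cases gives the claim.

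\emph{Converse.} Assume $V^{(2m)}(0)\geq 0$ for all $m\geq 1$, i.e.\ all $a_{2m}\geq 0$, so $V^{1/2}$ is, plausibly, convex. The cleanest route is to invoke Lemma~\ref{lem:square}: $V$ satisfies \eqref{eq:v} if and only if $V^{1/2}$ is convex on $\R\setminus\{0\}$. So it suffices to prove that if $V(x)=\sum_{m\geq k}a_{2m}x^{2m}$ with all $a_{2m}\geq 0$ and $a_{2k}>0$, then $f(x):=V(x)^{1/2}$ is convex for $x>0$ (the case $x<0$ following by evenness). Factor $V(x)=a_{2k}x^{2k}(1+g(x))$ with $g(x)=\sum_{m>k}(a_{2m}/a_{2k})x^{2(m-k)}$, a power series with non-negative coefficients; then $f(x)=\sqrt{a_{2k}}\,x^k(1+g(x))^{1/2}$. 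The function $x^k$ is convex and increasing on $(0,\infty)$, and I claim $(1+g(x))^{1/2}$ is convex and increasing there too: $g$ is increasing with non-negative coefficients, $u\mapsto (1+u)^{1/2}$ is increasing, and one checks $\frac{d^2}{dx^2}(1+g)^{1/2}=\tfrac12(1+g)^{-3/2}\!\big(g''(1+g)-\tfrac12 (g')^2\big)$, where $g''(1+g)\geq g'' \geq \tfrac12(g')^2$ holds coefficient-by-coefficient because $g$ has non-negative coefficients and $g(0)=0$ (this is exactly the content of \eqref{eq:v} applied to the auxiliary series $1+g$, or can be verified directly by comparing the power-series coefficients of $2(1+g)g''$ and $(g')^2$). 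The product of two non-negative, increasing, convex functions on $(0,\infty)$ is convex, so $f$ is convex on $(0,\infty)$, hence (being even) convex on $\R\setminus\{0\}$, and Lemma~\ref{lem:square} gives \eqref{eq:v}.

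\textbf{Main obstacle.} The delicate point is the coefficient-comparison inequality $2(1+g)g''\succeq (g')^2$ (coefficientwise) for a power series $g$ with non-negative coefficients and $g(0)=0$, equivalently the claim that such $1+g$ is "a square of a convex function" in the formal sense — this is the heart of the converse and must be done carefully, perhaps by an induction on the coefficients or by recognizing it as the statement that $\sqrt{1+g}$ again has a convergent expansion whose convexity reduces to the scalar inequality $(2k)(2k-1)\geq \frac12 (2k)^2$ term-by-term after the product with $x^k$. Everything else (the finitely many low-order Taylor matchings in the first direction, the sign bookkeeping via uni-modality) is routine.
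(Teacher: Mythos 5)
On the forward direction (\eqref{eq:v} $\Rightarrow V''(0)\geq 0$ and $V^{(4)}(0)\geq 0$), your approach is correct but genuinely different from the paper's. You argue directly from the Taylor expansion: $V''(0)\geq 0$ from positivity of $V$ near $0$, and $V^{(4)}(0)\geq 0$ by computing the leading Taylor coefficient of $W:=VV''-\tfrac12(V')^2$ (there is a harmless arithmetic slip — the $x^4$-coefficient of $2W$ is $12a_2a_4$, not $8a_2a_4$, but you only use its sign). The paper instead factors $V=V_*^2$ with $V_*$ analytic, even or odd, and expresses $V''(0)$, $V^{(4)}(0)$ in terms of $V_*$'s derivatives, using convexity of $V_*$ on $\R_{\geq 0}$ to get the signs. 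Both work; your route is more elementary and self-contained, while the paper's unifies with the $V=V_*^2$ machinery used elsewhere.

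On the converse there is a genuine gap, which you partially acknowledge. You reduce (via Lemma~\ref{lem:square}) to showing $(1+g)^{1/2}$ is convex, which amounts to $(1+g)g''-\tfrac12(g')^2\geq 0$. The chain you propose, ``$g''(1+g)\geq g''\geq\tfrac12(g')^2$ coefficient-by-coefficient'', is false at the second step: for $g(x)=x^2$ the series $g''=2$ has zero $x^2$-coefficient while $\tfrac12(g')^2=2x^2$ has a positive one, so $g''\not\succeq\tfrac12(g')^2$. The correct assertion is that the \emph{combination} $(1+g)g''-\tfrac12(g')^2$ has nonnegative Taylor coefficients, but that is precisely the nontrivial fact to be proved, and your argument neither proves it nor reduces it to something simpler (the remark that it is ``exactly the content of \eqref{eq:v} applied to $1+g$'' is circular). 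Moreover the detour through $V=a_{2k}x^{2k}(1+g)$ buys nothing: the paper proves directly that $W=VV''-\tfrac12(V')^2$ has nonnegative Taylor coefficients at $0$, by applying the Leibniz rule to $W^{(2m)}(0)$, using that all odd derivatives of $V$ vanish and all even ones are nonnegative, symmetrizing $j\leftrightarrow m+1-j$, and invoking the binomial identity
\[
\binom{2m}{2j}+\binom{2m}{2j-2}-\binom{2m}{2j-1}=\binom{2m-1}{2j}+\binom{2m-1}{2j-3}\geq 0.
\]
That computation is the heart of the converse, and it is exactly what your ``main obstacle'' note defers. Until that (or an equivalent coefficientwise positivity argument) is supplied, the converse direction of your proof is incomplete.
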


\begin{proof}
By the proof of Proposition~\ref{prop:anDeck}, there exists an analytic map $V_*:\R\to\R$ such that $V(x)=(V_*(x))^2$ for all $x\in\R$.
Moreover, $V_*(\R_{\geq 0})=\R_{\geq 0}$ and $V_*$ is even or odd.
Suppose that $V$ satisfies \eqref{eq:v}, i.e.\ $V_*:\R_{\geq 0}\to\R_{\geq 0}$ is convex.
Recall that
\[ V''=2V_*V''_*+2(V'_*)^2,\  V^{(4)}=2V_*V^{(4)}_*+8V'_*V'''_*+6(V''_*)^2.
\]

Assume that $V_*$ is even. Then $V_*(0)=V'_*(0)=V'''_*(0)=0$, and hence
\[ V''(0)=0,\  V^{(4)}(0)=6(V''_*(0))^2\geq 0.
\]

Assume that $V_*$ is odd. Then $V_*(0)=V''_*(0)=0$ and
\[V_*(x)=V'_*(0)x+O(x^2),\quad V''_*(x)=6V'''_*(0)x+O(x^2).\]
By assumption, $V_*(x)\geq 0$ and $V''_*(x)\geq 0$ for all $x\geq 0$, so $V'_*(0)\geq 0$ and $V'''_*(0)\geq 0$.
It follows that
\[ V''(0)=2(V'_*(0))^2\geq 0,\  V^{(4)}(0)=8V'_*(0)V'''_*(0)\geq 0,
\]
which completes the proof of the first part.

\medskip

Now suppose that  $V:\R\to\R_{\geq 0}$ is an analytic even such that $V^{(2m)}(0)\geq 0$ for  all $m\geq 1$. As $V$ is even, we also have
$V^{(2m-1)}(0)=0$ for  all $m\geq 1$. By definition, we need to show that the analytic map
\[W(x)=V(x)V''(x)-\frac{1}{2}(V'(x))^2\]
takes only non-negative values. As $W$ is analytic and even, it is enough to prove that $W^{(2m)}(0)\geq 0$ for all $m\geq 0$.
By the general Leibniz rule, since $V(0)=0$ and $V^{(2k+1)}(0)=0$ for $k\geq 0$, we have $W(0)=V(0)V''(0)-\tfrac{1}{2}(V'(0))^2=0$ and for $m\geq 1$
\begin{align*}
W^{(2m)}(0)&=\sum_{k=0}^{2m}\binom{2m}{k}V^{(k)}(0)V^{(2m-k+2)}(0)-\frac{1}{2}\sum_{k=0}^{2m}\binom{2m}{k}V^{(k+1)}(0)V^{(2m-k+1)}(0)\\
&=\sum_{k=1}^{m}\binom{2m}{2k}V^{(2k)}(0)V^{(2m-2k+2)}(0)-\frac{1}{2}\sum_{k=1}^{m}\binom{2m}{2k-1}V^{(2k)}(0)V^{(2m-2k+2)}(0)\\
&=\frac{1}{2}\sum_{k=1}^{m}\left(\binom{2m}{2k}+\binom{2m}{2k-2}-\binom{2m}{2k-1}\right)V^{(2k)}(0)V^{(2m-2k+2)}(0).
\end{align*}
Moreover, we have
\begin{align*}
&\binom{2m}{2k}+\binom{2m}{2k-2}-\binom{2m}{2k-1}\\&={}\binom{2m-1}{2k}+\binom{2m-1}{2k-1}+\binom{2m-1}{2k-2}+\binom{2m-1}{2k-3}
-\binom{2m-1}{2k-1}-\binom{2m-1}{2k-2}\\
&{}=\binom{2m-1}{2k}+\binom{2m-1}{2k-3}\geq 0.
\end{align*}
Since $V^{(2k)}(0)\geq 0$ for all $k\geq 0$, it follows that $W^{(2m)}(0)\geq 0$ for all $m\geq 0$, which completes the proof.
\end{proof}

\bibliographystyle{abbrv}
\bibliography{springsbib,Impact-Zones-bib,myrefs,citations2}

\end{document}